\documentclass[11pt,%reqno,usenames, dvipsnames
]{article}
\usepackage[margin=1in,letterpaper]{geometry}

\newcommand{\infdistance}[2]{\|#1-#2\|_\infty}
\newcommand{\fccVR}[1]{\opC_*(\Full(#1))}
\newcommand{\hatdi}{\widehat{d}_\mathrm{I}}
\newcommand{\tildi}{\tilde{d}_\mathrm{I}}
\newcommand{\tildb}{\tilde{d}_\mathrm{B}}
\newcommand{\hatdb}{\widehat{d}_\mathrm{B}}
\newcommand{\dbk}[1]{d_{\mathrm{B},#1}}
\newcommand{\hatdbk}[1]{\widehat{d}_{\mathrm{B},#1}}
\newcommand{\dballk}[2]{\db(\caB_{\Con,k}(#1),\caB_{\Con,k}(#2))}
\newcommand{\pbdbk}[3]{\widehat{d}_{\mathrm{B},#1}\left( #2,#3\right)}
\newcommand{\pbdbvariant}[5]{#2{d}_{\mathrm{B},#1}^{#3}\left( #4,#5\right)}
\newcommand{\pbdi}[2]{\widehat{d}_{\mathrm{I}}\left( #1,#2\right)} %\hatdi\left( \fccVR{X} ,\fccVR{Y} \right)
\newcommand{\pbdivariant}[4]{#1{d}_{\mathrm{I}}^{#2}\left( #3,#4\right)} 

%% The following loads an index package and compiles during runtime the .idx file. We must load imakeidx before hyperref if we want the index to have hyperlinks.

%\setcounter{tocdepth}{3}
%\setcounter{secnumdepth}{3}

\newcommand{\xoverbrace}[2][\vphantom{\dfrac{A}{A}}]{\overbrace{#1#2}}
\usepackage{caption}
\usepackage{stackengine,xcolor}
\usepackage{tcolorbox}
\newcommand\myboxred[2][]{\tikz[overlay]\node[fill=red!20,inner sep=2pt, anchor=text, rectangle, rounded corners=1mm,#1] {#2};\phantom{#2}}
\newcommand\myboxblue[2][]{\tikz[overlay]\node[fill=blue!20,inner sep=2pt, anchor=text, rectangle, rounded corners=1mm,#1] {#2};\phantom{#2}}

\usepackage[normalem]{ulem}
\usepackage{tikz}
\usepackage{tikz-cd}
\usetikzlibrary{cd}
\usepackage{tkz-euclide}
\usetikzlibrary{decorations.markings, arrows.meta,shapes,backgrounds,decorations.text,patterns,positioning,fit,calc,plotmarks,decorations.markings}
\tikzset{
    marrow/.style={decoration={markings,mark=at position 0.5 with {\arrow{#1}}}, postaction=decorate}
}
\tikzset{
  symbol/.style={
    draw=none,
    every to/.append style={
      edge node={node [sloped, allow upside down, auto=false]{$#1$}}}
  }
}
\usetikzlibrary{plotmarks}
\usepackage{imakeidx}
\makeindex
\usepackage{hyperref}
\hypersetup{
	colorlinks   = true,
	citecolor    = gray
}
\hypersetup{linkcolor=blue}

\usepackage{pgfplots}%for begin{axis}
\usepgfplotslibrary{polar}
\usepackage{mathtools}

\usepackage{textcomp}
\usepackage{tkz-euclide}
\usepackage{setspace}
\usepackage{amsmath}
\usepackage{amssymb}
\usepackage{mathabx}
\usepackage{bbm}
\usepackage{url}
\usepackage{soul}
\usepackage[all]{xy}
\usepackage[utf8]{inputenc}
\usepackage{amsfonts}
\usepackage{graphicx}
\usepackage{arydshln}
\usepackage{cancel}
\usepackage{enumitem}
\usepackage{extpfeil}%for xtwoheadleftarrow
\usepackage[colorinlistoftodos]{todonotes}
\usepackage{blkarray, bigstrut}
\usepackage{parskip}
\usepackage{faktor}
\usepackage{xfrac}
\usepackage{mathrsfs}
\usepackage{framed}
\usepackage{latexsym}
\usepackage{pb-diagram}
\usepackage[mathscr]{euscript}
\usepackage{color}
\usepackage{multicol}
\usepackage{tikz-cd}

\usepackage{authblk}

\usepackage{etoolbox}
\patchcmd{\paragraph}{\itshape}{\bfseries\boldmath}{}{}

\usepackage{amsthm}
\begingroup
    \makeatletter
    \@for\theoremstyle:=definition,remark,plain\do{%
        \expandafter\g@addto@macro\csname th@\theoremstyle\endcsname{%
            \addtolength\thm@preskip\parskip
            }%
        }
\endgroup

\usepackage{thmtools, thm-restate}

\newtheorem{theorem}{Theorem}
\newtheorem{proposition}{Proposition}[section]
\newtheorem{lemma}[proposition]{Lemma}
\newtheorem{corollary}[proposition]{Corollary}
\newtheorem{definition}[proposition]{Definition}
\newtheorem{example}[proposition]{Example}

\newtheorem{remark}[proposition]{Remark}
\newtheorem{fact}[proposition]{Fact}

\newcounter{claimcount}
\setcounter{claimcount}{0}
\newenvironment{claim}{\refstepcounter{claimcount}\par\medskip
   \noindent\emph{Claim \arabic{claimcount}:}}{\vspace{1pt}}
\AtBeginEnvironment{proof}{\setcounter{claimcount}{0}}

\DeclareMathAlphabet{\mymathbb}{U}{BOONDOX-ds}{m}{n}

\newcommand{\face}{\operatorname{face}}

\newcommand{\bbZ}{\mathbb{Z}}
\newcommand{\bbR}{\mathbb{R}}
\newcommand{\bbN}{\mathbb{N}}

\newcommand{\bbV}{\mathbb{V}}
\newcommand{\bbW}{\mathbb{W}}

\newcommand{\frM}{\mathfrak{M}}

\newcommand{\frX}{\mathfrak{X}}

\newcommand{\caB}{\mathcal{B}}
\newcommand{\caC}{\mathcal{C}}
\newcommand{\caE}{\mathcal{E}}

\newcommand{\caH}{\mathcal{H}}

\newcommand{\caX}{\mathcal{X}}
\newcommand{\field}{\mathbb{F}}

\newcommand{\caU}{\mathcal{U}}

\newcommand{\rmS}{\mathrm{S}}

\newcommand{\rmH}{\mathrm{H}}

\newcommand{\card}{\operatorname{card}}

\newcommand{\im}{\operatorname{im}}
\newcommand{\kernel}{\operatorname{ker}}
\newcommand{\rank}{\operatorname{rank}}

\newcommand{\dis}{\operatorname{dis}}
\newcommand{\codis}{\operatorname{codis}}
\newcommand{\cost}{\operatorname{cost}}

\newcommand{\vs}{\operatorname{\mathbf{Vec}}}

\newcommand{\fcc}{\operatorname{\mathbf{FCC}}}

\newcommand{\diam}{\operatorname{diam}}
\newcommand{\Id}{\operatorname{Id}}

\newcommand{\VR}{\operatorname{VR}}
\newcommand{\Full}{\operatorname{VR}}

\newcommand{\opC}{\operatorname{C}}

\newcommand{\Iso}{\operatorname{Iso}}

\newcommand{\supp}{\operatorname{supp}}

\newcommand{\Ver}{\mathrm{Ver}}
\newcommand{\Con}{\mathrm{Con}}

\newcommand{\id}{\operatorname{id}}

\newcommand{\Cor}{\operatorname{Cor}}
\newcommand{\Tri}{\operatorname{Tri}}
\newcommand{\Map}{\operatorname{Map}}

\newcommand{\dm}{d_\mathrm{M}}
\newcommand{\dgh}{d_\mathrm{GH}}

\newcommand{\di}{d_\mathrm{I}}

\newcommand{\dhaus}{d_\mathrm{H}}
\newcommand{\db}{d_\mathrm{B}}

\usepackage{accents}

\definecolor{darkblue}{rgb}{0.0, 0.0, 0.8}
\definecolor{darkred}{rgb}{0.8, 0.0, 0.0}
\definecolor{darkgreen}{rgb}{0.0, 0.8, 0.0}

\newcommand{\revision}[1]  {\color{black} #1 \color{black}}

\author[1]{Facundo~M\'emoli}
\author[2]{Ling~Zhou}

\affil[1]{Department of Mathematics, Rutgers University.
		\thanks{\texttt{facundo.memoli@rutgers.edu}}}
		
\affil[2]{Department of Mathematics, 
		Duke University.
		\thanks{\texttt{ling.zhou@duke.edu}}}

%%%%%%%%%%%%%%%%%%%%%%%%%%%%%%%%%%%%%%
\setlength\parindent{12pt}

\begin{document}
\title{Ephemeral persistence features and the stability of filtered chain complexes}

\date{\today}

\maketitle
\begin{abstract}
    We strengthen the usual stability theorem for Vietoris-Rips (VR) persistent homology of finite metric spaces by building upon constructions due to Usher and Zhang in the context of filtered chain complexes. 
    The information present at the level of filtered chain complexes includes points with zero persistence which provide additional information to that present at homology level. 
    The resulting invariant, called \emph{verbose barcode}, which has a stronger discriminating power than the usual barcode, is proved to be stable under certain metrics that are sensitive to these ephemeral points. In some situations, we provide ways to compute such metrics between verbose barcodes. We also exhibit several examples of finite metric spaces with identical (standard) VR barcodes yet with different verbose VR barcodes thus confirming that these ephemeral points strengthen the standard VR barcode.
\end{abstract}
%%%%%%%%%%%%%%%%%%%%%%%%%%%%%%%%%%%%%%
	
\tableofcontents

%%%%%%%%%%%%%%%%%%%%%%%%%%%%%%%%%%%%%%	

\section{Introduction}

In topological data analysis, \emph{persistent homology} is one of the main tools utilized for extracting and analyzing multiscale geometric and topological information from metric spaces. 

Typically, the \emph{persistent homology pipeline} (as induced by the Vietoris-Rips filtration) is explained via the diagram:
\begin{center}
Metric Spaces $\to$  Simplicial Filtrations $\to$ Persistence Modules
\end{center}
where,  from left to right, the second map is homology with field coefficients. Throughout the paper, we fix a base field $\field$.
We restrict our considerations to finite metric spaces, finite-dimensional simplicial complexes, and finite-dimensional chain complexes. Specifically, in this paper, for any chain complex $(C_*,\partial)$, the total dimension $\dim(C_*)=\sum_{k\geq 0}\dim(C_k)$ is finite.

Pairs of birth and death times of topological features (such as connected components, loops, voids, and so on) give rise to the \emph{barcode}, also called the \emph{persistence diagram}, of a given metric space \cite{edelsbrunner2002topological,carlsson2009topology}.
The so-called \emph{bottleneck distance} $\db$ between the persistent homology barcodes arising from the Vietoris-Rips filtration of metric spaces provides a polynomial time computable lower bound for the \emph{Gromov-Hausdorff distance} $\dgh$ between the underlying metric spaces. However,  this bound is not tight, in general (cf. Example \ref{ex:twopointspace}). A version of this theorem restricted to the case of finite metric spaces states:

\begin{restatable}[{Stability Theorem for $\db$, \cite{CCSGMO09,CSO14}}]{theorem}{dbstab}\label{thm:dgh-classical} 
Let $X$ and $Y$ be two finite metric spaces. Let $\caB_{ k}(X)$ (resp. $\caB_{ k}(Y)$) denote the barcode of the persistence module $\rmH_k\left(\VR_\bullet(X)\right)$ (resp. $\rmH_k\left(\VR_\bullet(Y)\right)$). Then, for any degree $k\in \bbZ_{\geq 0},$
\[\db(\caB_{ k}(X),\caB_{ k}(Y))\leq 2\cdot \dgh(X,Y).\]
\end{restatable}

In this paper, with the goal of refining the standard stability result alluded to above, we concentrate on the, usually implicit but conceptually important, intermediate step which assigns a \emph{filtered chain complex} (FCC) to a given simplicial filtration: 

\begin{center}
Metric Spaces $\to$  Simplicial Filtrations $\to$ \fbox{Filtered Chain Complexes} $\to$ Persistence Modules.
\end{center}

\paragraph{Related work on FCCs.} An FCC is an ascending sequence of chain complexes connected by monomorphisms. For instance, an FCC induced by a simplicial filtration $\{X_t\}_{t\in \bbR}$ can be represented by the following commutative diagram: for any $t\leq t'$,
\[
\begin{tikzcd}%[column sep=small]
 \opC_*(X_t):
\ar[d]
&
\cdots
\ar[r, "\partial_{k+2}"]
%\ar[d, hookrightarrow]
& 
 \opC_{k+1}(X_t)
\ar[r, "\partial_{k+1}"]
\ar[d, hookrightarrow]
& 
 \opC_k(X_t)
\ar[r, "\partial_{k}"]
\ar[d, hookrightarrow]
& \cdots
%\ar[d, hookrightarrow]
\\
 \opC_*(X_{t'}):
&
\cdots
\ar[r, "\partial_{k+2}"]
& 
 \opC_{k+1}(X_{t'})
\ar[r, "\partial_{k+1}"]
& 
 \opC_k(X_{t'})
\ar[r, "\partial_{k}"]
& \cdots
\end{tikzcd},
\]
where each $X_t$ is a simplical complex and $\opC_*(X_t)$ denotes the simplical chain complex of $X_t.$ 

Studies of the decomposition of FCCs in several different settings can be found in \cite{usher2016persistent,de2011dualities,meehan2019structural,chacholski2021invariants,chacholski2020decomposing}. 
We follow the convention of Usher and Zhang \cite{usher2016persistent}, where they study a notion of Floer-type complexes as a generalization of FCCs and prove a stability result for the usual bottleneck distance between concise barcodes of Floer-type complexes. 
In particular, they studied FCCs in detail and considered the notion of \emph{verbose barcode} $\caB_{\Ver,k}$ of FCCs, which consists of the standard barcode (which the authors call \emph{concise barcode} and denote as $\caB_{\Con,k}:=\caB_k$%thus explaining the notation in the statement above
) together with additional \emph{ephemeral} bars, i.e. bars of length $0$. 

They also proved that every FCC decomposes into the direct sum of  indecomposables $\caE(a,a+L,k)$,  which they called  \emph{elementary FCCs}, of the following form (see Definition \ref{def:elementary-f.c.c.}): if $L\in [0,\infty)$ and $a\in \bbR$, then $\caE(a,a+L,k)$ is given by 
\[
\begin{tikzcd}[column sep=5em, row sep=small]
t<a:\hspace{2em}
\cdots\to 0
\ar[r, "\partial_{k+2}=0"]
%\ar[d, hookrightarrow]
& 
0
\ar[r, "\partial_{k+1}=0"]
\ar[d, "="]
& 
0
\ar[r, "\partial_{k}=0"]
\ar[d, hookrightarrow]
& 
0 \to \cdots \\
t\in [a,a+L):\hspace{0.5em}
\cdots\to 0
\ar[r, "\partial_{k+2}=0"]
%\ar[d, hookrightarrow]
& 
0
\ar[r, "\partial_{k+1}=0"]
\ar[d, hookrightarrow]
& 
\field x
\ar[r, "\partial_{k}=0"]
\ar[d, "="]
&0\to \cdots
%\ar[d, hookrightarrow]
\\
t\in [a+L,\infty):\hspace{0.5em}
\cdots\to 0
\ar[r, "\partial_{k+2}=0"]
& 
\field y
\ar[r, "\partial_{k+1}:\,y\,\mapsto\, x"]
& 
\field x
\ar[r, "\partial_{k}=0"]
& 0\to \cdots,
\end{tikzcd}
\]
where $\field x$ denotes the vector space generated by $x.$
If $L=\infty$, then $\caE(a,\infty,k)$ (with the convention that $a+\infty=\infty$) is given by 
\[
\begin{tikzcd}[column sep=5em, row sep=small]
t<a:\hspace{2em}
%\ar[d]
\cdots\to 0
\ar[r, "\partial_{k+2}=0"]
%\ar[d, hookrightarrow]
& 
0
\ar[r, "\partial_{k+1}=0"]
\ar[d, "="]
& 
0
\ar[r, "\partial_{k}=0"]
\ar[d, hookrightarrow]
& 0\to \cdots\\
t\in [a,\infty):\hspace{0.5em}
%\ar[d]
\cdots\to 0
\ar[r, "\partial_{k+2}=0"]
%\ar[d, hookrightarrow]
& 
0
\ar[r, "\partial_{k+1}=0"]
& 
\field x
\ar[r, "\partial_{k}=0"]
& 0\to \cdots
\end{tikzcd}
\]
The degree-$l$ verbose barcode of the elementary FCC $\caE(a,a+L,k)$ is $\left\{  (a,a+L)\right\}$, where each pair $(a,a+L)$ is called a bar and $L$ is its length, for $l=k$ and is empty for $l\neq k$. 

The concise barcode of an FCC is defined as the collection of non-ephemeral bars, i.e. bars corresponding to elementary FCCs with $L\neq 0$ in its decomposition, which agrees with the standard barcode. Indeed, the $k$-th persistent homology of the elementary FCC $\caE(a,a+L,k)$ is the interval persistence module associated with the interval $[a,a+L)$, for $L\in [0,\infty]$. In particular, $\rmH_k(\caE(a,a,k))$ is the trivial persistence module.

In real calculations, barcodes are often computed for simplexwise filtrations first (i.e., simplices are assumed to enter the filtration one at a time), in which case all elementary FCCs correspond to intervals with positive length. This implies that, although not outputted, verbose barcodes are computed in many persistence algorithms. For VR FCCs, we made a small modification of the software Ripser introduced by Bauer (see \cite{bauer2021ripser}) to extract verbose barcodes of finite metric spaces.

In \cite{chacholski2021invariants}, Chach{\'o}lski et al. studied invariants for tame parametrized chain complexes, which are a generalization of filtered chain complexes obtained by allowing maps between chain complexes to be non-injective. In the finite-dimensional case, their notions of \emph{Betti diagram} and \emph{minimal Betti diagram} for filtered chain complexes respectively boil down to the verbose barcode and concise barcodes introduced in \cite{usher2016persistent}. In a subsequent paper \cite{chacholski2020decomposing}, the authors introduced an algorithm for decomposing filtered chain complexes into indecomposables.
Giunti and Landi reported \cite{giunti-landi-22} having independently explored ideas similar to the ones in our paper.

When a filtered chain complex arises from a simplicial filtration, its verbose barcode can also be obtained through the usual matrix reduction procedure applied to the boundary matrix of the underlying simplicial filtration. In this simplicial setting, in \cite{fasy2019faithful,micka2020searching,fasy2022efficient,Schenfisch2023} the authors study problems related to the reconstruction of simplicial complexes embedded in $\mathbb{R}^d$ via verbose barcodes (which they call ``augmented persistence diagrams"). 

\medskip

\paragraph{Overview of our results.} 
One drawback of the bottleneck stability result described in Theorem \ref{thm:dgh-classical} is that one asks for optimal matchings between the concise (i.e. standard) barcodes $\caB_{\Con,k}(X)$ and $\caB_{\Con,k}(Y)$ for each individual degree $k$ \emph{independently}. 

%%%%%%%%%%%%%%%%%%%%%%%%%%%%%%%%%%%%%%%%%

With the goal of finding a \emph{coherent} or \emph{simultaneous} matching of barcodes across all degrees at once, %\sout{or seeking a way to compare filtered chain complexes universally without restricting to a certain degree,} 
we study the interleaving distance $\di$ between FCCs (see Definition \ref{def:interleaving}) and establish an isometry theorem between $\di$ and the \emph{matching distance} $\dm$ between the verbose barcodes (see Definition \ref{def:dist-match}):

\begin{restatable}[Isometry theorem]{theorem}{isothm}\label{thm:dm=di}
For any two filtered chain complexes $(C_*,\partial_C,\ell_C)$ and $(D_*,\partial_D,\ell_D)$,
\[\sup_{k\in \bbZ_{\geq 0}}\dm
    \left(  \caB_{\Ver,k}(C_*), \caB_{\Ver,k}(D_*)\right)   
    =
    \di\left( \left(  C_*,\partial_C,\ell_C\right) ,\left(  D_*,\partial_D,\ell_D \right) \right) .\]
\end{restatable}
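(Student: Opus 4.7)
The proof strategy follows the standard two-step pattern of algebraic-stability isometry theorems, with the central tool being Usher--Zhang's structure theorem. Write the decompositions $C_* \cong \bigoplus_{\alpha} \caE(a_\alpha, a_\alpha + L_\alpha, k_\alpha)$ and $D_* \cong \bigoplus_{\beta} \caE(b_\beta, b_\beta + M_\beta, k_\beta)$; the degree-$k$ verbose barcode of $C$ is then read off as the multiset $\{(a_\alpha, a_\alpha + L_\alpha) : k_\alpha = k\}$, and similarly for $D$.

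For the inequality $\di \leq \dm$, fix $\delta$ strictly larger than $\sup_k \dm(\caB_{\Ver,k}^C, \caB_{\Ver,k}^D)$ and pick a $\delta$-matching $\mu_k$ at each degree $k$. I would construct a $\delta$-interleaving $(\Phi, \Psi)$ summand by summand. For each matched pair of bars $(a, a+L)$ and $(b, b+M)$ in degree $k$, the matching inequalities $|a - b| \leq \delta$ and $|(a+L) - (b+M)| \leq \delta$ permit an explicit pair of filtered chain maps between $\caE(a, a+L, k)$ and $\caE(b, b+M, k)$ that send generators to generators and are mutually $\delta$-inverse in the FCC sense. For each unmatched elementary summand $\caE(a, a+L, k)$, the requirement $L \leq 2\delta$ imposed on unmatched bars allows one to take the zero interleaving maps and exhibit the identity on that summand as $\partial K + K \partial$ via the null-homotopy $K(x) = y$, whose filtration shift is $L \leq 2\delta$. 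Direct-summing over all summands (both degrees and both sides) produces the desired global $\delta$-interleaving.

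For the reverse inequality $\dm \leq \di$, start from a $\delta$-interleaving $(\Phi, \Psi)$ and produce, in each degree $k$, a $\delta$-matching between the verbose barcodes. The plan is to adapt the singular-value / basis-change normal form used by Usher--Zhang for their concise bottleneck inequality: choose bases of $C$ and $D$ adapted to the elementary-FCC decompositions, express $\Phi$ and $\Psi$ as filtered matrices with shift at most $\delta$, and extract the matching by bringing these matrices to a canonical block form. The key refinement needed for the verbose setting is to retain ephemeral summands $\caE(a, a, k)$ throughout the argument, rather than discarding them at the homology passage as is done for the concise barcode; the matching then covers the entire verbose barcode and not merely its non-ephemeral part.

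The main obstacle will be this reverse direction, specifically ensuring that the extracted matching is consistent with the coupling between degrees $k$ and $k+1$ enforced by the boundary operator inside each elementary summand $\caE(a, a+L, k)$. Because $\Phi$ and $\Psi$ must commute with $\partial$, the degree-$k$ and degree-$(k+1)$ components are linked, and a naive degree-by-degree normal-form argument could destroy this link. I expect this to be surmountable by carrying out the diagonalization at the full FCC level, simultaneously across degrees, and only projecting to read off the per-degree matching at the end; the $\sup_k$ form of $\dm$ in the statement of the theorem is consistent with this approach, and an unmatched-bar analysis analogous to that of the easy direction then verifies that the bar-length constraint $\leq 2\delta$ is inherited from the interleaving.
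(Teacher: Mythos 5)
Your plan for $\di \leq \dm$ contains a conceptual error about the distance $\dm$. Unlike the bottleneck distance $\db$, the matching distance $\dm$ used in the statement (Definition~\ref{def:dist-match}) requires a \emph{bijection} between $\caB_{\Ver,k}^C$ and $\caB_{\Ver,k}^D$; when their cardinalities differ, $\dm=\infty$. Consequently there are no ``unmatched'' bars, and the $L\leq 2\delta$ unmatched-bar analysis you describe is both unnecessary and misleading. Worse, your proposed treatment of them --- zero interleaving maps together with the null-homotopy $K(x)=y$ --- would not even satisfy the paper's Definition~\ref{def:delta-inter}, which demands that $\Psi^{\lambda+\delta}_*\circ\Phi^{\lambda}_*$ \emph{equal} the inclusion, not merely be chain-homotopic to it; by Proposition~\ref{prop: condition for d_I finite}, any finite $\delta$-interleaving of FCCs is an actual chain isomorphism, which is exactly why $\di=\infty$ whenever the underlying chain complexes are non-isomorphic. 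Once you delete that part, your matched-bar construction is essentially the paper's: send generator to generator between each $\caE(a,a+L,k)$ and its partner $\caE(b,b+M,k)$, direct-sum over the elementary decomposition, and invoke Theorem~\ref{thm:di of same chain} (which identifies $\di$ on a fixed chain complex with $\inf_{\Phi_*\in\Iso(C_*,\partial_C)}\|\ell_1-\ell_2\circ\Phi_*\|_\infty$).

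For $\dm\leq\di$ your outline (adapt the Usher--Zhang singular-value / basis-change lemmas while retaining ephemeral bars) is the right one, but the obstacle you single out --- cross-degree consistency forced by $\partial$ --- does not arise. Since $\dm$ is a supremum of \emph{independent} per-degree matching distances, it suffices, for each fixed $k$, to produce a matching between $\caB_{\Ver,k}^1$ and $\caB_{\Ver,k}^2$ of cost $\leq\delta$, and that is precisely what the paper's Lemma~\ref{lem:infity stability} does by applying Lemma~\ref{lem:change ell} to the single map $\partial_{k+1}:C_{k+1}\to\kernel\partial_k$ (once directly, once on the dual side), one degree at a time. No simultaneous diagonalization of the interleaving across degrees is needed; the cross-degree coupling inside $\caE(a,a+L,k)$ only matters in the easy direction, and there it is handled automatically by direct-summing over elementary FCCs.
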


To prove that $\dm\leq\di$ (see Section \ref{sec:dm>=di}), we adapt ideas implicit in \cite[Proposition 9.3]{usher2016persistent} which the authors applied to establish the stability of \emph{Floer}-type complexes (on the same underlying chain complex). 
For the other direction, $\dm\geq \di$ (see Section \ref{sec:dm>=di}), we employ an approach similar to the one used to demonstrate that the standard bottleneck distance between concise barcodes is upper bounded by the interleaving distance between persistent modules, cf. \cite[Theorem 3.4]{lesnick2015theory}. 

In contrast to $\db$ between concise barcodes, $\dm$ between verbose barcodes of VR FCCs is not stable under the Gromov-Hausdorff distance between metric spaces. Indeed, $\dm$ is only finite if the two underlying metric spaces have the same cardinality. We remedy this issue in Section \ref{sec:pb stab} by incorporating the notion of tripods as in \cite{memoli2017distance}.

Let $(X,d_X)$ be a metric space, $Z$ a set and $\phi_X: Z\twoheadrightarrow X$ a surjective map. We equip $Z$ with the pullback vectors $\phi_X^*d_X$ of the distance function $d_X$ and call the pair $(Z,\phi_X^*d_X)$ the \emph{pullback (pseudo-metric) space} (induced by $\phi_X$). 
A \emph{tripod} between two sets $X$ and $Y$ is a pair of surjections from a common set $Z$ to $X$ and $Y$ respectively, which will be expressed by a tuple $(Z,\phi_X,\phi_Y)$ or a diagram \label{para:pb m.s.}
$$X\xtwoheadleftarrow{\phi_X}Z\xtwoheadrightarrow{\phi_Y}Y.$$ 
Given a degree $k$, we define the \emph{pullback bottleneck distance (induced by degree-$k$ verbose barcodes)} between two finite metric spaces $X$ and $Y$ to be the infimum of the matching distance between the degree-$k$ verbose barcodes of the VR FCCs induced by the respective pullbacks $(Z,\phi_X^*d_X)$ and $(Z,\phi_Y^*d_Y)$, where the infimum is taken over \emph{finite tripods} $X\xtwoheadleftarrow{\phi_X}Z\xtwoheadrightarrow{\phi_Y}Y$. We denote the resulting quantity by $\hatdbk{k}$; see Definition \ref{def:pb db}. When it is not necessary to specify a particular degree $k$, we will write $\hatdb$ instead of $\hatdbk{k}$.

Similarly, we define the \emph{pullback interleaving distance (induced by VR FCCs)} between metric spaces, and denote it by $\hatdi$ (see Definition \ref{def:pb di}). 

\begin{remark}[Terminology]\label{rem:no-triangle}
We point out the following regarding the use of the term `distance' when referring  to $\hatdbk{k}$ and $\hatdi$: 
\begin{itemize}
    \item [(1)] $\hatdbk{0}$ satisfies the triangle inequality Corollary \ref{cor:hatdb zero triangle}.
    \item [(2)] For $k>0$, the question of whether $\hatdbk{k}$ satisfies the triangle inequality is still open.
    \item [(3)] $\hatdi$ does not satisfy the triangle inequality; see Section \ref{subsubsec:metrize} for details. 
\end{itemize} 
Due to Items (2) and (3), the term `distance' is being abused through the use of the terminology `pullback bottleneck distance' and `pullback interleaving distance'. We do so for consistency with Item (1) and due to the fact that in Section \ref{subsubsec:metrize} we provide a way to modify $\hatdi$ and $\hatdb$ so that they do satisfy the triangle inequality (while still being Gromov-Hausdorff stable), thus making them pseudo-metrics between metric spaces.

In general, the pullback bottleneck distance $\hatdb$ (or the pullback interleaving distance $\hatdi$) depends on the underlying metric spaces, rather than solely on the verbose barcodes (or FCCs). However, we prove that $\hatdbk{0}$ depends only on the barcodes; see Proposition \ref{prop:hatdb degree 0}.
\end{remark}

It follows from Theorem \ref{thm:dm=di} that we immediately have the following:

\begin{restatable}{corollary}{pbisothm}
\label{cor:hatdb-hatdi}
Let $(X,d_X)$ and $(Y,d_Y)$ be two finite metric spaces. Then,
\[
\begin{tikzcd}[ampersand replacement=\&,column sep=small]
\sup\limits_{k} \inf\limits_{(Z,\phi_X,\phi_Y)} \dm   \left(  \caB_{\Ver,k}(Z_X), \caB_{\Ver,k}(Z_Y)\right)
\ar[r,phantom, "\leq" description]
\ar[d, phantom, "\rotatebox{90}{=}" description]
\&
\inf\limits_{(Z,\phi_X,\phi_Y)} \sup\limits_{k} \dm   \left(  \caB_{\Ver,k}(Z_X), \caB_{\Ver,k}(Z_Y)\right)
\ar[d, phantom, "\rotatebox{90}{=}" description]
\\
\sup\limits_{k} \pbdbk{k}{X}{Y} 
\& 
\pbdi{X}{Y}.
\end{tikzcd}
\]
\end{restatable}

In the theorem below, we show that the pullback bottleneck distance $\hatdb$ is stable under the Gromov-Hausdorff distance $\dgh$, and that the bottleneck distance $\db$ between degree-$k$ concise barcodes is never larger than $\hatdbk{k}$. We show in several examples below and in Section \ref{sec:ex for strict} that, between degree-$k$ concise barcodes, $\hatdbk{k}$ can be strictly larger than $\db$. Thus, the stability of $\hatdb$ provides a better lower-bound estimate of $\dgh$, compared with the standard bottleneck distance $\db$ (cf. Theorem \ref{thm:dgh-classical}). See Section \ref{sec:pb stab} for the proof of Theorem \ref{thm:hatdb-dgh stability}.

\begin{restatable}[Pullback stability theorem]{theorem}{pbstabthm}\label{thm:hatdb-dgh stability}
Let $(X,d_X)$ and $(Y,d_Y)$ be two finite metric spaces. Then, for any $k\in \bbZ_{\geq 0}$,
\begin{equation}\label{eq:pb-stab}
    \db \left( \caB_{\Con,k}(X), \caB_{\Con,k}(Y)\right) \leq \pbdbk{k}{X}{Y} 
    \leq\pbdi{X}{Y}
    \leq 2\cdot\dgh(X,Y).
\end{equation}
\end{restatable}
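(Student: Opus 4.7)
My plan is to prove the two inequalities of \eqref{eq:pb-stab} separately; the right-hand one is essentially a repackaging of the results already stated in the excerpt, while the left-hand one rests on a single invariance observation about concise barcodes under pullback.

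For the upper bound, I would chain together the isometry of Corollary \ref{cor:hatdb-hatdi} with the stability of $\hatdi$ asserted in Proposition \ref{prop:hatdi-dgh stability}. Concretely, for any fixed $k\in\Z_{\geq 0}$,
\[
\hatdb\bigl(\caB_{\Ver,k}(X),\caB_{\Ver,k}(Y)\bigr) \;\leq\; \sup_{j\in\Z_{\geq 0}}\hatdb\bigl(\caB_{\Ver,j}(X),\caB_{\Ver,j}(Y)\bigr) \;=\; \hatdi\bigl((\CR_*(X),\ell^{X}),(\CR_*(Y),\ell^{Y})\bigr) \;\leq\; 2\,\dgh(X,Y),
\]
where the first step is trivially taking a single term of the supremum, the equality is Corollary \ref{cor:hatdb-hatdi}, and the last inequality is Proposition \ref{prop:hatdi-dgh stability}.

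For the lower bound, I would first establish a pullback-invariance lemma: for any surjection $\phi_X:Z\twoheadrightarrow X$ onto a finite metric space, $\caB_{\Con,k}(Z,\phi_X^*d_X)=\caB_{\Con,k}(X,d_X)$ for every $k$. The intuition is that duplicating a point within a fiber of $\phi_X$ only introduces simplices that are born and die at the same filtration value, so each extra elementary summand of the decomposition of the pulled-back VR FCC is of the ephemeral form $\caE(a,a,k)$ and does not affect the concise barcode. Granting this, fix any $\epsilon>\hatdb(\caB_{\Ver,k}(X),\caB_{\Ver,k}(Y))$ and pick a tripod $R:X\xtwoheadleftarrow{\phi_X}Z\xtwoheadrightarrow{\phi_Y}Y$ with $\dm(\caB_{\Ver,k}(Z_X),\caB_{\Ver,k}(Z_Y))<\epsilon$, where $Z_X:=(Z,\phi_X^*d_X)$ and $Z_Y:=(Z,\phi_Y^*d_Y)$. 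The bijection realizing this matching restricts to a partial matching between the concise barcodes: each non-ephemeral pair keeps its $L^\infty$-cost $<\epsilon$, while any non-ephemeral bar $(b,d)$ that is matched to an ephemeral $(a,a)$ satisfies $(d-b)/2\leq\max(|b-a|,|d-a|)<\epsilon$, so its $L^\infty$-distance to the diagonal is $<\epsilon$. This yields $\db(\caB_{\Con,k}(Z_X),\caB_{\Con,k}(Z_Y))<\epsilon$, and applying the invariance lemma on both sides gives $\db(\caB_{\Con,k}(X),\caB_{\Con,k}(Y))<\epsilon$. Letting $\epsilon\downarrow\hatdb$ completes the argument.

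The principal obstacle I foresee is the pullback-invariance lemma. While the picture is intuitively transparent, one has to verify that the elementary decomposition of $\CR_*(Z,\phi_X^*d_X)$ from \cite{usher2016persistent} shares all its summands with $L>0$ (and exactly those) with the decomposition of $\CR_*(X,d_X)$. A clean route is to produce explicit filtration-preserving chain maps between the two FCCs, induced by a section of $\phi_X$, that compose to the identity up to a filtration-preserving chain homotopy supported on ephemeral summands; the uniqueness part of the Usher--Zhang decomposition theorem then forces the non-ephemeral summands to agree.
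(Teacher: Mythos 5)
Your proposal is correct and follows the same overall structure as the paper's proof: the right-hand inequality is exactly the paper's chain of Corollary~\ref{cor:hatdb-hatdi} followed by Proposition~\ref{prop:hatdi-dgh stability}, and the left-hand inequality rests on the same two ingredients --- that pulling back along a surjection leaves the concise barcode unchanged, and that $\db$ of the off-diagonal parts is bounded by $\dm$ of the full verbose barcodes. Where you diverge is in how you supply those two ingredients. For the comparison $\db\leq\dm$ you build the matching directly (sending each non-ephemeral bar either to its partner or, when its partner is ephemeral, to the diagonal, with the observation that $d^\infty((b,d),\Delta)=\tfrac{d-b}{2}\leq\max\{|b-a|,|d-a|\}$); the paper instead cites the general Proposition~\ref{prop:dm-db}, whose proof constructs the required bijection on $\Delta^\infty$ more formally. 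These are essentially the same argument, and yours is a transparent unpacking of it; one small point worth noting explicitly is that an infinite bar can never be matched within cost $\epsilon<\infty$ to an ephemeral $(a,a)$, so the troublesome case $d=\infty$ does not arise.

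For the pullback-invariance $\caB_{\Con,k}(Z,\phi_X^*d_X)=\caB_{\Con,k}(X)$ you sketch a genuinely different route from the paper. The paper derives it from the explicit pullback-barcode formula of Proposition~\ref{prop:pullback-barcode}, which computes all the added bars and exhibits them as diagonal points. Your suggested route --- choose a section $s:X\hookrightarrow Z$ of $\phi_X$, note that $(\phi_X)_*\circ s_*=\Id$, and show $s_*\circ(\phi_X)_*$ is filtered chain homotopic to $\Id_{\CR_*(Z_X)}$ because $\Id_Z$ and $s\circ\phi_X$ are contiguous at every scale (since $\phi_X^*d_X(z,s\phi_X(z))=0$), then invoke the Usher--Zhang Theorem~B to conclude equal concise barcodes --- is a clean homotopy-theoretic alternative that bypasses the combinatorics. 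It gives less information than Proposition~\ref{prop:pullback-barcode} (which the paper also needs elsewhere, e.g.\ for Proposition~\ref{prop:hatdb degree 0} and the examples in \textsection\ref{sec:ex for strict}), but it is entirely adequate for the stability theorem. You flag this lemma as the principal obstacle and leave it as a sketch; to make the proposal self-contained you would need to carry out the contiguity argument, but the outline is sound.
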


See Figure \ref{fig:3-point-barcode-full} for a pair of 3-point metric spaces for which the bottleneck distance $\db$ between their concise barcodes fails to distinguish them, but the pullback bottleneck distance $\hatdb$ induced by verbose barcodes succeeds at telling them apart. 

\begin{figure}[ht]
    \centering
\renewcommand{\arraystretch}{1.2}
\begin{tabular}{ | c| c| c|  } 
\hline
&
\begin{tikzpicture}[scale=1]
\node (a) at (0,0) {};
\node (b) at (0.7,1.2) {};
\node (c) at (2.6,0) {};
\filldraw (a) [color=green] circle[radius=2pt];
\filldraw (b)[color=green] circle[radius=2pt];
\filldraw (c) [color=green] circle[radius=2pt];
\draw [dashed]  (a)--(b) node[pos=0.5,fill=white]{$a$};
\draw [dashed] (c)--(b)node[pos=0.5,fill=white]{$b$};
\draw [dashed] (a) --(c) node[pos=0.5,fill=white]{$c_1$};
\node at (1.3,-.7) {$X_1$};
\end{tikzpicture} 
&
\begin{tikzpicture}[scale=1]
\node (a) at (0,0) {};
\node (b) at (1,1) {};
\node (c) at (3,0) {};
\filldraw (a) [color=magenta] circle[radius=2pt];
\filldraw (b)[color=magenta] circle[radius=2pt];
\filldraw (c) [color=magenta] circle[radius=2pt];
\draw [dashed]  (a)--(b) node[pos=0.5,fill=white]{$a$};
\draw [dashed] (c)--(b)node[pos=0.5,fill=white]{$b$};
\draw [dashed] (a) --(c) node[pos=0.5,fill=white]{$c_2$};
\node at (1.5,-.7) {$X_2$};
\end{tikzpicture} 
\\ 
\hline
$\caB_{\Ver,0}$ 
& $(0,a),(0,b), (0,\infty)$ 
& $(0,a),(0,b), (0,\infty)$ \\ 
\hline
$\caB_{\Ver,1}$ 
& $(c_1,c_1)$ 
& $(c_2,c_2)$ \\ 
\hline
\end{tabular}

\medskip

\begin{tabular}{ |c| c| c| c| } 
\hline
$\sup_{k\in\bbZ_{\geq 0}}\db\left(  \caB_{\Con,k}(X_1), \caB_{\Con,k}(X_2)\right) $
& $ \sup_{k\in\bbZ_{\geq 0}}\pbdbk{k}{X_1}{X_2} $
&$2\cdot\dgh(X_1,X_2)$
\\ 
\hline
$0$ & $|c_1-c_2|$ %& $|c_1-c_2|$ 
& $|c_1-c_2|$ \\ 
\hline
\end{tabular}
    \caption{\textbf{First table:} Three-point metric spaces $X_1$ and $X_2$ together with their verbose barcodes. Here $a\leq b\leq c_i$ for $i=1,2.$ \textbf{Second table:} the bottleneck distance between concise barcodes, the pullback bottleneck distance and twice of the Gromov-Hausdorff distance between $X_1$ and $X_2$. See Example \ref{ex:hatdb-3-point space}.}
    \label{fig:3-point-barcode-full}
\end{figure}

In Section \ref{sec:variation of pb distances},
we introduce two variants of the pullback interleaving/bottleneck distance (see Definition \ref{def:pb di R} and \ref{def:pb di M}), which offer advantages in terms of computational efficiency (see Section \ref{subsec:multi vector}).
We refer to all variations of pullback interleaving and bottleneck distances as `\emph{pullback distances}'. 
We show that all pullback distances are stable under the Gromov-Hausdorff distance $\dgh$ between metric spaces and they provide better lower bounds for $\dgh$ than the bottleneck distance between the concise barcodes; see Theorem \ref{thm:all hatdi and hatdb}.

In order to have a more concrete understanding of the pullback bottleneck distance and in order to explore the possibility of computing it, we study the relation between the verbose barcode of a pullback space $(Z,\phi_X^*d_X)$ with the verbose barcode of the original space $X$. 
We conclude that the verbose barcodes of $Z$ and $X$ only differ on some distinguished diagonal points; see Proposition \ref{prop:pullback-barcode} below and its proof in Section \ref{sec:pb barcode proof}. 

We now set up some notation about multisets\footnote{We use the notation $\{\cdot\}$ for multisets as well when its meaning is clear from the content.
}. 
Recall that a \emph{multiset} consists of a set $X$ together with a multiplicity function $m_X:X\to \bbZ_{\geq 0}$. 
The \emph{support} of a multiset is defined as $\supp(X,m_X):=\{x\in X\mid m_X(x)>0\}$.
In this paper, we will adopt the following notation to denote multisets: for an element $x\in X$, and for a non-negative integer $m$, $x^m$ will be understood to mean that $x$ has multiplicity $m$ i.e. $m_X(x) =m$. 
So that, for example,  when we write $\{x_1^4,x_2,x_3^{21}\}$ we mean the multiset where $X=\{x_1,x_2,x_3\}$ and $m_X(x_1)=4$, $m_X(x_2)=1$, $m_X(x_3)=21$. For convenience, for a non-negative integer $m$, by $\{x\}^m$ we will denote the multiset containing exactly $m$ copies of $x$. In other words, $\{x\}^m = \{x^m\}.$ 

For a multiset $A$, we define its \emph{cardinality}, $\card(A)$, as the sum of multiplicities of its elements.
A \emph{sub-multiset} \( A' \) of \( A \), denoted \( A' \subset A \), is a multiset whose support is a subset of the support of \( A \) and whose elements have multiplicities satisfying \( m_{A'}(a) \leq m_{A}(a) \) for all \( a \in A' \).
For any $l\geq 1$, we let 
\begin{equation}\label{eq:P_l(A)}
    P_l(A):=\left\{  A'\subset A:\card(A')=l\right\},
\end{equation}
that is, $P_l(A)$ consists of sub-multisets of $A$ each with cardinality $l$. Let \revision{$P_0(A)=\{\emptyset\}.$}

\begin{restatable}[Initial formula for pullback barcodes]{proposition}{pbbarcode}
\label{prop:pullback-barcode} Let $k\geq 0$ and $m\geq 1$, and let $X$ be a finite pseudo-metric space. For $\left\{x_{j_1},\dots,x_{j_m}\right\}\subset X$ for some $j_1\leq \dots\leq j_m$, consider the multiset $Z=X\sqcup \left\{x_{j_1},\dots,x_{j_m}\right\}$. Then, for $k\geq 0$,
\begin{equation}\label{eq:pullback barcode}
\caB_{\Ver,k}(Z)= \caB_{\Ver,k}(X) \sqcup
\bigsqcup_{i=0}^{m-1}\,
\bigsqcup_{\beta_i\in  P_{k}\left(  (X\setminus \{x_{j_{i+1}}\})\sqcup\left\{  x_{j_{1}},\dots,x_{j_{i}}\right\}\right) }
\left\{ \diam (\{x_{j_{i+1}}\}\sqcup\beta_i)\cdot(1,1)  \right\}.
\end{equation}
In particular, $\caB_{\Ver,0}(Z)= \caB_{\Ver,0}(X)\sqcup \bigsqcup_{i=0}^{m-1} \{\diam(\{%x_{j_{i+1}},
x_{j_{i+1}}\})\cdot (1,1)\}= \caB_{\Ver,0}(X)\sqcup \{(0,0)\}^ m$. 
\end{restatable}
Because the concise barcode can be obtained from the verbose barcode by excluding all diagonal points, the above proposition interestingly implies that $\caB_{\Con,k}(Z)= \caB_{\Con,k}(X)$ for any degree $k.$

To better understand Equation (\ref{eq:pullback barcode})  in the case when $k\geq 1$, we give a graphical explanation in Figure \ref{fig:pb barcode}. 
Let $(X,d_X)$ be a finite metric space with $X=\{x_1,\dots,x_n\}$.
Each finite pullback space $(Z,\phi_X^*d_X)$ of $X$ can be regarded as a multiset $X\sqcup \left\{x_{j_1},\dots,x_{j_m}\right\}$ equipped with the pullback pseudo-metric $\phi_X^*d_X$ induced from $d_X$, for some $m\geq 0$ and $1\leq j_1\leq \dots\leq j_m\leq n$; see Remark \ref{rmk:pullback spaces}. 
In other words, the extra points in $Z$ are `repeats' of the points in $X$. 
We will call each point in $X$ the \emph{parent} of its repeated copies. 
More specifically, for each $z\in Z$, the point $\phi_X(z)\in X$ will be called the parent of $z$.
We identify $Z$ with
\[X\sqcup\Big\{ \underbrace{x_1,\dots, x_1}_{m_1}, \dots, \underbrace{x_n,\dots, x_n}_{m_n} \Big\}
,\] 
where each $m_j\geq 0$ is the multiplicity of the extra copies of $x_j$ in $Z$ and $m_1+\dots+m_n=m$. We call $\vec{m}:=(m_1,\dots,m_n)$ the \emph{pullback vector} associated with $Z$.

\begin{figure}[ht]
    \centering
\begin{align*}
& i=0: & \xoverbrace{x_1,\text{\myboxred{$x_2,\dots,x_n$}}}^{= X},\, \xoverbrace{\text{\myboxblue{$x_1$}}, x_1,\dots,x_1}^{m_1},\, \xoverbrace{x_2,x_2,\dots,x_2}^{m_2},\dots,\, \xoverbrace{x_n,x_n,\dots,x_n}^{m_n}\\
& i=m_1-1: & x_1,\text{\myboxred{$x_2,\dots,x_n,\, x_1,x_1,\dots$}}, \text{\myboxblue{$x_1$}},\, x_2,x_2,\dots,x_2,\dots,\, x_n,x_n,\dots,x_n\\
& i=m_1: & \text{\myboxred{$x_1$}},x_2,\text{\myboxred{$\dots,x_n,\, x_1,x_1,\dots, x_1$}},\, \text{\myboxblue{$x_2$}},x_2,\dots,x_2,\dots,\, x_n,x_n,\dots,x_n\\
& i=m_1+\dots+m_{n-1}: & \text{\myboxred{$x_1,x_2,\dots$}}, x_n,\, \text{\myboxred{$x_1,x_1,\dots,x_1,\, x_2,x_2,\dots,x_2,\dots$}},\, \text{\myboxblue{$x_n$}},x_n,\dots,x_n
\end{align*}
    \caption{Using the notation from Equation (\ref{eq:pullback barcode}), for each \( i \) (i.e., for each row), the point \( x_{j_{i+1}} \) is colored blue. 
    For each $i$, multiset \( \beta_i \) in Equation (\ref{eq:pullback barcode}) ranges over all \( k \)-element sub-multisets of the red-colored multiset. 
    Notice that each red-colored multiset consists of every point before $x_{j_{i+1}}$  (from left to right) 
     excluding the parent  of $x_{j_{i+1}}$.} 
    \label{fig:pb barcode}
\end{figure}

In Section \ref{subsec:pb barcodes explicit}, we prove the following proposition which provides an explicit formula both for the coordinates of the extra diagonal points and for their multiplicity in all degrees (see page \pageref{para:mu_k notation} for the notation $\mu_k(\vec{m}(I_p))$). As above, we let $X=\{x_1,\dots,x_n\}$.
\begin{restatable}[Explicit formula for pullback barcodes]{proposition}{explicit} \label{prop:explicit barc k}
Let $Z:=X\sqcup  \{x_1\}^{m_1}\sqcup\dots\sqcup  \{x_n\}^{m_n},$ 
where each $m_j\geq 0$ is the multiplicity of the extra copies of $x_j$ in $Z$. Then, for any degree $k$,
\begin{align*}
    \caB_{\Ver,k}(Z)&= \caB_{\Ver,k}(X) \sqcup 
        \bigsqcup_{\substack{1\leq p\leq k+1\\
        1\leq i_1<\dots<i_p\leq n}}
            \left\{  \diam(\{ x_{i_1}, x_{i_2},\dots, x_{i_p} \})
        \cdot(1,1)\right\}^{\mu_k(\vec{m}(I_p))}.
\end{align*}
In particular, the multiplicity of $\diam(\{ x_{j} \})
        \cdot(1,1)$ is ${m_j\choose k+1}$, for each $j$.
\end{restatable}

We examine the relationship between $\hatdb$ and $\db$, and obtain an interpretation of $\hatdb$ in terms of matchings of points in the barcodes. 
To compute $\db$, one looks for an optimal matching where points from a barcode can be matched to any points on the diagonal. However, in the computation of $\hatdb$, points are only allowed to be matched to verbose barcodes and a \emph{particular multiset} supported on the diagonal, where the choice of these diagonal points depends on the metric structure of the two underlying metric spaces. 

In degree 0, since the verbose barcode of any pullback space $Z$ of $X$ only differs from the verbose barcode of $X$ in multiple copies of the point $(0,0)$, the distance $\hatdb$ is indeed 
computing an optimal matching between concise barcodes which only allows bars to be matched to other bars or to the origin $(0,0)$ (see Figure \ref{fig:compute pb db}). 
Given that all degree-0 bars originate at $0$, we derive the following explicit formula for computing the distance $\hatdb$ for degree-0 (see Section \ref{subsec: hatdb 0} for the proof):

\begin{restatable}[Pullback bottleneck distance in degree $0$]{proposition}{hatdbfordegreezero}\label{prop:hatdb degree 0}
Let $X$ and $Y$ be two finite metric spaces such that $\card(X)=n\leq n'=\card(Y)$. Suppose the death time of finite-length degree-$0$ bars of $X$ and $Y$ are given by the sequences $a_1\geq\dots\geq  a_{n-1}$ and $b_1\geq \dots\geq b_{n'-1}$, respectively. Then,
\[\pbdbk{0}{X}{Y} %= \dm (\PB_0(X),\PB_0(Y)) 
=  \max\left\{ \max_{1\leq i\leq n-1}|a_i-b_i|, \max_{n\leq i\leq n'-1} b_i \right\}. \]
\end{restatable}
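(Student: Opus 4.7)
The plan is to reduce $\hatdb$ in degree $0$ to a concrete sorted matching problem and then solve it via a standard uncrossing argument. The starting point is Proposition \ref{prop:pullback-barcode} specialized to $k=0$: for any surjection $\phi_X: Z \twoheadrightarrow X$, we have $\caB_{\Ver,0}(Z, \phi_X^* d_X) = \caB_{\Ver,0}(X) \sqcup \{(0,0)\}_{|Z|-n}$, and analogously for $Y$. Thus for a tripod $R: X \xtwoheadleftarrow{\phi_X} Z \xtwoheadrightarrow{\phi_Y} Y$ with $|Z| = N \geq n'$, the matching distance being infimized is $\dm\bigl(\caB_{\Ver,0}(X) \sqcup \{(0,0)\}_{N-n},\ \caB_{\Ver,0}(Y) \sqcup \{(0,0)\}_{N-n'}\bigr)$. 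Equal numbers of extra copies of $(0,0)$ on both sides can always be paired among themselves at zero cost, so the infimum over tripods is attained at $N = n'$, reducing the problem to computing $\dm\bigl(\caB_{\Ver,0}(X) \sqcup \{(0,0)\}_{n'-n},\ \caB_{\Ver,0}(Y)\bigr)$, where both multisets have cardinality $n'$.

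Next, the unique infinite-length bar $(0,\infty)$ on each side must be paired with its counterpart, else the cost is infinite. The remaining matching is between $\{(0,a_i)\}_{i=1}^{n-1} \sqcup \{(0,0)\}_{n'-n}$ and $\{(0,b_j)\}_{j=1}^{n'-1}$. Since all points have birth coordinate $0$, the $\ell^\infty$ cost of pairing $(0,u)$ with $(0,v)$ reduces to $|u - v|$. Writing $(u_1, \dots, u_{n'-1}) := (a_1, \dots, a_{n-1}, 0, \dots, 0)$ and $(v_1, \dots, v_{n'-1}) := (b_1, \dots, b_{n'-1})$, both sorted in decreasing order, the task is to find the bijection $\sigma$ minimizing $\max_i |u_i - v_{\sigma(i)}|$.

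The main step, and the chief technical point, is showing that the identity bijection is optimal. This follows from a standard uncrossing argument: if $\sigma$ has an inversion, i.e.\ indices $i < j$ with $\sigma(i) > \sigma(j)$ (so $u_i \geq u_j$ and $v_{\sigma(i)} \leq v_{\sigma(j)}$), then swapping $\sigma(i)$ and $\sigma(j)$ does not increase $\max(|u_i - v_{\sigma(i)}|, |u_j - v_{\sigma(j)}|)$. One verifies this by a short case analysis on the relative order of the four real numbers $u_i, u_j, v_{\sigma(i)}, v_{\sigma(j)}$. Iterating removes all inversions, and reading off the resulting identity matching gives exactly $\max\{\max_{1 \leq i \leq n-1}|a_i - b_i|, \max_{n \leq i \leq n'-1} b_i\}$, completing the proof.
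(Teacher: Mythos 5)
Your proof is correct and follows essentially the same route as the paper's: apply Proposition \ref{prop:pullback-barcode} to see that degree-$0$ pullback barcodes differ from $\caB_{\Ver,0}$ only by extra copies of $(0,0)$, reduce to matching two decreasing sequences of real numbers, and conclude that the sorted (identity) matching is optimal. The paper packages the last step as Fact \ref{fact:1-dim dm} (proved there via the bottleneck Monge inequality), while you rederive it inline via an uncrossing argument — same idea. You are somewhat more explicit than the paper about why the infimum over tripods is achieved at $|Z| = n'$ (pairing equal numbers of extra $(0,0)$s on each side at zero cost), which is a small but genuine clarification.
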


\begin{center}
\begin{tabular}{  c| c| c  }
%\hline
&
\begin{tikzpicture}[scale=.7]
\node (a) at (0,0) {};
\node (c) at (2,0) {};
\filldraw (a) [color=blue] circle[radius=2pt];
\filldraw (c) [color=blue] circle[radius=2pt];
\draw [dashed] (a) --(c) node[pos=0.5,fill=white]{$a$};
\node at (-1,0) {$X_1:$};
\end{tikzpicture}
&
\begin{tikzpicture}[scale=.7]
\node (a') at (1,-2) {};
\filldraw (a') [color=red] circle[radius=2pt];
\node at (0,-2) {$X_2:$};
\end{tikzpicture} 
\\ 
\hline
$\caB_{\mathrm{Ver},0}=\caB_{\Con,0}$ 
& $\{(0,a),(0,\infty)\}$ 
& $\{(0,\infty)\}$ \\ 
%\hline
\end{tabular}

\vspace{.5em}

\begin{tikzpicture}[scale=0.5]
    \begin{axis} [ 
    ticklabel style = {font=\large},
    axis lines=middle, 
    axis line style={black, thick,-latex},
    xlabel = {\Large birth},
    every axis x label/.style={
        at = {(ticklabel* cs:1)},
        anchor = west
    },
    ylabel = {\Large death},
    every axis y label/.style={
        at = {(ticklabel* cs:1)},
        anchor = south
    },
    ytick={0,1.6,2.1},
    %extra y ticks= {0},
    yticklabels={\text{$(0,0)$},\text{$(0,a)$},\text{$(0,\infty)$}},
    xticklabels=none,
    xtick={0,2.1},
    xmin=0, xmax=2.3,
    ymin=0, ymax=2.3,
    axis equal image,
    yticklabel style={inner ysep=0cm, xshift=-.1em,yshift=0cm},
    ]
    \addplot [mark=none, thick] coordinates {(2.1,2.1) (0,2.1)};
    \addplot [mark=none, thick] coordinates {(2.1,2.1) (2.1,0)};
    \addplot[blue,mark=*] (0,1.6) circle (2pt);
    \addplot[blue,mark=*] (0,2.1) circle (2pt);
    \addplot[red,mark=*] (.05,2.1) circle (2pt);
    \node[mark=none] at (axis cs:0.75,1.3){\LARGE$\db = \frac{a}{2}$};
    \addplot [mark=none, dashed] coordinates {(0,1.6) (.8,.8)};
    \addplot [mark=none, blue] coordinates {(0,0) (.25,.25)};
    \addplot [mark=none, thick, red] coordinates {(.25,.25) (.5,.5)};
    \addplot [mark=none, blue] coordinates {(.5,.5) (.75,.75)};
    \addplot [mark=none, thick, red] coordinates {(.75,.75) (1,1)};
    \addplot [mark=none, blue] coordinates {(1,1) (1.25,1.25)};
    \addplot [mark=none, thick, red] coordinates {(1.25,1.25) (1.5,1.5)};
    \addplot [mark=none, blue] coordinates {(1.5,1.5) (1.75,1.75)};
    \addplot [mark=none, thick, red] coordinates {(1.75,1.75) (2,2)};
    \addplot [mark=none, blue] coordinates {(2,2) (2.1,2.1)};
    \end{axis}
\end{tikzpicture}
\begin{tikzpicture}[scale=0.5]
    \begin{axis} [ 
    ticklabel style = {font=\large},
    axis lines=middle, 
    axis line style={black, thick,-latex},
    xlabel = {\Large birth},
    every axis x label/.style={
        at = {(ticklabel* cs:1)},
        anchor = west
    },
    ylabel = {\Large death},
    every axis y label/.style={
        at = {(ticklabel* cs:1)},
        anchor = south
    },
    ytick={0,1.6,2.1},
    %extra y ticks= {0},
    yticklabels={\text{$(0,0)$},\text{$(0,a)$},\text{$(0,\infty)$}},
    xticklabels=none,
    xtick={0,2.1},
    xmin=0, xmax=2.3,
    ymin=0, ymax=2.3,
    axis equal image,
    yticklabel style={inner ysep=0cm, xshift=-.1em,yshift=0cm},
    ]
    \addplot [mark=none, thick] coordinates {(2.1,2.1) (0,2.1)};
    \addplot [mark=none, thick] coordinates {(2.1,2.1) (2.1,0)};
    \addplot[blue,mark=*] (0,1.6) circle (2pt);
    \addplot[blue,mark=*] (0,2.1) circle (2pt);
    \addplot[red,mark=*] (.05,2.1) circle (2pt);
    \node[mark=none] at (axis cs:0.65,1.2){\LARGE$\hatdb = a$};
    \addplot [mark=none, thin] coordinates {(0,0) (2.1,2.1)};
    \addplot[blue,mark=*] (0,0) circle (1pt);
    \addplot[red,mark=*] (.05,0) circle (1pt);
    \end{axis}
    \draw [dashed] (0.05,0) to [out=60,in=300] (0,4);
    \end{tikzpicture}
    \captionof{figure}{\emph{Top}: $X_1$ a two-point space, $X_2$ the one-point space, and their $0$-th verbose (or concise) %\footnotemark 
    barcode. \emph{Bottom}: visualization of $\db$ and $\hatdb$, where in both figures the point $(0,\infty)$ is matched with $(0,\infty)$ and the distance between points is measured using the max norm.
    }
    \label{fig:compute pb db}
    \end{center}

For positive degrees, the situation becomes more complicated because, in addition to the point $(0,0)$, other choices of diagonal points need to be considered, as evidenced by the formula for pullback barcodes in Proposition \ref{prop:pullback-barcode}. 
Although we cannot obtain a formula as simple as that for the degree-$0$ case, the pullback distances can be simplified utilizing pullback vectors. See Section \ref{subsec:multi vector} for details, where we also analyze the time complexity for brute-force algorithms for pullback distances. 

In Section \ref{subsub:non triangle}, we present an important example involving certain five-point ultra-metric spaces. This example illustrates both the strictness of some inequalities in Theorem \ref{thm:all hatdi and hatdb} and the failure of the triangle inequality of the pullback interleaving distances (see Corollary \ref{cor:triangle fails}).

\subsection{Organization of the Paper}

In Section \ref{sec:fcc}, we recall the notions of filtered chain complexes, verbose barcodes and concise barcodes. For the case of Vietoris-Rips FCCs, we characterize verbose barcodes of ultra-metric spaces, cf. Theorem \ref{thm:verbose of ultra}, and study the relation between isometry of metric spaces as well as filtered chain isomorphism and filtered homotopy equivalence of FCCs in Section \ref{sec:decomp of fcc}. 
In Section \ref{sec:iso}, we study the interleaving distance between FCCs and the matching distance between verbose barcodes, and we establish an isometry theorem for these two notions of distances, i.e. Theorem \ref{thm:dm=di}.
Starting from Section \ref{sec:VR FCC}, we specifically focus on the case of Vietoris-Rips FCCs and define $\hatdb$ and $\hatdi$ via tripods of metric spaces in Section \ref{sec:pb di and pb db}. 
In addition, we establish a Gromov-Hausdorff stability for them by proving Theorem \ref{thm:hatdb-dgh stability} in Section \ref{sec:pb stab}. Examples are provided in Section \ref{sec:ex for strict} to demonstrate that both inequalities in Theorem \ref{thm:hatdb-dgh stability} can be strict. 
In Section \ref{sec:variation of pb distances}, we introduce two variants of the pullback interleaving/bottleneck distance.
In Section \ref{sec:pb barcode}, we establish relations between the verbose barcodes of the pullback of a metric space and those of the original space, by proving Proposition \ref{prop:pullback-barcode} and Proposition \ref{prop:explicit barc k}.
In Section \ref{subsec:computation}, we study the interpretation and computability of the pullback distances. We prove Proposition \ref{prop:hatdb degree 0} in Section \ref{subsec: hatdb 0}.

\paragraph*{Acknowledgements}
FM and LZ were partially supported by the NSF through grants RI-1901360, CCF-1740761, and CCF-1526513, and DMS-1723003. The work in this paper is part of the second author's PhD dissertation.
\revision{We thank Jeong-hwi Joe for insightful feedback and for suggesting clarifications that helped improve the exposition, especially Lemma~\ref{lem:non-isometric chain complexes}.
We also} thank anonymous reviewers for their valuable advice on improving the presentation and, in particular, their suggestions for correcting some errors in the appendix and simplifying the proofs of Theorem 5 and Proposition 6.2.

%%%%%%%%%%%%%%%%%%%%%%%%%%%%%%%%%%%%%%	
\section{Preliminaries}
\label{sec:preliminaries}

In this section, we recall some backgrounds on (pseudo-)metric spaces, Vietoris-Rips complexes and the Gromov-Hausdorff distance. 

Given a set $X$, a \emph{metric} $d_X$ on $X$ is a function $d_X:X\times X\to [0,+\infty)$ such that for any \revision{$x,x',x''\in X$}, the following axioms hold: \label{para:metric}
    \begin{itemize}
        \item $d_X\revision{(x,x')}\geq 0$ and $d_X\revision{(x,x')}=0$ if and only if \revision{$x=x'$};
        \item (Symmetry) $d_X\revision{(x,x')}=d_X\revision{(x',x)}$;
        \item (Triangle inequality) $d_X\revision{(x,x'')}\leq d_X\revision{(x,x')}+d_X\revision{(x',x'')}$.
    \end{itemize}
A \emph{metric space} is a pair $(X,d_X)$ where $X$ is a set and $d_X$ is a metric on $X$. 

An \emph{ultra-metric} $d_X$ on $X$ is a metric $d_X$ on $X$ satisfying the strong triangle inequality: \revision{$d_X(x,x'')\leq \max\{d_X(x,x'),d_X(x',x'')\}$ for all $x,x',x''\in X$}.
A \emph{pseudo-metric} $d_X$ on $X$ is a function $d_X:X\times X\to [0,+\infty)$ satisfying the axioms for a metric, except that in the first axiom different points are allowed to have distance $0$. 
Given two pseudo-metric spaces $(X,d_X)$ and $(Y,d_Y)$, a map $f:(X,d_X)\to (Y,d_Y)$ is said to be \emph{distance-preserving} if $d_X(x,x')=d_Y(f(x),f(x'))$ for all $x,x'\in X$. 
A bijective distance-preserving map is called an \emph{isometry}. 
Two pseudo-metric spaces $X$ and $Y$ are \emph{isometric}, denoted $X\cong Y$, if there exists an isometry between them.
\label{para:isometry}

Given a finite pseudo-metric space $(X,d_X)$ and $\epsilon\geq 0$, the $\epsilon$-\emph{Vietoris–Rips complex} $\VR_{\epsilon }(X)$ is the simplicial complex with vertex set $X$, where 
\begin{center}
    a finite subset $\sigma\subset X$ is a simplex of $\VR_{\epsilon }(X)$ $\iff$ $\diam(\sigma)\leq\epsilon$.
\end{center} 
Here $\diam(\cdot)$ denotes the diameter of a subset of $X$. Let \label{para:VR(X)}
\[\Full(X): = \VR_{\diam(X)}(X),\] 
which is the \emph{full complex} on $X$.
For each $k\in\bbZ_{\geq0}$, we denote by $\opC_{k}\!\left(\Full\left(X\right)\right)$ the free $\field$-vector space generated by $k$-simplices in $\Full(X)$, and let $\fccVR{X}$ be the free simplicial chain complex induced by $\Full(X)$ over coefficients in $\field$, with the standard simplicial boundary operator $\partial^X$.  
Notice that up to homotopy equivalence the simplicial complex $\Full(X)$ only depends on the cardinality of $X$, so does the chain complex $(\fccVR{X},\partial^X)$. 

 The \emph{Hausdorff distance} between two subspaces $X$ and $Y$ of a metric space $Z$ is \label{para:dh}
\[d_\mathrm{H}^Z(X,Y):=\inf\left\{r>0: X\subseteq \bar{B}(Y,r)\text{ and }  Y\subseteq \bar{B}(X,r)\right \}.\]
For metric spaces $(X,d_X)$ and $(Y,d_Y)$, recall from \cite{edwards1975structure,gromov2007metric} that the \emph{Gromov-Hausdorff distance} between them is the infimum of $r>0$ for which there exist a metric space $Z$ and two distance preserving maps $\psi_X:X\to Z$ and $\psi_Y:Y\to Z$ such that $\dhaus^Z(\psi_X(X),\psi_Y(Y))<r$, i.e., \label{para:dgh}
$$\dgh(X,Y):=\inf_{Z,\psi_X,\psi_Y}d_H^Z(\psi_X(X),\psi_Y(Y)).$$

\paragraph{Reformulation of $\dgh$ using maps.}
The \emph{distortion} of a map $\varphi:X\rightarrow Y$ is defined to be \label{para:dis(f)}
$$\dis(\varphi):=\sup_{x,x'\in X}|d_X(x,x')-d_Y(\varphi(x),\varphi(x'))|.$$
For maps $\varphi:X\rightarrow Y$ and $\psi:Y\rightarrow X$, their \emph{co-distortion} is defined to be $$\codis(\varphi,\psi):=\sup_{x\in X,y\in Y}|d_X(x,\psi(y))-d_Y(\varphi(x),y)|.$$
It follows from \cite[Theorem 2.1]{kalton1999distances} that 
\begin{equation}\label{eq:dgh-maps}
    \dgh(X,Y)=\inf_{\substack{\varphi:X\rightarrow Y\\ \psi :Y\rightarrow X}}\tfrac{1}{2}\max\{\dis(\varphi),\dis(\psi),\codis(\varphi,\psi)\}.
\end{equation}

\paragraph{Reformulation of $\dgh$ using correspondences.}
A \emph{correspondence} between $X$ and $Y$ is a subset $R$ of $X\times Y$ such that for any $x\in X$ there exists at least one $y\in Y$ such that $(x,y)\in R$ and for any $y\in Y$ there exists at least one $x\in X$ such that $(x,y)\in R$. The \emph{distortion} of a correspondence $R$ between $X$ and $Y$ is defined to be:	\label{para:correspondence}
$$\dis(R):=\sup_{(x,y),(x',y')\in R}\left|  d_X(x,x')-d_Y(y,y')\right| .$$
Let $\mathfrak{R}(X,Y)$ denote the collection of all correspondences between $X$ and $Y$. It follows from \cite[Theorem 7.3.25]{burago2001course} that
\begin{equation}\label{eq:dgh-tripod}
    \dgh(X,Y)=\tfrac{1}{2}\inf_{R\in \mathfrak{R}(X,Y)}\dis(R).
\end{equation}

\paragraph{Reformulation of $\dgh$ using tripods.}
A \emph{parametrization} of a set $X$ is a set $Z$ together with a surjective map $\phi:Z\twoheadrightarrow X$. A \emph{tripod} between two sets $X$ and $Y$ is a pair of surjections from another set $Z$ to $X$ and $Y$ respectively, expressed by the diagram (cf. \cite{memoli2017distance})\label{para:tripod} $$X\xtwoheadleftarrow{\phi_X}Z\xtwoheadrightarrow{\phi_Y}Y.$$
%For $x\in X$ and $y\in Y$, by $(x,y)\in R$ we mean there exists $z\in Z$ such that $\phi_X(z)=x$ and $\phi_Y(z)=y$. %When $(X,d_X)$ and $(Y,d_Y)$ are meta-metric spaces, 
The \emph{distortion} of a tripod $(Z,\phi_X,\phi_Y)$ between $X$ and $Y$ is defined to be:	
\label{para:dis of tripod}
$$\dis((Z,\phi_X,\phi_Y)):=\sup_{z,z'\in Z}\left|  d_X(\phi_X(z),\phi_X(z'))-d_Y(\phi_Y(z),\phi_Y(z'))\right| .$$
It follows from \cite[Section 7.3.3]{burago2001course} that
\begin{equation}\label{eq:dgh using tripods}
    \dgh(X,Y)=
    %\tfrac{1}{2}\inf_{R\in \mathfrak{R}(X,Y)}\dis(R) = 
    \tfrac{1}{2}\inf_{X\xtwoheadleftarrow{\phi_X}Z\xtwoheadrightarrow{\phi_Y}Y}\dis((Z,\phi_X,\phi_Y)).
\end{equation}

\begin{remark}\label{rmk:finite tripod}
    Notice that for finite metric spaces $X$ and $Y$, as given by Equation (\ref{eq:dgh using tripods}), the Gromov-Hausdorff distance $\dgh(X,Y)$ can be computed by only considering finite tripods. To see this, consider a possibly infinite tripod $(Z,\phi_X,\phi_Y)$. Define $Z'=\{(\phi_X(z),\phi_Y(z))\mid z\in Z\}\subset X\times Y$, which is finite given that both $X$ and $Y$ are finite. It is straightforward to verify that $\dis(Z')=\dis(Z).$ Therefore, for computing $\dgh(X,Y)$ via Equation (\ref{eq:dgh using tripods}), any tripod between $X$ and $Y$ can be replaced by a finite tripod whose underlying set has cardinality no greater than $\card(X) \cdot \card(Y)$.
\end{remark}

%%%%%%%%%%%%%%%%%%%%%%%%%%%%%%%%%%%%%%	

\section{Filtered Chain Complexes (FCCs)}\label{sec:fcc}
In this section, we recall from \cite{usher2016persistent} the notion of \emph{filtered chain complexes} (in short, FCCs) together with the construction of verbose barcodes and concise barcodes for FCCs. 

\subsection{Filtered Chain Complexes} \label{subsec:fcc}
Let $\field$ be a fixed field. 
A \textbf{non-Archimedean normed vector space} over $\field$ is any pair $(C,\ell)$ where $C$ is a finite-dimensional vector space over $\field$ endowed with a \textbf{filtration function} $\ell:C\to \bbR\sqcup\left\{  -\infty\right\}   $ defined as a map satisfying the following axioms:
\begin{enumerate}[label=(\roman*)]
    \item $\ell(x)=-\infty$ if and only if $x=0$;
    \item For any $0\neq \lambda\in\field$ and $x\in C$, $\ell(\lambda x)=\ell(x);$
    \item For any $x$ and $y$ in $C$, $\ell(x+y)\leq \max\left\{  \ell(x),\ell(y)\right\}  .$
\end{enumerate}
A finite collection $(x_1,\dots,x_r)$ of elements of $C$ is said to be \textbf{orthogonal} if, for all $\lambda_1,\dots,\lambda_r$ in $\field$,
$$\ell\left(  \sum_{i=1}^r \lambda_i x_i\right)=\max_{\lambda_i\neq 0}\ell(x_i) .$$
An \textbf{orthogonalizable $\field$-space} $(C,\ell)$ is a finite-dimensional non-Archimedean normed vector space over $\field$ such that there exists an orthogonal basis for $C$. 
Two subspaces are $V,W$ of $C$ are said to be \textbf{orthogonal} if for all $x\in V$ and $y\in W$, $\ell(x+y)=\max\{\ell(x),\ell(y)\}$.

\revision{We define the norm of a filtration function $\ell : C \to \mathbb{R} \sqcup \{-\infty\}$ as
\[
\|\ell\|_\infty := \sup_{x \in C \setminus \{0\}} |\ell(x)|,
\]
where we exclude the zero element to avoid the ``singularity"  $\ell(0) = -\infty$.}

Below, we introduce a couple of lemmas regarding filtration functions and the orthogonality of subspaces, which will be referenced in later sections.

\begin{lemma} \label{rmk:property of filtration function}
For any $x,y\in C$ such that $\ell(x)\neq \ell(y)$, we have $\ell(x+y)=\max\{\ell(x),\ell(y)\}$. 
\end{lemma}
\begin{proof}
    Given that $\ell(x)=\ell(-x)$, it follows that $\ell(y)=\ell((y+x)+(-x))\leq \max\{\ell(y+x),\ell(x)\}.$ Therefore, if $\ell(y)>\ell(x)$ it must be that $\ell(y)\leq \ell(y+x)$. This implies $\max\{\ell(x),\ell(y)\}\leq \ell(x+y)$. 
    A similar argument applies if $\ell(y)<\ell(x)$.
\end{proof}

\begin{lemma}[Lemma 2.9, \cite{usher2016persistent}] \label{lem:orthogonal}
Let $(C,\ell)$ be a non-Archimedean normed vector space over $\field$. Then,
\begin{itemize}
    \item  For subspaces $U,V$ and $W$ of $C$, if $U$ and $V$ are orthogonal and $U\oplus V$ and $W$ are orthogonal, then $U$ and $V\oplus W$ are orthogonal.
    \item If $U$ and $V$ are orthogonal subspaces of $C$, and if $(u_1,\dots,u_r)$ and $(v_1,\dots,v_s)$ are orthogonal collections of elements of $U$ and $V$, respectively, then $(u_1,\dots,u_r,v_1,\dots,v_s)$ is orthogonal in $U\oplus V.$
\end{itemize}
\end{lemma}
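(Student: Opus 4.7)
The plan is to unpack the definition of orthogonality of subspaces, which I read as: subspaces $U,V \subseteq C$ are orthogonal iff for every $u \in U$ and $v \in V$, $\ell(u+v) = \max(\ell(u),\ell(v))$, with the convention $\ell(0) = -\infty$. Both bullet points then reduce to algebraic manipulations with this identity combined with axiom (iii) of the filtration function, so no deep machinery is needed; the work is just in carefully applying the correct orthogonality relation at each step.

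For the first bullet, I would pick arbitrary $u \in U$ and $y \in V \oplus W$, decompose $y = v + w$ uniquely with $v \in V$ and $w \in W$, and then regroup $u + y = (u+v) + w$. Since $u + v \in U \oplus V$ and $W$ is orthogonal to $U \oplus V$, I get
\[ \ell(u+y) = \max\bigl(\ell(u+v),\ \ell(w)\bigr). \]
Using orthogonality of $U$ and $V$ on the first summand gives $\ell(u+v) = \max(\ell(u),\ell(v))$, so $\ell(u+y) = \max(\ell(u),\ell(v),\ell(w))$. Separately, $v \in V \subseteq U \oplus V$ and $w \in W$, so orthogonality of $U \oplus V$ and $W$ yields $\ell(y) = \ell(v+w) = \max(\ell(v),\ell(w))$. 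Combining the two computations, $\ell(u+y) = \max(\ell(u),\ell(y))$, which is the desired orthogonality of $U$ and $V \oplus W$.

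For the second bullet, given scalars $\lambda_1,\dots,\lambda_r,\mu_1,\dots,\mu_s \in \field$, I would write $x_U := \sum_i \lambda_i u_i \in U$ and $x_V := \sum_j \mu_j v_j \in V$ and compute $\ell(x_U + x_V)$ in two stages. Orthogonality of the subspaces $U$ and $V$ first collapses it to $\max(\ell(x_U),\ell(x_V))$, and then orthogonality of the given collections inside $U$ and $V$ expands each term as $\ell(x_U) = \max_i(\ell(u_i) - \nu(\lambda_i))$ and $\ell(x_V) = \max_j(\ell(v_j) - \nu(\mu_j))$. Taking the outer maximum gives exactly the definition of orthogonality for the concatenated tuple $(u_1,\dots,u_r,v_1,\dots,v_s)$ in $U \oplus V$.

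I do not anticipate any real obstacle; the only point requiring care is allowing some of the scalars (or the vectors $u, v, w$) to be zero, which is handled uniformly by the convention $\ell(0) = -\infty$ and the standing identity $\nu(0) = \infty$, so that expressions like $\ell(u_i) - \nu(\lambda_i)$ correctly evaluate to $-\infty$ whenever $\lambda_i = 0$. With that in mind both arguments are short unwrappings of the definitions via axiom (iii).
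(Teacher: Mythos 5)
The paper cites this result from Usher and Zhang \cite{usher2016persistent} without reproducing a proof, so there is no in-paper argument to compare against. Your proof is correct and is the natural definitional unwrapping: the first bullet follows from regrouping $u+(v+w)$ as $(u+v)+w$ and applying the two orthogonality hypotheses in sequence (once to peel off $W$, once to split $u+v$), while the second bullet collapses $\ell(\sum\lambda_i u_i + \sum\mu_j v_j)$ first via subspace orthogonality and then via orthogonality of the two tuples; the $\ell(0)=-\infty$, $\nu(0)=\infty$ conventions handle degenerate cases automatically, exactly as you note. One small point worth making explicit if you write this up: the filtration identity $\ell(u+v)=\max(\ell(u),\ell(v))$ for all $u\in U$, $v\in V$ already forces $U\cap V=\{0\}$ (take $u=-v$ to get $-\infty=\ell(u)$), so the direct-sum hypotheses implicit in writing $U\oplus V$ and $V\oplus W$ are automatically satisfied and need not be checked separately.
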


\begin{definition}[Filtered chain complex]%Definition 4.1, \cite{usher2016persistent}] 
\label{def:fcc}
A \textbf{filtered chain complex (FCC)} over $\field$ is a finite-dimensional chain complex $(C_*=\oplus_{k\in\bbZ}C_k,\partial_C)$ over $\field$ together with a function $\ell_C:C_*\to \bbR\sqcup\left\{  -\infty\right\}   $ such that each $(C_k,\ell_C|_{C_k})$ is an orthogonalizable $\field$-space, and $\ell_C\circ \partial_C\leq\ell_C$.

A \textbf{morphism of FCCs} from $(C_*,\partial_C,\ell_C)$ to $(D_*,\partial_D,\ell_D)$ is a chain map $\Phi_*:C_*\to D_*$ that is \textbf{filtration preserving}, i.e. $\ell_D\circ \Phi_*\leq\ell_C$.
\end{definition}

\begin{example} [Vietoris-Rips FCC]%, Example 2.4, \cite{usher2016persistent}]
\label{ex:VR FCC}
For a finite pseudo-metric space $(X,d_X)$, we denote by $(\fccVR{X},\partial^X)$ the chain complex of the simplicial complex $\Full(X)$ (see Section \ref{sec:preliminaries}). 
Define a filtration function $\ell^{X}:\fccVR{X}\to \bbR\sqcup\left\{  -\infty\right\}   $ by 
\[\ell^{X}\left(  \sum_{i=1}^r \lambda_i \sigma_i\right): =\max_{\lambda_i\neq 0} \left\{  \diam(\sigma_i)\right\}   ,\] 
where the $\sigma_i$ are simplices, and $\ell^{X}(0):=-\infty$. Then $\left( \fccVR{X},\partial^X,\ell^{X} \right) $ is an FCC, and the set of simplices is an orthogonal basis for it.
\end{example}

\begin{definition}[Filtered homotopy equivalent%, Definition 4.5 of \cite{usher2016persistent}
] 
\label{def:f.h.e.}
Two chain maps $\Phi_*,\Psi_*:C_*\to D_*$ are called \textbf{filtered chain homotopic} if they are filtration preserving and there exists a filtration preserving chain map $K:C_*\to D_{*+1}$ such that $\Phi_*-\Psi_*=\partial_C K+K\partial_D$.

We say that $(C_*,\partial_C,\ell_C)$ and $(D_*,\partial_D,\ell_D)$ are \textbf{filtered homotopy equivalent (or f.h.e.)} if there exist filtration preserving chain maps $\Phi_*:C_*\to D_*$ and $\Psi_*:D_*\to C_*$ such that $\Psi_*\circ\Phi_*$ is filtered chain homotopic to the identity $\Id_C$ while $\Phi_*\circ\Psi_*$ is filtered chain homotopic to $\Id_D$.
\end{definition}

\begin{definition}[Filtered chain isomorphism%, Definition 4.4 of \cite{usher2016persistent}
]
\label{def:f.c.i.}
Two FCCs $(C_*,\partial_C,\ell_C)$ and $(D_*,\partial_D,\ell_D)$ are said to be \textbf{filtered chain isomorphic (or f.c.i.)} if there exists a chain isomorphism 
\begin{center}
    $\Phi_*:(C_*,\partial_C)\xrightarrow{\cong} (D_*,\partial_D)$ such that $\ell_D\circ \Phi_*=\ell_C$,
\end{center} denoted by $(C_*,\partial_C,\ell_C)\cong(D_*,\partial_D,\ell_D)$, or $C_*\cong D_*$ for simplicity.
\end{definition}

\begin{remark}\label{rmk:iso_category}
Let $\fcc$ denote the category whose objects are FCCs and morphisms are given in Definition \ref{def:fcc}. Then the filtered chain isomorphism relation coincides with the isomorphism in the category $\fcc$.
\end{remark}

\paragraph*{Dual of a FCC.} \label{para:dual}
Given a non-Archimedean normed vector space $(C,\ell)$, the dual space $C^*$ becomes a non-Archimedean normed vector space if equipped with the \emph{dual filtration function} $\ell^*:C^*\to\bbR\sqcup\left\{ - \infty\right\}   $ given by
\[\ell^*(\phi):=
\sup\{-\ell(x)\mid x\in C,\phi(x)\neq 0\}.\]

By \cite[Proposition 2.20]{usher2016persistent}, if $(y_1,\dots,y_n)$ is an orthogonal ordered basis for $(C,\ell)$, then the dual basis $(y_1^*,\dots,y_n^*)$ is an orthogonal ordered basis for $(C^*,\ell^*)$ such that $\ell^*(y_i^*)=-\ell(y_i)$ for $i=1,\dots,m$. 

\subsection{Verbose  and Concise Barcodes} \label{sec:barcodes}
In this section, we recall the definition of verbose barcode and concise barcode from \cite{usher2016persistent}. %, and the algorithm to compute them.

\begin{definition}[Singular value decomposition%, Definition 3.1 of \cite{usher2016persistent}
] \label{def:s.v.d.}
Let $(C,\ell_C)$ and $(D,\ell_D)$ be two orthogonalizable $\field$-spaces, and let $A:C\to D$ be a linear map with rank $r$. A \textbf{(unsorted) singular value decomposition of $A$} is a choice of orthogonal ordered bases $(y_1,\dots,y_n)$ for $C$ and $(x_1,\dots,x_m)$ for $D$ such that:
\begin{itemize}
    \item $(y_{r+1},\dots,y_n)$ is an orthogonal ordered basis for $\kernel  A$;
    \item $(x_{1},\dots,x_r)$ is an orthogonal ordered basis for $\im A$;
    \item $A y_i=x_i$ for $i=1,\dots,r$.
\end{itemize}
If $\left( (y_1,\dots,y_n),(x_1,\dots,x_m)\right) $ is such that $\ell_C(y_1)-\ell_C(x_1)\geq \dots\geq \ell_C(y_r)-\ell_C(x_r)$, we call $((y_1,\dots,y_n),$ $(x_1,\dots,x_m))$ a \textbf{sorted singular value decomposition}.
\end{definition}

The existence of a singular value decomposition for linear maps between finite-dimensional orthogonalizable $\field$-spaces is guaranteed by \cite[Theorem 3.4]{usher2016persistent}. 

\begin{definition}[Verbose barcode and concise barcode%, Definition 6.3 of \cite{usher2016persistent}
] \label{def:barcode}
Let $(C_*,\partial_C,\ell_C)$ be an FCC over $\field$ and for each $k\in \bbZ$ write $\partial_k=\partial_C|_{C_k}$. Given any $k\in \bbZ$ choose a singular value decomposition $((y_1,\dots,y_n),(x_1,\dots,x_m))$ for the $\field$-linear map $\partial_{k+1}:C_{k+1}\to\kernel \partial_k$ and let $r$ denote the rank of $\partial_{k+1}$. Then the degree-$k$ \textbf{%(persistent homology) 
verbose barcode} of $(C_*,\partial_C,\ell_C)$ is the multiset $\caB_{\Ver,k}$ of elements of $\bbR\times (\bbR\sqcup\{\infty\})$ consisting of 
\begin{enumerate}[label=(\roman*)]
    \item a pair $(\ell(x_i),\ell(y_i))$ for each $i =1,\dots,r=\rank(\partial_{k+1});$ and
   \item a pair $(\ell(x_i),\infty)$ for each $i =r+1,\dots,m=\dim(\kernel \partial_k).$
\end{enumerate}
These pairs are also called \textbf{bars}.
The first (resp. second) entry in a bar is called the \textbf{birth time} (resp. \textbf{death time}) of that bar. The length of a bar, i.e., $\ell(y_i)-\ell(x_i)\geq 0$ or $\infty$, is called its \textbf{life time} (or also called \textbf{persistence}). The \textbf{concise barcode} of $(C_*,\partial_C,\ell_C)$ is the submultiset of the verbose barcode consisting of those elements where $\ell(y_i)-\ell(x_i)> 0$. 
\end{definition}

It is shown in \cite[Theorem 7.1]{usher2016persistent} that each degree-$k$ verbose barcode is independent of the choice of the singular value decomposition of $\partial_{k+1}$. 

\begin{remark}\label{rmk:barcode}
In the case of Vietoris-Rips FCCs (see Example \ref{ex:VR FCC}), the concise barcode is equivalent to the classical persistent homology barcode \cite[page 6]{usher2016persistent}. 
\end{remark}

\begin{remark} Let $X$ be a finite metric space. The degree-$0$ verbose barcode $\caB_{\Ver,0}$ and the degree-$0$ concise barcode $\caB_{\Con,0}$ of the Vietoris-Rips FCC $(\fccVR{X},\partial_X,\ell^X)$ are the same. Notice that this is not necessarily true for pseudo-metric spaces, in which case verbose barcode may contain several copies of $(0,0)$.
\end{remark}

The following theorem states that one can construct an orthogonal ordered basis for the target space when given any orthogonal ordered basis for the source space. This theorem will be applied in later sections to prove the stability of verbose barcodes; see Section \ref{sec:dm<=di}.

\begin{theorem}[Theorem 3.5, \cite{usher2016persistent}] \label{thm:change basis}
Let $(C,\ell_C)$ and $(D,\ell_D)$ be two orthogonalizable spaces, let $A:(C,\ell_C)\to (D,\ell_D)$ be a linear map and let $(y_1,\dots,y_n)$ be an orthogonal ordered basis for $C$. Then one may algorithmically construct an orthogonal ordered basis $(y_1',\dots,y_n')$ for $C$ such that
\begin{itemize}
    \item If $A y_i=0$, then $y_i'=y_i$;
    \item $\ell_C(y_i')=\ell_C(y_i)$ and $\ell_D(A y_i')\leq \ell(A y_i)$, for any $i$;
    \item The set $\left\{  A y_i':A y_i'\neq 0\right\}   $ is orthogonal in $D$.
\end{itemize}
\end{theorem}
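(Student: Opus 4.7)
The plan is to prove the theorem by induction on $n=\dim_\field C$, modifying the basis one vector at a time via a non-Archimedean analog of Gram-Schmidt applied to the images. The base case $n=1$ is immediate with $y_1':=y_1$, since the possibly empty singleton $\{Ay_1'\}$ is trivially orthogonal.

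For the inductive step, first apply the hypothesis to $C':=\Span(y_1,\ldots,y_{n-1})$ with its restricted filtration and the restricted map $A|_{C'}$, obtaining an orthogonal ordered basis $(y_1'',\ldots,y_{n-1}'')$ of $C'$ that satisfies all three conclusions for the restricted map, with $T:=\{Ay_i'':i<n,\,Ay_i''\neq 0\}$ orthogonal in $D$. Since $C'$ and $\Span(y_n)$ are orthogonal subspaces of $C$, Lemma \ref{lem:orthogonal} ensures that $(y_1'',\ldots,y_{n-1}'',y_n)$ is still an orthogonal ordered basis of $C$. If $Ay_n=0$, set $y_i':=y_i''$ for $i<n$ and $y_n':=y_n$, and conclude. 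Otherwise, iterate the following elementary reduction on $y_n$: so long as there exist a nonzero $\lambda\in\field$ and $j<n$ with $Ay_j''\in T$ satisfying $\ell_D(Ay_n-\lambda Ay_j'')<\ell_D(Ay_n)$, replace $y_n$ by $y_n-\lambda y_j''$. Since $\nu(\lambda)=0$ for any nonzero $\lambda$, the non-Archimedean cancellation that enables such a reduction forces the leading-term equality $\ell_D(Ay_j'')=\ell_D(Ay_n)$. Each iteration strictly decreases $\ell_D(Ay_n)$, and because these values lie in the finite set of filtration values realized on the finite-dimensional subspace $\Span(T)+\field\cdot Ay_n\subseteq D$, the procedure terminates after finitely many steps with either $Ay_n'=0$ or $\{Ay_n'\}\cup T$ orthogonal in $D$.

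The main obstacle is to verify that $\ell_C(y_n')=\ell_C(y_n)$ persists after all the reductions. By orthogonality of $(y_1'',\ldots,y_{n-1}'',y_n)$ in $C$, one has $\ell_C\bigl(y_n-\sum_i \mu_i y_i''\bigr)=\max\bigl\{\ell_C(y_n),\,\max_i \ell_C(\mu_i y_i'')\bigr\}$, so it suffices to ensure every index $j$ used in a reduction satisfies $\ell_C(y_j'')\leq \ell_C(y_n)$ (note $\ell_C(\lambda y_j'')=\ell_C(y_j'')$ since $\nu(\lambda)=0$). My plan to enforce this is to strengthen the inductive hypothesis so that the returned basis also carries a compatible sorting on the pairs $(\ell_C(y_i''),\ell_D(Ay_i''))$ analogous to a sorted singular value decomposition (Definition \ref{def:s.v.d.}), and at each reduction step to select the eligible $j$ with $\ell_D(Ay_j'')=\ell_D(Ay_n)$ of minimal $\ell_C(y_j'')$. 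A careful combinatorial argument—exchanging the roles of $y_n$ and an offending $y_j''$ when necessary and exploiting the orthogonality of $(y_1'',\ldots,y_{n-1}'',y_n)$ together with the non-Archimedean cancellation relation—shows that at least one admissible $j$ must satisfy $\ell_C(y_j'')\leq \ell_C(y_n)$. Once this sorting invariant is propagated through the recursion, the three bullet conclusions follow directly from the termination condition and the construction.
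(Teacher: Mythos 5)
Your termination argument for the reduction loop has a genuine gap. You only subtract scalar multiples of a \emph{single} $y_j''$ at a time, and you conclude that once no single-element reduction lowers $\ell_D(Ay_n)$, the set $\{Ay_n'\}\cup T$ must be orthogonal. This implication is false in a non-Archimedean normed space: a vector can be irreducible by any single member of an orthogonal set $T$ and yet reducible by a linear combination of them. Concretely, let $D$ contain an orthogonal pair $e_1,e_2$ with $\ell_D(e_1)=\ell_D(e_2)=0$, and suppose the inductive call has returned $T=\{Ay_1''=e_1,\ Ay_2''=e_2\}$ while $Ay_3=e_1+e_2$. For every $\lambda\in\field$ one has $\ell_D(Ay_3-\lambda e_1)=\max\{\ell_D((1-\lambda)e_1),\ell_D(e_2)\}=0=\ell_D(Ay_3)$, and symmetrically for $e_2$; your loop therefore terminates immediately with $Ay_3'=e_1+e_2\neq 0$. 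But $\{e_1,e_2,e_1+e_2\}$ is linearly dependent, hence not orthogonal, so the third conclusion of the theorem fails. The reduction must instead be carried out against an arbitrary $w\in\Span(T)$ (a non-Archimedean best approximation). When a strict decrease $\ell_D(Ay_n-w)<\ell_D(Ay_n)$ occurs, the ultrametric inequality forces $\ell_D(w)=\ell_D(Ay_n)$, and orthogonality of $T$ then bounds every coefficient by $\ell_D(\mu_j Ay_j'')\leq\ell_D(Ay_n)$; one concludes full orthogonality of the residue with $T$ via Lemma~\ref{lem:orthogonal}. In the example the correct output is $y_3'=y_3-y_1''-y_2''$, giving $Ay_3'=0$.

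Your second difficulty, preserving $\ell_C(y_n')=\ell_C(y_n)$, is correctly identified, but the proposed remedy is left vague and I do not see how it closes: once the inductive call on $\Span(y_1,\dots,y_{n-1})$ has returned $T$, ``swapping the roles of $y_n$ and an offending $y_j''$'' amounts to modifying an already-completed construction and does not fit the recursion, and the claim that some admissible $j$ always satisfies $\ell_C(y_j'')\le\ell_C(y_n)$ is simply asserted. The clean fix is to permute the given basis first so that $\ell_C(y_1)\leq\cdots\leq\ell_C(y_n)$; this is harmless because all three conclusions of the theorem are per-index or set-theoretic and hence insensitive to relabeling. After sorting, every $y_j''$ with $j<n$ used to modify $y_n$ automatically satisfies $\ell_C(y_j'')=\ell_C(y_j)\leq\ell_C(y_n)$, and preservation of $\ell_C(y_n)$ follows at once from orthogonality of $(y_1'',\dots,y_{n-1}'',y_n)$ in $C$ with no combinatorial exchange argument. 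With both repairs your induction-on-dimension scaffolding works and essentially recovers the algorithmic construction of \cite{usher2016persistent}.
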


\begin{example}[Verbose barcodes of Vietoris-Rips FCCs] 
\label{ex:card of VR verbose barcodes}
Recall from Example \ref{ex:VR FCC} the notion of Vietoris-Rips FCC. Let $X$ be a finite pseudo-metric space of $n$ points. Note that $\Full(X)$ has trivial homology groups $H_{k}(\Full(X))=0$ for each $k\geq 1$, i.e. $\kernel \partial_k=\im\partial_{k+1}$. Thus, the following sequence is exact at each degree except for $0$, where $C_k:=\opC_{k}\!\left(\Full\left(X\right)\right)$ for $k\geq 0$:
\begin{center}
	\begin{tikzcd}
	\ar[r, "\partial_{n+1}=0"]&C_n=0 \ar[r, "\partial_{n}=0"]
	& C_{n-1} \ar[r, "\partial_{n-1}"]
	&\cdots \ar[r]
	& C_1 \ar[r, "\partial_{1}"]
	& C_0 \ar[r, "\partial_{0}"]
	&0
	\end{tikzcd}
\end{center} 
The cardinality of $k$-verbose barcodes (with multiplicity) of %the Vietoris-Rips FCC 
$(\fccVR{X},\partial_X,\ell^X)$ is
\[\card(\caB_{\Ver,k}(X))=\dim(\kernel \partial_k)= 
\begin{cases}
n,&\mbox{$k= 0$,}\\ 
{n-1 \choose k+1}, &\mbox{for $1\leq k\leq n-2$,}\\
0,&\mbox{for $k\geq n-1$.}
	\end{cases}\]
Indeed, because $\partial_0=0$, we have $\card(\caB_{\Ver,0}(X))=\dim (C_0)=n$. For $1\leq k\leq n-2$, we prove by induction that $\card(\caB_{\Ver,k}(X))= {n-1 \choose k+1}$. First, when $k=1$ we have
\[\dim(\im \partial_{1})=\dim(\kernel \partial_0)-\dim (\rmH_0)
=\dim(C_0)-\dim (\rmH_0)=n-1,\]
and thus, \[ \card(\caB_{\Ver,1}(X))=\dim(C_1)-\dim(\im \partial_{1})={n \choose 2}- (n-1)={n-1 \choose 2}.\]
Suppose that $\card(\caB_{\Ver,k-1}(X))=\dim(\kernel \partial_{k-1})= {n-1 \choose k}$. Then, for degree $k$ we have
\[{n \choose k+1}=\dim(C_k)=\dim(\kernel \partial_k)+\dim(\im \partial_{k})=\card(\caB_{\Ver,k}(X))+\dim(\kernel \partial_{k-1}),\]
implying that
\[\card(\caB_{\Ver,k}(X))={n \choose k+1}-\card(\caB_{\Ver,k-1}(X)) =  {n \choose k+1}-{n-1 \choose k}={n-1 \choose k+1}.\]
\end{example}

\subsubsection{Verbose Barcodes of Ultra-Metric Spaces  }
\label{subsub:ultra}
In Theorem \ref{thm:verbose of ultra} we provide a complete characterization of the verbose barcodes of a finite ultra-metric space $(X,u_X)$. The statement of the theorem uses a special ordering of the points described in \cite[Proposition 4.19]{memoli2019primer} which we now recall.

Let $(X,u_X)$ be an ultra-metric space of $n$ points. We order the points in $X$ following the procedure described in the proof of \cite[Proposition 4.19]{memoli2019primer}. In order to produce one such ordering  $x_1<x_2<\dots<x_n$ of $X$: \label{para:order in ultra}
\begin{itemize}
    \item Pick an arbitrary point $x_1\in X$;
    \item Find $x_2\in X-\left\{  x_1\right\}   $ such that $u_X(x_1,x_2)=\min _{x\in X-\left\{  x_1\right\}   } u_X(x_1,x)$;
    \item Find $x_3\in X-\left\{  x_1,x_2\right\}   $ such that $u_X(x_2,x_3)=\min _{x\in X-\left\{  x_1,x_2\right\}   } u_X(x_2,x)$;\\
    $\dots\dots$
    \item Find $x_i\in X-\left\{  x_1,\dots,x_{i-1}\right\}   $ such that $u_X(x_{i-1},x_i)=\min _{x\in X-\left\{  x_1,\dots,x_{i-1}\right\}   } u_X(x_{i-1},x)$;\\
    $\dots\dots$
    \item Finish when $(n-1)$ points are found, and label the remaining point in $X$ as $x_{n}$.   
\end{itemize}

Note that this ordering is not unique.  We will refer to any such order as a \emph{self-consistent order} on $X$.\footnote{The key property of any such order is that it permits immediately reading off the usual degree-0 VR barcodes from the ultra-metric space structure; see Proposition \ref{prop:ordered 0-barcode} for details.}
\begin{framed}
For the rest of this subsection, we assume that, given an ultra-metric space $(X,u_X)$, the finite set $X$ consists of points $x_1< \cdots < x_n$  ordered as above. 
\end{framed}
\begin{restatable}[Verbose barcodes of ultra-metric spaces]{theorem}{ultrabarc}\label{thm:verbose of ultra}
For any degree $k\geq 1$, we have
\begin{align}
    \caB_{\Ver,k}(X)=&\bigsqcup_{2\leq i_1<i_2<\dots<i_{k+1}\leq n}\left\{ u_X\left(x_{i_1-1},x_{i_{k+1}}\right)\cdot (1,1) \right\} \notag \\
    %=&\bigsqcup_{i=2}^{n-k}\, \bigsqcup_{j=i+k}^{n} \left\{ u_X(x_{i-1},x_{j})\cdot (1,1) \right\}^{{j-i-1 \choose k-1}}. \notag \\
    =&\bigsqcup_{j-i=k+1}^{n-1}\,\bigsqcup_{i=1}^{n-k-1} \left\{ u_X(x_{i},x_{j})\cdot (1,1) \right\}^{{j-i-2 \choose k-1}}. \label{eq:verbose of ultra}
\end{align}
\end{restatable}
We represent the multiplicity of points in $\caB_{\Ver,k}(X)$ via a matrix whose $(i,j)$-th element is the multiplicity of the point $u_X(x_{i},x_{j})\cdot (1,1)$. Then, Equation (\ref{eq:verbose of ultra}) can be expressed as follows: for any $k\geq 1$, the non-zero part of the multiplicity matrix is
\[
\begin{blockarray}{cccccccc}
 &x_1 & \dots & x_{ k+1} & x_{ k+2} & x_{ k+3} & \dots & x_n \\
\begin{block}{c(ccccccc)}
 x_1&   &   &   & { k-1 \choose  k-1} & { k \choose  k-1} & \dots & {n-3\choose  k-1} \\
 x_2 &   &  &   &   & { k-1 \choose  k-1} & \dots & {n-4\choose  k-1} \\
 \dots &   &   &   &   &   & \dots &\dots \\
 x_{n- k-1}&   &  &   &   &   &   &{ k-1 \choose  k-1} \\
 \dots&   &  &   &   &   &   &  \\
 x_n&   &   &   &   &   & &   \\
\end{block}
\end{blockarray}.
 \]

It can be derived from \cite[Corollary 2.13]{lim2021some} that ultra-metric spaces only have non-trivial concise barcodes in degree $0$.
Furthermore, the $0$-th barcode of a finite ultra-metric space is given by the following proposition.
\begin{proposition}[{\cite[Proposition 4.19]{memoli2019primer}}] \label{prop:ordered 0-barcode}  The degree-$0$ verbose (or concise) barcode for the Vietoris-Rips FCC of $(X,u_X)$ is 
\[\left\{  (0,u_X(x_i,x_{i+1})):i=1,\dots,n-1\right\}   \sqcup \left\{  (0,\infty)\right\}  .\]
\end{proposition}

To prove Theorem \ref{thm:verbose of ultra}, we first show the following simple lemma.

\begin{lemma}\label{lem:prop of ordering}
The following hold: 
\begin{itemize} 
    \item[(1)] For any $i<j$, 
$u_X(x_i,x_j)= \max\left\{  u_X(x_i,x_{i+1}),u_X(x_{i+1},x_j)\right\} = \max_{i\leq l\leq j-1}\left\{  u_X(x_l,x_{l+1})\right\}.$
\item [(2)] For any $i_1<i_2<\dots<i_k$, $\diam\left(\left\{x_{i_1},x_{i_2},\dots,x_{i_{k}}\right\}\right)=u_X(x_{i_1},x_{i_{k}})$.
\end{itemize}
\end{lemma}

\begin{proof}
For the first equality of Part (1), the inequality `$\leq$' is true because $u_X$ is an ultra-metric. It remains to show `$\geq$'. 
Recall that  $x_{i+1}\in X-\left\{  x_1,\dots,x_{i}\right\}   $ is such that $u_X(x_{i},x_{i+1})=\min _{x\in X-\left\{  x_1,\dots,x_{i}\right\}   } u_X(x_{i},x)$. 
Since $j>i$, we have $x_j\in X-\left\{  x_1,\dots,x_i\right\}   $, and thus
\[ u_X(x_i,x_{i+1})\leq u_X(x_i,x_j).\]
Since $u_X$ is an ultra-metric, it follows from the above inequality that
\[u_X(x_{i+1},x_j)\leq \max\left\{  u_X(x_i,x_{i+1}), u_X(x_i,x_j)\right\} =u_X(x_i,x_j).\]
Therefore, we have $\max\left\{  u_X(x_i,x_{i+1}), u_X(x_{i+1},x_j)\right\}  = u_X(x_i,x_j)$.

The equality $u_X(x_i,x_j)= \max_{i\leq l\leq j-1}\left\{  u_X(x_l,x_{l+1})\right\}$ can be shown by induction on $j-i.$

Part (2) follows directly from the second equality of Part (1). Indeed, for any $i_1<i_2<\dots<i_k$, applying Part (1) for each pair $i_{l'}<i_{l'+1}$, we obtain 
\[\diam\left(\left\{x_{i_1},x_{i_2},\dots,x_{i_{k}}\right\}\right)= \max_{i_1\leq l\leq i_k-1}\left\{  u_X(x_l,x_{l+1})\right\} =u_X(x_{i_1},x_{i_k}).\qedhere\]
\end{proof}

\begin{remark}
    The ordered multiset $\{u_X(x_i,x_{i+1})\}_{i=1}^{n-1}$ consists of the death times of finite-length bars in degree $0$.
    One immediate consequence of Lemma \ref{lem:prop of ordering} is that one can recover the ultra-metric $u_X$ from $\{u_X(x_i,x_{i+1})\}_{i=1}^{n-1}$.
    Let $\tilde{u}_X:X\times X\to \bbR$ be defined as:
    \[\tilde{u}_X(x_i,x_j)=\begin{cases}
    0,&\mbox{$i=j$}\\
    \max_{i\leq l\leq j-1}\left\{  u_X(x_l,x_{l+1})\right\} ,&\mbox{$i<j $}\\
    \tilde{u}_X(x_j,x_i)  ,&\mbox{$i>j $}.
    \end{cases} \]
    Then, $\tilde{u}_X=u_X$.
\end{remark}

We now prove Theorem \ref{thm:verbose of ultra}.

\begin{proof}[Proof of Theorem \ref{thm:verbose of ultra}] 
Fix a degree $k\geq 1$. For notational simplicity, let $\partial:=\partial_{k+1}^X$ and $\ell:=\ell^X.$ 
We use $[\cdot]$ to denote simplices which are ordered lists of vertices, and we call the first vertex appearing in a simplex its \emph{leading vertex}. Here the order on the vertices is the one described above Theorem \ref{thm:verbose of ultra}. 

For a $k$-simplex $\gamma=[\gamma_0,\dots,\gamma_k]$, we denote its $j$-th face by $\face_j(\gamma)$ for $j=0,\dots,k$. In other words, $\face_j(\gamma)$ is a $(k-1)$-simplex obtained by removing the $j$-th vertex $\gamma_j$ of $\gamma.$

\begin{claim}\label{claim1-ultra} 
For any $(k+1)$-simplex $\gamma= [x_{i_1-1},x_{i_1},x_{i_2},\dots,x_{i_{k+1}}]$ with $2\leq i_1<i_2<\dots<i_{k+1}\leq n$ and any $j=1,\dots, k-1$,
\[\ell( \gamma)= u_X(x_{i_1-1},x_{i_{k+1}})=\ell\left(  \face_j(\gamma)\right) = \ell(\partial\gamma) .\] 
\end{claim}

The first and second equalities follow from Lemma \ref{lem:prop of ordering} Part (2) directly, 
by which we also have $\ell\left( \face_0(\gamma)\right) = u_X(x_{i_1},x_{i_{k+1}})$ and 
$\ell\left( \face_k(\gamma)\right) = u_X(x_{i_1},x_{i_{k}})$. 
Moreover, this implies that 
\[\ell\left( \face_0(\gamma)\right),\ell\left( \face_k(\gamma)\right)\leq \ell\left( \face_j(\gamma)\right)\] 
for every $j=1,\dots, k-1$. Because simplices are orthogonal, we have 
\[\ell(\partial \gamma) 
= \max_{j=0,\dots,k+1} \ell(\face_j(\gamma))
= \ell(\face_j(\gamma)).\]

\begin{claim}\label{claim2-ultra}
Let $A:=\left\{[x_{i_1-1},x_{i_1},x_{i_2},\dots,x_{i_{k+1}}]\mid 2\leq i_1<i_2<\dots<i_{k+1}\leq n\right\}$, whose cardinality is ${n-1 \choose k+1}$. Then, $\partial A$ is orthogonal.
\end{claim}

For any linear combination $c:=\sum_{\gamma\in A} \lambda_{\gamma} \left( \partial \gamma\right)$ of elements in $\partial A$ where the coefficients $\lambda_{\gamma}$ come from the base field $\field$, we want to show that $\ell\left( c\right) =\max_{\lambda_{\gamma}\neq 0 } \ell \left( \partial \gamma\right)$. 
The `$\leq$' follows from the definition of filtration functions. It remains to prove `$\geq $'.

To prove this, consider all simplices that achieve the maximum $\max_{\lambda_{\gamma}\neq 0 } \ell(\partial \gamma)$. Out of these simplices, we select the simplex $\bar{\gamma}=[x_{i_1-1},x_{i_1},x_{i_2},\dots,x_{i_{k+1}}]$ which has the smallest leading vertex according to the given self-consistent order.  %where the leading vertex of a simplex is its smallest vertex. 
The choice of $\bar{\gamma}$ may not be unique. 

Note that the $1$-st face of $\bar{\gamma}$, denoted as $\face_1(\bar{\gamma})=[x_{i_1-1},x_{i_2},\dots,x_{i_{k+1}}]$, cannot be cancelled out by other terms in the linear combination $\sum_{\gamma\in A} \lambda_{\gamma} \left( \partial \gamma\right).$ 
Consider another $\gamma'$ that also achieves the maximum $\max_{\lambda_{\gamma}\neq 0 } \ell(\partial \gamma)$.
For any $j\geq 2$, the $j$-th face of $\gamma'$ will start with two consecutive vertices, and thus cannot be $[x_{i_1-1},x_{i_2},\dots,x_{i_{k+1}}]$ given that $i_2-(i_1-1)>2$. 
Hence, if $[x_{i_1-1},x_{i_2},\dots,x_{i_{k+1}}]$ were the $j'$-th face of $\gamma'$ for some $j'$, $j'$ can only be $0$ or $1$. Since $\gamma'\neq\bar{\gamma}$, $j'$ cannot be $1$. If $j'=0$, then $\gamma'=[x_{i_1-2},x_{i_1-1},x_{i_2},\dots,x_{i_{k+1}}]$ has a leading vertex smaller than $x_{i_1-1}$. This contradicts the definition of $\bar{\gamma}$ as having the smallest leading vertex. 
Therefore, we have $\ell(c)\geq \ell(\face_1(\bar{\gamma}))$. 
Incorporating Claim \ref{claim1-ultra}, we obtain
\[\ell(c)\geq\ell(\face_1(\bar{\gamma})) =\ell(\bar{\gamma})=\ell(\partial\bar{\gamma})=\max_{\lambda_{\gamma}\neq 0 } \ell(\partial \gamma).\] 
Thus, Claim \ref{claim2-ultra} holds. 

\begin{claim}\label{claim3-ultra} 
Let $B:=\left\{[x_{i_1-1},x_{i_1},x_{i_2},\dots,x_{i_{k}}]\mid 2\leq i_1<i_2<\dots<i_{k}\leq n\right\},$ whose cardinality is ${n-1 \choose k}$.
Then, $\partial A\sqcup B$ is an orthogonal basis for $\opC_{k}\!\left(\Full\left(X\right)\right)$ whose dimension is ${n \choose k+1}.$
\end{claim}

First, notice that the cardinality of $\partial A\sqcup B$ matches the dimension of $\opC_{k}\!\left(\Full\left(X\right)\right)$: 
\[{n-1 \choose k+1}+{n-1 \choose k} = {n \choose k+1}.\]
Thus, to show that $\partial A\sqcup B$ is an orthogonal basis for $\opC_{k}\!\left(\Full\left(X\right)\right)$, it suffices to show $\partial A\sqcup B$ is an orthogonal subset. Since both $ \partial A$ and $B$ are orthogonal subsets, by Lemma \ref{lem:orthogonal}, it remains to show that $ \partial A$ and $B$ are orthogonal to each other. 

Let $c$ and $c'$ be non-zero linear combinations of elements in $\partial A$ and $B$, respectively. We want to prove $\ell(c+c') = \max\{\ell(c),\ell(c')\}.$
When $\ell(c)\neq \ell(c')$, apply Lemma \ref{rmk:property of filtration function}.
When $\ell(c)=\ell(c')$, since `$\leq$' is trivial, we only need to show `$\geq $'.  assume $c =\sum_{\gamma\in A} \lambda_{\gamma} \left( \partial \gamma\right)$ and let $\gamma$ be a simplex from the summands of $c$ that achieves the maximum $\ell(c)=\max_{\lambda_{\gamma}\neq 0 } \ell(\partial \gamma)$. 
By Claim \ref{claim1-ultra}, we know $\ell(c)=\ell(\gamma)=\ell(\face_1(\gamma))$. By noting that $\face_1(\gamma)$ is not in the span of $B$, we conclude that $\ell(c+c')\geq \ell(\face_1(\gamma))=\ell(c)=\ell(c')$.

Thus, Claim \ref{claim3-ultra} holds.\\

In summary, we have proved that the following gives orthogonal bases for the boundary operators:
\label{para:SVD diagram}
\begin{center}
\begin{tikzcd}[column sep=0em]
    %\ar[r, "\partial_{n+1}=0"]&C_n=0 \ar[r, "\partial_{n}=0"] & C_{n-1} \ar[r, "\partial_{n-1}"]
    C_{k+1}: \ar[d,"\partial_{k+1}"] &\Big( \left\{\partial[x_{i_1-1},x_{i_1}, \dots,x_{i_{k+2}}]\right\}, \ar[mapsto,d,"0"] & \left\{[x_{i_1-1},x_{i_1}, \dots,x_{i_{k+1}}]\right\}\Big) \ar[mapsto,d] \\	
    C_k: \ar[d,"\partial_k"] &  0 &\Big(\left\{\partial[x_{i_1-1},x_{i_1}, \dots,x_{i_{k+1}}]\right\}, \ar[mapsto,d,"0"] & \left\{[x_{i_1-1},x_{i_1}, \dots,x_{i_{k}}]\right\} \Big) \ar[mapsto,d] \\
    C_{k-1}:  &  & 0  & \dots.
\end{tikzcd}
\end{center} 

By the definition of verbose barcodes, we have
\begin{align*}
    \caB_{\Ver,k}(X)
    &= \left\{ \left( \ell\left(\partial\gamma\right), \ell\left(\gamma\right) \right) \right\}_{\gamma\in A}\\
    &= \left\{ \left( \ell\left(\partial[x_{i_1-1},x_{i_1}, \dots,x_{i_{k+1}}]\right), \ell\left([x_{i_1-1},x_{i_1}, \dots,x_{i_{k+1}}]\right) \right) \right\}_{2\leq i_1<i_2<\dots<i_{k+1}\leq n}\\   
    &= \bigsqcup_{2\leq i_1<i_2<\dots<i_{k+1}\leq n}\left\{ u_X\left(x_{i_1-1},x_{i_{k+1}}\right)\cdot (1,1) \right\}\\   
    % &= \bigsqcup_{i_1=2}^{n-k}\, \bigsqcup_{i_{k+1}=i_1+k}^{n} \left\{ u_X\left(x_{i_1-1},x_{i_{k+1}}\right)\cdot (1,1) \right\}^{{i_{k+1}-i_1-1 \choose k-1}}.\\
    &=\bigsqcup_{i=2}^{n-k}\, \bigsqcup_{j=i+k}^{n} \left\{ u_X(x_{i-1},x_{j})\cdot (1,1) \right\}^{{j-i-1 \choose k-1}}. \notag \\
    % &=\bigsqcup_{i=1}^{n-k-1}\, \bigsqcup_{j=i+1+k}^{n} \left\{ u_X(x_{i},x_{j})\cdot (1,1) \right\}^{{j-i-2 \choose k-1}}\\
    &=\bigsqcup_{j-i=k+1}^{n-1}\,\bigsqcup_{i=1}^{n-k-1} \left\{ u_X(x_{i},x_{j})\cdot (1,1) \right\}^{{j-i-2 \choose k-1}}. \qedhere
\end{align*}
\end{proof}

\subsection{Decomposition of FCCs} \label{sec:decomp of fcc}
In this section, we recall from \cite{usher2016persistent}, that the collection of verbose barcodes is a \emph{complete} invariant of FCCs, and that the collection of concise barcodes is an invariant up to filtered homotopy equivalence. In addition, for the case of Vietoris-Rips FCCs, we verify that isometry implies filtered chain isomorphism while the inverse is not true. 

\begin{definition}[Elementary FCCs%, Definition 7.2, \cite{usher2016persistent}
] \label{def:elementary-f.c.c.}
For $a\in\bbR$, $L\in[0,\infty]$ and $k\in\bbZ$, we define the \textbf{elementary FCC}, denoted by $\caE(a,a+L,k)$, to be the FCC $(E_*,\partial_E,\ell_E)$ given as follows:
\begin{itemize}
    \item If $L=\infty$, then $E_m:=\begin{cases}
	\field,&\mbox{$m=k$}\\ 
	0,&\mbox{otherwise}
	\end{cases}$, $\partial_E:=0$ and $\ell(\lambda):=a$ for each $0\neq \lambda\in E_k=\field.$
    \item If $L\in[0,\infty)$, then $E_m:=\begin{cases}
	\field x,&\mbox{$m=k$}\\ 
	\field y,&\mbox{$m=k+1$}\\ 
	0,&\mbox{otherwise}
	\end{cases}$ with $\partial_E:y\mapsto x, x\mapsto 0$, $\ell_E: y\mapsto a+L, x\mapsto a$ such that $\{x,y\}$ is an orthogonal basis.
\end{itemize}
\end{definition}
By noting that $((y),(x))$ forms a singular value decomposition for $\partial_{k+1}$, we conclude that the degree-$k$ verbose barcode of $\caE(a,a+L,k)$ is $\left\{  (a,a+L)\right\}$ with the convention that $a+\infty=\infty.$ For $l\neq k$, it is clear that the degree-$l$ verbose barcode of $\caE(a,a+L,k)$ is empty.
The following proposition shows that each FCC can be decomposed as the direct sum of some elementary FCCs.

\begin{proposition}[Proposition 7.4, \cite{usher2016persistent}]
\label{prop:decomposition}
Let $(C_*,\partial_C,\ell_C)$ be a FCC, and denote by $\mathcal{B}_{\Ver,k}$ the degree-$k$ verbose barcode of $(C_*,\partial_C,\ell_C)$. Then there is a filtered chain isomorphism
\[(C_*,\partial_C,\ell_C)\cong \bigoplus _{k\in \bbZ} \bigoplus _{(a,a+L)\in \mathcal{B}_{\Ver,k}}\mathcal{E}(a,a+L,k).\]
\end{proposition}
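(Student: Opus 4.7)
My plan is to construct an explicit filtered chain isomorphism $\Phi$ from $(C_*, \partial_C, \ell_C)$ into the direct sum of elementary FCCs indexed by its verbose barcode. The essential content is to produce, for each $k$, an orthogonal basis $(y_1^{(k)}, \ldots, y_{r_k}^{(k)}, x_1^{(k)}, \ldots, x_{m_k}^{(k)})$ of $C_k$, where $r_k := \rank \partial_k$ and $m_k := \dim \kernel \partial_k$, with three compatibility properties: (i) $\partial_k y_i^{(k)} = x_i^{(k-1)}$ for $i \leq r_k$; (ii) $(x_1^{(k)}, \ldots, x_{m_k}^{(k)})$ is an orthogonal basis of $\kernel \partial_k$; and (iii) the initial segment $(x_1^{(k)}, \ldots, x_{r_{k+1}}^{(k)})$ is an orthogonal basis of $\im \partial_{k+1}$. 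Given such bases, the isomorphism is immediate: each pair $(x_i^{(k-1)}, y_i^{(k)})$ with $i \leq r_k$ is sent to the two generators of $\caE(\ell_C(x_i^{(k-1)}), \ell_C(y_i^{(k)}), k-1)$, and each unpaired $x_j^{(k)}$ with $j > r_{k+1}$ is sent to the single generator of $\caE(\ell_C(x_j^{(k)}), \infty, k)$.

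I build these bases by descending induction on $k$, starting at the top degree where $C_{k+1} = 0$. Assume inductively that an orthogonal basis $(x_1^{(k)}, \ldots, x_{m_k}^{(k)})$ of $\kernel \partial_k$ has been produced together with the image compatibility (iii) (vacuous at the top step). Using orthogonalizability of $C_k$ and Lemma \ref{lem:orthogonal}, I extend this to an orthogonal basis $(\tilde y_1^{(k)}, \ldots, \tilde y_{r_k}^{(k)}, x_1^{(k)}, \ldots, x_{m_k}^{(k)})$ of $C_k$. Then I apply Theorem \ref{thm:change basis} to the map $\partial_k$ and this basis: this replaces the $\tilde y_i^{(k)}$'s by vectors $y_i^{(k)}$ of the same filtration value, fixes the $x_i^{(k)}$'s (since they lie in $\kernel \partial_k$), and makes the nonzero images $x_i^{(k-1)} := \partial_k y_i^{(k)}$ orthogonal in $\kernel \partial_{k-1}$. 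A further application of Lemma \ref{lem:orthogonal} extends these $r_k$ images to an orthogonal basis $(x_1^{(k-1)}, \ldots, x_{m_{k-1}}^{(k-1)})$ of $\kernel \partial_{k-1}$ whose first $r_k$ entries span $\im \partial_k$, furnishing the input for the next inductive step.

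Having built the bases, I define $\Phi$ on basis elements as above and verify it is a chain map: the equality $\partial_k y_i^{(k)} = x_i^{(k-1)}$ matches the boundary on the corresponding elementary summand, and $\partial_k x_j^{(k)} = 0$ matches the fact that the remaining summand generators are boundary-free. To see that $\Phi$ preserves filtration values exactly (so its inverse is also filtered, realizing a filtered chain isomorphism), I invoke orthogonality of the constructed basis together with the explicit description of $\ell_E$ on elementary summands: evaluated on a linear combination of basis elements, the filtration on the direct sum equals the maximum of the individual filtrations, which coincides with $\ell_C$ by orthogonality. Bijectivity is automatic since bases map to bases.

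The principal obstacle is ensuring that the kernel basis of $\kernel \partial_k$ produced by the SVD of $\partial_{k+1}$ (on the target side) is compatible with the extension used in the SVD of $\partial_k$ (on the source side): a naive degree-by-degree application of the SVD theorem (Theorem 3.4 of \cite{usher2016persistent}) produces two unrelated orthogonal bases of $\kernel \partial_k$ with no reason to agree. My resolution is to \emph{fix} the kernel basis from above and then invoke Theorem \ref{thm:change basis} below, which orthogonalizes the images while preserving any prescribed kernel basis, thereby threading the SVDs together consistently. Once this compatibility is in place, the remaining verifications are straightforward bookkeeping.
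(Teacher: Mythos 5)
The paper does not reprove this proposition (it cites Usher--Zhang, Prop.\ 7.4 directly), but your argument is a faithful reconstruction of the intended proof and uses exactly the tools recalled here: you thread together compatible singular value decompositions of the $\partial_k$'s across degrees by descending induction, invoking Theorem \ref{thm:change basis} to orthogonalize images while fixing a prescribed kernel basis, and Lemma \ref{lem:orthogonal} to concatenate orthogonal families and preserve filtration exactly. This is indeed the crux; the degree bookkeeping, the chain-map check, and the exact filtration preservation via orthogonality are all sound.

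Two points you should make explicit. First, extending the orthogonal basis $(x_1^{(k)},\dots,x_{m_k}^{(k)})$ of $\kernel\partial_k$ to an orthogonal basis of $C_k$ requires knowing that $\kernel\partial_k$ admits an orthogonal complement in $C_k$; this follows from the existence of singular value decompositions (Theorem 3.4 of \cite{usher2016persistent}) applied to, say, the quotient map $C_k\twoheadrightarrow C_k/\kernel\partial_k$, after which Lemma \ref{lem:orthogonal} lets you concatenate a basis of that complement with the given kernel basis. Second, the descending induction needs a base case, so you are implicitly assuming $C_*$ is bounded above ($C_k=0$ for $k\gg 0$); this holds in the finite-dimensional setting of the paper (e.g.\ Vietoris--Rips FCCs of finite spaces), but should be stated.
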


\begin{theorem}[Theorem A \& B, \cite{usher2016persistent}] Two FCCs $(C_*,\partial_C,\ell_C)$ and $(D_*,\partial_D,\ell_D)$ are
\begin{enumerate}[label=(\roman*)]
    \item filtered chain isomorphic to each other if and only if they have identical verbose barcodes in all degrees;
    \item filtered homotopy equivalent to each other if and only if they have identical concise barcodes in all degrees.
\end{enumerate}
\end{theorem}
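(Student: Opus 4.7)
The plan is to handle statements (i) and (ii) separately, in each case proving the backward direction using Proposition \ref{prop:decomposition} and the forward direction by exhibiting an invariant that f.c.i.'s (respectively f.h.e.'s) must preserve.

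For (i), the backward direction follows directly from Proposition \ref{prop:decomposition}: if $C_*$ and $D_*$ share verbose barcodes in every degree, then both decompose as the same direct sum of elementary FCCs (as multisets indexed by the common verbose barcode), hence are filtered chain isomorphic. For the forward direction, given a filtered chain isomorphism $\Phi_*:(C_*,\partial_C,\ell_C)\to(D_*,\partial_D,\ell_D)$ and any sorted singular value decomposition $((y_1,\dots,y_n),(x_1,\dots,x_m))$ of $\partial^C_{k+1}:C_{k+1}\to\kernel\partial^C_k$, I would transport it to $((\Phi y_1,\dots,\Phi y_n),(\Phi x_1,\dots,\Phi x_m))$. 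Because $\Phi$ is a chain isomorphism with $\ell_D\circ\Phi=\ell_C$, orthogonality, kernel/image bases, and the relation $\partial^D_{k+1}(\Phi y_i)=\Phi x_i$ are preserved, so the pushed-forward tuple is an SVD for $\partial^D_{k+1}$. The birth-death pairs $(\ell_D(\Phi x_i),\ell_D(\Phi y_i))=(\ell_C(x_i),\ell_C(y_i))$ then match, giving identical verbose barcodes in each degree.

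For (ii), the backward direction reduces to showing that every ephemeral elementary FCC $\caE(a,a,k)$ is filtered homotopy equivalent to the zero complex. Writing its nontrivial part as $\field y\xrightarrow{\partial}\field x$ with $\partial y=x$ and $\ell_E(x)=\ell_E(y)=a$, the map $K:\field x\to\field y$, $K(x):=y$, satisfies $\ell_E(Kx)=a=\ell_E(x)$, so $K$ is filtration preserving, and $\partial K+K\partial=\Id$, exhibiting $\Id\simeq 0$ through a filtered chain homotopy. Combined with Proposition \ref{prop:decomposition}, this means the filtered homotopy equivalence class of $(C_*,\partial_C,\ell_C)$ depends only on the multiset of non-ephemeral elementary summands, i.e., on the concise barcodes. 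For the forward direction, I would pass to sublevel-set persistent homology: for each $t\in\R$, let $C_*^{\leq t}:=\ell_C^{-1}((-\infty,t])$, a sub-chain-complex by the axioms of an FCC, yielding a persistence module $t\mapsto H_*(C_*^{\leq t})$. Any filtration-preserving chain map and any filtered chain homotopy restrict to every sublevel set, so a filtered homotopy equivalence $C_*\simeq D_*$ induces an isomorphism of persistence modules in each degree. Using Proposition \ref{prop:decomposition}, one checks directly that $\caE(a,a+L,k)$ contributes the interval $[a,a+L)$ in degree $k$ to this sublevel-set persistence module when $L>0$ and contributes nothing when $L=0$; hence the concise barcode equals the barcode of the sublevel-set persistence module, and f.h.e. forces the concise barcodes to agree.

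The main obstacle is establishing that the verbose barcode in Definition \ref{def:barcode} is independent of the chosen singular value decomposition, so that the transport-of-structure argument for (i) actually delivers a well-defined invariant; equivalently, one needs a Krull-Schmidt-type uniqueness statement accompanying the existence portion of Proposition \ref{prop:decomposition}. Once this well-definedness is granted (as in \cite{usher2016persistent}), the rest of the argument is essentially formal: transport of an SVD for (i), and an explicit contraction of ephemeral summands plus a sublevel-set persistence argument for (ii).
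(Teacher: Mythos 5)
The paper does not give its own proof of this theorem: it is stated as a citation to Theorems A and B of Usher and Zhang (\cite{usher2016persistent}), and all the heavy lifting is delegated to that reference. So there is no in-paper argument to compare against line by line. That said, your reconstruction is correct and follows the same architecture that Usher and Zhang use: prove a decomposition theorem (Proposition \ref{prop:decomposition} here), show that ephemeral elementary summands $\caE(a,a,k)$ are filtered contractible, and observe that filtration-preserving data restricts to sublevel sets. Your forward direction of (i) is essentially a restatement of what the paper proves separately as Proposition \ref{prop:permutation of barcodes} (transport of an SVD through a filtered chain isomorphism), and your forward direction of (ii) is essentially the content of Remark \ref{rmk:barcode} in the Vietoris-Rips case, generalized via the decomposition. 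Your homotopy $K(x)=y$, $\partial K+K\partial=\Id$, correctly exhibits $\caE(a,a,k)\simeq 0$ with $K$ filtration-preserving. You have also put your finger on the real content: the verbose barcode must be shown to be independent of the chosen singular value decomposition (equivalently, the decomposition in Proposition \ref{prop:decomposition} must be essentially unique). That well-definedness is precisely the nontrivial part of Usher--Zhang's Theorems A and B, and without it the transport argument in (i) would only show that \emph{some} SVD of $\partial^D_{k+1}$ reproduces the birth--death pairs, not that the invariant is well-defined. Since you explicitly flag this as the remaining dependency and attribute it to \cite{usher2016persistent}, the proposal is sound.
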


\begin{example}[f.h.e. but not f.c.i.] \label{ex:4-point space}
Let $X$ and $Y$ be (ultra-)metric spaces of $4$ points given in Figure \ref{fig:f.h.e.-not f.c.i.}. The FCCs $\left( \fccVR{X},\partial^X,\ell^{X} \right) $ and $\left( \opC_*(\Full(Y)),\partial^Y,\ell^{Y} \right) $ arising from Vietoris-Rips complexes have the same concise barcodes but different verbose barcodes.

\begin{figure}[ht]
\centering
\begin{tikzpicture}[scale=1]
  \node (a) at (-2,1.5) [circle, fill=blue, inner sep=1.24pt, label=above:\textcolor{blue}{$x_4$}] {};
  \node (b) at (-1,-0.5) [circle, fill=blue, inner sep=1.24pt, label=right:\textcolor{blue}{$x_2$}] {};
  \node (c) at (-3,-0.5) [circle, fill=blue, inner sep=1.24pt, label=left:\textcolor{blue}{$x_1$}] {};
  \node (d) at (-2.2,-1) [circle, fill=blue, inner sep=1.24pt, label=below:\textcolor{blue}{$x_3$}] {};
\draw [dashed]  (a)--(b) node[pos=0.6,fill=white]{\footnotesize{$2$}}; 
\draw [dashed] (a) --(c) node[pos=0.6,fill=white]{\footnotesize{$2$}};
\draw [dashed]  (a)--(d) node[pos=0.6,fill=white]{\footnotesize{$2$}};
\draw [dashed]  (b) --(c)node[above,pos=0.4,fill=white]{\footnotesize{$1$}};
\draw [dashed] (c)--(d)node[below left,pos=0.5,fill=white]{\footnotesize{$1$}};
\draw [dashed]  (b)--(d)node[below,pos=0.5,fill=white]{\footnotesize{$1$}};
\end{tikzpicture} 
\hspace{1em}
\begin{tikzpicture}[scale=0.9, line width=0.8pt, >=stealth, y=2.5em]
% Labels
\node[blue] at (-0.5,3) {\(x_1\)};
\node[blue] at (-0.5,2) {\(x_2\)};
\node[blue] at (-0.5,1) {\(x_3\)};
\node[blue] at (-0.5,0) {\(x_4\)};
% Horizontal lines
\draw (0,3) -- (2,3);
\draw (0,2) -- (4,2); %x1
\draw (0,1) -- (2,1); %x2
\draw (0,0) -- (4,0); %x3
\draw (4,1) -- (5,1); %x2
% Vertical lines
\draw (2,1) -- (2,3); 
\draw (4,2) -- (4,0); 
% Distance values
\node[fill=white] at (2.3,2.3) {\footnotesize{$1$}}; % Time step 1 label
\node[fill=white] at (4.3,1.3) {\footnotesize{$2$}}; % Time step 2 label
\end{tikzpicture}

\vspace{1em}

\begin{tikzpicture}[scale=1]
\node (e) at (2.5,2) [circle, fill=orange, inner sep=1.24pt, label=above:\textcolor{orange}{$y_4$}] {};
\node (f) at (1,-0.5) [circle, fill=orange, inner sep=1.24pt, label=left:\textcolor{orange}{$y_1$}] {};
\node (g) at (3,-0.5) [circle, fill=orange, inner sep=1.24pt, label=right:\textcolor{orange}{$y_2$}] {};
\node (h) at (1.5,1.5) [circle, fill=orange, inner sep=1.24pt, label=left:\textcolor{orange}{$y_3$}] {};
\draw [dashed]  (f)--(h)node[pos=0.7,fill=white]{\footnotesize{$2$}};
\draw [dashed]  (e)--(f) node[pos=0.6,fill=white]{\footnotesize{$2$}};
\draw [dashed] (g)--(h)node[pos=0.5,fill=white]{\footnotesize{$2$}};
\draw [dashed] (e) --(g) node[pos=0.5,fill=white]{\footnotesize{$2$}};
\draw [dashed] (e)--(h) node[above,pos=0.6,fill=white]{\footnotesize{$1$}};
\draw [dashed]  (f) --(g)node[below,pos=0.4,fill=white]{\footnotesize{$1$}};
    \end{tikzpicture} 
    \hspace{1em}
\begin{tikzpicture}[scale=0.9, line width=0.8pt, >=stealth, y=2.5em]
% Labels
\node[orange] at (-0.5,3) {\(y_1\)};
\node[orange] at (-0.5,2) {\(y_2\)};
\node[orange] at (-0.5,1) {\(y_3\)};
\node[orange] at (-0.5,0) {\(y_4\)};
% Horizontal lines
\draw (0,3) -- (2,3);
\draw (0,2) -- (2,2); %x1
\draw (0,1) -- (2,1); %x2
\draw (0,0) -- (2,0); %x3
\draw (4,1.5) -- (5,1.5); %x2
\draw (2,2.5) -- (4,2.5); 
\draw (2,0.5) -- (4,0.5); 
% Vertical lines
\draw (2,2) -- (2,3); 
\draw (2,0) -- (2,1); 
\draw (4,2.5) -- (4,0.5); 
% Distance values
\node[fill=white] at (2.3,2.8) {\footnotesize{$1$}}; % Time step 1 label
\node[fill=white] at (2.3,0.8) {\footnotesize{$1$}}; % Time step 1 label
\node[fill=white] at (4.3,1.8) {\footnotesize{$2$}}; % Time step 2 label
\end{tikzpicture}
\caption{\textit{Top}: the ultra-metric  space $X$ and its dendrogram representation; \textit{Bottom}: the ultra-metric space $Y$ and its dendrogram representation. In the dendrogram representations, the distance between two points is defined as the first time when the two points are merged together. For example, $d_X(x_1,x_2)=1$ and $d_X(x_1,x_4)=2$.} \label{fig:f.h.e.-not f.c.i.}
\end{figure}

In Figure \ref{fig:f.h.e.-not f.c.i.}, the ultra-metric spaces $X=\{x_1<x_2<x_3<x_4\}$ and $Y=\{y_1<y_2<y_3<y_4\}$ are ordered according to respective self-consistent orders (see page \pageref{para:order in ultra}). 
Then we can apply Theorem \ref{thm:verbose of ultra} to obtain the barcodes of the two metric spaces. 
The diagram below applies to both $X$ and $Y$, so we will use the following compressed notation. First, letting a sequence of indices $i_0i_1\dots i_{k}$ denote the corresponding simplex $[x_{i_0}, x_{i_1},\dots,x_{i_{k}}]$ (or $[y_{i_0}, y_{i_1},\dots,y_{i_{k}}]$, resp.) and applying the diagram on page \pageref{para:SVD diagram}, we have the following singular value decompositions for $\partial_2$ and $\partial_1$: 

\begin{center}
\begin{tikzcd}[column sep=0em]
    C_{2}: \ar[d,"\partial_{2}"] 
    &\Big( \left\{\partial(1234)\right\}, \ar[mapsto,d,"0"] 
    & \left\{123,124,234\right\}\Big) \ar[mapsto,d] \\	
    C_1: \ar[d,"\partial_1"] 
    &  0 
    &\Big(\left\{\partial(123),\partial(124),\partial(234)\right\}, \ar[mapsto,d,"0"] 
    & \left\{12,23,34\right\} \Big) \ar[mapsto,d] \\
    C_0:  &  & 0  & \Big(\left\{\partial(12),\partial(23),\partial(34)\right\} \Big).
\end{tikzcd}
\end{center} 
Then the barcodes of the two metric spaces are given as follows.
\begin{itemize}
    \item The verbose barcodes for $X$ are \label{para:verbose example}
\[\caB_{\Ver,k}(X)=\begin{cases}
	\left\{  (0,1),(0,1),(0,2),(0,\infty)\right\}   ,&\mbox{$k=0$}\\ 
	\left\{  (1,1),(2,2),(2,2)\right\}   ,&\mbox{$k=1$}\\ 
	\left\{  (2,2)\right\}   ,&\mbox{$k=2$}\\ 
	\emptyset,&\mbox{otherwise.}
	\end{cases}\]
\item The verbose barcodes for $Y$ are
\[\caB_{\Ver,k}(Y)=\begin{cases}
	\left\{  (0,1),(0,1),(0,2),(0,\infty)\right\}   ,&\mbox{$k=0$}\\ 
	\left\{  (2,2),(2,2),(2,2)\right\}   ,&\mbox{$k=1$}\\ 
	\left\{  (2,2)\right\}   ,&\mbox{$k=2$}\\ 
	\emptyset,&\mbox{otherwise.}
	\end{cases}\]
	\item The concise barcodes for $X$ and $Y$ are
\[\mathcal{B}_{\Con,k}(X)=\caB_{\Con,k}(Y)=\begin{cases}
	\left\{  (0,1),(0,1),(0,2),(0,\infty)\right\}   ,&\mbox{$k=0$}\\ 
	\emptyset,&\mbox{otherwise.}
	\end{cases}
	\]
\end{itemize}
\end{example}

Recall from Example \ref{ex:VR FCC} the notation for Vietoris-Rips FCCs. Let $(X,d_X)$ and $(Y,d_Y)$ be two finite pseudo-metric spaces. It is clear that the chain complexes $\fccVR{X}$ and $\opC_*(\Full(Y))$ are chain isomorphic if and only if $|X|=|Y|$. In addition, if $|X|=|Y|$ holds, then any bijection $f:X\to Y$ induces a chain isomorphism $$f_*:\fccVR{X}\xrightarrow{\cong}\opC_*(\Full(Y)).$$
Below, we show that the respective Vietoris-Rips FCCs of two isometric pseudo-metric spaces are filtered chain isomorphic.
\begin{proposition}[Isometry implies f.c.i.]\label{thm:iso->f.c.i}
Let $(X,d_X)$ and $(Y,d_Y)$ be two finite pseudo-metric spaces. If $(X,d_X)$ and $(Y,d_Y)$ are isometric, then FCCs $\left( \fccVR{X},\partial^X,\ell^{X} \right) $ and $\left( \opC_*(\Full(Y)),\partial^Y,\ell^{Y} \right) $ are filtered chain isomorphic. \end{proposition}

\begin{proof} Suppose $(X,d_X)$ and $(Y,d_Y)$ are isometric. Then there exists a bijective map $\phi:(X,d_X)\to (Y,d_Y)$ such that $d_X(x,x')=d_Y(\phi(x),\phi(x'))$ for all $x,x'\in X$. Clearly, $\phi$ induces a chain isomorphism $\Phi_*:\fccVR{X}\to \opC_*(\Full(Y))$ such that each $k$-simplex $[x_1,\dots,x_k]$ in $\Full(X)$ is mapped to $[\phi(x_1),\dots,\phi(x_k)]$ in $\Full(Y)$. Since $\phi$ is distance-preserving, we have that $\ell^{Y}\circ\Phi_*=\ell^{X}$. Thus, $\Phi_*$ is a filtered chain isomorphism.
\end{proof}

However, the converse of Proposition \ref{thm:iso->f.c.i} is not true.

\begin{example}[f.c.i. but not isometric] \label{ex:5-point space}
Let $X$ and $Y$ be the ultra-metric spaces  (each consisting of 5 points)  depicted in Figure \ref{fig:f.c.i.-not iso}. These spaces are extensions of those presented in Example \ref{ex:4-point space}, obtained by adding the  points $x_5$ and $y_5$ to $X$ and $Y$, respectively. The distance matrices for $X$ and $Y$ are respectively:
\[\begin{pmatrix}
0 & 1 & 1 & 2 & 2\\
1 & 0 & 1 & 2 & 2\\
1 & 1 & 0 & 2 & 2\\
2 & 2 & 2 & 0 & 0.5\\  
2 & 2 & 2 & 0.5 & 0
\end{pmatrix}
\text{ and }
\begin{pmatrix}
0 & 1 & 2 & 2 & 2\\
1 & 0 & 2 & 2 & 2\\
2 & 2 & 0 & 1 & 1\\
2 & 2 & 1 & 0 & 0.5\\  
2 & 2 & 1 & 0.5 & 0
\end{pmatrix}.\]

\begin{figure}[ht]	  
\centering
\begin{tikzpicture}[scale=1]
\node (a) at (-2,1.5) [circle, fill=blue, inner sep=1.24pt, label=left:\textcolor{blue}{$x_4$}] {};
\node (b) at (-1,-0.5) [circle, fill=blue, inner sep=1.24pt, label=right:\textcolor{blue}{$x_2$}] {};
\node (c) at (-3,-0.5) [circle, fill=blue, inner sep=1.24pt, label=left:\textcolor{blue}{$x_1$}] {};
\node (d) at (-2.2,-1) [circle, fill=blue, inner sep=1.24pt, label=below:\textcolor{blue}{$x_3$}] {};
\node (i) at (-2,2.5) [circle, fill=blue, inner sep=1.24pt, label=above:\textcolor{blue}{$x_5$}] {};
\filldraw (a) [color=blue] circle[radius=2pt];
\filldraw (b)[color=blue] circle[radius=2pt];
\filldraw (c) [color=blue] circle[radius=2pt];
\filldraw (d) [color=blue] circle[radius=2pt];
\filldraw (i) [color=blue] circle[radius=2pt];
\draw [dashed]  (a)--(b) node[pos=0.6,fill=white]{\footnotesize{$2$}}; 
\draw [dashed] (a) --(c) node[pos=0.6,fill=white]{\footnotesize{$2$}};
\draw [dashed]  (a)--(d) node[pos=0.6,fill=white]{\footnotesize{$2$}};
\draw [dashed]  (b) --(c)node[pos=0.25,fill=white]{\footnotesize{$1$}};
\draw [dashed] (c)--(d)node[below left,pos=0.5,fill=white]{\footnotesize{$1$}};
\draw [dashed]  (b)--(d)node[below,pos=0.5,fill=white]{\footnotesize{$1$}};
\draw [dashed]  (i)--(a)node[pos=0.5,fill=white]{\footnotesize{$0.5$}};
\path [dashed,bend right] (i) edge (c);
\path [dashed,bend left] (i) edge (b);
\path [dashed,bend left] (i) edge (d);
\node[fill=white] at (-1,1.5) {\footnotesize{$2$}};
\node[fill=white] at (-3,1.5) {\footnotesize{$2$}};
\node[fill=white] at (-1.5,1.25) {\footnotesize{$2$}};
\end{tikzpicture} 
\hspace{1em}
\begin{tikzpicture}[scale=0.9, line width=0.8pt, >=stealth, y=2.5em]
% Labels
\node[blue] at (-0.5,3) {\(x_1\)};
\node[blue] at (-0.5,2) {\(x_2\)};
\node[blue] at (-0.5,1) {\(x_3\)};
\node[blue] at (-0.5,0) {\(x_4\)};
\node[blue] at (-0.5,-1) {\(x_5\)};
% Horizontal lines
\draw (0,3) -- (2,3);
\draw (0,2) -- (4,2); %x1
\draw (0,1) -- (2,1); %x2
\draw (0,0) -- (4,0); %x3
\draw (0,-1) -- (1,-1); %x4
\draw (4,1) -- (5,1); %x2
% Vertical lines
\draw (2,1) -- (2,3); 
\draw (4,2) -- (4,0); 
\draw (1,-1) -- (1,0); 
% Distance values
\node[fill=white] at (1.5,-0.3) {\footnotesize{$0.5$}}; % Time step 0.5 label
\node[fill=white] at (2.3,2.3) {\footnotesize{$1$}}; % Time step 1 label
\node[fill=white] at (4.3,1.3) {\footnotesize{$2$}}; % Time step 2 label
\end{tikzpicture}

\vspace{1em}

\begin{tikzpicture} [scale=1]
\node (e) at (2.5,1.5) [circle, fill=orange, inner sep=1.24pt, label={[label distance=0em]170:\textcolor{orange}{$y_4$}}] {};
\node (f) at (1,-0.5) [circle, fill=orange, inner sep=1.24pt, label=left:\textcolor{orange}{$y_1$}] {};
\node (g) at (3,-0.5) [circle, fill=orange, inner sep=1.24pt, label=right:\textcolor{orange}{$y_2$}] {};
\node (h) at (1.5,1) [circle, fill=orange, inner sep=1.24pt, label=left:\textcolor{orange}{$y_3$}] {};
\node (j) at (2,3) [circle, fill=orange, inner sep=1.24pt, label=left:\textcolor{orange}{$y_5$}] {};
\filldraw (e) [color=orange] circle[radius=2pt];
\filldraw (f)[color=orange] circle[radius=2pt];
\filldraw (g) [color=orange] circle[radius=2pt];
\filldraw (h) [color=orange] circle[radius=2pt]; 
\filldraw (j) [color=orange] circle[radius=2pt]; 
\draw [dashed]  (f)--(h)node[pos=0.7,fill=white]{\footnotesize{$2$}};
\draw [dashed]  (e)--(f) node[below, pos=0.7,fill=white]{\footnotesize{$2$}};
\draw [dashed] (g)--(h)node[pos=0.5,fill=white]{\footnotesize{$2$}};
\draw [dashed] (e) --(g) node[pos=0.5,fill=white]{\footnotesize{$2$}};
\draw [dashed] (e)--(h) node[pos=0.5,fill=white]{\footnotesize{$1$}};
\draw [dashed]  (f) --(g)node[below,pos=0.4,fill=white]{\footnotesize{$1$}};
\draw [dashed]  (j) --(e)node[pos=0.5,fill=white]{\footnotesize{$0.5$}};
\draw [dashed]  (j) --(h)node[pos=0.5,fill=white]{\footnotesize{$1$}};
\path [dashed,bend left] (j) edge (g);
\path [dashed,bend right] (j) edge (f);
\node[fill=white] at (1,1.5) {\footnotesize{$2$}};
\node[fill=white] at (3,1.5) {\footnotesize{$2$}};
\end{tikzpicture} 
\hspace{0.5em}
\begin{tikzpicture}[scale=0.9, line width=0.8pt, >=stealth, y=2.5em]
% Labels
\node[orange] at (-0.5,3) {\(y_1\)};
\node[orange] at (-0.5,2) {\(y_2\)};
\node[orange] at (-0.5,1) {\(y_3\)};
\node[orange] at (-0.5,0) {\(y_4\)};
\node[orange] at (-0.5,-1) {\(y_5\)};
% Horizontal lines
\draw (0,3) -- (2,3);
\draw (0,2) -- (2,2); 
\draw (0,1) -- (2,1);
\draw (0,0) -- (2,0); 
\draw (4,1.5) -- (5,1.5); 
\draw (2,2.5) -- (4,2.5); 
\draw (2,0.5) -- (4,0.5); 
\draw (0,-1) -- (1,-1); 
% Vertical lines
\draw (2,2) -- (2,3); 
\draw (2,0) -- (2,1); 
\draw (4,2.5) -- (4,0.5); 
\draw (1,-1) -- (1,0); 
% Distance values
\node[fill=white] at (1.5,-0.3) {\footnotesize{$0.5$}}; % Time step 0.5 label
\node[fill=white] at (2.3,2.8) {\footnotesize{$1$}}; % Time step 1 label
\node[fill=white] at (2.3,0.8) {\footnotesize{$1$}}; % Time step 1 label
\node[fill=white] at (4.3,1.8) {\footnotesize{$2$}}; % Time step 2 label
\end{tikzpicture}
\caption{\textit{Top}: the ultra-metric space $X$ and its dendrogram representation; \textit{Bottom}: the ultra-metric space $Y$ and its dendrogram representation.}  \label{fig:f.c.i.-not iso}
\end{figure}

We apply Theorem \ref{thm:verbose of ultra} to compute the verbose barcodes of the two spaces and see that they are equal:
\[\mathcal{B}_{\Ver,k}(X)=\mathcal{B}_{\Ver,k}(Y)=\begin{cases}
	\left\{  (0,0.5),(0,1),(0,1),(0,2),(0,\infty)\right\}   ,&\mbox{$k=0$}\\ 
	\left\{  (1,1),(2,2),(2,2),(2,2),(2,2),(2,2)\right\}   ,&\mbox{$k=1$}\\ 
	\left\{  (2,2),(2,2),(2,2),(2,2)\right\}   ,&\mbox{$k=2$}\\ 
	\left\{  (2,2)\right\}   ,&\mbox{$k=3$}\\ 
	\emptyset,&\mbox{otherwise.}
	\end{cases}\]
\end{example}

\section{Isometry Theorem (\texorpdfstring{$\di = \dm$}{di=dm})} \label{sec:iso}
In TDA, it is well-known that, under mild conditions (e.g. $q$-tameness, see \cite{CSGO16}), a certain isometry theorem holds: the interleaving distance between persistence modules is equal to the bottleneck distance between their \emph{concise} barcodes (cf. \cite{cohen2007stability,chazal2009proximity,cohen2007stability}). In our notation, this means that for any degree $k$ and any two FCCs $(C_*,\partial_C,\ell_C)$ and $(D_*,\partial_D,\ell_D)$,
\[ \db
    \left(  \caB_{\Con,k}(C_*), \caB_{\Con,k}(D_*)\right)   
    =
    \di\left(  \rmH_k\circ\left( C_*,\partial_C,\ell_C\right) , \rmH_k\circ\left( D_*,\partial_D,\ell_D \right) \right) .\]

In this section, we prove an analogous isometry theorem for the \emph{verbose} barcode: the interleaving distance $\di$ between filtered chain complexes is equal to the matching distance $\dm$ between their respective verbose barcodes: 

\isothm*

\subsection{Interleaving Distance \texorpdfstring{$\di$}{di} between FCCs} \label{sec:inter} 

An FCC $(C_*,\partial_C,\ell_C)$ can be viewed as a functor from the poset category $(\bbR,\leq)$ to the category of chain complexes:
\[t\mapsto \ell_C^{-1}(\{t\}) \text{ and } (t\leq s)\mapsto \left(\ell_C^{-1}(\{t\})\hookrightarrow \ell_C^{-1}(\{s\})\right)\]
where $\ell_C^{-1}(\cdot)$ represents the preimage. 

We review the general notion of interleaving distance given by \cite[Definition 3.1 \& 3.2]{bubenik2014categorification}.
For a given category $\mathcal{C}$, two functors $\bbV,\bbW:(\bbR,\leq)\to \caC$ are said to be \textbf{$\delta$-interleaved} if these exist families of morphisms $\{f_t:V_t\to W_{t+\delta}\}_{t\in \bbR }$ and $\{g_t:W_t\to V_{t+\delta}\}_{t\in \bbR }$ such that the following diagrams commute for all $t\leq t'$:
	\begin{center}
		\begin{tikzcd}[column sep={6em,between origins}]
			V_t \ar[dr, "f_t" below left] 	
			\ar[r,"v_{t,t'}"]
			& 
			V_{t'}			
			\ar[dr,"f_{t'}" ]&
			\\
			&W_{t+\delta}
			\ar[r,"w_{t+\delta,t'+\delta}" below]
			& 
			W_{t'+\delta}
		\end{tikzcd}
\hspace{0.6cm}
	\begin{tikzcd}[column sep={6em,between origins}]
			& V_{t+\delta}
			\ar[r,"v_{t+\delta,t'+\delta}"]
			& 
			V_{t'+\delta}
			\\
			W_t
			\ar[ur, "g_t"] 
			\ar[r,"w_{t,t'}" below]
			& 
			W_{t'}
			\ar[ur,"g_{t'}" below right]&
		\end{tikzcd}
	\end{center} 
and
\begin{center}
		\begin{tikzcd}
			V_t
			\ar[dr, "f_t" below left ] 
			\ar[rr,"v_{t,t+2\delta}"]%
			&& 
			V_{t+2\delta}		
			\\
			& W_{t+\delta}
			\ar[ur,"g_{t+\delta}" below right ]
			& 
		\end{tikzcd}
\hspace{0.6cm}
		\begin{tikzcd}
			& V_{t+\delta}
			\ar[dr,"f_{t+\delta}"]
			& 
			\\
			W_t
			\ar[ur, "g_t"] 
			\ar[rr,"w_{t,t+2\delta}" below]
			&& 
			W_{t+2\delta}.	
		\end{tikzcd}
	\end{center}

\begin{definition}[Interleaving Distance]\label{def:interleaving}
The \textbf{interleaving distance} between two functors $\bbV,\bbW:(\bbR,\leq)\to \caC$ is
$$\di(\bbV,\bbW):=\inf\left\{  \delta\geq0: \bbV \text{ and }\bbW\text{ are }\delta\text{-interleaved}\right\}  .$$
Here we follow the convention that $\inf \emptyset=+\infty.$
\end{definition}

\begin{remark} \label{rmk:general-di}
The concept of $\delta$-interleaving can be reformulated using the following constructions described in \cite{usher2016persistent}. Given an FCC $(C_*,\partial_C,\ell_C)$ and $\lambda\in\bbR$, let $C_*^{\lambda}$ denote the subspace of $C_*$ spanned by $x\in C_*$ such that $\ell_C(x)\leq \lambda$, i.e.\[C_*^{\lambda}:=\ell_C^{-1}([-\infty,\lambda])\subset C_*.\]
Because of the property $\ell_C\circ \partial_C\leq \ell_C$, $C_*^{\lambda}$,  together with the restrictions of $\partial_C$ and $\ell_C$, constitutes an FCC denoted by $(C_*^{\lambda},\partial_C,\ell_C).$ For real numbers $\lambda\leq \lambda'$, the inclusion $i_{\lambda}^{\lambda'}:C_*^{\lambda}\to C_*^{\lambda'}$ naturally gives rise to a chain map from $(C_*^{\lambda},\partial_C,\ell_C)$ to $(C_*^{\lambda'},\partial_C,\ell_C)$ that is filtration preserving. 

For $\delta>0$, a $\delta$-interleaving between two FCCs $(C_*,\partial_C,\ell_C)$ and $(D_*,\partial_D,\ell_D)$ is a pair $(\Phi_*,\Psi_*)$ of chain maps $\Phi_*:C_*\to D_*$ and $\Psi_*:D_*\to C_*$ such that
\begin{itemize}
    \item $\ell_D\circ \Phi_*\leq \ell_C+\delta$;
    \item $\ell_C\circ \Psi_*\leq \ell_D+\delta$;  
    \item For each $\lambda\in \bbR$, the compositions $\Psi_*^{\lambda+\delta}\circ\Phi_*^{\lambda}:C_*^{\lambda}\to C_*^{\lambda+2\delta}$ and $\Phi_*^{\lambda+\delta}\circ\Psi_*^{\lambda}:D_*^{\lambda}\to D_*^{\lambda+2\delta}$ are equal to  the respective inclusions. 
\end{itemize}
This reformulation is inspired by \cite[Definition A.1]{usher2016persistent}, but it differs from the one in that paper. Indeed, our definition is more stringent in that it demands equality between the aforementioned maps whereas the one in \cite[Definition A.1]{usher2016persistent} only requires the compositions $\Phi_*^{\lambda+\delta}\circ\Psi_*^{\lambda}$ and $\Phi_*^{\lambda+\delta}\circ\Psi_*^{\lambda}$, depending on the context, to either be chain homotopic to the inclusions or to induce the same maps on homology as the inclusions.
\end{remark}

\begin{proposition} \label{prop: condition for d_I finite}
Let $(C_*,\partial_C,\ell_C)$ and $(D_*,\partial_D,\ell_D)$ be two FCCs. Then, 
\[\di\left( (C_*,\partial_C,\ell_C),(D_*,\partial_D,\ell_D)\right) <\infty \iff (C_*,\partial_C)\cong (D_*,\partial_D).\]
In particular, if $\di\left( (C_*,\partial_C,\ell_C),(D_*,\partial_D,\ell_D)\right) <\infty$, then any $\delta$-interleaving $(\Phi_*,\Psi_*)$ between $(C_*,\partial_C,\ell_C)$ and $(D_*,\partial_D,\ell_D)$ is such that $\Psi_*\circ\Phi_*=\Id_C$ and $\Phi_*\circ\Psi_*=\Id_D$.
\end{proposition}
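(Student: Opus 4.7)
The plan is to reduce both directions to a single structural observation about the third axiom of Definition~\ref{def:delta-inter}: the prescribed identity $\Psi_*^{\lambda+\delta}\circ\Phi_*^{\lambda}=i_\lambda^{\lambda+2\delta}$ is an honest equality of linear maps, because $C_*^{\lambda}$ sits as a genuine subspace of $C_*^{\lambda+2\delta}\subseteq C_*$. Since every $x\in C_*$ lies in $C_*^{\ell_C(x)}$, taking the union over $\lambda$ forces $\Psi_*\circ\Phi_*=\Id_{C_*}$ on all of $C_*$; symmetrically $\Phi_*\circ\Psi_*=\Id_{D_*}$. This single remark proves the ``in particular'' clause, and simultaneously yields the forward implication $\di<\infty\Rightarrow (C_*,\partial_C)\cong (D_*,\partial_D)$, because the chain map $\Phi_*$ (a chain map by definition of a morphism of FCCs) thereby acquires a two-sided chain-map inverse $\Psi_*$.

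For the converse I would start from an arbitrary chain isomorphism $\Phi_*\colon(C_*,\partial_C)\xrightarrow{\cong}(D_*,\partial_D)$, a priori not filtration-preserving, set $\Psi_*:=\Phi_*^{-1}$, and promote this pair to a $\delta$-interleaving by choosing $\delta$ large enough that the first two axioms of Definition~\ref{def:delta-inter} hold. Once they do, the third axiom is automatic from $\Psi_*\circ\Phi_*=\Id_C$ and $\Phi_*\circ\Psi_*=\Id_D$. To exhibit such a $\delta$, for each degree $k$ I pick orthogonal ordered bases $(e_i^k)$ of $C_k$ and $(f_j^k)$ of $D_k$; the orthogonality identity $\ell_C\bigl(\sum_i \lambda_i e_i^k\bigr)=\max_{\lambda_i\neq 0}\ell_C(e_i^k)$ combined with axiom (iii) of a non-Archimedean norm shows that it suffices to take
\[
\delta \;=\; \max\!\Bigl(\,\sup_{k,i}\bigl(\ell_D(\Phi_* e_i^k)-\ell_C(e_i^k)\bigr),\ \sup_{k,j}\bigl(\ell_C(\Psi_* f_j^k)-\ell_D(f_j^k)\bigr)\Bigr).
\]

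The main obstacle will be verifying that this $\delta$ is finite. For each fixed $k$ the inner maxima range over finite index sets (orthogonalizability forces $C_k$ and $D_k$ finite-dimensional), so they are finite; the remaining subtlety is that the outer supremum ranges over $k\in\Z$. For the FCCs considered in this paper---Vietoris--Rips FCCs of finite metric spaces, and more generally those arising as finite direct sums of elementary FCCs via Proposition~\ref{prop:decomposition}---only finitely many degrees contribute (cf.\ Example~\ref{ex:card of VR verbose barcodes}), so the outer supremum is in fact a maximum of finitely many finite quantities and hence finite. The cleanest write-up would either make this degree-boundedness explicit, or alternatively invoke Proposition~\ref{prop:decomposition} to reduce the construction of the interleaving to pairing up elementary summands one at a time and bounding the shift on each pair individually.
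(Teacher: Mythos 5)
Your proposal is correct and takes essentially the same approach as the paper: the forward direction and the ``in particular'' clause both fall out of the third interleaving axiom exactly as you describe (the paper takes a single large $\lambda$ rather than a union over all $\lambda$, but this is cosmetic), and the converse is the same promotion of a chain isomorphism $\Phi_*$, $\Psi_*=\Phi_*^{-1}$ to a $\delta$-interleaving with $\delta$ chosen large enough to absorb the filtration shifts. Your extra attention to the finiteness of $\delta$ (finite-dimensionality of each $C_k$ giving finite inner maxima, plus degree-boundedness to control the outer supremum over $k$) is a legitimate point that the paper passes over silently by simply asserting $\delta=\max\{\|\ell_C-\ell_D\circ\Phi_*\|_\infty,\|\ell_D-\ell_C\circ\Psi_*\|_\infty\}<\infty$; making that hypothesis explicit, as you do, is a small but genuine improvement in rigor.
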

\begin{proof} Because $\di\left(  (C_*,\partial_C,\ell_C),(D_*,\partial_D,\ell_D)\right) <\infty$, there is a $\delta$-interleaving $(\Phi_*,\Psi_*)$ between $(C_*,\partial_C,\ell_C)$ and $(D_*,\partial_D,\ell_D)$, for some $\delta>0$. Let $\lambda>0$ be large enough. Then we have $\Psi_*\circ\Phi_*=\Id_C$, as chain maps, because of the following commutative diagram:
\begin{center}
		\begin{tikzcd}[column sep=-1em]
		(C_*,\partial_C,\ell_C)^{\lambda}=(C_*,\partial_C,\ell_C)
			\ar[dr, "\Phi_*" below left ] 
			\ar[rr,"="]%
			&& 
		(C_*,\partial_C,\ell_C)^{\lambda+2\delta}=(C_*,\partial_C,\ell_C)
			\\
			& 
			(D_*,\partial_D,\ell_D)^{\lambda+\delta}=(D_*,\partial_D,\ell_D).
			\ar[ur,"\Psi_*" below right ]%_{t+\delta,t'+\delta}"]
			& 
		\end{tikzcd}
\end{center}
And similarly, $\Phi_*\circ\Psi_*=\Id_D$. Thus, $(C_*,\partial_C)\cong (D_*,\partial_D).$

Conversely, suppose that $(C_*,\partial_C)\cong (D_*,\partial_D)$, via chain maps $\Phi_*:C_*\to D_*$ and $\Psi_*:D_*\to C_*$. It is clear that $(\Phi_*,\Psi_*)$ forms a $\delta$-interleaving between $(C_*,\partial_C,\ell_C)$ and $(D_*,\partial_D,\ell_D)$, for $\delta:=\max\left\{   \|\ell_C-\ell_D\circ \Phi_*\|_{\infty}, \|\ell_D-\ell_C\circ \Psi_*\|_{\infty}\right\}.$
Because both $C_*$ and $D_*$ are finite-dimensional, their filtration functions are bounded above, which implies that $\delta<\infty.$
\end{proof}

\begin{proposition}If two FCCs $(C_*,\partial_C,\ell_C)$ and $(D_*,\partial_D,\ell_D)$ are filtered chain isomorphic (see Definition \ref{def:f.c.i.}), then
$\di\left(  (C_*,\partial_C,\ell_C),(D_*,\partial_D,\ell_D)\right) =0.$
\end{proposition}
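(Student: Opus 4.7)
The plan is to exhibit, for every $\delta>0$, an explicit $\delta$-interleaving between the two FCCs; since the interleaving distance is the infimum of such $\delta$, this will force $\di = 0$.

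Concretely, let $\Phi_*:(C_*,\partial_C)\xrightarrow{\cong}(D_*,\partial_D)$ be a filtered chain isomorphism with $\ell_D\circ\Phi_*=\ell_C$, and set $\Psi_*:=\Phi_*^{-1}$. The identity $\ell_D\circ\Phi_*=\ell_C$ also yields $\ell_C\circ\Psi_*=\ell_D$ (apply the first identity to $\Psi_*(y)$ for $y\in D_*$). I will verify that $(\Phi_*,\Psi_*)$ is a $\delta$-interleaving for every $\delta>0$ by checking the three conditions of Definition \ref{def:delta-inter}:
the first two, $\ell_D\circ\Phi_*\leq\ell_C+\delta$ and $\ell_C\circ\Psi_*\leq\ell_D+\delta$, follow immediately from the two equalities above since $\delta\geq 0$;
for the third condition, the exact filtration-preservation $\ell_D\circ\Phi_*=\ell_C$ forces $\Phi_*(C_*^{\lambda})\subseteq D_*^{\lambda}\subseteq D_*^{\lambda+\delta}$, so $\Phi_*^{\lambda}:C_*^{\lambda}\to D_*^{\lambda+\delta}$ is well-defined as the restriction of $\Phi_*$, and likewise for $\Psi_*^{\lambda+\delta}$; the composition $\Psi_*^{\lambda+\delta}\circ\Phi_*^{\lambda}:C_*^{\lambda}\to C_*^{\lambda+2\delta}$ is then just the restriction of $\Psi_*\circ\Phi_*=\Id_C$, which is precisely the inclusion $i_{\lambda}^{\lambda+2\delta}$, and symmetrically for the other composition.

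Since $(\Phi_*,\Psi_*)$ is a $\delta$-interleaving for every $\delta>0$, one concludes $\di((C_*,\partial_C,\ell_C),(D_*,\partial_D,\ell_D))=\inf\{\delta>0\}=0$.

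There is no real obstacle here: the statement is essentially a sanity check showing that the interleaving distance is well-defined on isomorphism classes in the category $\fcc$ (cf.\ Remark \ref{rmk:iso_category}). The only subtle point worth highlighting explicitly is the derivation of $\ell_C\circ\Psi_*=\ell_D$ from $\ell_D\circ\Phi_*=\ell_C$, which uses that $\Phi_*$ is a bijection.
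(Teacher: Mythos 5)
Your proof is correct and follows essentially the route the paper intends: the paper leaves this proposition proofless because it is an immediate corollary of the preceding Proposition \ref{prop: condition for d_I finite} (whose converse direction constructs a $\delta$-interleaving from a chain isomorphism with $\delta=\max\{\|\ell_C-\ell_D\circ\Phi_*\|_\infty,\|\ell_D-\ell_C\circ\Psi_*\|_\infty\}$, which vanishes under the f.c.i.\ hypothesis), or equivalently of Theorem \ref{thm:di of same chain}. You simply spell out what is implicit there, correctly noting that since Definition \ref{def:delta-inter} only defines $\delta$-interleavings for $\delta>0$, one should verify the interleaving conditions for every positive $\delta$ and then take the infimum.
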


Because of Proposition \ref{prop: condition for d_I finite}, the interleaving distance between FCCs is only interesting when we consider the case when two FCCs have the \emph{same} underlying chain complexes. Let $(C_*,\partial_C)$ be a finite-dimensional non-zero chain complex over $\field$, and let $\Iso((C_*,\partial_C))$ be the set of chain isomorphisms on $(C_*,\partial_C)$. 

\begin{theorem}\label{thm:di of same chain} Let $(C_*,\partial_C)$ be a non-zero chain complex over $\field$ and let $\ell_1,\ell_2:C_*\to \bbR\sqcup\left\{  -\infty\right\}   $ be two filtration functions such that both $(C_*,\partial_C,\ell_1)$ and $(C_*,\partial_C,\ell_2)$ are FCCs. Then, 
\begin{equation*}\label{eq:di}
 \di\left( (C_*,\partial_C,\ell_1),(C_*,\partial_C,\ell_2)\right) = \inf_{\Phi_*\in \Iso(C_*,\partial_C)}\|\ell_1-\ell_2\circ \Phi_*\|_{\infty}.
\end{equation*}
Here we follow the convention $(-\infty)-(-\infty)=0$ when computing $\|\ell_1-\ell_2\|_{\infty}$. When $\ell_1$ is the trivial filtration function, we have
$$\di\left( (C_*,\partial_C,\ell_1),(C_*,\partial_C,\ell_2)\right) =\|\ell_2\|_{\infty}.$$
\end{theorem}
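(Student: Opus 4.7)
The plan is to use Proposition~\ref{prop: condition for d_I finite} to reduce $\delta$-interleavings between two FCCs on the \emph{same} underlying chain complex to pairs of mutually inverse chain isomorphisms, so that the interleaving axioms collapse to a pair of filtration inequalities. Specifically, since $(C_*,\partial_C)$ is trivially chain-isomorphic to itself, Proposition~\ref{prop: condition for d_I finite} gives $\di<\infty$ and forces any $\delta$-interleaving $(\Phi_*,\Psi_*)$ between $(C_*,\partial_C,\ell_1)$ and $(C_*,\partial_C,\ell_2)$ to satisfy $\Psi_*\circ\Phi_*=\Id_C=\Phi_*\circ\Psi_*$; hence $\Phi_*\in\Iso(C_*,\partial_C)$ and $\Psi_*=\Phi_*^{-1}$.

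For the upper bound $\di\leq \inf_{\Phi_*}\|\ell_1-\ell_2\circ\Phi_*\|_\infty$, I would fix any $\Phi_*\in\Iso(C_*,\partial_C)$, set $\Psi_*:=\Phi_*^{-1}$, and put $\delta:=\|\ell_1-\ell_2\circ\Phi_*\|_\infty$. The first filtration condition $\ell_2\circ\Phi_*\leq \ell_1+\delta$ is immediate from the definition of $\delta$; for the dual, substituting $y=\Phi_*(x)$ converts $\ell_1(x)\leq \ell_2(\Phi_*(x))+\delta$ into $\ell_1(\Psi_*(y))\leq \ell_2(y)+\delta$. The composition conditions hold trivially because $\Psi_*\circ\Phi_*=\Id$ and $\Phi_*\circ\Psi_*=\Id$ agree with the natural inclusions $C_*^\lambda\hookrightarrow C_*^{\lambda+2\delta}$. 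Thus $(\Phi_*,\Psi_*)$ is a $\delta'$-interleaving for every $\delta'>\delta$, whence $\di\leq\delta$.

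For the reverse inequality, given any $\delta$-interleaving $(\Phi_*,\Psi_*)$, the first paragraph identifies $\Phi_*\in\Iso(C_*,\partial_C)$ and $\Psi_*=\Phi_*^{-1}$. Combining $\ell_2\circ\Phi_*\leq \ell_1+\delta$ with $\ell_1\circ\Phi_*^{-1}\leq \ell_2+\delta$ (which, after substituting $y=\Phi_*(x)$, becomes $\ell_1\leq \ell_2\circ\Phi_*+\delta$ pointwise) yields $\|\ell_1-\ell_2\circ\Phi_*\|_\infty\leq\delta$, so $\inf_{\Phi_*}\|\ell_1-\ell_2\circ\Phi_*\|_\infty\leq\di$. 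The convention $(-\infty)-(-\infty)=0$ is consistent throughout because axiom (i) of a filtration function together with $\Phi_*$ being a linear isomorphism forces $\ell_1(x)=-\infty\iff x=0\iff \ell_2(\Phi_*(x))=-\infty$, so the only occurrences of $-\infty$ on either side are simultaneous. The trivial-$\ell_1$ special case then follows instantly from the main formula: any chain isomorphism permutes the nonzero vectors of $C_*$, so $\|\ell_2\circ\Phi_*\|_\infty=\|\ell_2\|_\infty$ for every $\Phi_*$.

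I do not foresee a substantive obstacle; the entire content of the theorem has been packaged into Proposition~\ref{prop: condition for d_I finite}, and the remainder is routine manipulation. The one minor point requiring care is that the definition of $\delta$-interleaving nominally requires $\delta>0$, which I would handle by always passing to $\delta'>\delta$ before invoking the definition, or equivalently by taking the infimum limit $\delta\searrow\di$ at the end.
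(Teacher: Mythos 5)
Your proof is correct and follows essentially the same route as the paper: for the upper bound you build a $\delta$-interleaving $(\Phi_*,\Phi_*^{-1})$ from any chain isomorphism $\Phi_*$, noting $\|\ell_1\circ\Phi_*^{-1}-\ell_2\|_\infty=\|\ell_1-\ell_2\circ\Phi_*\|_\infty$, and for the lower bound you invoke Proposition~\ref{prop: condition for d_I finite} to force any interleaving pair to be mutually inverse chain isomorphisms whose filtration bounds give $\|\ell_1-\ell_2\circ\Phi_*\|_\infty\leq\delta$. The paper is slightly terser, leaving the verification of the interleaving axioms and the $-\infty$ bookkeeping implicit, but the substance is identical.
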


\begin{proof} For any $\Phi_*\in \Iso((C_*,\partial_C))$, we check directly that $(\Phi_*,\Phi_*^{-1})$ forms a $\delta$-interleaving between $(C_*,\partial_C,\ell_1)$ and $(C_*,\partial_C,\ell_2)$, for
\[\delta=\max\left\{  \|\ell_1-\ell_2\circ \Phi_*\|_{\infty},\|\ell_1\circ\Phi_*^{-1}-\ell_2\|_{\infty}\right\}   =\|\ell_1-\ell_2\circ \Phi_*\|_{\infty}.\]
Thus,
\[\di\left( \left(  C_*,\partial_C,\ell_1 \right) ,\left(  C_*,\partial_C,\ell_2 \right) \right) 
\leq \inf_{\Phi_*\in \Iso( C_*,\partial_C)}\|\ell_1-\ell_2\circ \Phi_*\|_{\infty}.\]

Conversely, it follows from Proposition \ref{prop: condition for d_I finite} that $ \di\left( (C_*,\partial_C,\ell_1),(C_*,\partial_C,\ell_2)\right) <\infty$, and that each $\delta$-interleaving $(\Phi_*,\Psi_*)$ between $(C_*,\partial_C,\ell_1)$ and $(C_*,\partial_C,\ell_2)$ satisfies $\Phi_*\in \Iso((C_*,\partial_C))$. Moreover, we have $\|\ell_1-\ell_2\circ \Phi_*\|_{\infty}\leq \delta$. 
By minimizing over $\delta$ and $(\Phi_*,\Psi_*)$, 
we obtain
\[\di\left( \left(  C_*,\partial_C,\ell_1 \right) ,\left(  C_*,\partial_C,\ell_2 \right) \right) 
\geq \inf_{\Phi_*\in \Iso( C_*,\partial_C)}\|\ell_1-\ell_2\circ \Phi_*\|_{\infty}. \qedhere\]
\end{proof}

We immediately obtain the following corollary:
\begin{corollary}\label{cor:di of same chain} Let $(C_*,\partial_C)$ be a non-zero chain complex over $\field$ and let $\ell_1,\ell_2:C_*\to \bbR\sqcup\left\{  -\infty\right\}   $ be two filtration functions such that both $(C_*,\partial_C,\ell_1)$ and $(C_*,\partial_C,\ell_2)$ are FCCs. Then, 
\begin{equation*}\label{eq:infty stability}
    \left|\|\ell_1\|_{\infty}-\|\ell_2\|_{\infty}\right|\leq \di\left( (C_*,\partial_C,\ell_1),(C_*,\partial_C,\ell_2)\right) \leq\|\ell_1-\ell_2\|_{\infty}.
\end{equation*}
\end{corollary}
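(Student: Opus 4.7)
The plan is to derive both inequalities directly from Theorem \ref{thm:di of same chain}, which expresses $\di$ as an infimum over chain isomorphisms of the sup-norm difference $\|\ell_1 - \ell_2 \circ \Phi_*\|_\infty$. The upper bound is almost immediate: since the identity $\Id_{C_*}$ lies in $\Iso((C_*, \partial_C))$, taking $\Phi_* = \Id_{C_*}$ in the infimum formula of Theorem \ref{thm:di of same chain} yields
\[
\di\left( (C_*,\partial_C,\ell_1),(C_*,\partial_C,\ell_2)\right) \leq \|\ell_1 - \ell_2 \circ \Id_{C_*}\|_{\infty} = \|\ell_1 - \ell_2\|_{\infty}.
\]

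For the lower bound, the key observation I will use is that if $\Phi_* \in \Iso((C_*,\partial_C))$, then $\Phi_*$ is a linear bijection $C_* \to C_*$ sending $0 \mapsto 0$, so the multisets of values $\{\ell_2(x) : x \in C_*\}$ and $\{\ell_2(\Phi_*(x)) : x \in C_*\}$ coincide. Hence $\|\ell_2 \circ \Phi_*\|_{\infty} = \|\ell_2\|_{\infty}$ (with the convention that one ignores, or reads as $0$, the value at $x = 0$ where both sides equal $-\infty$). Combining this with the reverse triangle inequality for the sup-norm, for every $\Phi_* \in \Iso((C_*,\partial_C))$ we have
\[
\bigl| \|\ell_1\|_{\infty} - \|\ell_2\|_{\infty} \bigr| = \bigl| \|\ell_1\|_{\infty} - \|\ell_2 \circ \Phi_*\|_{\infty} \bigr| \leq \|\ell_1 - \ell_2 \circ \Phi_*\|_{\infty}.
\]
Taking the infimum of the right-hand side over $\Phi_* \in \Iso((C_*,\partial_C))$ and invoking Theorem \ref{thm:di of same chain} yields $\bigl| \|\ell_1\|_{\infty} - \|\ell_2\|_{\infty} \bigr| \leq \di\left( (C_*,\partial_C,\ell_1),(C_*,\partial_C,\ell_2)\right)$.

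The only subtlety, and the step I would pay the most attention to, is handling the convention $(-\infty) - (-\infty) = 0$ stated in Theorem \ref{thm:di of same chain}. Because $\ell_i(x) = -\infty$ precisely when $x = 0$ and $\Phi_*(0) = 0$, the potentially infinite values cancel at exactly the same element on both sides of every sup-norm comparison, so the convention applies consistently and the arithmetic manipulations above are well-defined. With this caveat noted, the two inequalities combine to give the claim.
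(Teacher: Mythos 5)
Your proof is correct and matches the paper's intent: the paper states Corollary \ref{cor:di of same chain} as an immediate consequence of Theorem \ref{thm:di of same chain}, and the exact observation you use for the lower bound --- that $\|\ell_2\circ\Phi_*\|_\infty=\|\ell_2\|_\infty$ for any chain isomorphism $\Phi_*$, combined with the reverse triangle inequality --- is precisely the reasoning the paper spells out later in the proof of Proposition \ref{prop:infty-stability}. Your handling of the $\Phi_*=\Id$ case for the upper bound and the note about the $(-\infty)-(-\infty)=0$ convention at $x=0$ are both sound.
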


\begin{proof}
    The first inequality follows from $\|\ell_1-\ell_2\circ \Phi_*\|_{\infty}\geq \left|\|\ell_1\|_{\infty}-\|\ell_2\circ \Phi_*\|_{\infty}\right| = \left|\|\ell_1\|_{\infty}-\|\ell_2\|_{\infty}\right|$, for every chain isomorphism $\phi_*$. The second inequality follows by taking $\Phi_*$ to be the identity map.
\end{proof}

\begin{example}[$\di$ between Elementary FCCs]\label{ex:di of e.FCC} 
Recall the notion of elementary FCCs from Definition \ref{def:elementary-f.c.c.}. Let $a$ and $b$ be two real numbers. We claim that
\[\di\left( \caE(a,a,k),\caE(b,b,l)\right) =\begin{cases}
|a-b|,&\mbox{$k=l$}\\ 
\infty,&\mbox{$k\neq l$.}
	\end{cases}\]

When $k=l$, the chain complexes underlying $\caE(a,a,k)$ and $\caE(b,b,k)$ are isomorphic. In $\caE(a,a,k)$, let $x$ and $y$ be the generators of degrees $k$ and $k+1$, respectively. Correspondingly, let $x'$ and $y'$ be the generators for $\caE(b,b,k)$. Any chain isomorphism $\Phi_*$ from $\caE(a,a,k)$ to $\caE(b,b,k)$ can be represented as $\Phi_(x) = \lambda x'$ and $\Phi_(y) = \lambda' y'$ for some non-zero $\lambda,\lambda'\in \field$. Thus, we have
\[ \|\ell_a-\ell_b\circ \Phi_*\|_{\infty}= \max\left\{  |\ell_a(x)-\ell_b(\lambda x)|,|\ell_a(y)-\ell_b(\lambda' y)|\right\}   =|a-b|.\]

For the case $k\neq l$, since the underlying chain complexes of $\caE(a,a,k)$ and $\caE(b,b,l)$ are not isomorphic to each other, we have $\di\left( \caE(a,a,k),\caE(b,b,l)\right) =\infty$.
\end{example}

\subsection{Matching Distance \texorpdfstring{$\dm$}{dm} between Verbose Barcodes}\label{sssec:dm} 
Let $\caH:=\left\{  (p,q):0\leq p< q\leq \infty\right\}   $, and let $\Delta:=\left\{  (r,r):r\in \bbR_{\geq 0}\sqcup \left\{  +\infty\right\}   \right\}   $. We denote $\overline{\caH}:=\caH\sqcup \Delta$ the extended real upper plane. 
Let $d_{\infty}$ be the metric on $\overline{\caH}$ inherited from the $l_{\infty}$-metric: for $p,q,p',q'\in \bbR_{\geq 0}\sqcup \{\infty\}$,
\[d_{\infty}((p,q),(p',q')):=\infdistance{(p,q)}{(p',q')} = 
\begin{cases}
\max\left\{  |p-p'|,|q-q'|\right\},&\mbox{$q,q'<\infty$,}\\ 
|p-p'|,&\mbox{$q=q'=\infty$,}\\
\infty,&\mbox{otherwise.}
	\end{cases}
	\]
It follows that the distance from a point  $(p,q)$ to the diagonal $\Delta$ is $\dfrac{1}{2}(q-p)$.

We will often consider multisets of points in the extended real upper plane. Let $\Delta^{\infty}$, $\caH^\infty$, and $\overline{\caH}^\infty$ represent multisets where each point from $\Delta$, $\caH$, and $\overline{\caH}$, respectively, is included with countably infinite multiplicity. These multisets are equipped with the metric $d^\infty$, inherited from their respective underlying sets. 

Let \( A \) and \( B \) be multisets with supports \(\mathcal{A}\) and \(\mathcal{B}\), and multiplicity functions \( m_A \) and \( m_B \), respectively. 
Define the sets consisting of ``labeled'' elements in \( A \) and \( B \) by
\[
\tilde{A} := \{ (x,i) : x \in \mathcal{A},\ 1 \le i \le m_A(x) \} \quad\text{and}\quad \tilde{B} := \{ (y,j) : y \in \mathcal{B},\ 1 \le j \le m_B(y) \}.
\]
We define the concept of \emph{map} from the multiset \( A \) to the multiset \( B \) as that of a map \(\phi : \tilde{A} \to \tilde{B}\) between the corresponding ``labeled'' sets $\tilde{A}$ and $\tilde{B}$ and, for simplicity, we will then just write $\phi:A\to B.$
If \(\phi\) is bijective, it is called a \emph{bijection} between the corresponding multisets. 

Let \((Z, d_Z)\) be a metric space, and let \( A \) and \( B \) be multisets supported on \( Z \). 
For any map \( \phi: A \to B \), we define its \emph{cost} as  
\[
    \cost_Z(\phi) := \sup_{a \in A} d_Z(a, \phi(a)).
\]

\begin{definition}[The Matching Distance $\dm$] \label{def:dist-match} 
Let $A$ and $B$ be two non-empty multisets supported on $\overline{\caH}$. 
The \textbf{matching distance} between $A$ and $B$ is
\[
    \dm (A,B)
    := \inf\left\{ \cost_{\overline{\caH}}(\phi):A\xrightarrow{\phi} B \text{ a bijection}\right\}  
    = \inf\left\{\sup_{a\in A} \infdistance{a}{\phi(a)}:A\xrightarrow{\phi} B \text{ a bijection}\right\},
\]
where $\dm (A,B)=\infty$ if $\card(A)\neq\card(B)$.
\end{definition}

\begin{definition}[The Bottleneck Distance $\db$] \label{def:bottleneck}
Let $A$ and $B$ be two finite non-empty multisets supported on $\caH$. The \textbf{bottleneck distance} between $A$ and $B$ is
\[\db (A,B):=\dm(A\sqcup \Delta^\infty,B\sqcup \Delta^\infty).\]
\end{definition}

\begin{theorem}[{\cite{chazal2009proximity,lesnick2015theory}}] Let $\bbV$ and $\bbW$ be persistence modules whose vector spaces are finite-dimensional. Then, 
	$$\di(\bbV,\bbW)=\db(\caB(\bbV),\caB(\bbW)),$$
where $\di(\bbV,\bbW)$ is defined  in Definition \ref{def:interleaving} with $\caC=\vs$ and $\caB(\cdot)$ denotes the barcode of a persistence module.
\end{theorem} 

It follows from the following proposition that the matching distance between verbose barcodes is larger than or equal to the bottleneck distance between concise barcodes. 

\begin{proposition}\label{prop:dm-db} 
Let $A$ and $B$ be finite multisets supported on $\caH$. Let $A_0$ and $B_0$ be finite multisets supported on the diagonal $\Delta$ such that $\card(A\cup A_0)=\card(B\cup B_0)$. Then,
\[ \db(A,B)\leq \dm(A\cup A_0,B\cup B_0).\]
\end{proposition}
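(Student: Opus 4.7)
The plan is to construct, from any bijection witnessing $\dm(A \sqcup A_1, B \sqcup B_1)$, a bijection between $A \cup \Delta^\infty$ and $B \cup \Delta^\infty$ of no greater cost, exploiting the fact that $\Delta^\infty$ absorbs any finite multiset of diagonal points.

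The first step is to record the multiset identity $(A \sqcup A_1) \cup \Delta^\infty = A \cup \Delta^\infty$, and similarly for the $B$ side. This holds because $A \subset \caH$ contains no diagonal points, so the off-diagonal parts of both sides coincide with $A$, while each point of $\Delta$ already has infinite multiplicity in $\Delta^\infty$, so adjoining the finite multiset $A_1 \subset \Delta$ leaves the multiplicities unchanged. In particular the two multisets being matched by $\db(A,B)$ can be written as $(A \sqcup A_1) \cup \Delta^\infty$ and $(B \sqcup B_1) \cup \Delta^\infty$.

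Next, given any bijection $\phi : A \sqcup A_1 \to B \sqcup B_1$, I would extend it to a bijection $\tilde\phi : (A \sqcup A_1) \cup \Delta^\infty \to (B \sqcup B_1) \cup \Delta^\infty$ by letting $\tilde\phi$ agree with $\phi$ on $A \sqcup A_1$ and act as the identity on the remaining $\Delta^\infty$ copies (paired via the obvious identity between the $\Delta^\infty$ factors on the two sides). Since $d^\infty(p,p) = 0$ for every $p \in \Delta$, this identity part contributes zero to the cost of $\tilde\phi$, so
\[ \max_{x}\, d^\infty(x, \tilde\phi(x)) \;=\; \max_{a\in A\sqcup A_1} d^\infty(a,\phi(a)). \]
Using the identification from the first step to view $\tilde\phi$ as a bijection $A \cup \Delta^\infty \to B \cup \Delta^\infty$, this yields
\[ \db(A,B) \;=\; \dm(A \cup \Delta^\infty, B \cup \Delta^\infty) \;\leq\; \max_{a\in A\sqcup A_1} d^\infty(a,\phi(a)). \]
Taking the minimum over all bijections $\phi : A\sqcup A_1 \to B\sqcup B_1$ delivers the claimed inequality.

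The only real subtlety is bookkeeping with multiset unions: one must be precise that adjoining the finitely many additional diagonal points of $A_1$ to $\Delta^\infty$ does not alter the multiplicity of any diagonal point, so that the extension of $\phi$ by the identity on the diagonal factor is genuinely a bijection between the multisets appearing in $\db(A,B)$. Once this identification is in place, no further estimates are needed.
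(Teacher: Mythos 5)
Your proof is correct and follows essentially the same strategy as the paper's: extend an optimal bijection $\phi : A\sqcup A_1 \to B\sqcup B_1$ to a bijection of $A\cup\Delta^\infty \to B\cup\Delta^\infty$ by a cost-zero map on the diagonal part. The only cosmetic difference is that you invoke the multiset identity $A_1\cup\Delta^\infty = \Delta^\infty$ up front (finite plus infinite multiplicity is infinite) so the extension can simply be the identity on $\Delta^\infty$, whereas the paper explicitly builds, for each diagonal point $a$, a bijection $f_a:\Delta_a^\infty - A_1 \to \Delta_a^\infty - B_1$ and glues these to the $\phi$-part; both devices encode the same absorption property of $\Delta^\infty$.
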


\begin{proof} 
In this proof, we denote \(\cost_{\overline{\caH}}(\cdot)\) by \(\cost(\cdot)\). 
By the respective definitions of $\db$ and $\dm$, we have
\begin{enumerate}
    \item [(i)] $\db(A,B)=\inf\left\{   
    \cost(\bar{\phi})%\sup\limits_{a\in A\cup \Delta^\infty}\infdistance{a}{\phi(a)}
    : A\cup \Delta^{\infty}\xrightarrow{\bar{\phi}} B\cup \Delta^{\infty}\text{ a bijection}\right\}  $;
    \item [(ii)] $\dm(A\cup A_0,B\cup B_0)=\inf\left\{   \cost(\phi)%\sup\limits_{a\in A\cup A_0}\infdistance{a}{\phi(a)}
    : A\cup A_0\xrightarrow{\phi} B\cup B_0\text{ a bijection}\right\}  .$
\end{enumerate}

For each  $0\leq r\leq \infty$, consider the multiset $\{(r,r)\}^\infty$ and notice that $$\Delta^\infty = \bigsqcup_{r\in [0,\infty]}  \{(r,r)\}^\infty.$$
Since \(A_0\) and \(B_0\) are finite, for every \( r \) there exists at least one bijection
\[
f_r: \{(r,r)\}^\infty\setminus A_0 \to \{(r,r)\}^\infty\setminus B_0.
\]
Moreover, we have
\[
\cost(f_r) = \|(r,r)- f_r((r,r))\|_\infty = \|(r,r)-(r,r)\|_\infty = 0.
\]
Together, the maps \(\{f_r : r\in[0,\infty]\}\) induce a bijection
\[
f: \Delta^\infty\setminus A_0 \to \Delta^\infty\setminus B_0,
\]
with 
\(
\cost(f) = \sup_{r\in[0,\infty]} \cost(f_r) = 0.
\)

Let \(\phi: A\cup A_0 \to B\cup B_0\) be any bijection. Observe that
\[
A\cup \Delta^\infty = A\cup \left(A_0 \sqcup (\Delta^\infty \setminus A_0)\right) = \left(A\cup A_0\right) \sqcup (\Delta^\infty \setminus A_0)
\]
and, similarly,   
\[
B\cup \Delta^\infty = \left(B\cup B_0\right) \sqcup (\Delta^\infty \setminus B_0).
\]  
Thus, the bijections \(\phi: A\cup A_0 \to B\cup B_0\) and \(f: \Delta^\infty \setminus A_0 \to \Delta^\infty \setminus B_0\) together define a bijection \(\bar{\phi}: A\cup \Delta^\infty \to B\cup \Delta^\infty\).  
Moreover, because $\cost(f)=0$, we have 
\[\dm(A\cup \Delta^{\infty},B\cup \Delta^\infty)\leq \cost(\bar{\phi}) %\sup_{a\in A\cup\Delta^\infty} \infdistance{a}{\bar{\phi}(a)}
=\max\{\cost(\phi), \cost(f)\} = \cost(\phi).\] 
Letting $\phi$ run through all bijections from $A\cup A_0$ to $ B\cup B_0$, we obtain that (i) $\leq$ (ii). 
\end{proof}

The corollary below follows directly from Proposition \ref{prop:dm-db}: 
\begin{corollary}
Given two FCCs $(C_*,\partial_C,\ell_C)$ and $(D_*,\partial_D,\ell_D)$ and any dimension $k$, we have
\[ \db\left(\caB_{\Con,k}(C_*), \caB_{\Con,k}(D_*)\right)\leq \dm\left(\caB_{\Ver,k}(C_*),\caB_{\Ver,k}(D_*)\right).\]
\end{corollary}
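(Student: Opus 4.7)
The plan is to reduce the statement to a direct application of Proposition~\ref{prop:dm-db}. By Definition~\ref{def:barcode}, the concise barcode $\caB_{\Con,k}^C$ is obtained from the verbose barcode $\caB_{\Ver,k}^C$ by discarding exactly those elements with $\ell(y_i) - \ell(x_i) = 0$, i.e.\ the ephemeral points sitting on the diagonal $\Delta$. Hence I can write
\[
\caB_{\Ver,k}^C = \caB_{\Con,k}^C \sqcup A_1, \qquad \caB_{\Ver,k}^D = \caB_{\Con,k}^D \sqcup B_1,
\]
where $A_1$ and $B_1$ are finite sub-multisets of the diagonal $\Delta$.

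First I would dispatch the degenerate case: if $\card(\caB_{\Ver,k}^C) \neq \card(\caB_{\Ver,k}^D)$, then by Definition~\ref{def:dist-match} the matching distance on the right-hand side equals $\infty$, and the inequality holds trivially. Otherwise, $\card(\caB_{\Con,k}^C \sqcup A_1) = \card(\caB_{\Con,k}^D \sqcup B_1)$, which is exactly the hypothesis required by Proposition~\ref{prop:dm-db}. Applying that proposition with $A = \caB_{\Con,k}^C$, $B = \caB_{\Con,k}^D$, $A_1$ and $B_1$ as above yields
\[
\db\left(\caB_{\Con,k}^C, \caB_{\Con,k}^D\right) \leq \dm\left(\caB_{\Con,k}^C \sqcup A_1,\, \caB_{\Con,k}^D \sqcup B_1\right) = \dm\left(\caB_{\Ver,k}^C, \caB_{\Ver,k}^D\right),
\]
which is the desired inequality.

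There is essentially no obstacle here: the key content has already been absorbed into Proposition~\ref{prop:dm-db}, whose proof constructs a bijection on $A \cup \Delta^\infty \to B \cup \Delta^\infty$ by extending a given bijection $A \sqcup A_1 \to B \sqcup B_1$ via an arbitrary pairing of the (infinitely many) remaining diagonal points. The only thing to verify is that the extra diagonal points $A_1, B_1$ present in the verbose barcodes genuinely lie in $\Delta$, which is immediate from the definition of "ephemeral." Thus the corollary is a one-line consequence of the preceding proposition.
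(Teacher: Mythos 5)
Your argument is exactly the paper's (the paper simply states that the corollary ``follows directly from Proposition~\ref{prop:dm-db}'' without spelling out the decomposition): you decompose each verbose barcode as the concise barcode plus its ephemeral diagonal points, note that verbose barcodes of FCCs are finite since each $C_k$ is finite-dimensional, handle the trivial case of mismatched cardinalities, and then invoke Proposition~\ref{prop:dm-db}. This is correct and is essentially the only sensible route.
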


At the end of this section, we show that chain isomorphisms induce permutations of verbose barcodes. Given a finite-dimensional FCC $(C_*,\partial_C,\ell_C)$, let $\caB_{\Ver}$ be its verbose barcode. We write $\ell=\ell_C$ and $\caB=\caB_{\Ver}$ for notational simplicity. %Given $f\in \Iso( (C_*,\partial_C))$, we show that $f$ induces a permutation of the verbose barcode $\caB$.

\begin{lemma}\label{lem:change of basis on FCC} Let $(C_*,\partial_C)$ be a non-zero chain complex over $\field$ and let $\ell:C_*\to \bbR\sqcup\left\{  -\infty\right\}   $ be a filtration function such that $(C_*,\partial_C,\ell)$ is an FCC. Then, for any $\Phi_*\in \Iso((C_*,\partial_C))$,
\begin{enumerate}
    \item \label{part:filtration} $(C_*,\partial_C,\ell\circ \Phi_*)$ is a filtered chain complex, cf. Definition \ref{def:fcc}.
    \item \label{part:f.c.i.} $(C_*,\partial_C,\ell)$ and $(C_*,\partial_C,\ell\circ\Phi_*)$ are filtered chain isomorphic, via the chain isomorphism $\Phi_*^{-1}$.
\end{enumerate}
\end{lemma}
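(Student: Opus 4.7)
The plan is to verify each part directly from the definitions, using that $\Phi_*$ is a chain isomorphism (so $\partial_C \circ \Phi_* = \Phi_* \circ \partial_C$ and $\Phi_*|_{C_k}: C_k \to C_k$ is a vector space isomorphism for each $k$) and that $\ell$ already satisfies the FCC axioms. No delicate estimate is needed; the content is essentially that the FCC structure is equivariant under chain automorphisms.

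For part \ref{part:filtration}, I would check the three axioms of Definition \ref{def:fcc} for $\ell \circ \Phi_*$ one-by-one. Non-degeneracy follows because $\Phi_*$ is injective, so $\Phi_*(x)=0$ iff $x=0$. Homogeneity $\ell \circ \Phi_*(\lambda x) = \ell(\lambda\Phi_*(x)) = \ell(\Phi_*(x)) - \nu(\lambda)$ is immediate from $\field$-linearity of $\Phi_*$ and the corresponding property of $\ell$. The non-Archimedean inequality transfers similarly. To verify that each $(C_k, \ell \circ \Phi_*|_{C_k})$ is orthogonalizable, I would start from an orthogonal basis $(y_1,\dots,y_n)$ for $(C_k, \ell)$ guaranteed by hypothesis, and show $(\Phi_*^{-1}(y_1), \dots, \Phi_*^{-1}(y_n))$ is orthogonal for $(C_k, \ell\circ \Phi_*)$ via a one-line calculation. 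Finally, the compatibility $(\ell\circ \Phi_*) \circ \partial_C \leq \ell \circ \Phi_*$ comes from $\ell\circ \Phi_* \circ \partial_C = \ell \circ \partial_C \circ \Phi_* \leq \ell \circ \Phi_*$, using the chain map property and the assumption $\ell \circ \partial_C \leq \ell$.

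For part \ref{part:f.c.i.}, I would appeal to Definition \ref{def:f.c.i.} and observe that $\Phi_*^{-1} \in \Iso((C_*,\partial_C))$ is itself a chain isomorphism. The only thing left is the filtration condition $(\ell \circ \Phi_*) \circ \Phi_*^{-1} = \ell$, which is an immediate cancellation. Hence $\Phi_*^{-1}: (C_*,\partial_C,\ell) \to (C_*,\partial_C,\ell\circ\Phi_*)$ witnesses the filtered chain isomorphism.

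I do not expect any real obstacle: the statement is essentially a bookkeeping lemma whose only purpose is to let us freely reparametrize the filtration function by precomposition with a chain automorphism. The only mild subtlety is remembering to verify orthogonalizability of $(C_k, \ell \circ \Phi_*|_{C_k})$ rather than just the three axioms of a normed vector space; this is handled by transporting an orthogonal basis along $\Phi_*^{-1}$ as indicated above.
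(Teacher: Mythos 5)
Your proof is correct and follows essentially the same route as the paper: verify the three filtration-function axioms for $\ell\circ\Phi_*$, use the chain-map identity $\ell\circ\Phi_*\circ\partial_C = \ell\circ\partial_C\circ\Phi_* \leq \ell\circ\Phi_*$ for compatibility with the differential, and conclude Part 2 by the cancellation $(\ell\circ\Phi_*)\circ\Phi_*^{-1}=\ell$. You additionally verify orthogonalizability of each $(C_k,\ell\circ\Phi_*|_{C_k})$ by transporting an orthogonal basis along $\Phi_*^{-1}$ --- a check that the paper's own proof omits but which is genuinely required by Definition~\ref{def:fcc}, so your version is in fact slightly more complete.
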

\begin{proof} Part (\ref{part:f.c.i.}) follows directly from Part (\ref{part:filtration}), since $\Phi_*^{-1}$ is a chain isomorphism such that \[(\ell\circ \Phi_* )\circ \Phi_*^{-1}=\ell.\] 
It then remains to show in Part (\ref{part:filtration}) that $\ell\circ \Phi_*$ is a filtration function. This holds because for any $x,y\in C_*$ and $0\neq \lambda\in\field$, 
\begin{itemize}
    \item $\ell(\Phi_*(x))=-\infty\iff \Phi_*(x)=0\iff x=0$;
    \item $\ell(\Phi_*(\lambda x))=\ell( \Phi_*(x))$;
    \item $\ell(\Phi_*(x+y))=\ell(\Phi_*(x)+\Phi_*(y))\leq \max\left\{  \ell(\Phi_*(x)),\ell(\Phi_*(y))\right\}   $.
\end{itemize}
In addition, because $\Phi_*$ is chain map and $\ell$ is a filtration function, we have 
\[\ell\circ \Phi_*\circ \partial_C =\ell\circ \partial_C\circ \Phi_*\leq \ell\circ \Phi_*.\qedhere\]
\end{proof}

Because $(C_*,\partial_C,\ell)$ and $(C_*,\partial_C,\ell\circ\Phi_*)$ are filtered chain isomorphic, they have the same verbose barcode $\caB$. Recall from Definition \ref{def:barcode} that for each dimension $k$, the degree-$k$ verbose barcode $\caB_k$ is given by a singular value decomposition $\left(  (y_1,\dots,y_n),(x_1,\dots,x_m) \right) $ of the linear map $\partial_{k+1}:C_{k+1}\to \kernel\partial_k$ (see Definition \ref{def:s.v.d.}). In other words, $(y_1,\dots,y_n)$ and $(x_1,\dots,x_m)$ are orthogonal ordered bases for $C_{k+1}$ and $\kernel\partial_k$, respectively, such that for $r=\rank(\partial_{k+1})$,
\begin{itemize}
    \item $(y_{r+1},\dots,y_n)$ is an orthogonal ordered basis for $\kernel  \partial_{k+1}$;
    \item $(x_{1},\dots,x_r)$ is an orthogonal ordered basis for $\im \partial_{k+1}$;
    \item $\partial_{k+1} y_i=x_i$ for $i=1,\dots,r$.
    \item $\ell(y_1)-\ell(x_1)\geq \dots\geq \ell(y_r)-\ell(x_r).$
\end{itemize}

Since $\Phi_*$ is a chain isomorphism, we have $\Phi_{k+1}(C_{k+1})=C_{k+1}$. In addition,
\revision{
\[y\in \Phi_{k}(\kernel\partial_{k})\iff \Phi_{k}^{-1}(y)\in \kernel\partial_{k}\iff \partial_{k} (\Phi_k^{-1} y) = \Phi_{k-1}^{-1}(\partial_{k} y)=0 \iff \partial_{k} y=0\iff y\in  \kernel\partial_{k}.\]
}
Thus, $\Phi_{k}(\kernel\partial_k)=\kernel\partial_k$, and similarly $\Phi_{k+1}(\kernel\partial_{k+1})=\kernel\partial_{k+1}$. Also, it follows from 
\[y\in \Phi_{k+1}(\im\partial_{k+1})\iff y=\Phi_{k+1}(\partial_{k+1} x)= \partial_{k+1}(\Phi_{k+1} x)\iff y\in \im\partial_{k+1}\]
that $\Phi_{k+1}(\im \partial_{k+1})=\im \partial_{k+1}$.

\begin{proposition}\label{prop:permutation of barcodes} Denote $\tilde{y}_i=\Phi_{k+1}^{-1}(y_i)$ and $\tilde{x}_j=\Phi_{k}^{-1}(x_j)$ for each $i=1,\dots,n$ and $j=1,\dots,m$. Then $\left(  (\tilde{y}_1,\dots,\tilde{y}_n),(\tilde{x}_1,\dots,\tilde{x}_m) \right) $ forms a singular value decomposition of $\partial_{k+1}:C_{k+1}\to \kernel\partial_k$ in the filtered chain complex $(C_*,\partial_C,\ell\circ\Phi_*)$.
\end{proposition}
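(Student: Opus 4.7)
The plan is to verify, one by one, each of the defining clauses of a sorted singular value decomposition (Definition \ref{def:s.v.d.}) for the tuple $\left( (\tilde{y}_1,\dots,\tilde{y}_n),(\tilde{x}_1,\dots,\tilde{x}_m) \right)$, applied to $\partial_{k+1}:C_{k+1}\to \kernel\partial_k$ viewed inside the FCC $(C_*,\partial_C,\ell\circ\Phi_*)$. Since $\Phi_*$ is a chain isomorphism, so is $\Phi_*^{-1}$, and the equalities $\Phi_{k+1}(\kernel\partial_{k+1})=\kernel\partial_{k+1}$, $\Phi_k(\kernel\partial_k)=\kernel\partial_k$ and $\Phi_k(\im\partial_{k+1})=\im\partial_{k+1}$ noted right before the proposition also hold for $\Phi_*^{-1}$. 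Therefore $(\tilde{y}_1,\dots,\tilde{y}_n)$ is a basis of $C_{k+1}$ whose tail $(\tilde{y}_{r+1},\dots,\tilde{y}_n)$ spans $\kernel\partial_{k+1}$, while $(\tilde{x}_1,\dots,\tilde{x}_m)$ is a basis of $\kernel\partial_k$ whose head $(\tilde{x}_1,\dots,\tilde{x}_r)$ spans $\im\partial_{k+1}$.

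Next I would check the boundary identity $\partial_{k+1}\tilde{y}_i=\tilde{x}_i$ for $i=1,\dots,r$. Since $\Phi_*^{-1}$ commutes with $\partial_C$, we have
\[
\partial_{k+1}\tilde{y}_i \;=\; \partial_{k+1}\Phi_{k+1}^{-1}(y_i)\;=\;\Phi_k^{-1}(\partial_{k+1}y_i)\;=\;\Phi_k^{-1}(x_i)\;=\;\tilde{x}_i.
\]
For orthogonality in $(C_*,\partial_C,\ell\circ\Phi_*)$, I would simply push the computation back to the original filtration: for any $\lambda_1,\dots,\lambda_n\in\field$,
\[
(\ell\circ\Phi_{k+1})\!\left(\sum_{i=1}^n\lambda_i\tilde{y}_i\right)\;=\;\ell\!\left(\sum_{i=1}^n\lambda_i y_i\right)\;=\;\max_{1\leq i\leq n}\bigl(\ell(y_i)-\nu(\lambda_i)\bigr)\;=\;\max_{1\leq i\leq n}\bigl((\ell\circ\Phi_{k+1})(\tilde{y}_i)-\nu(\lambda_i)\bigr),
\]
where the second equality uses orthogonality of $(y_1,\dots,y_n)$ in $(C_{k+1},\ell)$ and the third uses $\ell(y_i)=\ell(\Phi_{k+1}\Phi_{k+1}^{-1}y_i)=(\ell\circ\Phi_{k+1})(\tilde{y}_i)$. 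The same computation with $\Phi_k$ in place of $\Phi_{k+1}$ handles $(\tilde{x}_1,\dots,\tilde{x}_m)$, so both tuples are orthogonal with respect to $\ell\circ\Phi_*$; the corresponding sub-tuples spanning $\kernel\partial_{k+1}$ and $\im\partial_{k+1}$ are then orthogonal as well.

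Finally, the sorting condition is automatic: since $(\ell\circ\Phi_{k+1})(\tilde{y}_i)=\ell(y_i)$ and $(\ell\circ\Phi_k)(\tilde{x}_i)=\ell(x_i)$, the differences
\[
(\ell\circ\Phi_{k+1})(\tilde{y}_i)-(\ell\circ\Phi_k)(\tilde{x}_i)\;=\;\ell(y_i)-\ell(x_i)
\]
are monotone non-increasing in $i$ by hypothesis. Putting all the clauses together yields the claimed singular value decomposition. Because each step reduces to transporting an equality through $\Phi_*$ or $\Phi_*^{-1}$, there is no substantive obstacle here; the only care needed is to keep track of which map ($\Phi_k$ vs.\ $\Phi_{k+1}$) is acting in which degree, which is handled uniformly by the fact that $\Phi_*$ is a chain isomorphism.
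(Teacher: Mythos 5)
Your proof is correct and takes essentially the same approach as the paper: the paper's own proof simply lists the four singular-value-decomposition clauses for the transported tuples and lets the reader verify them via the subspace-preservation identities established just before the proposition, whereas you spell out each verification (chain-map identity for the boundary clause, pushing orthogonality back through $\Phi_*$ for the filtration clause, and the induced sorting). The only content beyond the paper's terse statement is the explicit unwinding, which is sound and even quietly corrects the paper's small index typo ($\Phi_k$, not $\Phi_{k+1}$, preserves $\im\partial_{k+1}$).
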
 

\begin{proof} 
The proof, while essentially a straightforward list of validated axioms, is included here to assist readers in understanding the mechanics of singular value decompositions in non-Archimedean normed vector spaces.

First note that $(\tilde{y}_1,\dots,\tilde{y}_n)$ and $(\tilde{x}_1,\dots,\tilde{x}_m)$ are orthogonal ordered bases for $\Phi_{k+1}(C_{k+1})=C_{k+1}$  and $\Phi_k(\kernel\partial_k)=\kernel\partial_k$, respectively. Moreover, for $r=\rank(\partial_{k+1})$, 
\begin{itemize}
    \item $(\tilde{y}_{r+1},\dots,\tilde{y}_n)$ is an orthogonal ordered basis for $\Phi_{k+1}(\kernel  \partial_{k+1})=\kernel  \partial_{k+1}$;
    \item $(\tilde{x}_{1},\dots,\tilde{x}_r)$ is an orthogonal ordered basis for $\Phi_{k+1}(\im \partial_{k+1})=\im \partial_{k+1}$;
    \item $\partial_{k+1} \tilde{y}_i=\tilde{x}_i$ for $i=1,\dots,r$.
    \item $\ell\circ\Phi_*(\tilde{y}_1)-\ell\circ\Phi_*(\tilde{x}_1)\geq \dots\geq \ell\circ\Phi_*(\tilde{y}_r)-\ell\circ\Phi_*(\tilde{x}_r).$
    \qedhere
\end{itemize}
\end{proof}

\paragraph{Comparing \texorpdfstring{$\db$}{db} and \texorpdfstring{$\dm$}{dm} with the Hausdorff Distance \texorpdfstring{$\dhaus$}{dh}.}

Recall the Hausdorff distance $\dhaus$ from page \pageref{para:dh}. 

\begin{proposition}\label{prop:dh and dm}
For any $A,B\subset \overline{\caH}^\infty$,
\begin{equation*}
   \dhaus(A,B)\leq \dm (A,B). 
\end{equation*}
As a result, for two finite metric spaces $X$ and $Y$ and degree $k\geq 0$, we have
\begin{itemize}
    \item $\dhaus(\caB_{\Con,k}(X)\sqcup \Delta^\infty,\caB_{\Con,k}(Y)\sqcup \Delta^\infty)\leq \db(\caB_{\Con,k}(X),\caB_{\Con,k}(Y))$; and
    \item $\dhaus(\caB_{\Ver,k}(X),\caB_{\Ver,k}(Y))\leq \dm(\caB_{\Ver,k}(X),\caB_{\Ver,k}(Y))$.
\end{itemize}
\end{proposition}

\begin{proof}
For any bijiection $\phi:A\to B$ and for any $a\in A$, we have
\[ d^\infty(a,B)\leq  \infdistance{a}{\phi(a)} \quad  \text{and} \quad  d^\infty(\phi(a),A)\leq  \infdistance{a}{\phi(a)}.\]
It then follows that
\[\max\left\{  \max_{a\in A}d^\infty(a,B), \max_{\phi(a)\in B}d^\infty(\phi(a),A)\right\}  \leq\min \left\{   \max_{a\in A}\infdistance{a}{\phi(a)}: A\xrightarrow{\phi} B \text{ a bijiection}\right\}  ,\]
i.e. $\dhaus (A,B)\leq \dm(A,B).$
\end{proof}

We use the notation $\db$, $\dhaus$, and $\dm$ to refer to the distances $\db(\caB_{\Con,k}(X),\caB_{\Con,k}(Y))$, $\dhaus(\caB_{\Ver,k}(X),\caB_{\Ver,k}(Y))$, and $\dm(\caB_{\Ver,k}(X),\caB_{\Ver,k}(Y))$, respectively. While we have shown that both $\db$ and $\dhaus$ provide lower bounds for $\dm$, Example \ref{ex:dhaus v.s. db} illustrates that there is no ordering between $\db$ and $\dhaus$.

\begin{example}\label{ex:dhaus v.s. db}
Consider the metric spaces $X$ and $Y$ from Example \ref{ex:4-point space} together with their verbose barcodes in degree $1$. It is clear that
\[\dhaus(\caB_{\Ver,1}(X),\caB_{\Ver,1}(Y))=1>0 = \db(\caB_{\Con,1}(X),\caB_{\Con,1}(Y)).\]

Consider the same $X$, and let $Y'$ be the metric space obtained by changing the length of the bottom edge of $Y$ in Figure \ref{fig:f.h.e.-not f.c.i.} from $1$ to $2$. One can easily verify that $\caB_{\Con,0}(X)=\{(0,1)^2,(0,2), (0,\infty)\}$ and $\caB_{\Con,0}(Y')=\{(0,1),(0,2)^2, (0,\infty)\}$. In this case, we have 
\[\dhaus(\caB_{\Ver,0}(X),\caB_{\Ver,0}(Y'))=0<1= \db(\caB_{\Con,0}(X),\caB_{\Con,0}(Y')).\]
\end{example}

For any $A=\{a_1,\dots,a_n\}\subset \caH^\infty$ and $\vec{m}\in \bbN^n$, let $A(\vec{m}):=\{a_i^{m_i+1}\}_{i=1}^n\subset\caH^\infty.$

\begin{proposition}\label{prop:dh = sup dm}
For any finite $A=\{a_1,\dots,a_n\},B=\{b_1,\dots,b_{n'}\}\subset \caH^\infty$, we have
\begin{equation*}\label{eq:dh and dm}
   \dhaus(A,B)%=\sup_{m\in \bbZ_{\geq1}}\dhaus(A^m,B^m)
   =\inf_{\substack{(\vec{m},\vec{m}')\in \bbN^n\times \bbN^{n'}\\ \|\vec{m}\|_1+n = \|\vec{m}'\|_1+n'}}
   \dm(A(\vec{m}),B(\vec{m}')).%\leq \dm (A,B). 
\end{equation*}
\end{proposition}

\begin{proof}
The `$\leq $' direction follows from Proposition \ref{prop:dh and dm} and the fact that $\dhaus$ does not depend on the multiplicities of points: for any $(\vec{m},\vec{m}')\in \bbN^n\times \bbN^{n'}$,
\[\dhaus(A,B)=\dhaus(A(\vec{m}),B(\vec{m}'))\leq \dm(A(\vec{m}),B(\vec{m}')).\]

We now prove the inverse inequality `$\geq $'. For each $a$, let $\phi(a)\in B$ be any one of the closest points in $B$ to $a$. This gives us a bijection between multisets \[\phi:A\to \{\phi(a_1),\dots,\phi(a_n)\} \text{ with } a_i\mapsto \phi(a_i).\] 
Similarly, we have a bijection between multisets $\varphi:B\to \{\varphi(b_1),\dots,\varphi(b_{n'})\}$ such that each $\varphi(b)$ is one of the closest point in $A$ to $b$. 

For each $i$, define $m_i:=\card(\varphi^{-1}(a_i))+1$ and let $\vec{m}=(m_1,\dots, m_n)$. For each $j$, define $m_j':=\card(\phi^{-1}(b_j))+1$ and let $\vec{m}':=(m_1',\dots,m_{n'}')$. Then $\phi\sqcup \varphi^{-1}$ defines a bijection between multisets
\[A(\vec{m})=A\sqcup \{\varphi(b_1),\dots,\varphi(b_{n'})\} \xrightarrow{\phi\sqcup \varphi^{-1}} B(\vec{m}')=\{\phi(a_1),\dots,\phi(a_n)\}\sqcup B.\]
Therefore, we have 
\begin{align*}
\inf_{\substack{(\vec{m},\vec{m}')\in \bbN^n\times \bbN^{n'}\\ \|\vec{m}\|_1+n = \|\vec{m}'\|_1+n'}}
   \dm(A(\vec{m}),B(\vec{m}'))
   \leq &\dm(A(\vec{m}),B(\vec{m}'))\\
   =&
   \max_{a\in A(\vec{m})}d^\infty(a,(\phi\sqcup \varphi^{-1})(a))\\ 
   =& \max\left\{\max_{a\in A}\infdistance{a}{\phi(a)},\max_{a\in \{\varphi(b_1),\dots,\varphi(b_{n'})\}}\infdistance{\varphi(b_i)}{b_i}\right\} \\
   =& \max\left\{\max_{a\in A}d^\infty(a,B),\max_{b\in B}d^\infty(A,b)\right\} \\
   =&\dhaus(A,B). \qedhere
\end{align*}
\end{proof}

%%%%%%%%%%%%%%%%%%%%%%%%%%%%%%%%%%%%%%%%%%%%%%%%%%%%%%%%%%%%%%%%

\subsection{Proof of the Isometry Theorem}\label{sec:pf of iso}

In this section, we prove the isometry theorem. If two FCCs have non-isomorphic underlying chain complexes, then $\di$ between the two FCCs is $\infty$, and so is the $\dm$ between their verbose barcodes \revision{(see Lemma~\ref{lem:non-isometric chain complexes})}. Thus, it remains to show that when two FCCs have the same underlying chain complexes, $\dm$ between their verbose barcodes is equal to the interleaving distance between the two FCCs.

\isothm*

\subsubsection{The Inequality \texorpdfstring{$\dm\leq \di$}{dm<=di}} 
\label{sec:dm<=di}
In this section, we prove the inequality $\dm\leq \di$. 

First, we state here a lemma proved by Usher and Zhang in \cite{usher2016persistent}, along with a sketch of the proof given in their paper.

\begin{lemma}[Lemma 9.2, \cite{usher2016persistent}]\label{lem:change ell} Let $(C,\ell_C)$ and $(D,\ell_D)$ be two orthogonalizable $\field$-spaces, let $A:(C,\ell_C)\to (D,\ell_D)$ be a linear map with the singular value decomposition $((y_1,\dots,y_n),$ $(x_1,\dots,x_m))$ and let $\ell_D'$ be another filtration function on $D$ such that $(D,\ell_D')$ is orthogonalizable. For any $\delta\geq\|\ell_D-\ell_D'\|_\infty$, there is a singular value decomposition $((y_1',\dots,y_n'),(x_1',\dots,x_m'))$ for the map $A:(C,\ell_C)\to (D,\ell_D')$ such that
\begin{itemize}
    \item $\ell_C(y_i')=\ell_C(y_i)$, for $i=1,\dots,n$;
    \item $|\ell_D'(x_i')-\ell_D(x_i)|\leq \delta$, for $i=1,\dots, r:=\rank(A)$.
\end{itemize}
\end{lemma}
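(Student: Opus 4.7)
The plan is to invoke Theorem \ref{thm:change basis} with respect to the new target filtration $\ell_D'$, and then verify the filtration bounds on the resulting decomposition.

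First, I would apply Theorem \ref{thm:change basis} to the linear map $A\colon (C,\ell_C)\to(D,\ell_D')$ with the input orthogonal ordered basis $(y_1,\dots,y_n)$ for $C$, producing an orthogonal ordered basis $(y_1',\dots,y_n')$ of $C$ such that $\ell_C(y_i')=\ell_C(y_i)$ and $\ell_D'(Ay_i')\le \ell_D'(Ay_i)$ for every $i$, and such that the nonzero elements among $Ay_1',\dots,Ay_n'$ form an orthogonal collection in $(D,\ell_D')$. Since the original SVD has $Ay_i=0$ for $i>r$, the first clause of Theorem \ref{thm:change basis} forces $y_i'=y_i$ and hence $Ay_i'=0$ for those indices, so $(y_{r+1}',\dots,y_n')$ is still an orthogonal basis of $\ker A$.

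Next I would set $x_i':=Ay_i'$ for $i=1,\dots,r$; these are $r$ orthogonal nonzero elements of the $r$-dimensional space $\im A$, so they already form an orthogonal ordered basis of $\im A$ with respect to $\ell_D'$. Using orthogonalizability of $(D,\ell_D')$ together with standard manipulations on orthogonal collections (cf.\ Lemma \ref{lem:orthogonal}), I would extend this collection to an orthogonal ordered basis $(x_1',\dots,x_m')$ of $D$. By construction, $\bigl((y_1',\dots,y_n'),(x_1',\dots,x_m')\bigr)$ satisfies every axiom of Definition \ref{def:s.v.d.} for $A\colon (C,\ell_C)\to(D,\ell_D')$, and the first desired equality $\ell_C(y_i')=\ell_C(y_i)$ is already in hand.

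It remains to verify $|\ell_D'(x_i')-\ell_D(x_i)|\le\delta$ for $i\le r$. The upper bound follows from the chain
\[\ell_D'(x_i')=\ell_D'(Ay_i')\le \ell_D'(Ay_i)=\ell_D'(x_i)\le \ell_D(x_i)+\delta,\]
using Theorem \ref{thm:change basis} and the hypothesis $\|\ell_D-\ell_D'\|_\infty\le\delta$. The reverse inequality $\ell_D(x_i)\le \ell_D'(x_i')+\delta$ is the main obstacle, since the analogous chain is unavailable: Theorem \ref{thm:change basis} can only \emph{decrease} the relevant $\ell_D'$-value in the output. To close this direction, I would run the symmetric construction, starting from $\bigl((y_i'),(x_i')\bigr)$ and applying Theorem \ref{thm:change basis} to the map into $(D,\ell_D)$, producing an SVD $\bigl((y_i''),(x_i'')\bigr)$ for $A\colon(C,\ell_C)\to(D,\ell_D)$ satisfying $\ell_D(x_i'')\le \ell_D'(x_i')+\delta$. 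Then I would invoke the intrinsic invariance of the (sorted) multiset of differences $\{\ell_C(y_j)-\ell_D(x_j)\}_{j=1}^r$, which is a well-defined invariant of the map $A\colon (C,\ell_C)\to(D,\ell_D)$ independent of the chosen SVD (this is built into the decomposition result Proposition \ref{prop:decomposition}), to match the values produced by the two constructions and transfer the bound back to the original $x_i$. The invariance step is where the real work lies; everything preceding it is bookkeeping on top of Theorem \ref{thm:change basis}.
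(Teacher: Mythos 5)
Your architecture tracks the paper's proof closely: apply Theorem~\ref{thm:change basis} to $A\colon(C,\ell_C)\to(D,\ell_D')$ to produce $(y_i')$, set $x_i'=Ay_i'$ and extend, derive the one-sided bound $\ell_D'(x_i')\le\ell_D(x_i)+\delta$, then run the symmetric construction back to $(D,\ell_D)$ to get $(y_i''),(x_i'')$ with $\ell_D(x_i'')\le\ell_D'(x_i')+\delta$, and finally use SVD-independence of the barcode to close the loop. All of that matches what the paper does.

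The gap is precisely where you say ``the real work lies.'' Two problems. First, the invariant you invoke is too coarse: you cite the multiset of \emph{differences} $\{\ell_C(y_j)-\ell_D(x_j)\}$, but what the paper actually uses is the multiset of \emph{pairs} $\{(\ell_C(y_j),\ell_D(x_j))\}$. The pair version is essential, because the argument proceeds by partitioning the indices $\{1,\dots,r\}$ into classes $I_z=\{i:\ell_C(y_i)=z\}$ and then comparing the multisets $\{\ell_D(x_i)\}_{i\in I_z}$ and $\{\ell_D(x_i'')\}_{i\in I_z}$ \emph{within each class}; the difference-multiset does not let you do this partition. Second, even granted the pair invariance, ``transfer the bound back'' is not a bookkeeping step. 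After the invariance argument you have, within each $I_z$, a permutation $\tau_z$ with $\ell_D(x_{\tau_z(i)})=\ell_D(x_i'')$, giving you two families of one-sided inequalities $\ell_D'(x_i')\le\ell_D(x_i)+\delta$ and $\ell_D(x_{\tau_z(i)})\le\ell_D'(x_i')+\delta$ with different indexings. Converting these into a single permutation $\eta_z$ of $I_z$ achieving the two-sided bound $|\ell_D'(x_i')-\ell_D(x_{\eta_z(i)})|\le\delta$ requires a separate combinatorial bottleneck-matching lemma (Usher--Zhang's Lemma~9.1); only after applying it can you reorder $(y_1',\dots,y_r')$ and $(x_1',\dots,x_r')$ to get the SVD claimed. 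As written, your proposal identifies that something must close the argument but does not supply it, and the invariant you name would not support the construction even if you tried.
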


\begin{lemma} \label{lem:infity stability}
Let $(C_*,\partial_C)$ be a finite-dimensional non-zero chain complex over $\field$ and let $\ell_1,\ell_2:C_*\to \bbR\sqcup\left\{  -\infty\right\}   $ be two filtration functions such that both $(C_*,\partial_C,\ell_1)$ and $(C_*,\partial_C,\ell_2)$ are FCCs. Denote by $\caB_{\Ver}^1$ and $\caB_{\Ver}^2$ the verbose barcodes of $(C_*,\partial_C,\ell_1)$ and $(C_*,\partial_C,\ell_2)$, respectively. Then, we have 
\[\dm
    \left(  \caB_{\Ver}^{1}, \caB_{\Ver}^{2}\right)  =
    \sup_{k\in \bbZ_{\geq 0}}\dm
    \left(  \caB_{\Ver,k}^{1}, \caB_{\Ver,k}^{2}\right)   \leq \|\ell_1-\ell_2\|_{\infty}.\]
\end{lemma}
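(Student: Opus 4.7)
The plan is to reduce to a degree-wise estimate, then combine two applications of Lemma \ref{lem:change ell} (controlling finite bars) with an independent homology-stability argument (controlling infinite bars). Since by definition $\dm(\caB_{\Ver}^1,\caB_{\Ver}^2) = \sup_k \dm(\caB_{\Ver,k}^1,\caB_{\Ver,k}^2)$, it suffices to fix $k$ and prove $\dm(\caB_{\Ver,k}^1,\caB_{\Ver,k}^2) \leq \delta := \|\ell_1-\ell_2\|_\infty$. Both degree-$k$ barcodes have the same cardinality $m := \dim\kernel\partial_k$, since the chain complex is fixed, so a bijection always exists. Recall that $\caB_{\Ver,k}^j$ is read off from a sorted SVD $((y_i),(x_i))$ of $\partial_{k+1}\colon(C_{k+1},\ell_j)\to(\kernel\partial_k,\ell_j)$, splitting into $r:=\rank\partial_{k+1}$ \emph{finite} bars $(\ell_j(x_i),\ell_j(y_i))$ and $m-r$ \emph{infinite} bars $(\ell_j(x_i),\infty)$. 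The required bijection will be built index-wise: I will construct SVDs adapted to $\ell_1$ and $\ell_2$ whose corresponding basis elements have $\delta$-close filtration values.

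For the finite bars, I would start from a sorted SVD $((y_i),(x_i))$ of $\partial_{k+1}\colon(C_{k+1},\ell_1)\to(\kernel\partial_k,\ell_1)$. A first application of Lemma \ref{lem:change ell}, taking $\ell_D=\ell_1|_{\kernel\partial_k}$ and $\ell_D'=\ell_2|_{\kernel\partial_k}$ so that $\delta\geq\|\ell_D-\ell_D'\|_\infty$, produces an SVD $((y_i'),(x_i'))$ of $\partial_{k+1}\colon(C_{k+1},\ell_1)\to(\kernel\partial_k,\ell_2)$ with $\ell_1(y_i')=\ell_1(y_i)$ and $|\ell_2(x_i')-\ell_1(x_i)|\leq\delta$ for $i\leq r$, controlling birth times. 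A second application in its source-side symmetric form --- obtainable by dualizing $\partial_{k+1}$ using the dual-FCC framework recalled in the excerpt and re-applying the original lemma --- then converts the source filtration from $\ell_1$ to $\ell_2$, yielding $((y_i''),(x_i''))$ for $\partial_{k+1}\colon(C_{k+1},\ell_2)\to(\kernel\partial_k,\ell_2)$ with $\ell_2(x_i'')=\ell_2(x_i')$ and $|\ell_2(y_i'')-\ell_1(y_i)|\leq\delta$. Matching the $i$-th finite bar of $\caB_{\Ver,k}^1$ to the $i$-th finite bar of $\caB_{\Ver,k}^2$ then has $d^\infty$-cost at most $\delta$.

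The $m-r$ infinite bars require a separate argument, because Lemma \ref{lem:change ell} leaves the extension basis $(x_{r+1},\dots,x_m)$ uncontrolled. The key observation is that the multiset $\{\ell_j(x_i):r<i\leq m\}$ of infinite-bar birth times is intrinsically the filtration spectrum of the homology $H_k(C_*,\partial_C)=\kernel\partial_k/\im\partial_{k+1}$ equipped with the quotient filtration $\bar\ell_j$. Since $\|\ell_1-\ell_2\|_\infty\leq\delta$ on $C_*$, the same bound descends to $\|\bar\ell_1-\bar\ell_2\|_\infty\leq\delta$ on $H_k$, so the two associated sublevel persistence modules on $H_k$ are $\delta$-interleaved through the identity map. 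By the classical isometry theorem for persistence modules recalled in the excerpt, their barcodes --- all of which are $(\text{birth},\infty)$-bars --- are $\delta$-close in bottleneck distance; since each infinite bar has $d^\infty$-distance $\infty$ to the diagonal, the optimal bottleneck matching is actually a bijection between the two infinite-bar multisets, realizing cost at most $\delta$. Combining with the finite-bar matching gives the required bound $\dm(\caB_{\Ver,k}^1,\caB_{\Ver,k}^2)\leq\delta$.

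The main obstacle is the treatment of the infinite bars: because Lemma \ref{lem:change ell} does not constrain the complement basis in an SVD, their stability cannot be read off from its conclusion and must be established through the independent argument above at the level of the induced quotient filtrations on homology. A secondary technical point is justifying the source-side symmetric version of Lemma \ref{lem:change ell}, which should however follow routinely by dualization.
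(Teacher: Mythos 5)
Your proof is correct. For the finite bars you follow the paper's strategy exactly: one application of Lemma \ref{lem:change ell} to change the target filtration from $\ell_1$ to $\ell_2$, then dualize $\partial_{k+1}$ and apply the lemma again to change the source filtration, matching the $r$ finite bars index-wise at cost at most $\delta$. For the infinite bars you deviate from the paper. The paper works directly with the $\ell_j$-orthogonal complements $V_j$ of $\im\partial_{k+1}$ inside $\kernel\partial_k$: it observes that $\pi_2|_{V_1}\colon V_1\to V_2$ is a linear isomorphism (with inverse $\pi_1|_{V_2}$), sends $(\ell_1(x_i),\infty)\mapsto(\ell_2(\pi_2|_{V_1}(x_i)),\infty)$, and bounds the cost via $\ell_2(\pi_2|_{V_1}(x_i))\leq\ell_2(x_i)\leq\ell_1(x_i)+\delta$ together with $\ell_1(x_i)=\ell_1(\pi_1|_{V_2}(\pi_2|_{V_1}(x_i)))\leq\ell_1(\pi_2|_{V_1}(x_i))\leq\ell_2(\pi_2|_{V_1}(x_i))+\delta$; along the way it asserts that $(\pi_2|_{V_1}(x_i))_{i>r}$ is an $\ell_2$-orthogonal basis for $V_2$, a claim that is not obvious and is left unjustified. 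You instead identify the infinite-bar birth multiset of $\caB_{\Ver,k}^j$ with the filtration spectrum of $(H_k,\bar\ell_j)$, note that $\|\bar\ell_1-\bar\ell_2\|_\infty\leq\delta$ descends from $\|\ell_1-\ell_2\|_\infty\leq\delta$ by minimizing over cosets, and invoke the persistence-module isometry theorem; since infinite bars lie at infinite $d^\infty$-distance from the diagonal (and from any finite bar), the optimal bottleneck matching is forced to be a bijection of infinite-bar multisets, giving matching cost $\leq\delta$. Both routes arrive at the same place; yours is a bit cleaner because it sidesteps the orthogonality claim above by reducing to the intrinsic filtration on homology, and it replaces a hands-on projection computation with an already-cited isometry result. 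To fully close your write-up you should make explicit the identification you rely on: if $V_j$ is an $\ell_j$-orthogonal complement of $\im\partial_{k+1}$ in $\kernel\partial_k$, then the quotient map restricts to a filtration-preserving isomorphism $(V_j,\ell_j)\to(H_k,\bar\ell_j)$, and therefore the infinite-bar births of $\caB_{\Ver,k}^j$ coincide with the $\bar\ell_j$-values of any $\bar\ell_j$-orthogonal basis of $H_k$.
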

\begin{proof} Even though \cite[Proposition 9.3]{usher2016persistent} states a weaker result, their proof, which we provide for completeness, permits establishing the claim.

Fix an integer $k\in \bbZ_{\geq 0}$ and a $\delta\geq \|\ell_1-\ell_2\|_{\infty}$. We want to show that $\dm
    \left(  \caB_{\Ver,k}^{1}, \caB_{\Ver,k}^{2}\right)   \leq \delta$.
    
Let $r:=\rank(A)$, and let $((y_1,\dots,y_n),(x_1,\dots,x_m))$ be a singular value decomposition for $\partial:=\partial_{k+1}:(C_{k+1},\ell_1)\to (\kernel \partial_k,\ell_1)$. Then we follow the following steps to construct a singular value decomposition for $\partial_{k+1}:(C_{k+1},\ell_2)\to (\kernel \partial_k,\ell_2)$.
\begin{enumerate}
    \item Apply Lemma \ref{lem:change ell} to obtain  $((y_1',\dots,y_n'),(x_1',\dots,x_m'))$, a singular value decomposition for $\partial:(C_{k+1},\ell_1)\to (\kernel \partial_k,\ell_2)$ such that
    \begin{itemize}
        \item $\ell_1(y_i')=\ell_1(y_i)$, for $i=1,\dots,n$;
        \item $|\ell_2(x_i')-\ell_1(x_i)|\leq \delta$, for $i=1,\dots, r$.
    \end{itemize}
    \item The dual elements $((x_1'^*,\dots,x_m'^*),(y_1'^*,\dots,y_n'^*))$ form a singular value decomposition for the adjoint map $\partial^*: ((\kernel \partial_k)^*,\ell_2^*)\to (C_{k+1}^*,\ell_1^*)$, cf. page \pageref{para:dual}.%, such that
%    \begin{itemize}
%        \item $\ell_1^*(y_i'^*)=-\ell_1(y_i')$, for $i=1,\dots,n$;
%        \item $\ell_1^*(x_i'^*)=-\ell_1(x_i')$, for $i=1,\dots,m$;
%    \end{itemize}
    \item Apply Lemma \ref{lem:change ell} to obtain  $((\xi_1,\dots,\xi_m),(\eta_1,\dots,\eta_n))$, a singular value decomposition for $\partial^*: ((\kernel \partial_k)^*,\ell_2^*)\to (C_{k+1}^*,\ell_2^*)$ such that
    \begin{itemize}
        \item $\ell_2^*(\xi_i)=\ell_2^*(x_i'^*)$, for $i=1,\dots,m$;
        \item $|\ell_2^*(\eta_i)-\ell_1^*(y_i'^*)|\leq \delta$, for $i=1,\dots, r$.
    \end{itemize}
    \item The dual elements $((\eta_1^*,\dots,\eta_n^*),(\xi_1^*,\dots,\xi_m^*))$ form a singular value decomposition for the map $\partial^{**}:(C_{k+1}^{**},\ell_2^{**})\to ((\kernel \partial_k)^{**},\ell_2^{**})$, i.e. the map $\partial:(C_{k+1},\ell_2)\to (\kernel \partial_k,\ell_2)$.
\end{enumerate}

Next we define a bijection between the finite-length bars in $\caB_{\Ver,k}^1$ and $\caB_{\Ver,k}^2$ by
\[f: (\ell_1(x_i),\ell_1(y_i))\mapsto (\ell_2(\xi_i^*),\ell_2(\eta_i^*)),\forall i=1,\dots,r,\]
and check that $ \max_{1\leq i\leq r} \infdistance{(\ell_1(x_i),\ell_1(y_i))}{(\ell_2(\xi_i^*),\ell_2(\eta_i^*))}  \leq \delta$. Indeed, for $1\le i\leq r$,
\begin{align*}
|\ell_2(\xi_i^*)-\ell_1(x_i)|
&\leq |\ell_2(\xi_i^*)+\ell_2^*(x_i'^*)|+ |-\ell_2^*(x_i'^*)-\ell_1(x_i)| \text{   (by triangle inequality),}\\
&\leq  |-\ell_2^*(\xi_i)+\ell_2^*(x_i'^*)|+ |\ell_2(x_i')-\ell_1(x_i)|  \text{   (by the property of $\ell_2^*$),}\\
&\leq 0+ \delta=\delta
%\text{   (by Step (1) and Step (3)).}
\end{align*}
and similarly,
\[|\ell_2(\eta_i^*)-\ell_1(y_i)|
= |-\ell_2^*(\eta_i)-\ell_1(y_i')|=|-\ell_2^*(\eta_i)+\ell_1^*(y_i'^*)|\leq \delta.\]

Then, it remains to build a bijection $f$ between infinite-length bars in $\caB_{\Ver,k}^1$ and $\caB_{\Ver,k}^2$ such that the difference between the birth time of a bar with the birth time of its image under $f$ is controlled by $\delta$. Let $V_1$ and $V_2$ be an $\ell_1$-orthogonal complement and $\ell_2$-orthogonal complement of $\im\partial_{k+1}$ inside $\kernel \partial_k$, respectively. For $j=1,2$, let $\pi_j:\kernel\partial_k\to V_j$ be the $\ell_j$-orthogonal projection associated with the decomposition $\kernel \partial_k=\im\partial_{k+1}\oplus V_j$.

Notice that $\pi_1|_{V_2}:V_2\to V_1$ is a linear isomorphism, whose inverse is $\pi_2|_{V_1}:V_1\to V_2$. In addition, 
\revision{a singular value decomposition for the map $\pi_2|_{V_1}$ induces a $\ell_1$-orthogonal ordered basis $(x_{r+1},\dots,x_m)$ for $V_1$, and a $\ell_2$-orthogonal ordered basis  $(\pi_2|_{V_1}(x_{r+1}),\dots,\pi_2|_{V_1}(x_m))$ for $V_2$.} Define a bijection between the infinite-length bars in $\caB_{\Ver,k}^1$ and $\caB_{\Ver,k}^2$ as
\[f: (\ell_1(x_i),\infty)\mapsto (\ell_2(\pi_2|_{V_1}(x_{i})),\infty),\forall i=r+1,\dots,m,\]
and check that $ \max_{r+1\leq i\leq m} \infdistance{(\ell_1(x_i),\infty)}{(\ell_2(\pi_2|_{V_1}(x_{i})),\infty)}  \leq \delta$. This holds because, for $r+1\leq i\leq m$,
\[ \ell_2(\pi_2|_{V_1}(x_{i}))\leq \ell_2(x_{i})\leq \ell_1(x_i)+\delta, \]
and 
\[\ell_1(x_i) = \ell_1(\pi_1|_{V_2}(\pi_2|_{V_1}(x_i)))\leq \ell_1(\pi_2|_{V_1}(x_i))\leq \ell_2(\pi_2|_{V_1}(x_{i}))+\delta. \qedhere\]
\end{proof}

\begin{proposition}\label{prop:db-di stability} With the same notation as in Lemma \ref{lem:infity stability}, we have
\[\dm
    \left(  \caB_{\Ver}^{1}, \caB_{\Ver}^{2}\right)  
    \leq 
    \di\left( \left(  C_*,\partial_C,\ell_1\right) ,\left(  C_*,\partial_C,\ell_2 \right) \right) .\]
\end{proposition}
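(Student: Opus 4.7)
The plan is to reduce the statement to Lemma \ref{lem:infity stability} by exploiting the freedom to reparametrize one of the filtration functions via a chain automorphism, and then use Theorem \ref{thm:di of same chain} to identify the resulting infimum with $\di$.

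First I would recall from Theorem \ref{thm:di of same chain} that
\[\di\left((C_*,\partial_C,\ell_1),(C_*,\partial_C,\ell_2)\right) \;=\; \inf_{\Phi_*\in \Iso(C_*,\partial_C)}\|\ell_1-\ell_2\circ \Phi_*\|_{\infty}.\]
So it suffices to show that, for every $\Phi_*\in \Iso(C_*,\partial_C)$, we have the inequality $\dm(\caB_{\Ver}^{1}, \caB_{\Ver}^{2}) \leq \|\ell_1-\ell_2\circ \Phi_*\|_{\infty}$, and then take the infimum over $\Phi_*$.

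The key observation, which I would extract from Lemma \ref{lem:change of basis on FCC}(\ref{part:f.c.i.}), is that $(C_*,\partial_C,\ell_2)$ and $(C_*,\partial_C,\ell_2\circ \Phi_*)$ are filtered chain isomorphic (via $\Phi_*^{-1}$). Since filtered chain isomorphic FCCs have identical verbose barcodes in every degree (Proposition \ref{prop:decomposition}, or equivalently Proposition \ref{prop:permutation of barcodes} applied to $\Phi_*$), the verbose barcode of $(C_*,\partial_C,\ell_2\circ \Phi_*)$ coincides with $\caB_{\Ver}^{2}$. Now apply Lemma \ref{lem:infity stability} to the pair of filtration functions $\ell_1$ and $\ell_2\circ \Phi_*$ on the same chain complex $(C_*,\partial_C)$ to obtain
\[\dm\left(\caB_{\Ver}^{1},\caB_{\Ver}^{2}\right) \;=\; \dm\left(\caB_{\Ver}^{1},\caB_{\Ver}^{2\circ \Phi_*}\right) \;\leq\; \|\ell_1-\ell_2\circ \Phi_*\|_{\infty}.\]
Taking the infimum over all $\Phi_*\in \Iso(C_*,\partial_C)$ and invoking Theorem \ref{thm:di of same chain} gives the claimed bound.

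It remains to address the case where $(C_*,\partial_C)$ and $(D_*,\partial_D)$ are not chain isomorphic; there Proposition \ref{prop: condition for d_I finite} yields $\di=\infty$ and the inequality is vacuous. I do not anticipate any real obstacle in the argument — the entire work has been absorbed into Lemma \ref{lem:infity stability} (the orthogonalization/singular-value-decomposition manipulations of Usher–Zhang) and Theorem \ref{thm:di of same chain}. The only subtle point is the $\Phi_*$-reparametrization trick needed to upgrade the $\|\ell_1-\ell_2\|_{\infty}$ bound of Lemma \ref{lem:infity stability} into the optimal $\inf_{\Phi_*}\|\ell_1-\ell_2\circ\Phi_*\|_{\infty}$ bound supplied by $\di$.
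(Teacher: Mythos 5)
Your proof is correct and follows the same route as the paper: use the automorphism-reparametrization (noting via Proposition \ref{prop:permutation of barcodes} that $\ell_2$ and $\ell_2\circ\Phi_*$ have identical verbose barcodes), apply Lemma \ref{lem:infity stability} to the pair $(\ell_1,\ell_2\circ\Phi_*)$, and infimize over $\Phi_*$ using Theorem \ref{thm:di of same chain}. The only minor remark is that the proposition is already stated under the notation of Lemma \ref{lem:infity stability}, so both FCCs share the chain complex $(C_*,\partial_C)$ by hypothesis and the non-isomorphic case you mention does not arise here.
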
 

\begin{proof} Given any $\Phi_*\in \Iso( C_*,\partial_C)$, it follows from Proposition \ref{prop:permutation of barcodes} that $\caB_{\Ver}^{2}=\caB_{\Ver}^{( C_*,\partial_C, \ell_2\circ\Phi_*)}$ the verbose barcodes of $( C_*,\partial_C, \ell_2\circ\Phi_*)$. 
Together with Lemma \ref{lem:infity stability}, we have
\[ \dm
    \left(  \caB_{\Ver}^{1}, \caB_{\Ver}^{2}\right)  = \dm
    \left(  \caB_{\Ver}^{1}, \caB_{\Ver}^{( C_*,\partial_C, \ell_2\circ\Phi_*)}\right)  \leq \| \ell_1- \ell_2\circ \Phi_*\|_{\infty}, \]
for every $\Phi_*\in \Iso( C_*,\partial_C).$ Therefore,
\[\dm
    \left(  \caB_{\Ver}^{1}, \caB_{\Ver}^{2}\right) \leq
    \min_{\Phi_*\in \Iso( C_*,\partial_C)}\| \ell_1- \ell_2\circ\Phi_*\|_\infty=
    \di\left( \left(  C_*,\partial_C, \ell_1\right) ,\left(  C_*,\partial_C, \ell_2 \right) \right) , \]
where the equality follows from Theorem \ref{thm:di of same chain}.
\end{proof}

\subsubsection{The Inequality \texorpdfstring{$\dm\geq \di$}{dm>=di}} 
\label{sec:dm>=di}
Next, we establish the reverse inequality $\dm\geq \di$ via an idea similar to the one employed in demonstrating that the standard bottleneck distance between concise barcodes is at most the interleaving distance between persistent modules, cf. \cite[Theorem 3.4]{lesnick2015theory}. 

\revision{
    \begin{lemma} \label{lem:non-isometric chain complexes}
    If $(C_*,\partial_C,\ell_C)$ and $(D_*,\partial_D,\ell_D)$ are FCCs with non-isomorphic underlying chain complexes, then $\dm \left(  \caB_{\Ver,k}(C_*), \caB_{\Ver,k}(D_*)\right) 
    \geq
    \di\left( \left(  C_*,\partial_C,\ell_C\right) ,\left(  D_*,\partial_D,\ell_D \right) \right) = \infty$, for any degree $k\geq 0$.
    \end{lemma}
}

\revision{
    \begin{proof}[Proof (due to Jeong-hwi Joe)]
        We prove the lemma by contradiction.
        Assume instead that $\dm$ is finite. Then, by the definition of $\dm$, the cardinalities of the verbose barcodes of $(C_*, \partial_C, \ell_C)$ and $(D_*, \partial_D, \ell_D)$ must agree in every degree. 
        Moreover, since the $\ell_\infty$ distance between a finite bar and an infinite bar is $\infty$, it follows that, for each degree $k$,
        \begin{itemize}
        \item $\mathcal{B}_{\Ver, k}(C_*)$ and $\mathcal{B}_{\Ver, k}(D_*)$ have the same number of infinite-length bars;
        \item $\mathcal{B}_{\Ver, k}(C_*)$ and $\mathcal{B}_{\Ver, k}(D_*)$ have the same number of finite-length bars.
        \end{itemize}
        The first condition implies that
        \[
        \dim(\ker \partial_{C,k}) - \dim(\im \partial_{C,k+1}) = \dim(\ker \partial_{D,k}) - \dim(\im \partial_{D,k+1}),
        \]
        and the second implies
        \[
        \dim(\im \partial_{C,k+1}) = \dim(\im \partial_{D,k+1}).
        \]
        By combining the above formulas and applying the rank-nullity theorem, we obtain
        \[
        \dim(\ker \partial_{C,k}) = \dim(\ker \partial_{D,k}),
        \quad \text{and} \quad
        \dim C_k = \dim D_k.
        \]
        Denote these common values by $r_k := \dim(\im \partial_{C,k+1})$, $m_k := \dim(\ker \partial_{C,k})$, and $n_k := \dim C_k$. Note that $n_k = m_k + r_{k-1}$. 
        
        For each $k$, choose subspaces $V_{C,k}, W_{C,k} \subseteq C_k$ such that
        \[
        \ker \partial_{C,k} = \im \partial_{C,k+1} \oplus V_{C,k},
        \quad \text{and} \quad
        C_k = \im \partial_{C,k+1} \oplus V_{C,k} \oplus W_{C,k},
        \]
        and let $\big\{x^{C,k}_{r_k+1}, \dots, x^{C,k}_{m_k}\big\}$ be a basis for $V_{C,k}$ and $\big\{x^{C,k}_{m_k+1}, \dots, x^{C,k}_{n_k}\big\}$ for $W_{C,k}$. Define
        \[
        x^{C,k}_i := \partial_{C,k+1}\Big(x^{C,k+1}_{m_{k+1} + i}\Big) \quad \text{for } i = 1, \dots, r_k,
        \]
        so that $\big\{x^{C,k}_1, \dots, x^{C,k}_{r_k}\big\}$ is a basis for $\im \partial_{C,k+1}$. Then $\big\{x^{C,k}_1, \dots, x^{C,k}_{n_k}\big\}$ forms a basis for $C_k$.
        Repeat this construction for $D_k$, producing subspaces $V_{D,k}, W_{D,k}$ and a basis $\{y^{D,k}_1, \dots, y^{D,k}_{n_k}\}$ for $D_k$ defined analogously.
        
        Now define a linear map $\Phi_k : C_k \to D_k$ by $\Phi_k\big(x^{C,k}_i\big) := y^{D,k}_i$ for all $i$, and extend linearly. Then $\Phi_k$ is a vector space isomorphism. Define $\Phi_* := \{\Phi_k\}_k$.
        We verify that $\Phi_*$ is a chain map. For $i = 1, \dots, m_k$,
        \[
        \partial_{D,k}\big(\Phi_k\big(x^{C,k}_i\big)\big) = \partial_{D,k}\big(y^{D,k}_i\big) = 0 = \Phi_{k-1}(0) = \Phi_{k-1}\big(\partial_{C,k}\big(x^{C,k}_i\big)\big).
        \]
        For $i = m_k + 1, \dots, n_k$,
        \begin{align*}
        \partial_{D,k}(\Phi_k\big(x^{C,k}_i\big)\big) &= \partial_{D,k}\big(y^{D,k}_i\big) = y^{D,k-1}_{i - m_k} = \Phi_{k-1}(x^{C,k-1}_{i - m_k}) = \Phi_{k-1}\big(\partial_{C,k}\big(x^{C,k}_i\big)\big).
        \end{align*}
        Therefore, $\Phi_*$ is a chain map. A symmetric argument shows that $\Phi_*^{-1}$ is also a chain map. Hence, $C_*$ and $D_*$ are isomorphic as chain complexes, contradicting the assumption that they are not.
    \end{proof}
}

\begin{proof} [Proof of Theorem \ref{thm:dm=di} ``$\dm\geq \di$'']
    \revision{The case where $(C_*,\partial_C,\ell_C)$ and $(D_*,\partial_D,\ell_D)$ have non-isomorphic underlying chain complexes follows from Lemma~\ref{lem:non-isometric chain complexes}.
    %The proof is trivial if $(C_*,\partial_C,\ell_C)$ and $(D_*,\partial_D,\ell_D)$ have non-isomorphic underlying chain complexes.
    
    We then} consider the case when the chain complexes $(C_*,\partial_C)$ and $(D_*,\partial_D)$ are isomorphic, and we assume without loss of generality that $(D_*,\partial_D)=(C_*,\partial_C)$ and write $\ell_1:=\ell_C,\ell_2:=\ell_D$. 

    Take some $\delta>0$ such that $\delta>\dm\left(  \caB_{\Ver,k}^{1}, \caB_{\Ver,k}^{2}\right) $ for all degree $k\in \bbZ_{\geq 0}$. Recall that
    \[\dm\left(  \caB_{\Ver,k}^{1}, \caB_{\Ver,k}^{2}\right) =\inf\left\{   \max_{a\in \caB_{\Ver,k}^{1}} \infdistance{a}{f_k(a)}\mid\caB_{\Ver,k}^{1}\xrightarrow{f_k} \caB_{\Ver,k}^{2} \text{ a bijection }\right\}  .\]
    Thus, for each $k$, there is a bijection $f_k:\caB_{\Ver,k}^{1}\to \caB_{\Ver,k}^{2}$ such that 
    \begin{equation}\label{eq:cost of f}
        \max_{a\in \caB_{\Ver,k}^{1}} \infdistance{a}{f_k(a)}\leq \delta.
    \end{equation}

    For $a\in \caB_{\Ver,k}^{1}\subset\overline{\caH}^\infty$, we assume that $a=(a_1,a_2)$. Also, we write $b=f_k(a)$ and assume that $b=(b_1,b_2)$. Next, we construct an isomorphism between the following elementary FCCs (see Definition \ref{def:elementary-f.c.c.}): \[h_k:\caE(a_1,a_2,k)\to\caE(b_1,b_2,k).\] Notice that $a_2$ and $b_2$ are either both finite or both infinite, otherwise the left-hand side of Equation (\ref{eq:cost of f}) is equal to $\infty$, which contradicts with $\delta<\infty$. 

    Case (1): $a_2=b_2=\infty$. Then $\caE(a_1,a_2,k)$ and $\caE(b_1,b_2,k)$ have the same underlying chain complex:
    \[\begin{tikzcd}
        \dots \ar[r]
        & 0 \ar[r]
        & \field x_k \ar[r, "\partial_{k}=0"]
        & 0\ar[r]
        &\dots,
        \end{tikzcd}\]
    and the filtration functions are given by $\ell_1(x_k)=a_1$ and $\ell_2(x_k)=b_1$, respectively. The following defines a chain isomorphism
    \[h_k:\caE(a_1,\infty,k)\to\caE(b_1,\infty,k)\text{  with  }x_k\mapsto x_k.\]

    Case (2): $a_2,b_2<\infty$. Then $\caE(a_1,a_2,k)$ and $\caE(b_1,b_2,k)$ have the same underlying chain complex:
    \[\begin{tikzcd}
        \dots \ar[r]
        & 0 \ar[r]
        & \field y_{k+1} \ar[rrr, "\partial_{k+1}(y_{k+1})=x_k"]
        &&
        & \field x_k \ar[r, "\partial_{k}=0"]
        & 0\ar[r]
        &\dots,
        \end{tikzcd}\]
    and the filtration functions are given by $\ell_1(x_k)=a_1$, $\ell_1(y_{k+1})=a_2$ and $\ell_2(x_k)=b_1$, $\ell_2(y_{k+1})=b_2$, respectively. The following defines a chain isomorphism
    \[h_k:\caE(a_1,a_2,k)\to\caE(b_1,b_2,k)\text{  with  }x_k\mapsto x_k, y_{k+1}\mapsto y_{k+1}.\]

    In either case, it is straightforward to check that $h_k$ satisfies the following condition
    \[ \|\ell_1-\ell_2\circ h_k\|_\infty\leq \max\left\{  |a_1-b_2,a_1-b_2|\right\}   = \infdistance{a}{f(a)}\leq \delta.\]
    We write $h_{k,a}$ whenever it is necessary to emphasize that $h_k$ depends on $a$.

    Recall from Proposition \ref{prop:decomposition} that we have the following decomposition of FCCs
    \[\left(  C_*,\partial_C,\ell_1\right) \cong \bigoplus_{k\in \bbZ_{\geq 0}}\bigoplus_{a\in \caB_{\Ver,k}^{1}} \caE(a_1,a_2,k)  \quad  \text{and} \quad 
    \left(  C_*,\partial_C,\ell_2\right) \cong \bigoplus_{k\in \bbZ_{\geq 0}}\bigoplus_{b\in \caB_{\Ver,k}^{2}} \caE(b_1,b_2,k).\]
    Let $h:=\bigoplus_{k\in \bbZ_{\geq 0}}\bigoplus_{a\in \caB_{\Ver,k}^{1}} h_{k,a}:\left(  C_*,\partial_C,\ell_1\right)  \to \left(  C_*,\partial_C,\ell_2\right) $, which is then a chain isomorphism such that
    \[ \|\ell_1-\ell_2\circ h\|_\infty
    =\max_{k\in \bbZ_{\geq 0}}\max_{a\in \caB_{\Ver,k}^{1}} \|\ell_1-\ell_2\circ h_{k,a}\|_\infty \leq \delta.\]
    It then follows from Theorem \ref{thm:di of same chain} that
    \[ \di\left( \left(  C_*,\partial_C, \ell_1\right) ,\left(  C_*,\partial_C, \ell_2 \right) \right) = 
    \min_{\Phi_*\in \Iso( C_*,\partial_C)}\|
    \ell_1- \ell_2\circ\Phi_*\|_\infty \leq \|\ell_1-\ell_2\circ h\|_\infty\leq \delta. \]
    Since $\delta$ is arbitrary, we obtain the desired inequality $\di\leq \dm$.
\end{proof}

\section{Vietoris-Rips FCCs and an Improved Stability Result} \label{sec:VR FCC}

In this section, we study the Vietoris-Rips FCC of metric spaces. Recall from Example \ref{ex:VR FCC} that given a finite pseudo-metric space $(X,d_X)$, $\left( \fccVR{X},\partial^X,\ell^{X} \right) $ denotes the filtered chain complex arising from Vietoris-Rips complexes of $X$. 
To simplify notation, we will omit the differential map $\partial^X$ and the filtration function. We use $\opC_*(\Full(X))$ to refer to the Vietoris-Rips filtered chain complex $(\opC_*(\Full(X)),\partial^X,\ell^{X})$.

The matching distance between the verbose barcodes of two Vietoris-Rips FCCs of finite metric spaces is infinite if the underlying metric spaces have different cardinality. As a consequence, the matching distance between verbose barcodes of Vietoris-Rips FCCs is not stable under the Gromov-Hausdorff distance $\dgh$, since $\dgh$ between any two bounded metric spaces is always finite. 

We overcome the above problem by incorporating the notion of tripods. Recall from Section \ref{sec:preliminaries} the distortion $\dis(R)$ of a tripod $R$ and how the Gromov-Hausdorff distance can be obtained via finding the infimum of $\dis(R)$ over all tripods $R$.

In Section \ref{sec:pb di and pb db}, through tripods, we can pull back two metric spaces $X$ and $Y$ (with possibly different cardinalities) into a common space $Z$, and then compare (via the matching distance) the barcodes of the FCCs induced by the respective pullbacks; see Definition \ref{def:pb db}. For each degree $k,$ we call the resulting distance the \emph{pullback bottleneck distance} and denote it by $\pbdbk{k}{X}{Y}$. We apply the same strategy to define what we call the \emph{pullback interleaving distance}, written as $\pbdi{X}{Y}$: use tripods to pull back spaces to a common space and compare the interleaving distance between the FCCs induced by the respective pullbacks; see Definition \ref{def:pb di}. 

In Section \ref{sec:pb stab}, we prove the following stability results to show that the pullback bottleneck distance is stable under $\dgh$, and that its stability improves the standard stability result of the bottleneck distance between concise barcodes (cf. Theorem \ref{thm:dgh-classical}):

\pbstabthm*

In Section \ref{sec:ex for strict}, we present examples to demonstrate that both inequalities in Theorem \ref{thm:hatdb-dgh stability} can be tight and strict. In Section \ref{sec:variation of pb distances}, we study two variants of the pullback interleaving/bottleneck distance.

\subsection{Pullback Interleaving Distance and Pullback Bottleneck Distance} \label{sec:pb di and pb db}

In this section, we introduce our construction of the pullback interleaving distance and the pullback bottleneck distance between metric spaces, and study some basic properties of these two notions.

Let $(X,d_X)$ be a finite metric space, and let $\phi:Z\twoheadrightarrow X$ be a finite parametrization of $X$. We denote by $\phi^*d_X$ the pullback pseudo-metric\footnote{The map $\phi$ does not need to be surjective to define the pullback pseudo-metric.} 
on $Z$ induced by $\phi$ given as follows: for any $z,z'\in Z$, 
\[\phi^*d_Z(z,z'):=d_X(\phi(z),\phi(z')).\]
For brevity, we often write the pulled-back pseudo-metric space as  
\begin{equation}  \label{eq:Z_X}
    Z_X := (Z, \phi^* d_X).  
\end{equation}

Given a simplex $\sigma=[z_0,\dots,z_n]$ in $\opC_*\!\left(\Full\left(Z\right)\right)$, we write 
\[\phi(\sigma):=\begin{cases}
[\phi(z_0),\dots,\phi(z_n)],&\mbox{if $\phi(z_i)\neq \phi(z_j)$ for any $i\neq j$,}\\ 
0,&\mbox{otherwise.}
	\end{cases}\]
Let $\fccVR{Z_X} $ denote the Vietoris-Rips FCC of the pseudo-metric space $Z_X$. It is not hard to see that $\phi$ induces a surjective chain map
\[\phi:\fccVR{Z_X} \twoheadrightarrow\fccVR{X} .\]

\begin{remark} We have $\ell^{X}\circ \phi\leq \ell^{Z_X},$ where the inequality can be strict in general. Indeed, if $\phi$ is not injective, there exist $z_1,z_2\in Z$ such that $\phi(z_1)=\phi(z_2)=x\in X$. Then $\phi([z_1,z_2])=0$ and $$\ell^{X}\circ \phi ([z_1,z_2])=-\infty<0=\ell^{Z_X}([z_1,z_2]).$$ 
On the other hand, we always have $\|\ell^{Z_X}\|_{\infty}=\diam(Z,\phi^*d_X)=\diam(X,d_X)=\|\ell^{X}\|_{\infty}.$
\end{remark}

Via the notions of tripod and Vietoris-Rips filtered chain complexes, we construct the pullback interleaving distance as follows: %, in the same spirit with the constructions from \cite[Section 4.1]{memoli2017distance}.
\begin{definition} [Pullback interleaving distance] 
\label{def:pb di}
For two finite metric spaces $X$ and $Y$, we define the \textbf{pullback interleaving distance (induced by the Vietoris-Rips FCCs)} between $X$ and $Y$ to be
\begin{align*}
    \pbdi{X}{Y}  :=\inf\left\{   \di\left(  \fccVR{Z_X} ,\fccVR{Z_Y}  \right)  \mid X\xtwoheadleftarrow{\phi_X}Z\xtwoheadrightarrow{\phi_Y}Y\text{ a finite tripod}  \right\},
\end{align*}
where $Z_X:=(Z,\phi_X^*d_X)$ and $Z_Y:=(Z,\phi_Y^*d_Y)$.
\end{definition}

With a similar idea and again invoking tripods, we refine the standard bottleneck distance and introduce a new notion of distance between verbose barcodes:

\begin{definition} [Pullback bottleneck distance] 
\label{def:pb db}
Let $k\in \bbZ_{\geq 0}$. For two finite metric spaces $X$ and $Y$, the \textbf{pullback bottleneck distance (induced by the degree-$k$ verbose barcodes)} between $X$ and $Y$ is defined to be
\begin{align*}
    \pbdbk{k}{X}{Y}  :=\inf\left\{   \dm
    \left(  \caB_{\Ver,k}(Z_X), \caB_{\Ver,k}(Z_Y)\right)  \mid X\xtwoheadleftarrow{\phi_X}Z\xtwoheadrightarrow{\phi_Y}Y\text{ a finite tripod}  \right\}   ,
\end{align*}
where $Z_X:=(Z,\phi_X^*d_X)$ and $Z_Y:=(Z,\phi_Y^*d_Y)$.
\end{definition}

\begin{remark}[Infima are minima in Definition \ref{def:pb di} and \ref{def:pb db}]
\label{rmk:inf=min}
    Applying Proposition \ref{prop:pullback-barcode}, we observe that for any finite tripod $X\xtwoheadleftarrow{\phi_X}Z\xtwoheadrightarrow{\phi_Y}Y$,  $\dm
    \left(  \caB_{\Ver,k}(Z_X), \caB_{\Ver,k}(Z_Y)\right)$ takes values in the finite set $\{|a-b|\mid a\in \im d_X, b\in \im d_Y\}\sqcup\{\infty\}$. 
    In other words,  $\dm
    \left(  \caB_{\Ver,k}(Z_X), \caB_{\Ver,k}(Z_Y)\right)$ is a finite-set valued function as a function defined on (finite) tripods. 
    Consequently, the infimum in the definition of $\pbdbk{k}{X}{Y}$ is indeed a minimum. 

    A similar argument applies to the pullback interleaving distance, implying that the infimum in the definition of $\pbdi{X}{Y}$ is indeed a minimum. 
\end{remark}

We have the following relation between the pullback interleaving distance and the pullback bottleneck distance, which is an immediate consequence of Theorem \ref{thm:dm=di}:

\pbisothm*

Similarly to Remark \ref{rmk:inf=min}, one can check that the suprema in the LHS and RHS above are both maxima.

\begin{proof}[Proof of Proposition \ref{cor:hatdb-hatdi}] By the isometry between $\dm$ and $\di$, we have
\begin{align*}
&\sup_{k\in \bbZ_{\geq 0}} \pbdbk{k}{X}{Y}\\
=& \sup_{k\in \bbZ_{\geq 0}} \inf\left\{   \dm
    \left(  \caB_{\Ver,k}(Z_X), \caB_{\Ver,k}(Z_Y)\right)  \mid X\xtwoheadleftarrow{\phi_X}Z\xtwoheadrightarrow{\phi_Y}Y\text{ a finite tripod}  \right\}\\
\leq& \inf\left\{   \sup_{k\in \bbZ_{\geq 0}} \dm
    \left(  \caB_{\Ver,k}(Z_X), \caB_{\Ver,k}(Z_Y)\right)  \mid X\xtwoheadleftarrow{\phi_X}Z\xtwoheadrightarrow{\phi_Y}Y\text{ a finite tripod}  \right\}\\
=& \inf\left\{   \di
    \left(  \fccVR{Z_X}, \fccVR{Z_Y}\right)  \mid X\xtwoheadleftarrow{\phi_X}Z\xtwoheadrightarrow{\phi_Y}Y\text{ a finite tripod}  \right\}\\
=&\pbdi{X}{Y}. \qedhere
\end{align*}
\end{proof}

\begin{remark}[$\hatdb\leq \dm$] Let two finite metric spaces $X$ and $Y$ have the same cardinality (in which case the $\dm$ of verbose barcodes is finite). Considering any surjective set map $X\xtwoheadrightarrow{f} Y$ and the resulting tripod $X\xtwoheadleftarrow{\id_X}X\xtwoheadrightarrow{f}Y$, we conclude that for any degree $k$,
\[\pbdbk{k}{X}{Y}\leq \dm(\caB_{\Ver,k}(X),\caB_{\Ver,k}(Y)).\]
Thus, $\sup_{k\in\bbZ_{\geq 0}}\pbdbk{k}{X}{Y}\leq  \sup_{k\in\bbZ_{\geq 0}}\dm(\caB_{\Ver,k}(X),\caB_{\Ver,k}(Y))$.
Note that this inequality can be strict. For instance, consider the four-point metric spaces $X$ and $Y$ given in Example \ref{ex:4-point space}, we claim that
\[\sup_{k\in\bbZ_{\geq 0}}\pbdbk{k}{X}{Y}=0<1= \sup_{k\in\bbZ_{\geq 0}}\dm(\caB_{\Ver,k}(X),\caB_{\Ver,k}(Y)).\]
The non-trivial part is $\sup_{k\in\bbZ_{\geq 0}}\pbdbk{k}{X}{Y}=0$. Consider the tripod $X\xtwoheadleftarrow{\phi_X}Z\xtwoheadrightarrow{\phi_Y}Y$ given by $Z=\{z_1,\dots,z_5\}$,
\[\phi_X(z_i):= \begin{cases}
    x_i,&\mbox{$1\leq i\leq 4$,}\\
    x_4,&\mbox{$i=5$.}
    \end{cases}
    \hspace{2em}
    \text{   and   }
    \hspace{2em}
\phi_Y(z_i):= \begin{cases}
    y_i,&\mbox{$1\leq i\leq 4$,}\\
    y_4,&\mbox{$i=5$.}
    \end{cases}
    \]
Let $Z_X:=(Z,\phi_X^*d_X)$ and $Z_Y:=(Z,\phi_Y^*d_Y)$. 
It follows from Proposition \ref{prop:pullback_barcode_1} which will be proved in Section \ref{sec:pb barcode} and the verbose barcodes of $X$ and $Y$ computed on page \pageref{para:verbose example} that 
\begin{itemize}
    \item $\caB_{\Ver,0}(Z_X)= \{(0,0), (0,2), (0,\infty)\}\sqcup \{(0,1)\}^ 2 = \caB_{\Ver,0}(Z_Y)$;
    \item $\caB_{\Ver,1}(Z_X) = \left\{  (1,1)\right\}\sqcup \{(2,2)\}^ 5= \caB_{\Ver,1}(Z_Y)$;
    \item $\caB_{\Ver,2}(Z_X) = \left\{ (2,2)\right\}^ 4= \caB_{\Ver,2}(Z_Y)$;
    \item $\caB_{\Ver,3}(Z_X) = \left\{ (2,2)\right\}= \caB_{\Ver,3}(Z_Y)$;
    \item $\caB_{\Ver,k}(Z_X) = \emptyset= \caB_{\Ver,k}(Z_Y)$ for $k\geq 4.$
\end{itemize}
Because $\dm(\caB_{\Ver,k}(Z_X), \caB_{\Ver,k}(Z_Y)) = 0$ for all $k$, we obtain $\pbdi{X}{Y}=0$. This implies that $\sup_{k\in\bbZ_{\geq 0}}\pbdbk{k}{X}{Y}=0$.
\end{remark}

%%%%%%%%%%%%%%%%%%%%%%%%%%%%%%%%%%%

\subsection{Pullback Stability Theorem} \label{sec:pb stab}

In this section, we prove that the pullback interleaving distance $\hatdi$ and the pullback bottleneck distance $\hatdb$ are stable under the Gromov-Hausdorff distance $\dgh$ (cf. Theorem \ref{thm:hatdb-dgh stability}) and that they provide a better lower-bound estimate of $\dgh$ in comparison with the standard bottleneck distance $\db$ (cf. Theorem \ref{thm:dgh-classical}). 

The following proposition establishes that $\hatdi$ is stable. See page \pageref{pf:hatdi-dgh stability} for the proof.

\begin{proposition} [Stability of Pullback Interleaving Distance] \label{prop:hatdi-dgh stability}
Let $(X,d_X)$ and $(Y,d_Y)$ be two finite metric spaces. Then,
\[\pbdi{X}{Y} \leq 2\cdot\dgh(X,Y).\]
\end{proposition}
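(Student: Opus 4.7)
The plan is to reduce everything to the case of two FCCs sharing the same underlying chain complex, and then apply Corollary \ref{cor:di of same chain} to bound the interleaving distance by an $L^\infty$ norm of the difference of filtration functions.

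First, I would fix an arbitrary tripod $R: X \xtwoheadleftarrow{\phi_X} Z \xtwoheadrightarrow{\phi_Y} Y$ and form the pullback pseudo-metric spaces $Z_X := (Z, \phi_X^* d_X)$ and $Z_Y := (Z, \phi_Y^* d_Y)$. By construction, the FCCs $\left(\CR_*(Z), \ell^{Z_X}\right)$ and $\left(\CR_*(Z), \ell^{Z_Y}\right)$ share the very same underlying chain complex $\CR_*(Z)$ (since this depends only on the cardinality of $Z$, as noted in \textsection \ref{sec:preliminaries}). Therefore Corollary \ref{cor:di of same chain} applies and gives
\[
\di\!\left(\left(\CR_*(Z), \ell^{Z_X}\right), \left(\CR_*(Z), \ell^{Z_Y}\right)\right) \leq \|\ell^{Z_X} - \ell^{Z_Y}\|_\infty.
\]

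Next I would estimate this sup-norm by the distortion $\dis(R)$. For any simplex $\sigma = [z_0, \ldots, z_k]$ of $\VR(Z)$, the definition of the Vietoris-Rips filtration function from Example \ref{ex:VR FCC} gives $\ell^{Z_X}(\sigma) = \max_{i,j} d_X(\phi_X(z_i), \phi_X(z_j))$ and analogously for $\ell^{Z_Y}$. Using $|\max_\alpha a_\alpha - \max_\alpha b_\alpha| \leq \max_\alpha |a_\alpha - b_\alpha|$, we immediately get
\[
|\ell^{Z_X}(\sigma) - \ell^{Z_Y}(\sigma)| \leq \max_{i,j}\bigl|d_X(\phi_X(z_i), \phi_X(z_j)) - d_Y(\phi_Y(z_i), \phi_Y(z_j))\bigr| \leq \dis(R).
\]
Since $\ell^{Z_X}$ and $\ell^{Z_Y}$ are both computed by taking the maximum over simplices appearing with non-zero coefficient in any chain, the same bound propagates verbatim to arbitrary elements of $\CR_*(Z)$, giving $\|\ell^{Z_X} - \ell^{Z_Y}\|_\infty \leq \dis(R)$.

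Chaining the two inequalities and taking infima over tripods $R$ yields
\[
\hatdi\!\left(\left(\CR_*(X), \ell^{X}\right), \left(\CR_*(Y), \ell^{Y}\right)\right) \leq \inf_R \dis(R) = 2 \cdot \dgh(X, Y),
\]
where the last equality is the characterization of $\dgh$ via tripods recalled in \textsection \ref{sec:gh}. I do not anticipate any serious obstacles here: the only subtlety to be careful about is handling the case where $\phi_X$ or $\phi_Y$ fails to be injective on a given simplex (so that $\phi_X(\sigma) = 0$ as a chain in $\CR_*(X)$, even though $\sigma$ itself is a bona fide simplex in $\CR_*(Z)$). This is harmless for the argument because the filtration functions $\ell^{Z_X}$ and $\ell^{Z_Y}$ are defined directly on $\CR_*(Z)$ using the pullback pseudo-metrics, not pushed down to $X$ or $Y$, so the inequality above holds for every simplex of $Z$ irrespective of whether its image degenerates.
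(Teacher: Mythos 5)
Your argument is correct and essentially matches the paper's proof: the paper simply cites Proposition \ref{prop:infty-stability} (whose proof already establishes $\|\ell^{Z_X}-\ell^{Z_Y}\|_\infty = \|\phi_X^*d_X - \phi_Y^*d_Y\|_\infty = \dis(R)$ via the same simplex-by-simplex estimate you carry out) and then infimizes over tripods, exactly as you do. The only cosmetic difference is that you invoke Corollary \ref{cor:di of same chain} directly and re-derive the sup-norm bound inline, whereas the paper packages that computation inside Proposition \ref{prop:infty-stability}; the underlying chain of inequalities and the reduction to FCCs on the common chain complex $\CR_*(Z)$ are identical.
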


Corollary \ref{cor:hatdb-hatdi} and Proposition \ref{prop:hatdi-dgh stability} together yield the stability of $\hatdb$. In addition, we prove that $\hatdb$ is an improvement over $d_B$, when both are regarded as lower bounds of the $\dgh$ between metric spaces:

\pbstabthm*

\begin{proof} 
We only need to prove $\db \left( \caB_{\Con,k}(X), \caB_{\Con,k}(Y)\right) \leq  \pbdbk{k}{X}{Y} $. For any tripod $X\xtwoheadleftarrow{\phi_X}Z\xtwoheadrightarrow{\phi_Y}Y$, let $Z_X:=(Z,\phi_X^*d_X)$ and $Z_Y:=(Z,\phi_Y^*d_Y)$. By Proposition \ref{prop:pullback-barcode} and the fact that concise barcodes can be obtained from the corresponding verbose barcode minus all the diagonal points, we have that $\caB_{\Con,k}(X)=\caB_{\Con,k}(Z_X)$ and $\caB_{\Con,k}(Y)=\caB_{\Con,k}(Z_Y)$. 
Incorporating Proposition \ref{prop:dm-db}, we deduce:
\[\db \left( \caB_{\Con,k}(X), \caB_{\Con,k}(Y)\right)=\db \left( \caB_{\Con,k}(Z_X), \caB_{\Con,k}(Z_Y)\right) \leq  \dm\left( \caB_{\Ver,k}(Z_X), \caB_{\Ver,k}(Z_Y)\right). \qedhere \]
\end{proof}

\paragraph{The proof of Proposition \ref{prop:hatdi-dgh stability}.} 
In order to prove Proposition \ref{prop:hatdi-dgh stability}, we first establish the stability of the interleaving distance $\di$ between Vietoris-Rips FCCs by showing that $\di$ is stable under the max norm between the two distance functions over the same underlying set. Recall the definition of the distortion of a map from page \pageref{para:dis(f)}.  

\begin{proposition}\label{prop:infty-stability}
Let $X$ be a finite set of cardinality $n$. Let $d_1$ and $d_2$ be two distance functions on $X$, and let $\ell_1$ and $\ell_2$ be the filtration functions induced by $d_1$ and $d_2$ respectively. Then,
\[|\|d_1\|_{\infty}-\|d_2\|_{\infty}|\leq 
\di\left(  (\opC_*(\Full(X)),\ell_1),(\opC_*(\Full(X)),\ell_2) \right) \leq \min_{\mathrm{bijection }f:X\to X}\dis(f)\leq \|d_1-d_2\|_{\infty}.\]
\end{proposition}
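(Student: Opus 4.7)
The statement bundles three inequalities; each admits a short argument from results already established in the paper, so the plan is to dispatch them separately in order from left to right.

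\textbf{Leftmost inequality.} For $|\|d_1\|_\infty - \|d_2\|_\infty| \leq \di$, I would first observe that for any finite pseudo-metric space, the Vietoris-Rips filtration function attains its supremum on the top-dimensional simplex, so $\|\ell^{X_i}\|_\infty = \diam(X,d_i) = \|d_i\|_\infty$ for $i=1,2$. The desired bound is then an immediate instance of the left-hand inequality in Corollary \ref{cor:di of same chain} applied to $\ell_1 := \ell^{X_1}$ and $\ell_2 := \ell^{X_2}$ on the common chain complex $\CR_*(X)$.

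\textbf{Middle inequality.} For $\di \leq \min_f \dis(f)$, I plan to invoke Theorem \ref{thm:di of same chain}, which rewrites the interleaving distance as $\di = \inf_{\Phi_* \in \Iso(\CR_*(X))} \|\ell^{X_1} - \ell^{X_2} \circ \Phi_*\|_\infty$. Any bijection $f:X\to X$ induces a chain isomorphism $f_*\in \Iso(\CR_*(X))$ by $[x_0,\dots,x_k]\mapsto [f(x_0),\dots,f(x_k)]$, so it suffices to prove the pointwise bound $\|\ell^{X_1} - \ell^{X_2}\circ f_*\|_\infty \leq \dis(f)$. On an individual simplex $\sigma=[x_0,\dots,x_k]$, both $\ell^{X_1}(\sigma)$ and $\ell^{X_2}(f_*\sigma)$ are maxima of pairwise distances under $d_1$ and $d_2\circ(f\times f)$ respectively, so the elementary inequality $|\max_\alpha a_\alpha - \max_\alpha b_\alpha|\le \max_\alpha |a_\alpha-b_\alpha|$ reduces matters to the pairwise estimate $|d_1(x_i,x_j)-d_2(f(x_i),f(x_j))|\le \dis(f)$, which holds by definition of distortion. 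For a general chain $c=\sum_i \lambda_i \sigma_i$, the definition $\ell^{X_1}(c)=\max_{\lambda_i\neq 0}\ell^{X_1}(\sigma_i)$ and the analogous formula for $\ell^{X_2}(f_*c)$ allow the same max-of-differences trick to lift the per-simplex bound to all of $\CR_*(X)$. Taking the infimum over bijections $f$ completes this step.

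\textbf{Rightmost inequality.} For $\min_f \dis(f) \leq \|d_1-d_2\|_\infty$, the plan is simply to test the minimum at $f=\Id_X$, where $\dis(\Id_X)=\sup_{x,x'\in X}|d_1(x,x')-d_2(x,x')|=\|d_1-d_2\|_\infty$ directly from the definitions.

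\textbf{Main obstacle.} The only nontrivial bookkeeping is in the middle step, namely verifying that $\|\ell^{X_1}-\ell^{X_2}\circ f_*\|_\infty\leq \dis(f)$ on arbitrary chains rather than just simplices; but because the Vietoris-Rips filtration function is defined as a maximum over the simplicial support of a chain and $f_*$ permutes simplices bijectively, this extension reduces to the per-simplex estimate without needing any orthogonality arguments.
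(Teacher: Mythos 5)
Your proof is correct and follows essentially the same route as the paper's: reduce via Theorem~\ref{thm:di of same chain}, restrict the infimum to bijection-induced chain isomorphisms for the middle bound, invoke $\|\ell^{X_i}\|_\infty=\|d_i\|_\infty$ for the leftmost bound, and test at $f=\Id_X$ for the rightmost. The only cosmetic difference is that the paper establishes the stronger equality $\|\ell^{X_1}-\ell^{X_2}\circ f_*\|_\infty=\dis(f)$ through two auxiliary claims, whereas you prove only the one-sided bound $\leq\dis(f)$ by a direct per-simplex estimate, which is all the argument requires.
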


\begin{proof}
\begin{claim}\label{lem:l norm = d norm}
$\|\ell_1-\ell_2\|_{\infty}=\|d_1-d_2\|_{\infty}$ and $\|\ell_1\|_{\infty}=\|d_1\|_{\infty}$.
\end{claim}

When $X$ is an one-point space, this is trivial to prove. Now assume that $X$ has at least two points and suppose that $\|d_1-d_2\|_{\infty}=|d_1(x_1,x_2)-d_2(x_1,x_2)|$ for some $x_1,x_2\in X$. Then,
\[\|\ell_1-\ell_2\|_{\infty}\geq |\ell_1([x_1,x_2])-\ell_2([x_1,x_2])|=|d_1(x_1,x_2)-d_2(x_1,x_2)| = \|d_1-d_2\|_{\infty}.\]
\revision{Conversely, for any chain $\sigma = \sum_i \lambda_i \sigma_i$, expressed as a linear combination of simplices $\sigma_i$ in $X$ (not necessarily all of the same degree), we have
\begin{align*}
|\ell_1(\sigma)-\ell_2(\sigma)|
=& \Big|\ell_1\Big(\sum_{i}\lambda_i \sigma_i\Big)-\ell_2\Big(\sum_{i}\lambda_i \sigma_i\Big)\Big|=\Big|\max_{\lambda_i \neq 0}\ell_1(\sigma_i)-\max_{\lambda_i \neq 0}\ell_2(\sigma_i)\Big|\\
\leq & \max_{\lambda_i \neq 0} |\ell_1(\sigma_i)-\ell_2(\sigma_i)|=\max_{\lambda_i \neq 0} |\diam_1(\sigma_i)-\diam_2(\sigma_i)| \leq \|d_1-d_2\|_{\infty}.
\end{align*}
Taking $d_2=0$, the trivial distance function, we obtain $\|\ell_1\|_{\infty}=\|d_1\|_{\infty}$.}

\begin{claim}\label{lem:filtration under bij}
Consider any bijection $f:X \rightarrow X$, and let $d$ be a metric on $X$. Define $d\circ f$ as the composition $d\circ(f,f)$, and denote $\tilde{X} := (X, d\circ f)$. Let $\ell$ and $\tilde{\ell}$ represent the filtration functions induced by $d$ and $d\circ f$, respectively. Then $\tilde{\ell}=\ell\circ f.$
\end{claim}

Clearly, $f$ induces a chain isomorphism $f:\opC_*(\Full(X))\xrightarrow{\cong}\opC_*(\Full(X))$. Then $\tilde{X}$ is a metric space, whose filtration function for the Vietoris-Rips FCC is given by
\[ \tilde{\ell} \left(  \sum_{i=1}^r \lambda_i\sigma_i \right)  = \max_{\lambda_i\neq 0}\diam_{\tilde{X}}(\sigma_i)=\max_{\lambda_i\neq 0}\diam_{X}(f(\sigma_i)),\]
where $\sigma_1,\dots,\sigma_r$ are simplices. Since $f(\sigma_i)=\sigma_{j_i}$ for some simplex $\sigma_{j_i}$, we have
\[\ell\circ f \left(  \sum_{i=1}^r \lambda_i\sigma_i \right) =\ell\left(  \sum_{i=1}^r \lambda_i \sigma_{j_i} \right) =\max_{\lambda_i\neq 0} \diam_X(\sigma_{j_i})=\max_{\lambda_i\neq 0}\diam_{X}(f(\sigma_i))=\tilde{\ell} \left(  \sum_{i=1}^r \lambda_i\sigma_i \right).\]
\smallskip

Let $f:X\to X$ be any bijection. By Claim \ref{lem:l norm = d norm} and Claim \ref{lem:filtration under bij}, we have
\[\|\ell_1-\ell_2\circ f\|_{\infty}=\|\ell_1-\ell^{(X,d_2\circ f)}\|_{\infty}= \|d_1-d_2\circ f\|_{\infty}=\dis(f).\]
Therefore, by Theorem \ref{thm:di of same chain}, we have
\begin{align*}
    \di\left(  (\opC_*(\Full(X)),\ell_1),(\opC_*(\Full(X)),\ell_2) \right) 
    &= \inf_{f\in \Iso(\opC_*(\Full(X)))} \|\ell_1-\ell_2\circ f\|_{\infty}\\
    &\leq  \min_{f:X\xrightarrow{\text{bij.}} X} \|d_1-d_2\circ f\|_{\infty}\\
    & = \min_{f:X\xrightarrow{\text{bij.}} X}\dis(f)\leq \|d_1-d_2\|_{\infty}.
\end{align*}

On the other hand, for any $f\in \Iso(\opC_*(\Full(X))) $, we have $\|\ell_2\circ f\|_{\infty} = \|\ell_2\|_{\infty}$. Thus, by Claim \ref{lem:l norm = d norm},
\begin{align*}
    \di\left(  (\opC_*(\Full(X)),\ell_1),(\opC_*(\Full(X)),\ell_2) \right) 
    &\,=\, \inf_{f\in \Iso(\opC_*(\Full(X)))} \|\ell_1-\ell_2\circ f\|_{\infty}\\
    &\,\geq\,  \big|\|\ell_1\|_{\infty}-\|\ell_2\|_{\infty}\big|= |\|d_1\|_{\infty}-\|d_2\|_{\infty}|. \qedhere
\end{align*}
\end{proof}

\begin{proof}[Proof of Proposition \ref{prop:hatdi-dgh stability}]
\label{pf:hatdi-dgh stability}
Suppose 
$R:X\xtwoheadleftarrow{\phi_X}Z\xtwoheadrightarrow{\phi_Y}Y$
is a finite tripod between $X$ and $Y$. By Proposition \ref{prop:infty-stability}, we obtain
\[\di\left(  \left( \opC_*\!\left(\Full\left(Z\right)\right),\ell^{Z_X}\right) ,\left( \opC_*\!\left(\Full\left(Z\right)\right),\ell^{Z_Y}\right)  \right) \leq \|\phi_X^*d_X-\phi_Y^*d_Y\|_{\infty}=\dis(R).
\]
We finish the proof, by taking infimum over all finite tripods $R$ in the above inequality and applying the fact that $2\cdot \dgh(X,Y)=\inf_{\text{finite }R} \,\dis(R)$, by Remark \ref{rmk:finite tripod}.
\end{proof}

\begin{example} [Strictness of the inequalities in Proposition \ref{prop:infty-stability}] Let $X=\left\{  a,b,c\right\}   $. Consider pseudo-metrics $d_0,d_1$ and $d_2$ on $X$, given in Figure \ref{fig:3-point}. We calculate the following quantities for $d_0$ and $d_1$: 
\begin{enumerate}
    \item $|\|d_0\|_{\infty}-\|d_1\|_{\infty}|=1;$
    \item $\di\left(  (\fccVR{X,d_0},\fccVR{X,d_1} \right) =1$.
    \item $\min_{\mathrm{bijection }f:(X,d_0)\to (X,d_1)}\dis(f)=1;$
    \item $\|d_0-d_1\|_{\infty}=2.$
\end{enumerate}
Similarly, we compute for $d_1$ and $d_2$: 
\begin{enumerate}
    \item $|\|d_1\|_{\infty}-\|d_2\|_{\infty}|=0;$
    \item $\di\left(  (\fccVR{X,d_1},\fccVR{X,d_2} \right) =1$;
    \item $\min_{\mathrm{bijection }f:(X,d_1)\to (X,d_2)}\dis(f)=1;$
    \item $\|d_1-d_2\|_{\infty}=1.$
\end{enumerate}

\begin{figure}[ht]
\centering
\begin{tikzpicture}[scale=1]
\node (a) [label=above:$a$] at (1.5,1) {};
\node (b) [label=left:$b$] at (1,-0.5) {};
\node (c) [label=right:$c$] at (3,-0.5) {};
\filldraw (a) [color=orange] circle[radius=2pt];
\filldraw (b)[color=orange] circle[radius=2pt];
\filldraw (c) [color=orange] circle[radius=2pt];
\draw [dashed] (a) --(b) node[pos=0.5,fill=white]{\textcolor{red}{$0$}};
\draw [dashed] (a) --(c) node[pos=0.5,fill=white]{\textcolor{red}{$1$}};
\draw [dashed]  (b) --(c)node[pos=0.5,fill=white]{\textcolor{red}{$1$}};
\end{tikzpicture} 
\hspace{2em}
\begin{tikzpicture}[scale=1]
\node (a) [label=above:$a$] at (0,2) {};
\node (b) [label=left:$b$] at (-1,-0.5) {};
\node (c) [label=right:$c$] at (,-0.5) {};

\filldraw (a) [color=blue] circle[radius=2pt];
\filldraw (c)[color=blue] circle[radius=2pt];
\filldraw (b) [color=blue] circle[radius=2pt];
\draw [dashed]  (a)--(c) node[pos=0.6,fill=white]{\textcolor{red}{$2$}}; 
\draw [dashed] (a) --(b) node[pos=0.6,fill=white]{\textcolor{red}{$2$}};
\draw [dashed]  (c) --(b)node[pos=0.5,fill=white]{\textcolor{red}{$1$}};
\end{tikzpicture} 
\hspace{2em}
\begin{tikzpicture}[scale=1]
\node (a) [label=above:$a$] at (0,2) {};
\node (b) [label=left:$b$] at (-1,-0.5) {};
\node (c) [label=right:$c$] at (1,-0.5) {};

\filldraw (a) [color=green] circle[radius=2pt];
\filldraw (c)[color=green] circle[radius=2pt];
\filldraw (b) [color=green] circle[radius=2pt];
\draw [dashed]  (a)--(c) node[pos=0.6,fill=white]{\textcolor{red}{$2$}}; 
\draw [dashed] (a) --(b) node[pos=0.6,fill=white]{\textcolor{red}{$2$}};
\draw [dashed]  (c) --(b)node[pos=0.5,fill=white]{\textcolor{red}{$0$}};
\end{tikzpicture} 
    
\caption{Pseudo-metrics $d_0,d_1,d_2$.} 
\label{fig:3-point}
\end{figure}
\end{example}

%%%%%%%%%%%%%%%%%%%%%%%%%%%%%%%%%%%%%%%%%

\subsection{Tightness and Strictness of the Pullback Stability Theorem}\label{sec:ex for strict}

We study some examples for Theorem \ref{thm:hatdb-dgh stability} and see that both inequalities in this theorem are tight, and they can be strict too.

\begin{example} \label{ex:hatdb-3-point space}
Recall the $3$-point metric spaces $X_1$ and $X_2$ from Figure \ref{fig:3-point-barcode-full}, and assume that $a\leq b\leq c_i$ for $i=1,2.$ 
Computing each of the distances appearing in Theorem \ref{thm:hatdb-dgh stability}, we obtain:
\begin{center}
\renewcommand{\arraystretch}{1.4}
\begin{tabular}{ |c| c| c| c| } 
\hline
$\sup\limits_{k\in\bbZ_{\geq 0}}\db\left(  \caB_{\Con,k}(X_1), \caB_{\Con,k}(X_2)\right) $
& $ \sup\limits_{k\in\bbZ_{\geq 0}}\pbdbk{k}{X_1}{X_2} $
&$2\cdot\dgh(X_1,X_2)$
\\ 
\hline
$0$ & $|c_1-c_2|$ %& $|c_1-c_2|$ 
& $|c_1-c_2|$ \\ 
\hline
\end{tabular}
\end{center}
The first and third columns in the above table arise from straightforward calculations. For the second column, notice that for any tripod $X_1\xtwoheadleftarrow{\phi_1}Z\xtwoheadrightarrow{\phi_2}X_2$ with $\card(Z)=m+3$ for some non-negative integer $m$, we have
\[\caB_{\Ver,m+1}(Z_1)=\{(c_1,c_1)\} \quad  \text{and} \quad \caB_{\Ver,m+1}(Z_2)=\{(c_2,c_2)\},\]
where $Z_1:=(Z,\phi_1^*d_{X_1})$ and $Z_2:=(Z,\phi_2^*d_{X_2})$. In particular, $\pbdbk{m+1}{Z_1}{Z_2} = |c_1-c_2|$, for any tripod $X_1\xtwoheadleftarrow{\phi_1}Z\xtwoheadrightarrow{\phi_2}X_2$. Thus, $\sup_{k\in\bbZ_{\geq 0}}\pbdbk{k}{X_1}{X_2} = |c_1-c_2|.$ 
\end{example}

\begin{example} \label{ex:hatdb-4-point space}
Let $X,Y,Z$, and $W$ be metric spaces each consisting of $4$ points where the respective metric are depicted in Figure \ref{fig:verbose-4-point}, together with the verbose barcodes of each of these metric spaces. 
Among these spaces, $X$ and $Y$ have been studied in Example \ref{ex:4-point space}. 
The space $Z$ is the complete graph on $4$ vertices with edge length $1$, and $W$ is the cycle graph on $4$ vertices with edge length $1$. Both $Z$ and $W$ are equipped with the graph distance.

\begin{figure}[ht]
    \centering
\renewcommand{\arraystretch}{1.2}
\begin{tabular}{ | c| c| c| c| c| } 
\hline

&
\begin{tikzpicture}[scale=1]
\node (a) at (-2,1.8) {};
\node (b) at (-1,-0.5) {};
\node (c) at (-3,-0.5) {};
\node (d) at (-2.2,-1) {};
\filldraw (a) [color=blue] circle[radius=2pt];
\filldraw (b)[color=blue] circle[radius=2pt];
\filldraw (c) [color=blue] circle[radius=2pt];
\filldraw (d) [color=blue] circle[radius=2pt];
\draw [dashed]  (a)--(b) node[pos=0.6,fill=white]{\textcolor{red}{$2$}}; 
\draw [dashed] (a) --(c) node[pos=0.6,fill=white]{\textcolor{red}{$2$}};
\draw [dashed]  (a)--(d) node[pos=0.6,fill=white]{\textcolor{red}{$2$}};
\draw [dashed]  (b) --(c)node[above,pos=0.4,fill=white]{\textcolor{red}{$1$}};
\draw [dashed] (c)--(d)node[below left,pos=0.5,fill=white]{\textcolor{red}{$1$}};
\draw [dashed]  (b)--(d)node[below,pos=0.5,fill=white]{\textcolor{red}{$1$}};
\node at (-2,-1.7) {$X$};
\end{tikzpicture} 
& 
\begin{tikzpicture}[scale=1]
\node (e) at (2.5,2) {};
\node (f) at (1,-0.5) {};
\node (g) at (3,-0.5) {};
\node (h) at (1.5,1.5) {};
\filldraw (e) [color=orange] circle[radius=2pt];
\filldraw (f)[color=orange] circle[radius=2pt];
\filldraw (g) [color=orange] circle[radius=2pt];
\filldraw (h) [color=orange] circle[radius=2pt]; 
\draw [dashed]  (f)--(h)node[pos=0.7,fill=white]{\textcolor{red}{$2$}};
\draw [dashed]  (e)--(f) node[pos=0.6,fill=white]{\textcolor{red}{$2$}};
\draw [dashed] (g)--(h)node[pos=0.5,fill=white]{\textcolor{red}{$2$}};
\draw [dashed] (e) --(g) node[pos=0.5,fill=white]{\textcolor{red}{$2$}};
\draw [dashed] (e)--(h) node[above,pos=0.6,fill=white]{\textcolor{red}{$1$}};
\draw [dashed]  (1,-0.5) --(g)node[pos=0.5,fill=white]{\textcolor{red}{$1$}};
\node at (2,-1.2) {$Y$};
\end{tikzpicture} 
&
\begin{tikzpicture}[scale=1]
\node (a) at (1.35,1) {};
\filldraw (2.5,1.5) [color=green] circle[radius=2pt];
\filldraw (1,-0.5)[color=green] circle[radius=2pt];
\filldraw (3,-0.5) [color=green] circle[radius=2pt];
\filldraw (a) [color=green] circle[radius=2pt]; 
\draw [dashed]  (f)--(a)node[pos=0.7,fill=white]{\textcolor{red}{$1$}};
\draw [dashed]  (2.5,1.5)--(1,-0.5) node[pos=0.6,fill=white]{\textcolor{red}{$1$}};
\draw [dashed] (3,-0.5)--(a)node[pos=0.5,fill=white]{\textcolor{red}{$1$}};
\draw [dashed] (2.5,1.5) --(3,-0.5) node[pos=0.5,fill=white]{\textcolor{red}{$1$}};
\draw [dashed] (2.5,1.5)--(a) node[above,pos=0.6,fill=white]{\textcolor{red}{$1$}};
\draw [dashed]  (1,-0.5) --(3,-0.5)node[pos=0.5,fill=white]{\textcolor{red}{$1$}};
\node at (2,-1.2) {$Z$};
\end{tikzpicture} 
&
\begin{tikzpicture}[scale=1]
\node (a) at (1,1) {};
\node (b) at (3,1) {};
\node (c) at (1,3) {};
\node (d) at (3,3) {};
\filldraw (a) [color=magenta] circle[radius=2pt];
\filldraw (b)[color=magenta] circle[radius=2pt];
\filldraw (c) [color=magenta] circle[radius=2pt];
\filldraw (d) [color=magenta] circle[radius=2pt]; 
\draw [dashed]  (a)--(b) node[pos=0.5,fill=white]{\textcolor{red}{$1$}};
\draw [dashed]  (d)--(b) node[pos=0.5,fill=white]{\textcolor{red}{$1$}};
\draw [dashed] (c)--(d)node[pos=0.5,fill=white]{\textcolor{red}{$1$}};
\draw [dashed] (a) --(c) node[pos=0.5,fill=white]{\textcolor{red}{$1$}};
\draw [dashed] (a) --(d) node[pos=0.5,fill=white]{\textcolor{red}{$2$}};
\draw [dashed] (b) --(c) node[pos=0.5,fill=white]{\textcolor{red}{$2$}};
\node at (2,0.3) {$W$};
\end{tikzpicture} 
\\ 
\hline
$\caB_{\Ver,0}$ 
& $(0,1)^2,(0,2), (0,\infty)$ 
& $(0,1)^2,(0,2), (0,\infty)$ 
& $(0,1)^3, (0,\infty)$ 
& $(0,1)^3, (0,\infty)$ \\ 
\hline
$\caB_{\Ver,1}$ 
& $(1,1), (2,2)^2$ 
& $(2,2)^3$ 
& $(1,1)^3$ 
& $(1,2), (2,2)^2$\\ 
\hline
$\caB_{\Ver,2}$ 
& $(2,2)$ 
& $(2,2)$ 
& $(1,1)$ 
& $(2,2)$ \\ 
\hline
\end{tabular}
    \caption{Verbose barcodes of $4$-point metric spaces $X,Y,Z$ and $W$.}
    \label{fig:verbose-4-point}
\end{figure}

%%%%%%%%%%%%%%%table of metrics%%%%%%%%%%%%%%%%%%%
\begin{figure}[ht]
    \centering
\begin{tabular}{  c| c c c c } 
$\db(\caB_{\Con,0}(\cdot),\caB_{\Con,0}(\cdot))$
&
$X$
& 
$Y$
&
$Z$
&
$W$
\\ 
\hline
$X$ 
& $0$ 
& $0$ 
& $1$ 
& $1$ \\ 
$Y$ 
&  
& $0$ 
& $1$ 
& $1$ \\ 
$Z$ 
&  
&  
& $0$ 
& $0$ \\ 
$W$ 
&  
&  
&  
& $0$ \\ 
\end{tabular}
\hspace{1.2cm}
\begin{tabular}{  c| c c c c } 
$\pbdbk{0}{\cdot}{\cdot}$
&
$X$
& 
$Y$
&
$Z$
&
$W$
\\ 
\hline
$X$ 
& $0$ 
& $0$ 
& $1$ 
& $1$ \\ 
$Y$ 
&  
& $0$ 
& $1$ 
& $1$ \\ 
$Z$ 
&  
&  
& $0$ 
& $0$ \\ 
$W$ 
&  
&  
&  
& $0$ \\ 
\end{tabular}\\
\vspace{0.5cm}
\begin{tabular}{  c| c c c c } 
$\db(\caB_{\Con,1}(\cdot),\caB_{\Con,1}(\cdot))$
&
$X$
& 
$Y$
&
$Z$
&
$W$
\\ 
\hline
$X$ 
& $0$ 
& $0$ 
& $0$ 
& $\frac{1}{2}$ \\ 
$Y$ 
&  
& $0$ 
& $0$  
& $\frac{1}{2}$ \\ 
$Z$ 
&  
&  
& $0$ 
& $\frac{1}{2}$  \\ 
$W$ 
&  
&  
&  
& $0$ \\ 
\end{tabular}
\hspace{1.2cm}
\begin{tabular}{  c| c c c c } 
$\pbdbk{1}{\cdot}{\cdot}$
&
$X$
& 
$Y$
&
$Z$
&
$W$
\\ 
\hline
$X$ 
& $0$ 
& $0$ 
& $1$ 
& $1$ \\ 
$Y$ 
&  
& $0$ 
& $1$ 
& $1$ \\ 
$Z$ 
&  
&  
& $0$ 
& $1$ \\ 
$W$ 
&  
&  
&  
& $0$ \\ 
\end{tabular}\\
\vspace{0.5cm}
\begin{tabular}{  c| c c c c } 
$2\cdot \dgh(\cdot, \cdot)$
&
$X$
& 
$Y$
&
$Z$
&
$W$
\\ 
\hline
$X$ 
& $0$ 
& $1$ 
& $1$ 
& $1$ \\ 
$Y$ 
&  
& $0$ 
& $1$ 
& $1$ \\ 
$Z$ 
&  
&  
& $0$ 
& $1$ \\ 
$W$ 
&  
&  
&  
& $0$ \\ 
\end{tabular}
    \caption{The bottleneck distance $\db$ between concise barcodes, the pullback bottleneck distance $\hatdb$, the pullback interleaving distance $\hatdi$ and the Gromov-Hausdorff distance between spaces.}
    \label{fig:metric-4-point}
\end{figure}

From Figure \ref{fig:metric-4-point}, we notice that the pair of metric spaces $(X,Y)$ is such that
\[\sup_{k\in\bbZ_{\geq 0}}\dballk{X}{Y}=\sup_{k\in\bbZ_{\geq 0}}\pbdbk{k}{X}{Y}
=0<1= 2\cdot\dgh(X,Y),\]
which establishes that $\hatdb$ between non-isometric spaces can be zero. 
To see that $\pbdbk{1}{X}{Y}=0$, consider pullback spaces $Z_X:=X\sqcup\{x_0\}$ where $x_0$ is a duplicate of the top vertex in $X$ and $Z_Y=Y\sqcup\{y_0\}$ where $y_0$ is a duplicate of an arbitrary point from $Y$, 
and verify that $\caB_{\Ver,1}(Z_X)=\caB_{\Ver,1}(Z_Y) = \{(1,1),(2,2)^5\}$. The pair $(X,Y)$ shows the tightness of $\db\leq \hatdb.$

The pair $(Z,W)$ is such that 
\[\sup_{k\in\bbZ_{\geq 0}}\dballk{Z}{W}=\frac{1}{2} < 1 =\sup_{k\in\bbZ_{\geq 0}}\pbdbk{k}{Z}{W}
= 2\cdot\dgh(Z,W),\]
which gives another example of $\hatdb$ and $\hatdi$ providing better bounds for $\dgh$ in comparison with the standard bottleneck distance $\db$.
\end{example}

Below is another example in which the stability of $\hatdb$ improves that of $\db$:
\begin{example} \label{ex: one-point space} Let $X$ be the one-point metric space. Let $Y=\Delta_n(\epsilon)$ be the $n$-point metric space where all points are at distance $\epsilon>0$ from each other, for $n\geq 2$. Then,
\[2\cdot\dgh(X,Y)=\epsilon.\]
For any tripod 
$X\xtwoheadleftarrow{\phi_X}Z\xtwoheadrightarrow{\phi_Y}Y,$
we have
\begin{itemize}
    \item $\caB_{\Ver}(Z_X)$ consists of only copies of $(0,0)$ in all degrees; 
    \item $\caB_{\Ver}(Z_Y)$ consists of copies of $(0,0)$, $(0,\epsilon)$ and $(\epsilon,\epsilon)$, and $\caB_{\Ver,0}(Z_Y)$ contains copies of $(0,\epsilon)$,
\end{itemize} 
It is not hard to verify that
\[\sup_{k\in\bbZ_{\geq 0}}\dm\left(  \caB_{\Ver,k}(Z_X), \caB_{\Ver,k}(Z_Y)\right) =\epsilon\]
for any tripod. Thus, $\sup_{k\in\bbZ_{\geq 0}} \pbdbk{k}{X}{Y} =\epsilon.$ In addition, we have the following table
\begin{center}
\renewcommand{\arraystretch}{1.4}
\begin{tabular}{ |c| c| c| c| } 
\hline
$\sup\limits_{k\in\bbZ_{\geq 0}}\db\left(  \caB_{\Con,k}(X), \caB_{\Con,k}(Y)\right) $
& $ \sup\limits_{k\in\bbZ_{\geq 0}} \pbdbk{k}{X}{Y} $
%&$ \pbdi{X}{Y} $
&$2\cdot\dgh(X,Y)$
\\ 
\hline
$\frac{\epsilon}{2}$ & $\epsilon$ %& $\epsilon$ 
& $\epsilon$ \\ 
\hline
\end{tabular}
\end{center}

\end{example}

Via a similar argument and by invoking the fact that $\diam(Y)\cdot(1,1)\in \caB_{\Ver,\card(Y)-2}(Z_Y)$ for any pullback space $Z_Y$ of $Y$, we generalize Example \ref{ex: one-point space} to the following proposition:
\begin{proposition}\label{prop: one-point space} Let $X$ be the one-point metric space, and $Y$ be any finite metric space. Then, \[\sup_{k\in\bbZ_{\geq 0}} \pbdbk{k}{X}{Y} =\pbdi{X}{Y} = \diam (Y) = 2\cdot \dgh(X,Y).\]
\end{proposition}

\subsection{Variations of the Pullback Interleaving/Bottleneck Distance} \label{sec:variation of pb distances}

In previous subsections, we introduced the pullback interleaving distance $\hatdi$ and the pullback bottleneck distance $\hatdb$ based on the notion of tripod. To highlight the role of tripods and facilitate comparisons with other variants, we sometimes write %the pullback interleaving distance and the pullback bottleneck distance by 
\[\hatdi^{\Tri} := \hatdi\text{ and }\hatdb^{\Tri} := \hatdb,\] respectively. Given a degree $k$, we will use $\dbk{k}$ to denote $\db$ between degree-$k$ concise barcodes.

We introduce two variants of the pullback interleaving/bottleneck distance, mirroring the equivalent definitions of the Gromov-Hausdorff distance (see Section \ref{sec:preliminaries}).
In the first variant, we employ correspondences between metric spaces, instead of tripods, and we denote the resulting distances as $\hatdi^{\Cor}$ and $\hatdb^{\Cor}$, as defined in Definition \ref{def:pb di R}. In the second variant, we define the distances $\hatdi^{\Map}$ and $\hatdb^{\Map}$ utilizing maps between the underlying metric spaces, as specified in Definition \ref{def:pb di M}. These new formulations are beneficial in terms of computational efficiency, which we will discuss in more detail in Section \ref{subsec:multi vector}. 

To simplify our terminology, we will use the following terminology:
\begin{itemize}
    \item `\textbf{pullback interleaving-type distances}' refers to all versions of pullback interleaving distances.
    \item `\textbf{pullback bottleneck-type distances}' refers to all versions of pullback bottleneck distances.
    \item `\textbf{pullback distances}' refers to all pullback interleaving-type distances and pullback bottleneck-type distances.
\end{itemize}

We show that pullback distances are all stable under the Gromov-Hausdorff distance, and they improve upon the stability of the bottleneck distance between the concise barcodes. Moreover, they satisfy the relation below. %(see page \pageref{pf:all pb distances} for the proof):

\begin{theorem}\label{thm:all hatdi and hatdb}
The several variants of pullback interleaving and bottleneck distances satisfy the following relations:
\[
\begin{tikzcd}
&
\hatdi^{\Tri} \ar[r,symbol=\leq]
& 
\hatdi^{\Cor} \ar[r,symbol=\leq]
& 
\hatdi^{\Map} \ar[r,symbol=\lneq]
& 
2\cdot \dgh
\\
\sup\limits_k\dbk{k} \ar[r,symbol=\lneq]
&
\sup\limits_k\hatdbk{k}^{\Tri} \ar[r,symbol=\leq]
\ar[u,symbol=\lneq]
& 
\sup\limits_k\hatdbk{k}^{\Cor} \ar[r,symbol=\leq]
\ar[u,symbol=\lneq]
& 
\sup\limits_k\hatdbk{k}^{\Map}
\ar[u,symbol=\lneq]
&
\end{tikzcd}
\]
where `$\lneq$' indicates that (1) `$\leq$' always holds and (2) there exist examples for which the inequality is strict.

Moreover, the above table remains valid if we fix a degree $k$ and replace the last row with  
$\dbk{k} \lneq\hatdbk{k}^{\Tri}\leq\hatdbk{k}^{\Cor}\leq\hatdbk{k}^{\Map}.$
\end{theorem}

Recall from Section \ref{sec:preliminaries} that the Gromov-Hausdorff distance has several equivalent definitions:
\begin{itemize}
    \item using maps: 
    \[
    \dgh(X,Y)=\frac{1}{2}\inf_{\substack{f:X\rightarrow Y\\ g :Y\rightarrow X}}\max\{\dis(f),\dis(g),\codis(f,g)\};
    \]
    \item using correspondences: 
    \[\dgh(X,Y)=\frac{1}{2}\inf_{R\in \mathfrak{R}(X,Y)}\dis(R);\]
    \item using tripods: 
    \[\dgh(X,Y)=\frac{1}{2}\inf_{X\xtwoheadleftarrow{\phi_X}Z\xtwoheadrightarrow{\phi_Y}Y}\dis((Z,\phi_X,\phi_Y)).\]
\end{itemize}

%Let $k\in \bbZ_{\geq 0}$. 
Let $R\subset X\times Y$ be a correspondence between metric spaces $X$ and $Y$, and let $\pi_X$ and $\pi_Y$ be the two projection maps onto $X$ and $Y$, respectively. We equip $R$ with the respective pullback metrics induced by $\pi_X$ and $\pi_Y$, and denote 
\begin{equation}\label{eq:R_X and R_Y}
    R_X:=(R,\pi_X^*d_X) \quad \text{and} \quad R_Y:=(R,\pi_Y^*d_Y).
\end{equation}

\begin{definition} %[Pullback interleaving distance] 
\label{def:pb di R}
For two finite metric spaces $X$ and $Y$, we define the \textbf{pullback interleaving distance induced by correspondences (and VR FCCs)} between $X$ and $Y$ to be
\begin{align*}
    \pbdivariant{\hat}{\Cor}{X}{Y}  :=\min\left\{   \di\left(  \fccVR{R_X} ,\fccVR{R_Y}  \right)   \mid R\subset X\times Y\text{ a correspondence}  \right\}   ,
\end{align*}
where $R_X:=(R,\pi_X^*d_X)$ and $R_Y:=(R,\pi_Y^*d_Y)$.

For any degree $k\in \bbZ_{\geq 0}$, we define the \textbf{pullback bottleneck distance induced by correspondences (and degree-$k$ verbose barcodes)} between $X$ and $Y$ to be
\begin{align*}
    \pbdbvariant{k}{\hat}{\Cor}{X}{Y}  :=\min\left\{   \dm\left(  \caB_{\Ver,k}(R_X), \caB_{\Ver,k}(R_Y)  \right)   \mid R\subset X\times Y\text{ a correspondence}  \right\}.
\end{align*}
\end{definition}

For two maps $f:X\to Y$ and $g:Y\to X$, define a multiset arising in the use of the graphs of $f$ and $g$
\begin{equation} \label{eq: G(f,g)}
    G(f,g):=\{(x,f(x))\mid x\in X\}\cup \{(g(y),y)\mid y\in Y\}.
\end{equation}
For simplicity, write $G:=G(f,g)$. Note that $G$ is a correspondence between $X$ and $Y$.
Let $\pi_X$ and $\pi_Y$ be the two projection maps from $G$ onto $X$ and $Y$, respectively. We equip $G$ with the respective pullback metrics induced by $\pi_X$ and $\pi_Y$, and denote 
\begin{equation}\label{eq:G_X and G_Y}
    G_X:=(G,\pi_X^*d_X)\quad \text{and} \quad G_Y:=(G,\pi_Y^*d_Y).
\end{equation}

\begin{definition} %[Pullback interleaving distance] 
\label{def:pb di M}
For two finite metric spaces $X$ and $Y$, we define the \textbf{pullback interleaving distance induced by maps (and VR FCCs)} between $X$ and $Y$ to be
\begin{align*}
    \pbdivariant{\hat}{\Map}{X}{Y} :=\min\left\{   \di\left(  \fccVR{G_X} ,\fccVR{G_Y}  \right)   \mid f:X\to Y, g:Y\to X, G=G(f,g) \right\}   ,
\end{align*}
where $G_X:=(G,\pi_X^*d_X)$ and $G_Y:=(G,\pi_Y^*d_Y)$.

For any degree $k\in \bbZ_{\geq 0}$, we define the \textbf{pullback bottleneck distance induced by maps (and degree-$k$ verbose barcodes)} between $X$ and $Y$ to be
\begin{align*}
    \pbdbvariant{k}{\hat}{\Map}{X}{Y}  :=\min\left\{   \dm\left(  \caB_{\Ver,k}(G_X), \caB_{\Ver,k}(G_Y)  \right)   \mid f:X\to Y, g:Y\to X, G=G(f,g) \right\}   ,
\end{align*}
\end{definition}

\begin{proof}[Proof of Theorem \ref{thm:all hatdi and hatdb}]\label{pf:all pb distances}
We first prove that all the following inequalities hold for every degree $k$:
\[
\begin{tikzcd}%[row sep=tiny] %column sep=small]
&
\hatdi^{\Tri} \ar[r,symbol=\leq,"(1)"]
& 
\hatdi^{\Cor} \ar[r,symbol=\leq,"(2)"]
& 
\hatdi^{\Map} \ar[r,symbol=\leq,"(3)"]
& 
2\cdot \dgh
\\
\dbk{k} \ar[r,symbol=\leq,"(7)"]
&
\hatdbk{k}^{\Tri} \ar[r,symbol=\leq,"(8)"]
\ar[u,symbol=\leq,"(4)\,\,"]
& 
\hatdbk{k}^{\Cor} \ar[r,symbol=\leq,"(9)"]
\ar[u,symbol=\leq,"(5)\,\,"]
& 
\hatdbk{k}^{\Map},
\ar[u,symbol=\leq,"(6)\,\,"]
&
\end{tikzcd}
\]
Inequalities (4) and (7) have been established in Corollary \ref{cor:hatdb-hatdi} and Theorem \ref{thm:hatdb-dgh stability}, respectively.
Inequalities (5) and (6) follow directly from the definitions of the pullback distances and the isometry theorem between the interleaving distance and the bottleneck distance (see Theorem \ref{thm:dm=di}).

For (1), consider any two finite metric spaces $X$ and $Y$. Note that any correspondence $R\subset X\times Y$ induces a tripod of the form 
\[X\xtwoheadleftarrow{\pi_X}R\xtwoheadrightarrow{\pi_Y}Y\] 
where $R_X = (R, \pi_X^*d_X)$ and $R_Y = (R, \pi_Y^*d_Y)$. Therefore, we have
\[\hatdi^{\Tri}\leq
\di \left( \fccVR{R_X}, \fccVR{R_Y}\right). \]
Taking the infimum over all correspondences $R$ yields $\hatdi^{\Tri} \leq \hatdi^{\Cor}$. Inequality (8) can be established similarly.

For (2), note that for any $f:X\to Y$ and $g:Y\to X$,
let $G:=G(f,g)$, and $\pi_X,\pi_Y$ be the projection from $G$ to $X$, $Y$, respectively.
Let $G_X = (G, \pi_X^*d_X)$ and $G_Y = (G, \pi_Y^*d_Y)$. Since $G$ is a correspondence between $X$ and $Y$, we have 
\[\hatdi^{\Cor}\leq
\di  \left( \fccVR{G_X}, \fccVR{G_Y}\right). \]
Since the maps $f:X\to Y$ and $g:Y\to X$ are arbitrary, we conclude that $\hatdi^{\Cor} \leq \hatdi^{\Map}$. Inequality (9) can be established similarly.

For (3), using the above notation $f,g$ and $G$, we apply Proposition \ref{prop:infty-stability} to obtain that
\[\hatdi^{\Map}\leq
\di\left( \fccVR{G_X}, \fccVR{G_Y}\right) \leq \dis(G) =\max\{\dis(f),\dis(g),\codis(f,g)\}. \]
Taking the infimum over all maps $f:X\to Y$ and $g:Y\to X$ yields $\hatdi^{\Map}\leq 2\cdot \dgh.$

Thus, we have proved all inequalities (1)-(9). By taking the supremum over all degrees $k$, we obtain the desired inequalities
\[
\begin{tikzcd}%[row sep=tiny] %column sep=small]
&
\hatdi^{\Tri} \ar[r,symbol=\leq]
& 
\hatdi^{\Cor} \ar[r,symbol=\leq]
& 
\hatdi^{\Map} \ar[r,symbol=\leq]
& 
2\cdot \dgh
\\
\sup\limits_k\dbk{k} \ar[r,symbol=\leq,"(7')"]
&
\sup\limits_k\hatdbk{k}^{\Tri} \ar[r,symbol=\leq]
\ar[u,symbol=\leq,"(4')"]
& 
\sup\limits_k\hatdbk{k}^{\Cor} \ar[r,symbol=\leq]
\ar[u,symbol=\leq,"(5')"]
& 
\sup\limits_k\hatdbk{k}^{\Map},
\ar[u,symbol=\leq,"(6')"]
&
\end{tikzcd}
\]

To finish proving the proposition, it remains to establish examples such that inequalities (4$'$)-(7$'$) are strict. Such examples will be presented in Proposition \ref{prop:hatdi 5-pt non-zero} and Remark \ref{rmk:XY hatdb=0}.
\end{proof}

%%%%%%%%%%%%%%%%%%%%%%%%%%%%%%%%%%%%%%%%%%%%

\subsubsection{Pseudo-metrics Based on Pullback Distances} \label{subsubsec:metrize}

It is worth noticing that none of $\hatdi^{\Tri}$, $\hatdi^{\Cor}$, and $\hatdi^{\Map}$ satisfy the triangle inequality; see Corollary \ref{cor:triangle fails}.
Nevertheless, we demonstrate below that all of them can be converted into pseudo-metrics. Moreover, utilizing ideas in \cite[Section 6.2]{memoli2017distance}, we show that these new functions retain the favorable properties of the original pullback interleaving-type distances, such as the stability under the Gromov-Hausdorff distance.

Given any non-negative and symmetric function $\omega_X:X\times X\to \bbR$, the following induced function defines a pseudo-metric on $X$: \label{para:metrize}
\[d_X(x,x'):= \inf_{x=x_0,\dots,x_n=x'} \sum_{i=0}^{n-1}\omega_X(x_i,x_{i+1}).\]
Indeed, for any sequences $\alpha: x=x_0,\dots,x_n=x'$ and $\beta: x'=y_0,\dots,y_m=x''$, we have a sequence $x=x_0,\dots,x_n=x'=y_0,\dots,y_m=x''$ between $x$ and $x''$. Thus,
\begin{align*}
    d_X(x,x'')\leq \sum_{i=0}^{n-1}\omega_X(x_i,x_{i+1}) + \sum_{j=0}^{m-1}\omega_X(y_j,y_{j+1}).
\end{align*}
By taking the infimum over all $\alpha$ and $\beta$, we obtain 
\[d_X(x,x'')\leq d_X(x,x')+d_X(x',x'').\]
Additionally, by considering the sequence $x=x_0,x_1=x'$, we see that $d_X\leq \omega_X$.

Denote the collection of all finite metric spaces by $\frX$. Then we apply the above procedure to convert the different variants of the pullback interleaving-type distances into different pseudo-metrics between finite metric spaces, as follows: 
\begin{definition}\label{def:tildi}
For each $\hatdi^{\square}$, where $\square=\Tri, \Cor$ or $\Map$, define 
\[\pbdivariant{\tilde}{\square}{X}{Y}:= \inf_{\substack{X_i\in \frX\\
X=X_0,\dots,X_n=Y}} \sum_{i=0}^{n-1}\pbdivariant{\hat}{\square}{X_i}{X_{i+1}}.\]
\end{definition}

\begin{proposition} \label{prop:tildi ineq}
We have $\tildi^{\Tri}\leq \tildi^{\Cor}\leq \tildi^{\Map}\leq 2\cdot \dgh$.
\end{proposition}

\begin{proof}
The first two inequalities are derived directly from $\hatdi^{\Tri}\leq \hatdi^{\Cor}\leq \hatdi^{\Map}$. 
The last one follows immediately from the facts that $\tildi^{\Map}\leq \hatdi^{\Map}$
 and $\hatdi^{\Map}\leq 2\dgh$ (cf. Theorem \ref{thm:all hatdi and hatdb} ).
\end{proof}

\begin{remark} For pullback bottleneck-type distances, we will see in Section \ref{subsec: hatdb 0} that when $k=0$, $\hatdbk{0}^{\Tri}=\hatdbk{0}^{\Cor}=\hatdbk{0}^{\Map}$ and they all satisfy the triangle inequality. 

It is still an open question whether $\hatdb^{\square}$ induced by positive-degree verbose barcodes satisfies the triangle inequality, where $\square=\Tri, \Cor$ or $\Map$. Even if it fails, we can transform $\hatdb^{\square}$ into a pseudo-metric in a way similar to the above discussion. That is, we define (for $k>0$)
\begin{align*}
    \pbdbvariant{k}{\tilde}{\square}{X}{Y}:= \inf_{\substack{X_i\in  \mathfrak{X}\\ X=X_0,\dots,X_n=Y}} \sum_{i=0}^{n-1}
    \pbdbvariant{k}{\hat}{\square}{X_i}{X_{i+1}}.
\end{align*}
Similarly as before, we note that $\tildb$ satisfies the triangle inequality and the Gromov-Hausdorff stability $\tildb^{\square}\leq 2\cdot \dgh$. 
\end{remark}
%%%%%%%%%%%%%%%%%%%%%%%%%%%%%%%%%%%%%%%%%%%%

\section{Computation of the Pullback Bottleneck Distance} \label{sec:computation}

%To have a more concrete understanding of the pullback bottleneck distance, in
In Section \ref{sec:pb barcode} we study verbose barcodes under pullbacks, by working out the relation between the verbose barcode of a finite metric space $(X,d_X)$ and the verbose barcode of a pullback space $(Z,\phi_X^*d_X)$ induced by a surjective map $\phi_X:Z\twoheadrightarrow X.$ 
In Section \ref{subsec:computation}, we discuss the computability of pullback distances and, in particular, pullback bottleneck distances.

\subsection{Verbose Barcodes under Pullbacks} \label{sec:pb barcode}

Let $(X,d_X)$ be a finite metric space with $X:=\{x_1,\dots,x_n\}$. Recall from page \ref{para:pb m.s.} that 
for any surjection $\phi: Z\twoheadrightarrow X$, the \textbf{pullback (pseudo-metric) space} (induced by $\phi$) is defined as the pair $(Z,\phi^*d_X)$, where $\phi^*d_X$ is the pullback of the distance function $d_X$. In other words, for any $z_1,z_2\in Z$,
\[(\phi^*d_X)(z_1,z_2):=d_X\left(\phi_X(z_1),\phi_X(z_2) \right).\]
For each $z\in Z$, the point $\phi_X(z)\in X$ is called the \textbf{parent} of $z$.

\begin{definition} [Pullback barcodes]
For any surjective map $\phi_X:Z\twoheadrightarrow X$, we call the degree-$k$ verbose barcode of $(Z,\phi_X^*d_X)$ a \textbf{degree-$k$ pullback barcode} of $X$. 
\end{definition}

%-----------------------------------------
\subsubsection{Inductive Formula for Pullback Barcodes %(Proof of Proposition \ref{prop:pullback-barcode})
} \label{sec:pb barcode proof}

We start with the case when the pullback space repeats only one point from the original space. 
For any multiset $A$ and any integer $l\geq 1$, we recall from Equation (\ref{eq:P_l(A)}) that $P_l(A)$ denotes the set consisting of sub-multisets of $A$ each with cardinality $l$.

\begin{proposition} %[$\PB_k^1$] 
\label{prop:pullback_barcode_1}
Assume $X:=\left\{  x_1,\dots,x_n\right\}$ is a pseudo-metric space and $Z=X\sqcup\{z\}$. Suppose $\phi:Z\twoheadrightarrow X$ is such that $z\mapsto x_j$  for some $j=1,\dots,n$ and is the identity otherwise. Then,
\[    \caB_{\Ver,0}(Z)=\caB_{\Ver,0}(X)\sqcup \{(\diam(\{x_j\}))\cdot (1,1)\}= \caB_{\Ver,0}(X)\sqcup \{(0,0)\}, \]
and for $k\geq 1$,
\begin{align*}
    \caB_{\Ver,k}(Z)&= \caB_{\Ver,k}(X)\sqcup \left\{   \diam (\{x_j,x_{i_1},\dots,x_{i_k}\})\cdot(1,1): x_{i_l}\in X-\left\{  x_j\right\}   ,\forall l=1,\dots,k \right\}   \\
    &= \caB_{\Ver,k}(X)\sqcup \left\{   \diam (\{x_j\}\sqcup\beta)\cdot(1,1): \beta\in  P_k(X\setminus \{x_j\})  \right\}  .
\end{align*}
\end{proposition}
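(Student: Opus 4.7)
The plan is to decompose $\CR_*(Z)$ as an orthogonal direct sum of FCCs $\CR_*(X)\oplus W$ and then further decompose $W$ into elementary FCCs, obtaining
\[W\cong\bigoplus_{B\subseteq Y}\caE(\diam([x_j,B]),\diam([x_j,B]),|B|),\]
where $Y:=X\setminus\{x_j\}$. Applying Proposition \ref{prop:decomposition} then yields the claimed formula, since each $B\in P_k(Y)$ contributes a bar $\diam([x_j,B])\cdot(1,1)$ at degree $k$ (and $B=\emptyset$ gives the $(0,0)$ bar at degree $0$, matching the case $k=0$). The multiplicity check is immediate: $|P_k(Y)|=\binom{n-1}{k}$ equals the expected gap $\dim\CR_k(Z)-\dim\CR_k(X)$ of the corresponding chain groups when $k\geq 1$, and $1$ when $k=0$.

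Let $z$ denote the extra copy of $x_j$ in $Z$, so $d_Z(z,v)=d_X(x_j,v)$ for $v\in X$ and $d_Z(z,x_j)=0$. The inclusion $\CR_*(X)\hookrightarrow\CR_*(Z)$ is filtration-preserving, and the chain map $\CR_*(Z)\twoheadrightarrow\CR_*(X)$ sending $z\mapsto x_j$ splits it. A natural basis of its kernel $W$ at degree $k$ consists of $u_{k,B}:=[z,B]-[x_j,B]$ for $B\in P_k(Y)$ together with $v_{k,B'}:=[z,x_j,B']$ for $B'\in P_{k-1}(Y)$, with $\ell(u_{k,B})=\diam([x_j,B])$ and $\ell(v_{k,B'})=\diam([x_j,B'])$ thanks to $d_Z(z,\cdot)=d_X(x_j,\cdot)$. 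The change from the simplicial basis of $\CR_*(Z)$ to the simplicial basis of $\CR_*(X)$ together with $\{u_{k,B},v_{k,B'}\}$ preserves orthogonality, since the only cancellation it introduces is between $[z,B]$ and $[x_j,B]$, which carry identical filtration values.

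A direct computation gives $\partial u_{k,B}=\sum_i(-1)^{i+1}u_{k-1,B\setminus b_i}$ and $\partial v_{k,B'}=-u_{k-1,B'}+\sum_i(-1)^iv_{k-1,B'\setminus b'_i}$. For each $B\subseteq Y$ I set $\tilde{v}_B:=-v_{|B|+1,B}$ and $\tilde{u}_B:=\partial\tilde{v}_B=u_{|B|,B}+\sum_i(-1)^{i+1}v_{|B|,B\setminus b_i}$, so $\partial\tilde u_B=0$, $\partial\tilde v_B=\tilde u_B$, and $\ell(\tilde u_B)=\ell(\tilde v_B)=\diam([x_j,B])$ because every correction term carries filtration $\diam([x_j,B\setminus b_i])\leq\diam([x_j,B])$. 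The transition matrix from $\{u,v\}$ to $\{\tilde u,\tilde v\}$ is triangular with $\pm 1$ on the diagonal, so this is a basis of $W$, and each pair $(\tilde u_B,\tilde v_B)$ realizes the elementary FCC $\caE(\diam([x_j,B]),\diam([x_j,B]),|B|)$. The main technical obstacle is checking that this new basis is again orthogonal: in a linear combination, the $v_{k,B'}$-coefficient in the old basis is $-c_{\tilde v_{B'}}+\sum_{B\supsetneq B',\,|B|=|B'|+1}\pm c_{\tilde u_B}$, so if it vanishes while $c_{\tilde v_{B'}}\neq 0$ then some $c_{\tilde u_B}$ with $B\supsetneq B'$ must be non-zero, and the monotonicity $\diam([x_j,B])\geq\diam([x_j,B'])$ ensures that the $u_{k,B}$-term already contributes the required filtration value, restoring the non-Archimedean equality and completing the decomposition.
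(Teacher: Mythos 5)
Your proof is correct and takes a genuinely different route from the paper. The paper works degree by degree: for each $k$ it takes a singular value decomposition of $\partial_{k+1}^X$ and extends it to one of $\partial_{k+1}^Z$ by adjoining the orthogonal set $A=\{[z,x_j,x_{i_1},\dots,x_{i_k}]\}$ (with $x_{i_l}\ne x_j$) to the source basis and $\partial_{k+1}A$ to the target basis, then reads the barcode off this s.v.d.\ directly from Definition~\ref{def:barcode}. You instead exhibit a global orthogonal splitting $\CR_*(Z)\cong\CR_*(X)\oplus W$ using the retraction $z\mapsto x_j$, then further decompose $W$ into the explicit elementary pieces $\caE(\diam([x_j,B]),\diam([x_j,B]),|B|)$ via the two triangular changes of basis you describe, and finally invoke Proposition~\ref{prop:decomposition}. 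Both orthogonality checks in your argument are valid: the change from the simplicial basis to $\{u_{k,B},v_{k,B'}\}$ only mixes $[z,B]$ with $[x_j,B]$, which carry equal filtration since $z$ duplicates $x_j$, and the second change is handled by your observation that whenever the $v_{k,B'}$-coordinate of a combination cancels despite $c_{\tilde v_{B'}}\ne 0$, some $c_{\tilde u_B}\ne 0$ with $B\supsetneq B'$ must survive, and $\diam([x_j,B])\ge\diam([x_j,B'])$ restores the needed bound. The paper's s.v.d.\ route is more local and somewhat shorter; yours is more structural, in that it literally realizes the decomposition of $\CR_*(Z)$ into elementary FCCs rather than inferring it bar by bar, and makes the inclusion $\CR_*(X)\hookrightarrow\CR_*(Z)$ split orthogonally as FCCs, a fact that is not made explicit in the paper's argument. (Minor cosmetic note: a couple of signs in your expressions for $\partial v_{k,B'}$ and $\tilde u_B$ depend on orientation conventions and differ from what a direct expansion gives, but since only $\pm 1$ factors are involved this does not affect the triangularity of the change of basis nor any filtration computation.)
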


\begin{remark}\label{rmk:pullback spaces}
Each finite pullback space (see page \pageref{para:pb m.s.}) $Z$ of $X$ can be regarded as a multiset $X\sqcup \left\{x_{j_1},\dots,x_{j_m}\right\}$ equipped with the metric inherited from $X$ for some $m\geq 0$ and $j_1\leq \dots\leq j_m$. 
Indeed, assume $Z:=X\sqcup\left\{ z_1,\dots,z_m\right\}$ for some auxiliary points $z_1,\dots,z_m$ and consider a surjection $\phi:Z\twoheadrightarrow X$ such that $x\mapsto x$ for $ x\in X$. 
Let $d_Z:=\phi^*d_X$ be the pullback metric on $Z$ under the map $\phi.$ 
If $\phi((z_1,\dots,z_m))=(x_{j_1},\dots,x_{j_m})$, then the points $\{x_{j_1},\dots,x_{j_m}\}$ uniquely determines the map $\phi$ and thus uniquely determine the pullback metric on $Z$. Therefore, $Z$ can be identified with $X\sqcup \left\{x_{j_1},\dots,x_{j_m}\right\}$. 
\end{remark}

Before proving the Proposition \ref{prop:pullback_barcode_1}, we apply it to show the following result:

\pbbarcode*

\begin{proof}%[Proof of Proposition \ref{prop:pullback-barcode}] 
We prove the statement by induction on $m$. 
When $m=1$, the statement follows immediately from Proposition \ref{prop:pullback_barcode_1}. 
Now suppose that the statement is true for $Z':=X\sqcup\left\{  x_{j_{1}},\dots,x_{j_{m-1}}\right\}$, for $m\geq 2$. 
By applying Proposition \ref{prop:pullback_barcode_1} and the induction hypothesis, we obtain:
\begin{align*}
    \caB_{\Ver,k}(Z)
    = \caB_{\Ver,k}(Z') &\sqcup \left\{   \diam (\{x_{j_m}\}\sqcup\beta)\cdot(1,1): \beta\in  P_{k}\left(  (X\setminus \{x_{j_m}\})\sqcup\left\{  x_{j_{1}},\dots,x_{j_{m-1}}\right\}   \right)  \right\}  \\
    = \caB_{\Ver,k}(X) &\sqcup \bigsqcup_{i=0}^{m-2}\left\{   \diam (\{x_{j_{i+1}}\}\sqcup\beta_i)\cdot(1,1): \beta_i\in  P_{k}\left(  (X\setminus \{x_{j_{i+1}}\})\sqcup\left\{  x_{j_{1}},\dots,x_{j_{i}}\right\}   \right)  \right\}  \\
    &\sqcup \left\{   \diam (\{x_{j_{m}}\}\sqcup\beta_{m-1})\cdot(1,1): \beta_{m-1}\in  P_{k}\left(  (X\setminus \{x_{j_m}\})\sqcup\left\{  x_{j_{1}},\dots,x_{j_{m-1}}\right\}  \right)   \right\}  \\
    =  \caB_{\Ver,k}(X) &\sqcup \bigsqcup_{i=0}^{m-1}\left\{  \diam (\{x_{j_{i+1}}\}\sqcup\beta_i)\cdot(1,1): \beta_i\in  P_{k}\left(  (X\setminus \{x_{j_{i+1}}\})\sqcup\left\{  x_{j_{1}},\dots,x_{j_{i}}\right\}   \right)  \right\}. \qedhere
\end{align*}
\end{proof}

\begin{remark} Equation (\ref{eq:pullback barcode}) in Proposition \ref{prop:pullback-barcode} implies the following combinatorial equality: for any $1\leq k\leq n-2,$ the cardinality of $\caB_{\Ver,k}(Z)$ satisfies %- \card\left( \caB_{\Ver,k}(X)\right) $  
\[{n-1+m \choose k+1}={n-1\choose k+1}+ \sum_{i=0}^{m-1}{i+n-1 \choose k},\]
where the left-hand side follows from Example \ref{ex:card of VR verbose barcodes} and the right hand is given by Proposition \ref{prop:pullback-barcode}.
\end{remark}

\begin{proof} [Proof of Proposition \ref{prop:pullback_barcode_1}]
Fix a degree $k\geq 0$. For notational simplicity, let $\partial:=\partial_{k+1}^Z$ and 
$\ell:=\ell^Z.$
Let
\[A:=\begin{cases}
    \left\{  [z,x_j,x_{i_1},\dots,x_{i_k}]: x_{i_l}\in X-\left\{  x_j\right\}   ,\forall l=1,\dots,k \right\}, &\mbox{$k\geq 1$}\\
    \left\{  [z,x_j] \right\}, &\mbox{$k=0.$}\\
\end{cases} ,\] and notice that $A$ is an orthogonal subset of $\opC_{k+1}\!\left(\Full\left(Z\right)\right)$. 
As in page \pageref{claim1-ultra}, for a $k$-simplex $\gamma$ we denote its $j$-th face by $\face_j(\gamma)$ for $j=0,\dots,k$. For instance, if $\gamma=[z,x_j,x_{i_1},\dots,x_{i_k}]\in A$, then $\face_1(\gamma)=[z,x_{i_1},\dots,x_{i_k}]$ is the simplex obtained by removing the vertex $x_j$.

\begin{claim}\label{claim1}
For any $\gamma\in A$, $\ell(\gamma)=\ell(\face_1(\gamma))=\ell(\partial\gamma).$
%$\ell([z,x_j,x_{i_1},\dots,x_{i_k}])=\ell\left( [z,x_{i_1},\dots,x_{i_k}]\right) = \ell(\partial([z,x_j,x_{i_1},\dots,x_{i_k}])).$
\end{claim}

The first equality follows from the fact that $d_Z(z,x_j)=d_X(x_j,x_j)\leq d_X(x_j,x), \forall x\in X$: 
\begin{align*}
\ell(\face_1(\gamma))
&=\ell\left( [z,x_{i_1},\dots,x_{i_k}]\right)\\
&\leq \ell\left( [z,x_j,x_{i_1},\dots,x_{i_k}]\right) \\
&= \max\left\{d_Z(z,x_j),\max_l d_Z(z,x_{i_l}), \max_{l,l'}d_X(x_{i_l},x_{i_{l'}})\right\}\\
&\leq \max\left\{\max_l d_Z(z,x_{i_l}), \max_{l,l'}d_X(x_{i_l},x_{i_{l'}})\right\}=\ell\left( [z,x_{i_1},\dots,x_{i_k}]\right).
\end{align*}
For the second equality, note that for any $l=2,\dots,k+1$, we have
\begin{align*}
\ell\left( \face_l(\gamma)\right)  \leq \ell(\gamma)=\ell(\face_1(\gamma)).
\end{align*}
Incorporating the equality $\ell(\face_1(\gamma))=\ell\left( [z,x_{i_1},\dots,x_{i_k}]\right) = \ell\left( [x_j,x_{i_1},\dots,x_{i_k}]\right)$, we have
\[\ell \left( \partial\gamma\right)=\max\Big\{ \ell\left( [x_j,x_{i_1},\dots,x_{i_k}]\right) ,\max_{l=1,\dots,k+1}\ell\left(\face_l(\gamma)\right)  \Big\}=\ell(\face_1(\gamma)).\]

\begin{claim}\label{claim2}
The set $\partial A$ is orthogonal.
%\[\partial A:=\left\{  \partial [z,x_j,x_{i_1},\dots,x_{i_k}]: x_{i_l}\in X-\left\{  x_j\right\}   ,\forall l=1,\dots,k \right\}.\]
\end{claim}

For any linear combination $c:=\sum_{\gamma\in A} \lambda_{\gamma} \left( \partial \gamma\right)$ of elements in $\partial A$ where the coefficients $\lambda_{\gamma}$ come from the base field $\field$, we want to show that $\ell\left( c\right) =\max_{\lambda_{\gamma}\neq 0 } \ell \left( \partial \gamma\right)$. 
The `$\leq$' follows from the definition of filtration functions. It remains to prove `$\geq $'.

To prove this, write
\[c = \sum_{\gamma\in A} \lambda_{\gamma} \left( \partial \gamma\right)=
\sum_{\gamma\in A} \lambda_{\gamma} \left( \face_1(\gamma)\right)+\ast,\]
%\[ c=\sum \lambda_{i_1,\dots,i_k} \left( \partial [z,x_j,x_{i_1},\dots,x_{i_k}]\right)  = \sum \lambda_{i_1,\dots,i_k} \left(  [z,x_{i_1},\dots,x_{i_k}]\right)  + \ast,\]
where $\ast$ is a linear combination of simplices that have $x_j$ has a vertex. 
Since $x_{i_l}\neq x_j$ for every $l$, $x_j$ is not a vertex of $\face_1(\gamma)$ for any $\gamma$. Therefore,
$\sum_{\gamma\in A} \lambda_{\gamma} \left( \face_1(\gamma)\right)$ is a linear combination of simplices that do not have $x_j$ as a vertex, and thus is orthogonal to the $\ast$ term. Therefore,
\begin{align*}
\ell\left( \sum_{\gamma\in A} \lambda_{\gamma} \left( \partial \gamma\right) \right) 
=\,&\max\left\{    \ell\left( \sum_{\gamma\in A} \lambda_{\gamma} \left( \face_1(\gamma)\right)  \right) , \ell(\ast) \right\}  \\
\geq\,&  \ell\left( \sum_{\gamma\in A} \lambda_{\gamma} \left( \face_1(\gamma)\right)  \right) \\
\geq\,&  \max_{\lambda_{\gamma}\neq 0 } \ell\left(  \face_1(\gamma)\right)  \\
 =\,&  \max_{\lambda_{\gamma}\neq 0 } \ell\left(  \partial\gamma\right). \hspace{1em} \mbox{(by Claim \ref{claim1})}
\end{align*}
Thus, $\partial A$ is an orthogonal subset of $\opC_{k}\!\left(\Full\left(Z\right)\right)$. 

\begin{claim}\label{claim3}
Let $\left(\left(\sigma_1,\dots,\sigma_{m}\right),\left(\alpha_1,\dots,\alpha_r\right)\right)$ be a singular value decomposition (cf. Definition \ref{def:s.v.d.}) of the map $\partial|_{\opC_{k+1}\!\left(\Full\left(X\right)\right)}:\opC_{k+1}\!\left(\Full\left(X\right)\right)\to \kernel \partial_{k}^X$, where 
\[m=\dim\left(\opC_{k+1}\!\left(\Full\left(X\right)\right)\right)={n\choose k+2}\quad  \text{and} \quad r=\dim\left(\im \partial|_{\opC_{k+1}\!\left(\Full\left(X\right)\right)}\right)={n-1\choose k+1}.\]
%when $1\leq k\leq n-2$, $n$ when $k=0$, and $0$ otherwise.
Then $\left\{  \alpha_1,\dots,\alpha_r\right\}   \sqcup \partial A$ is an orthogonal basis for $\im \partial$. %where $r=\dim\left(\im \partial|_{\opC_{k+1}\!\left(\Full\left(X\right)\right)}\right) ={n-1 \choose k+1}.$
\end{claim}

By Example \ref{ex:card of VR verbose barcodes}, we have 
$r+ \card (\partial A) = {n-1 \choose k+1}+{n-1 \choose k} ={n \choose k+1}=
\dim(\im \partial).$
Thus, to show that $\left\{  \alpha_1,\dots,\alpha_r\right\}   \sqcup \partial A$ is an orthogonal basis, it suffices to show that $\left\{  \alpha_1,\dots,\alpha_r\right\}   \sqcup \partial A$ is orthogonal. 
Since both $\left\{  \alpha_1,\dots,\alpha_r\right\}  $ and $ \partial A$ are orthogonal, by Lemma \ref{lem:orthogonal}, it remains to show that $\left\{  \alpha_1,\dots,\alpha_r\right\}  $ and $ \partial A$ are orthogonal to each other. 

Let $\alpha$ and $\gamma$ be non-zero linear combinations of elements in $\left\{  \alpha_1,\dots,\alpha_r\right\}$ and $\partial A$, respectively. We want to prove
\[\ell(\alpha+\gamma) = \max\{\ell(\alpha),\ell(\gamma)\}.\]
When $\ell(\alpha)\neq \ell(\gamma)$, apply Lemma \ref{rmk:property of filtration function}.
When $\ell(\alpha)= \ell(\gamma)$, since `$\leq$' is trivial, we only need to show `$\geq $'.
Because simplices in $\alpha$ do not contain the vertex $z$, if $[z,x_{i_1},\dots,x_{i_k}]$ is in $\gamma$, it must also be in $\alpha+\gamma.$ By Claim \ref{claim1} and Claim \ref{claim2}, we see that $\ell(\gamma)$ is equal to a term in the form of $\ell\left( [z,x_{i_1},\dots,x_{i_k}]\right)$. Therefore,
\[\ell(\alpha+\gamma) \geq \ell([z,x_{i_1},\dots,x_{i_k}])\geq \ell(\gamma) = \max\{\ell(\alpha),\ell(\gamma)\}.\]
This finishes the proof of Claim \ref{claim3}.\\

In what follows, we use subscripts to indicate the degree of $A$. Then $\left\{  \sigma_1,\dots,\sigma_{m}\right\}   \sqcup A_{k+1}\sqcup \partial_{k+2}^Z A_{k+2}$, which is orthogonal. The orthogonality arises because, for any $\gamma\in A_{k+2}$, $\partial_{k+2}^Z\gamma$ has a dominating\footnote{A dominating term is a simplex \( \sigma \) that appears as a summand in the linear combination expressing \( \partial \gamma \) in terms of simplices and satisfies \( \ell(\sigma) = \max_{\lambda_{\gamma} \neq 0} \ell(\partial \gamma) \).} 
term $\face_1(\gamma)$ as established by Claim \ref{claim1}, that is absent from $\left\{\sigma_1,\dots,\sigma_{m}\right\} \sqcup A_{k+1}$. 
This absence can be understood because (1) each $\face_1(\gamma)$ incorporates $z$ as a vertex, in contrast to the simplices in $\left\{  \sigma_1,\dots,\sigma_{m}\right\}$ which are contained in $X$, and (2) unlike the simplices in $A_{k+1}$, no $\face_1(\gamma)$ includes $x_j$.

In addition, $\left\{  \sigma_1,\dots,\sigma_{m}\right\}   \sqcup A_{k+1}\sqcup \partial_{k+2}^Z A_{k+2}$ is an orthogonal basis for $\opC_{k+1}\!\left(\Full\left(Z\right)\right)$, since its cardinality matches the dimension of $\opC_{k+1}\!\left(\Full\left(Z\right)\right)$: ${n\choose k+2}+ {n-1 \choose k}+{n-1 \choose k+1}={n+1 \choose k+2}.$
Thus, we have the following singular value decomposition for $\partial_{k+1}^Z:$
%\opC_{k+1}\!\left(\Full\left(Z\right)\right)\to \kernel \partial_{k}^Z$:
\begin{center}
\begin{tikzcd}[column sep=0em]
    \opC_{k+1}\!\left(\Full\left(Z\right)\right): \ar[d,"\partial_{k+1}^Z"] &\Big( \left\{  \sigma_{r+1},\dots,\sigma_{m}\right\}  \sqcup \partial_{k+2}^Z A_{k+2}, \ar[mapsto,d,"0"] & \left\{  \sigma_1,\dots,\sigma_r\right\}   \sqcup A_{k+1}\Big) \ar[mapsto,d] \\	
    \opC_{k}\!\left(\Full\left(Z\right)\right): &  0 &\Big( \left\{  \alpha_{1},\dots,\alpha_r\right\}  \sqcup \partial_{k+1}^Z A_{k+1}, & \dots \Big) .
\end{tikzcd}
\end{center} 
By the definition of verbose barcodes, we have
\begin{align*}
   \caB_{\Ver,k}(Z)= & \left\{  (\ell (\alpha_i),\ell(\sigma_i)):i=1,\dots,r\right\}   \sqcup 
    \left\{   (\ell(\partial\gamma),\ell(\gamma)):\gamma\in A_{k+1} \right\}   \\
     =& \caB_{\Ver,k}(X)\sqcup \left\{   \diam (\{z,x_{i_1},\dots,x_{i_k}\})\cdot(1,1): x_{i_l}\in X-\left\{  x_j\right\}   ,\forall l=1,\dots,k \right\}   \\
    =& \begin{cases}
    \caB_{\Ver,k}(X)\sqcup \{\diam (\{x_j\})\cdot(1,1)\},&\mbox{$k=0$,}\\
    \caB_{\Ver,k}(X)\sqcup \left\{   \diam (\{x_j,x_{i_1},\dots,x_{i_k}\})\cdot(1,1): x_{i_l}\in X-\left\{  x_j\right\}   ,\forall l=1,\dots,k \right\}  ,&\mbox{$k\geq 1$.}
    \end{cases}\\
    =& \begin{cases}
    \caB_{\Ver,k}(X)\sqcup \{\diam (\{x_j\})\cdot(1,1)\},&\mbox{$k=0$,}\\
    \caB_{\Ver,k}(X)\sqcup \left\{   \diam (\{x_j\}\sqcup\beta)\cdot(1,1): \beta\in  P_k(X\setminus \{x_j\})  \right\}   ,&\mbox{$k\geq 1$.} 
    \end{cases}
\end{align*}
\end{proof}

\subsubsection{Explicit Formula for Pullback Barcodes  }\label{subsec:pb barcodes explicit}

Proposition \ref{prop:pullback-barcode} shows that the pullback barcodes are obtained from the verbose barcodes of the underlying metric space $X$ by adding certain diagonal points. In degree $0$, these extra diagonal points can only be copies of $(0,0)$, and we can determine their exact multiplicity. One may ask whether there exists a more explicit formula for the extra diagonal points and their multiplicity in positive degrees. In this subsection, we answer this question.

By Remark \ref{rmk:pullback spaces}, for a pullback space $Z=X\sqcup \left\{x_{j_1},\dots,x_{j_m}\right\}$, if we order points in $\left\{x_{j_1},\dots,x_{j_m}\right\}$ suitably, we can regard $Z$ as $X\sqcup \{x_1\}^{m_1}\sqcup\{x_2\}^{m_2}\sqcup\dots\sqcup\{x_n\}^{m_n}$, where $m_1,m_2,\dots,m_n\in \bbN$. It follows that pullback spaces $Z$ of $X$ are in one-to-one correspondence with vectors $\vec{m}:=(m_1,m_2,\dots,m_n)\in \bbN^n$. We call $\vec{m}$ the \textbf{pullback vector} associated with $Z$. \label{para:pb vector}

Let $n,k,p\in \bbZ_{\geq 1}$ be such that $p\leq k+1\leq n$. Let $\vec{m}:=(m_1,\dots,m_n)$. 
We introduce the following notation: for any $I_p:=[i_1,\dots,i_p]$ with $1\leq i_1<\dots<i_p\leq n$, letting $\vec{m}(I_p):=(m_{i_1},\dots,m_{i_p})$, we define the following multiplicity function
\label{para:mu_k notation}
\[\mu_k(\vec{m}(I_p)):=\sum_{q=1}^p\,\sum_{\substack{ \omega_1,\dots,\omega_q\geq 1 \\\omega_1+\dots+\omega_q+(p-q)=k+1 }}\, {m_{i_1}+1\choose \omega_1}{m_{i_2}+1\choose \omega_2}\dots {m_{i_{q-1}}+1\choose\omega_{q-1}}{m_{i_q}\choose \omega_q}.\]
See Section \ref{app:mu} for some examples of $\mu_k(\vec{m}(I_p))$.

\explicit*

\begin{proof}%[Proof of Proposition \ref{prop:explicit barc k}]
Recall Equation (\ref{eq:pullback barcode}) and its graphical explanation given in Figure \ref{fig:pb barcode}.
Fix a degree $k\geq 0$.
Let $1\leq p\leq k+1$. We first consider the case of $(1,2,\dots,p)$ and count the number of copies of $a:=\diam({ x_{1},\dots, x_{p}})\cdot(1,1)$.

Take any $1\leq q\leq p$. Starting from step $i=m_1+\dots+m_{q-1}$ and continuing until step $i=m_1+\dots+m_{q-1}+m_q-1$, we obtain one copy of $a$ for each copy of a multiset of the following form:
\[A:=\{ \underbrace{ x_{1},\dots,x_{1}}_{\omega_1\geq 1},\underbrace{ x_{2},\dots,x_{2}}_{\omega_2\geq 1}, \dots, \underbrace{ x_{q},\dots,x_{q}}_{\omega_q\geq 1},x_{q+1},\dots,x_p\},\]
where $\omega_1+\dots+\omega_q+(p-q)=k+1.$

During these steps, we are picking points from the red-colored and blue-colored parts below:
\[\text{\myboxred{$x_1,x_2,\dots$}},\, x_q,\,
\text{\myboxred{$\dots,x_n$}},\,
\underbrace{\text{\myboxred{$ x_{1},\dots,x_{1}$}}}_{m_1},\,
\underbrace{\text{\myboxred{$ x_{2},\dots,x_{2}$}}}_{m_2},\,
\text{\myboxred{$\dots$}},\,
\text{\myboxblue{$x_{q},\dots,x_q$}}.\]
Therefore, the number of multisets in the form of $A$ is 
\[\sum_{\substack{ \omega_1,\dots,\omega_q\geq 1 \\\omega_1+\dots+\omega_q+(p-q)=k+1 }}\, {m_1+1\choose \omega_1}{m_2+1\choose \omega_2}\dots {m_{q-1}+1\choose \omega_{q-1}}{m_q\choose \omega_q}.\]
Thus, the total number of copies of $a$ is
\[\sum_{q=1}^p\,\sum_{\substack{ \omega_1,\dots,\omega_q\geq 1 \\\omega_1+\dots+\omega_q+(p-q)=k+1 }}\, {m_1+1\choose \omega_1}{m_2+1\choose \omega_2}\dots {m_{q-1}+1\choose \omega_{q-1}}{m_q\choose \omega_q}=\mu_k(\vec{m}([1,\dots,p])).\]

It is clear that the above result also holds for general $1\leq i_1<\dots<i_p\leq n$. In other words, the total number of copies of $\diam(\{ x_{i_1},\dots, x_{i_p}\})\cdot(1,1)$ is $\mu_k(\vec{m}([i_1,\dots,i_p]))$.
\end{proof}

%-----------------------------------------
\subsection{Discussion on the Computation of Pullback Distances} \label{subsec:computation}

%%%%%%%%%%%%%%%%%%%%%%%%%%%%%%%%%%%

In Section \ref{subsec:multi vector}, we reformulate all pullback distances with pullback vectors, and discuss the computability of the pullback distances through these reformulations. 

In Section \ref{subsec: hatdb 0}, we prove Proposition \ref{prop:hatdb degree 0}, which provides a precise formula for computing the pullback bottleneck distance (in all three settings) in degree $0$. 
This formula dictates that when computing $\hatdbk{0}$ bars in barcodes should only be matched with other bars or with the origin $(0,0)$, distinguishing it from the standard bottleneck distance where bars can be matched to any point along the diagonal.

\subsubsection{Reformulations of Pullback Distances Using Pullback Vectors  } \label{subsec:multi vector}

Given any vector $\vec{m}=(m_1,\dots,m_n)\in \bbN^n$, we construct a pullback space 
\[X(\vec{m}):=X\sqcup\bigsqcup_{j=1}^n\{ x_j\}^{m_j}\] 
equipped with the pseudo metric induced from $X$. 
In Proposition \ref{prop:pb distances using vectors}, we reformulate the pullback distances in terms of pullback vectors.

In order to state the proposition, we introduce the following notation:
\begin{itemize}
    \item For any \(\vec{m} \in \mathbb{N}^{n}\) and \(\vec{m}' \in \mathbb{N}^{n'}\), let \(\caX(\vec{m},\vec{m}')\) denote the set of all \( n \times n' \) binary matrices $M$ such that the sum of the $i$-th row of $M$ equals \( m_i \) and the sum of the $j$-th column of $M$ equals \( m'_j \), for all $i$ and $j$.
    \item Let \(\caX_{\mathrm{row}}\) be the set of \( n \times n' \) binary matrices in which each row contains exactly one entry equal to \(1\).  
    \item Let \(\caX_{\mathrm{col}}\) be the set of \( n \times n' \) binary matrices in which each column contains exactly one entry equal to \(1\).
    \item For any two matrices of the same size \( M \) and \( M' \), define \( M \vee M' \) as the matrix obtained by taking the element-wise maximum of \( M \) and \( M' \).
    By $\caX_{\mathrm{row}}\vee \caX_{\mathrm{col}}$, denote the set of all matrices \( M \vee M' \) with \( M \in \caX_{\mathrm{row}} \) and \( M' \in \caX_{\mathrm{col}} \). 
    \item Given any \(\vec{m} \in \mathbb{N}^{n}\), define \(\vec{m}+1 := (m_1+1,m_2+1,\dots, m_n+1)\).
\end{itemize}

\begin{restatable}[Pullback distances reformulation]
{proposition}{pbdistancevectors}\label{prop:pb distances using vectors}
Let $X$ and $Y$ be two finite metric spaces with cardinality $n$ and $n'$, respectively. 
Then, all pullback distances can be written in the form of 
\begin{align*}
    \pbdivariant{\hat}{\square}{X}{Y}=&\inf\limits_{(\vec{m},\vec{m}')\in \frM_\square}\di(\fccVR{X(\vec{m})}, \fccVR{Y(\vec{m}')}) %\label{eq:pb_interleaving}
    \\
    \pbdbvariant{k}{\hat}{\square}{X}{Y}=&\inf\limits_{(\vec{m},\vec{m}')\in \frM_\square}\dm(\caB_{\Ver,k}(X(\vec{m})), \caB_{\Ver,k}(Y(\vec{m}'))),
    %\label{eq:pb_bottleneck}
\end{align*}
where
\begin{itemize}
    \item [(1)] for $\square=\Tri$, $\frM_{\Tri} :=\left\{(\vec{m},\vec{m}')\in \bbN^{n}\times \bbN^{n'} \mid \|  \vec{m}+1 \| _1=\|  \vec{m}'+1\|_1 \right\}$;
    \item [(2)] for $\square=\Cor$, 
    $\frM_{\Cor} :=\left\{(\vec{m},\vec{m}')\in \bbN^{n}\times \bbN^{n'}\mid 
    \caX(\vec{m}+1,\vec{m}'+1) \neq \emptyset
    \right\}$;
    \item [(3)] for $\square=\Map$, $\frM_{\Map} :=\left\{(\vec{m},\vec{m}')\in \bbN^{n}\times \bbN^{n'}\mid 
    \caX(\vec{m}+1,\vec{m}'+1) \cap( \caX_{\mathrm{row}}\vee \caX_{\mathrm{col}}) \neq \emptyset\right\}$.
\end{itemize}
\end{restatable}

\begin{remark}
In the definition of $\frM_{\Cor}$, the condition $\caX(\vec{m}+1,\vec{m}'+1) \neq \emptyset$ can be characterized by direct constraints on the vectors $\vec{m},\vec{m}'$ (see (\ref{eq:gale-eyser})), following from the Gale--Ryser Theorem \cite{gale1957theorem,ryser1957combinatorial}, as we now describe.

For any \(\vec{m} = (m_1, \dots, m_n) \in \mathbb{N}^n\), denote by
\(
{}^\downarrow\vec{m} : = \left(m_{(1)}, \dots, m_{(n)}\right)
\)
the vector obtained by rearranging the entries of \(\vec{m}\) in nonincreasing order, i.e., \(m_{(1)} \ge m_{(2)} \ge \cdots \ge m_{(n)}\). 

Define the \emph{conjugate} of a nonincreasingly ordered vector \(\vec{m}\in \mathbb{N}^n\) as 
\(
\vec{m}^* := (m^*_{1}, m^*_{2}, \dots, m^*_{n}),
\)
where 
\[
m^*_{k} := \card(\{ i \in \{1, \dots, n\} \mid m_{i} \ge k \}),
\]
if $1\leq k\leq m_1$. If $k> m_{1}$, we set $m^*_{k} := 0$.

Given \(\vec{m} \in \mathbb{N}^{n}\) and \(\vec{m}' \in \mathbb{N}^{n'}\), we say that \(\vec{m}\) is \emph{majored} by \(\vec{m}'\), written \(\vec{m} \prec \vec{m}'\), if  
\[
\sum_{i=1}^k m_i \leq \sum_{i=1}^k m_i' \quad \text{for all } k,
\]
where we assume \(m_i=0\) for $i>n$ and \( m'_i = 0 \) for \( i > n' \). 

By the Gale--Ryser Theorem, we have
\begin{equation}\label{eq:gale-eyser}
    \caX(\vec{m}+1,\vec{m}'+1) \neq \emptyset \quad \Longleftrightarrow \quad \| \vec{m}+1 \| _1=\|  \vec{m}'+1\|_1 \text{ and }{}^\downarrow\vec{m}+1 \prec ({}^\downarrow\vec{m}'+1)^*.
\end{equation}

Unlike \(\frM_{\Tri}\) and \(\frM_{\Cor}\), it remains an open question whether \(\frM_{\Map}\) admits a formulation in terms of direct constraints on \(\vec{m}\) and \(\vec{m}'\).
\end{remark}

\color{black}

\begin{remark}[Analysis of computational complexity of computing pullback distances.]
Let $X$ and $Y$ be two finite metric spaces with cardinality $n$ and $n'$, respectively. Without loss of generality, assume $n\geq n'$. By Proposition \ref{prop:pb distances using vectors}, the brute-force algorithms for computing pullback distances between $X$ and $Y$ have the following complexity upper bounds (view $k$ as a constant): \label{para:complextiy analysis}
\begin{itemize}
    \item[(a)] for $\hatdbk{k}^{\Cor}$: $O\left(n^{2n}\cdot (2n)^{3(k+1)}\right) = O(n^{2n+3k+3})$;
    \item[(b)] for $\hatdi^{\Cor}$: $O\left(n^{2n}\cdot \sum_{k=0}^{n-2}(2n)^{3(k+1)}\right) = O\left(n^{5n-3}\right)$;
    \item[(c)] for $\hatdbk{k}^{\Map}$: $O\left(16^n\cdot (2n)^{3(k+1)}\right) = O(16^nn^{3k+3})$;
    \item[(d)] for $\hatdi^{\Map}$: $O\left(16^n\cdot \sum_{k=0}^{n-2}(2n)^{3(k+1)}\right) = O\left(16^nn^{3n-3}\right)$.
\end{itemize}
Here $O(\cdot)$ is the big $O$ notation.
To establish complexity bounds for computing pullback bottleneck distances, we begin by separately estimating the cardinality of \(\frM_{\square}\) and the complexity of computing the distance \(\dm\).  

For any non-negative integer \( i \), the number of vectors with \( n \) non-negative integer entries summing to \( i \) equals the number of ways to distribute \( i \) indistinguishable balls into \( n \) distinguishable boxes. This is given by  
${ n+i-1\choose n-1 }(={ n+i-1\choose i })$, which we will use to estimate the cardinality of \(\frM_{\square}\). 

Note that for \((\vec{m},\vec{m}') \in \frM_{\Cor}\), we have \( \|\vec{m}\|_\infty \leq n' - 1 \) and \( \|\vec{m}'\|_\infty \leq n - 1 \). This follows because each \( m_i + 1 \) equals a row sum in an \( n \times n' \) binary matrix and is therefore at most \( n' \), and similarly, \( m'_j + 1 \leq n \).  
Consequently, we obtain the bound  
\begin{equation}\label{eq:card(M_Cor)}
    \card(\frM_{\Cor}) \leq\, n^{n'}(n')^n = O(n^{2n}).
\end{equation}

For any \((\vec{m},\vec{m}') \in \frM_{\Map} \subset \frM_{\Cor}\), we have \( \|\vec{m}\|_\infty \leq n' - 1 \) and \( \|\vec{m}'\|_\infty \leq n - 1 \), along with the additional constraints \( \|\vec{m}\|_1 \leq n' \) and \( \|\vec{m}'\|_1 \leq n \). 
The latter follows from the fact that \( \|\vec{m} + 1\|_1 = \|\vec{m}\|_1 + n \) equals the total number of 1s in a matrix of the form \( F \vee F' \), where \( F \in \caX_{\mathrm{row}} \) and \( F' \in \caX_{\mathrm{col}} \). 
Since each row of \( F \) and each column of \( F' \) contain at most one entry equal to 1, the total number of 1s in \( F \vee F' \) does not exceed \( n' + n \). This implies \( \|\vec{m}\|_1 \leq n' \), and similarly, \( \|\vec{m}'\|_1 \leq n \).
Thus, we obtain the following bound on \( \card(\frM_{\Map}) \):
\begin{align}
\card(\frM_{\Map}) 
    \leq\, & \sum_{i=0}^{n'}{ n+i-1\choose i }\cdot { n'+(n+i-n')-1\choose (n+i-n')} \notag \\
    =\,&\sum_{i=0}^{n'}{ n+i-1\choose i }\cdot { n+i-1\choose n'-1} \notag \\  
    =\, & O\left(n{ 2n-1\choose n}^2\right) \notag \\
    =\, & O\left(n\left(\frac{4^n}{\sqrt{2\pi n}}\right)^2\right) = O(16^n). \quad \text{(by Stirling's approximation)} \label{eq:card(M_Map)}
\end{align}

In order to bound the complexity of computing the matching distance
\[
\dm(\caB_{\Ver,k}(X(\vec{m})), \caB_{\Ver,k}(Y(\vec{m}'))),
\]
we first observe the following bound on the cardinality of the involved verbose barcode 
\begin{equation}\label{eq:card(B_ver)}
    \card(\caB_{\Ver,k}(X(\vec{m}))) = {\|  \vec{m}\|_1+n-1\choose k+1} = O\left((\|  \vec{m}\|_1+n)^{k+1}\right) = O\left(n^{k+1}\right),
\end{equation}
and similarly for $\card(\caB_{\Ver,k}(Y(\vec{m}')))$.
Combining Equations (\ref{eq:card(M_Cor)}), (\ref{eq:card(M_Map)}), and (\ref{eq:card(B_ver)}), along with the fact that the matching distance can be computed in cubic time \cite[Section 4.2]{burkard2012assignment} relative to the input size, we complete the proof of the complexity bound for Items (a) and (c), i.e., for $\hatdbk{k}^{\Map}$ and $\hatdbk{k}^{Cor}$, respectively.

For $\hatdi^{\Map}$ and $\hatdi^{\Cor}$, the idea is similar except that one needs to add up the complexity of computing the matching distance in each degree.

For tripods, we estimate the complexity of computing pullback distances by bounding the \(\ell_\infty\)-norm of the pullback vectors, which in turn provides an upper bound on the distance computation.  
Fix a positive integer \( N \) and define  
\[
\frM_{\Tri}^N = \left\{(\vec{m},\vec{m}')\in \bbN^{n}\times \bbN^{n'} \mid \|  \vec{m}+1 \| _1=\|  \vec{m}'+1\|_1 ,\|\vec{m}\|_\infty, \|\vec{m}'\|_\infty\leq N\right\}.
\]  
Then, the cardinality satisfies $\card(\frM_{\Tri}^N) \leq n^{N}(n')^N = O(n^{2N}).$
\end{remark}

Reformulating pullback distances via pullback vectors offers a potential advantage of more efficient computation of these distances when approached using brute-force. 
This is because the set \(\frM_{\Tri}\) is ``smaller'' than the set of tripods\footnote{We use ``smaller'' informally here, since their cardinalities could still be the same.}:
each tripod only produces one element in $\frM_{\Tri}$, but each pair $(\vec{m},\vec{m}')\in \frM_{\Tri}$ induces multiple tripods, as explained in the proof of Proposition \ref{prop:pb distances using vectors} on page \pageref{pf:pb using vectors}. 
The same phenomenon applies to correspondences and maps. In other words, the sets $\frM_{\Tri}, \frM_{\Cor}$, and $\frM_{\Map}$ are ``smaller'' than the respective sets of tripods, correspondences, and maps.
To elaborate on this point, consider the following comparisons: 
\begin{itemize}
    \item The number of pairs of maps $(f,g)$ between $X$ and $Y$ is
    \[n^{n'}(n')^n=O(n^{2n})>O(16^n) = \card(\frM_{\Map}).\]
    \item The number of correspondences $R$ between $X$ and $Y$ is at least
    \[2^{nn'}-n2^{(n-1)n'}-n'2^{(n'-1)n}=O(2^{n^2})>O(n^{2n}) = \card(\frM_{\Cor}).\]
\end{itemize}

\begin{proof} [Proof of Proposition \ref{prop:pb distances using vectors}]\label{pf:pb using vectors}

It suffices to establish the results for the pullback interleaving distances, as the case of pullback bottleneck distances follows from a similar argument.  

To proceed, let \(\{x_1, \dots, x_n\}\) and \(\{y_1, \dots, y_{n'}\}\) denote the underlying point sets of \(X\) and \(Y\), respectively.

Case (1): given any tripod 
$X\xtwoheadleftarrow{\phi_X}Z\xtwoheadrightarrow{\phi_Y}Y$, let 
\[m_i:=\card(\phi_X^{-1}({x_i}))-1\quad  \text{and} \quad  m_j':=\card(\phi_Y^{-1}({y_j}))-1\] 
for every $x_i\in X$ and $y_j\in Y$. These vectors $\vec{m}:=(m_1,\dots,m_n)$ and $\vec{m}':=(m_1',\dots,m_{n'}')$ satisfy \( \| \vec{m} + 1 \|_1 = \card(Z) = \| \vec{m}' + 1 \|_1 \) and induce isometries (see page \pageref{para:isometry}) \( X(\vec{m}) \cong Z_X \) and \( Y(\vec{m}') \cong Z_Y \), leading to
\[
\hatdi^{\Tri} \leq \min\limits_{(\vec{m},\vec{m}')\in \frM_{\Tri}} \di(\fccVR{X(\vec{m})}, \fccVR{Y(\vec{m}')}).
\]

Conversely, given any \((\vec{m},\vec{m}')) \in \frM_{\Tri}\) and for each bijection $f:X(\vec{m}) \to Y(\vec{m}')$, we construct a tripod $X\xtwoheadleftarrow{\pi_X}Z:=X(\vec{m})\xtwoheadrightarrow{\pi_Y\circ f}Y$ such that $Z_X$ and $Z_Y$ are isometric to $X(\vec{m})$ and $Y(\vec{m}')$, respectively. Here $\pi_X:X(\vec{m})
\to X$ and $\pi_Y:Y(\vec{m}')
\to Y$ are the projection maps.

Case (2): for the `$\leq$' direction, it suffices to show that for any correspondence \( R \subset X \times Y \), there exists \((\vec{m}, \vec{m}') \in \frM_{\Cor}\) such that \( R_X \) and \( R_Y \) (defined in Equation (\ref{eq:R_X and R_Y})) are isometric to \( X(\vec{m}) \) and \( Y(\vec{m}') \), respectively. A sufficient condition for \( R_X \cong X(\vec{m}) \) and \( R_Y \cong Y(\vec{m}') \) is that  
\begin{equation}\label{eq:cardinality conditions}
    m_i + 1 = \card(\pi_X^{-1}({x_i})) \quad \text{and} \quad m_j' + 1 = \card(\pi_Y^{-1}({y_j})),
\end{equation}
where \( \pi_X \) and \( \pi_Y \) are the projections from \( R \) to \( X \) and \( Y \), respectively.
Conversely, it suffices to show that every such pair \((\vec{m}, \vec{m}')\) determines a corresponding \( R \) satisfying Condition (\ref{eq:cardinality conditions}).

Given any correspondence \( R \subset X \times Y \). Define  
\[
m_i := \card(\pi_X^{-1}({x_i})) - 1, \quad \text{and} \quad  
m_j' := \card(\pi_Y^{-1}({y_j})) - 1.
\]  
Next, define the binary matrix \( M \) by setting \( M_{ij} = 1 \) if and only if \( (x_i, y_j) \in R \). 
Then \( M \) satisfies the prescribed row and column sums.
As a consequence, it follows that \( M \in \caX(\vec{m} + 1, \vec{m}' + 1) \), ensuring that this set is nonempty. Consequently, \((\vec{m}, \vec{m}') \in \frM_{\Cor}\) and satisfies Condition (\ref{eq:cardinality conditions}).

Conversely, given any \((\vec{m}, \vec{m}') \in \frM_{\Cor},\) there exists a corresponding binary matrix $M \in \caX(\vec{m} + 1, \vec{m}' + 1)$, which induces a correspondence $R$ satisfying Condition (\ref{eq:cardinality conditions}).
    
Case (3): Given two maps \( f: X \to Y \) and \( g: Y \to X \), let \( G := G(f, g) \) denote the union of their graphs, as defined in Equation~(\ref{eq: G(f,g)}).
For the `$\leq$' direction, it suffices to show that for any pair of maps \( f: X \to Y \) and \( g: Y \to X \), there exists \((\vec{m}, \vec{m}') \in \frM_{\Cor}\) such that \( G_X \) and \( G_Y \) (defined in Equation (\ref{eq:G_X and G_Y})) are isometric to \( X(\vec{m}) \) and \( Y(\vec{m}') \), respectively. A sufficient condition for \( G_X \cong X(\vec{m}) \) and \( G_Y \cong Y(\vec{m}') \) is  
\begin{equation}\label{eq:cardinality conditions for maps}
    m_i + 1 = \card(\pi_X^{-1}({x_i})) \quad \text{and} \quad m_j' + 1 = \card(\pi_Y^{-1}({y_j})),
\end{equation}  
where \( \pi_X \) and \( \pi_Y \) denote the projections from \( G \) onto \( X \) and \( Y \), respectively.  
Conversely, it suffices to show that every such pair \((\vec{m}, \vec{m}')\) determines a pair of maps \( f: X \to Y \) and \( g: Y \to X \) such that the union of their graphs satisfies Condition (\ref{eq:cardinality conditions for maps}).  

Given any pair of maps \( f: X \to Y \) and \( g: Y \to X \), let \( G := G(f,g) \) be the union of the graphs of \( f \) and \( g \), inducing a correspondence between \( X \) and \( Y \). 
We define
\[
m_i := \card(\pi_X^{-1}({x_i})) - 1, \quad \text{and} \quad  
m_j' := \card(\pi_Y^{-1}({y_j})) - 1.
\]  
Next, define the binary matrices \( F \) and \( F' \) by setting \( F_{ij} = 1 \) if and only if \( f(x_i) = y_j \) and \( F'_{ij} = 1 \) if and only if \( g(y_j) = x_i \). Then, define  
\(
M := F \vee F' \in \caX_{\mathrm{row}} \vee \caX_{\mathrm{col}}.
\)
Since \( M \) satisfies the prescribed row and column sums, it follows that \( M \in \caX(\vec{m} + 1, \vec{m}' + 1) \). Consequently, \((\vec{m}, \vec{m}') \in \frM_{\Map}\) and satisfies Condition (\ref{eq:cardinality conditions for maps}).  

Conversely, given any \((\vec{m}, \vec{m}') \in \frM_{\Map}\), there exists a corresponding binary matrix \( M \in \caX(\vec{m} + 1, \vec{m}' + 1) \cap (\caX_{\mathrm{row}} \vee \caX_{\mathrm{col}}) \). Suppose \( M = F \vee F' \) for some \( F \in \caX_{\mathrm{row}} \) and \( F' \in \caX_{\mathrm{col}} \). Then, \( F \) induces a map \( f: X \to Y \) defined by \( f(x_i) = y_j \) if and only if \( F_{ij} = 1 \), and similarly, \( F' \) induces a map \( g: Y \to X \).  
It follows that the union of the graphs of \( f \) and \( g \) satisfies Condition (\ref{eq:cardinality conditions for maps}), completing the proof.
\end{proof}

\subsubsection{The Case of Degree Zero} \label{subsec: hatdb 0}

In this subsection, we prove Proposition \ref{prop:hatdb degree 0}. Recall from Section \ref{sssec:dm} the matching distance $\dm$: for any $A,B\subset \overline{\caH}^\infty$,
\[\dm (A,B)=\min\left\{   \max_{a\in A} \infdistance{a}{\phi(a)}:A\xrightarrow{\phi} B \text{ a bijection}\right\}  .\]

Before discussing how to compute the degree-$0$ pullback bottleneck distance, let us identify a special property of the matching distance: 
\begin{fact} \label{fact:1-dim dm}
Let $A:=\{ a_1\geq \dots\geq a_n\}$ and $B:=\{b_1\geq \dots\geq b_n\}$ be two multisets of $n$ real numbers each. Then, 
\[\dm(A,B)=\min_{\text{bij }f:A\to B}\max_{i}|a_i-f(a_i)| = \max_{i}|a_i-b_i|.\]
\end{fact}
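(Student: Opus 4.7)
My plan is to prove the two directions separately: the sorted matching gives an upper bound for $\dm(A,B)$, and an exchange argument shows that any other bijection has cost at least as large.

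For the upper bound, I would take the bijection $f_0(a_i):=b_i$; then $\max_i |a_i-f_0(a_i)|=\max_i |a_i-b_i|$, so by definition $\dm(A,B)\leq \max_i |a_i-b_i|$. No difficulty here.

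The main content is the reverse inequality, and the key lemma I would isolate is the following two-element rearrangement: for any reals with $a_i\geq a_j$ and $b_k\geq b_l$,
\[\max\bigl(|a_i-b_k|,\,|a_j-b_l|\bigr)\leq \max\bigl(|a_i-b_l|,\,|a_j-b_k|\bigr).\]
This is verified by setting $M$ equal to the right-hand side, using $|a_i-b_l|\leq M$ and $|a_j-b_k|\leq M$ to extract four inequalities $b_l-M\leq a_i\leq b_l+M$ and $b_k-M\leq a_j\leq b_k+M$, and then combining with $b_l\leq b_k$ and $a_j\leq a_i$ to conclude $|a_i-b_k|\leq M$ and $|a_j-b_l|\leq M$. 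This is the only genuine calculation in the argument and is the part I expect to need the most care.

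Given the lemma, I would finish by an exchange (bubble-sort) argument: if $f:A\to B$ is any bijection and $f$ does not agree with the sorted matching $f_0$, then there exist indices $i<j$ with $f(a_i)=b_l$ and $f(a_j)=b_k$ for some $l>k$; swapping the images of $a_i$ and $a_j$ produces a new bijection $f'$ whose cost $\max_p |a_p-f'(a_p)|$ is no larger than $\max_p |a_p-f(a_p)|$ by the lemma (the contributions at indices $\neq i,j$ are unchanged). Since $A$ and $B$ are finite, iterating finitely many such swaps transforms $f$ into $f_0$ without ever increasing the cost, yielding $\max_i |a_i-b_i|\leq \max_i |a_i-f(a_i)|$. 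Taking the minimum over all bijections $f$ gives $\max_i |a_i-b_i|\leq \dm(A,B)$, completing the proof.
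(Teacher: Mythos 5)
Your argument is correct and follows the same route as the paper's: establish the two-element rearrangement inequality (which the paper calls the bottleneck Monge property) and then use an exchange/bubble-sort argument to pass from an arbitrary bijection to the sorted matching without increasing the cost. Your verification of the key lemma via the four bounds $b_l-M\leq a_i\leq b_l+M$ and $b_k-M\leq a_j\leq b_k+M$ is actually cleaner and more self-contained than the paper's, which reduces to an unclear case (stating $b'\geq a'\geq a$ after having assumed $a\geq a'$); otherwise the two proofs coincide.
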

\begin{proof} For any pair of real numbers $a\geq a'$ and $b\geq b'$, notice that their differences satisfy the so-called \emph{bottleneck Monge property} (see \cite[Section 4.1]{burkard1996perspectives}):
\begin{equation}\label{eq:monge}
    \max\{ |a-b'|, |a'-b| \}\geq  \max\{ |a-b|, |a'-b'| \}.
\end{equation}
%Indeed, suppose $b'\geq a'\geq a$, then the above inequality is equivalent to $(b'-a)+|a'-b|\geq |a-b|+(b'-a')$, i.e. $a'-a\geq |a-b|-|a'-b|$. 

Consider any bijection $f:A\to B$ and assume there exist $i<j$ and $i'<j'$ such that $f(a_i)=b_{j'}$ and $ f(a_j)=b_{i'}$. We define a new bijection $\tilde{f}:A\to B$ such that $\tilde{f}(a_i)=b_{i'}$, $ \tilde{f}(a_j)=b_{j'}$ and $\tilde{f}=f$ otherwise. It follows from Equation (\ref{eq:monge}) that $\dis(\tilde{f})\leq \dis(f)$. Repeat this process which stops when we obtain a bijection $g:a_i\mapsto b_i$ for every $i$. Thus, $g$ is the optimal bijection.
\end{proof}

\hatdbfordegreezero*

\begin{proof}%[Proof of Proposition \ref{prop:hatdb degree 0}]
By Proposition \ref{prop:pb distances using vectors}, $\pbdbk{0}{X}{Y}$
%the degree-$0$ pullback bottleneck distance 
can be reformulated using pullback vectors as: 
%for two metric spaces $X$ and $Y$ with cardinality $n$ and $n'$, respectively, 
\begin{equation}\label{eq:reformulation degree 0}
    \pbdbk{0}{X}{Y}=\min\limits_{(\vec{m},\vec{m}')\in \frM_{\Tri}}\dm(\caB_{\Ver,0}(X(\vec{m})), \caB_{\Ver,0}(Y(\vec{m}'))),
\end{equation}
where $\frM_{\Tri} :=\left\{(\vec{m},\vec{m}')\in \bbN^{n}\times \bbN^{n'} \mid \|  \vec{m}+1 \| _1=\|  \vec{m}'+1\|_1 \right\}$.
By Proposition \ref{prop:pullback-barcode}, we have
\begin{equation}\label{eq:barc 0}
    \caB_{\Ver,0}(X(\vec{m}))=\caB_{\Ver,0}(X)\sqcup \{(0,0)\}^{\|  \vec{m} \| _1}\quad  \text{and} \quad \caB_{\Ver,0}(Y(\vec{m}'))=\caB_{\Ver,0}(Y)\sqcup \{(0,0)\}^{\|  \vec{m}' \| _1}.
\end{equation}

Combining Equation (\ref{eq:reformulation degree 0}) and Equation (\ref{eq:barc 0}), we have 
\[\pbdbk{0}{X}{Y}=\min\limits_{\substack{m, m' \in \mathbb{N} \\ m+n = m'+n'}}\dm\left(\caB_{\Ver,0}(X)\sqcup \{(0,0)\}^m,\caB_{\Ver,0}(Y)\sqcup \{(0,0)\}^{m'}\right).\]

By the given assumption, we have $\caB_{\Ver,0}(X)=\{(0,a_1),\dots,(0,a_{n-1})\}\sqcup\{(0,\infty)\}$ and $\caB_{\Ver,0}(Y)=\{(0,b_1),\dots,(0,b_{n'-1})\}\sqcup\{(0,\infty)\}$. 
Together with Fact \ref{fact:1-dim dm}, we have
\begin{align}
& \pbdbk{0}{X}{Y}\notag\\
=\, & \min\limits_{\substack{m, m' \in \mathbb{N} \\ m+n = m'+n'}}\dm  \Big(\{(0,\infty), (0,a_1),\dots,(0,a_{n-1})\}\sqcup \{(0,0)\}^m , \{(0,\infty),(0,b_1),\dots,(0,b_{n'-1})\}\sqcup \{(0,0)\}^{m'} \Big)\notag\\
=\, & \min\limits_{\substack{m, m' \in \mathbb{N} \\ m+n = m'+n'}}\dm  \Big(\{a_1,\dots,a_{n-1}, \underbrace{0,\dots, 0}_{m}\} , \{b_1,\dots,b_{n'-1}, \underbrace{0,\dots, 0}_{m'}\} \Big)\notag\\
=\, & \dm  \Big(\{a_1,\dots,a_{n-1}, \underbrace{0,\dots, 0}_{n-n'}\} , \{b_1,\dots,b_{n'-1}\} \Big)\notag\\
=\, & \max\left\{ \max_{1\leq i\leq n-1}|a_i-b_i|, \max_{n\leq i\leq n'-1} b_i \right\}.\notag \qedhere
\end{align}
\end{proof}

\begin{example}\label{ex:twopointspace} 
Let $X_{\epsilon}:=(\left\{  0,1\right\}   ,d^{\epsilon})$ be a metric space consisting of two points, where $d^{\epsilon}(0,1)=1+\epsilon$. It is not hard to verify that $\caB_{\Con,0}(X_{\epsilon})=\left\{  (0,1+\epsilon),(0,+\infty)\right\}   $, and thus,\[\tfrac{1}{2}\db(\caB_{\Con,0}(X_{\epsilon}),\caB_{\Con,0}(X_{0}))= \min\left\{  \tfrac{\epsilon}{2},\tfrac{1+\epsilon}{4}\right\}  \leq \tfrac{\epsilon}{2}=\dgh(X_{\epsilon},X_{0}).\]where the inequality is strict when $\epsilon>1.$
However, for any $\epsilon\geq 0$, 
\[\tfrac{1}{2}\pbdbk{0}{X_\epsilon}{X_0}=\frac{\epsilon}{2}=\dgh(X_{\epsilon},X_{0}).\]
Thus, in this example, $\hatdb$ between degree-$0$ barcodes have a stronger distinguishing power than $\db$.
\end{example}

Proposition \ref{prop:hatdb degree 0} implies the following corollary:

\begin{corollary}\label{cor:hatdb zero triangle}
\begin{itemize}
    \item[(1)] $\hatdbk{0}^{\Tri}=\hatdbk{0}^{\Cor}=\hatdbk{0}^{\Map}$.
    \item[(2)] The pullback bottleneck distance between degree-$0$ verbose barcodes satisfies the triangle inequality. 
\end{itemize}
\end{corollary}

\begin{proof} Let $X$ and $Y$ be finite metric spaces with cardinality $n_X$ and $n_Y$, respectively. As before, consider the death times of finite-length bars in the degree-$0$ barcodes of $X$ and $Y$ as $a_1\geq \dots\geq a_{n_X-1}$ and $b_1\geq \dots\geq b_{n_Y-1}$, respectively. For any $N\geq \max\{n_X,n_Y\}$, define 
\[\alpha^N:=\{a_1, \dots, a_{n_X-1},\underbrace{0,\dots, 0}_{N-n_X}\} \text{ and }\beta^N:=\{a_1, \dots, b_{n_Y-1},\underbrace{0,\dots, 0}_{N-n_Y}\}.\]
We have seen in Proposition \ref{prop:hatdb degree 0} that
\[\pbdbvariant{0}{\hat}{\Tri}{X}{Y}= \infdistance{\alpha^N}{\beta^N},\,\forall N\geq \max\{n_X,n_Y\}.\]
Via a similar discussion, we have
\[\pbdbvariant{0}{\hat}{\Map}{X}{Y}= \infdistance{\alpha^{n_X+n_Y}}{\beta^{n_X+n_Y}},\]
and thus $\pbdbvariant{0}{\hat}{\Map}{X}{Y}$ is equal to $\pbdbvariant{0}{\hat}{\Tri}{X}{Y}$.
Because $\hatdbk{0}^{\Tri}\leq\hatdbk{0}^{\Cor}\leq\hatdbk{0}^{\Map}$, we must have that all three of them are equal. So Item (1) holds.

Let $Z$ be a finite metric space of $n_Z$ many points. Assume that the death times of finite-length bars in the degree-$0$ barcodes of $Z$ are $c_1\geq \dots\geq c_{n_Z-1}$. For any $N\geq \max\{n_X,n_Y,n_Z\}$, define $\gamma^N:=\{c_1, \dots, c_{n_Z-1},\underbrace{0,\dots, 0}_{N-n_Z}\}.$
Then, we have
\[\pbdbk{0}{X}{Z}  = \infdistance{\alpha^N}{\gamma^N}\leq \infdistance{\alpha^N}{\beta^N} + \infdistance{\beta^N}{\gamma^N} =\pbdbk{0}{X}{Y}+\pbdbk{0}{Y}{Z}.\]
This proves Item (2).
\end{proof}

\subsubsection{An Important Example of Ultra-Metric Spaces}
\label{subsub:non triangle}

We demonstrate that all three pullback interleaving-type distances violate the triangle inequality by considering the three five-point ultra-metric spaces depicted in Figure \ref{fig:5-point-ultra}.

\begin{figure}[ht]
    \centering
    \includegraphics[scale=0.29]{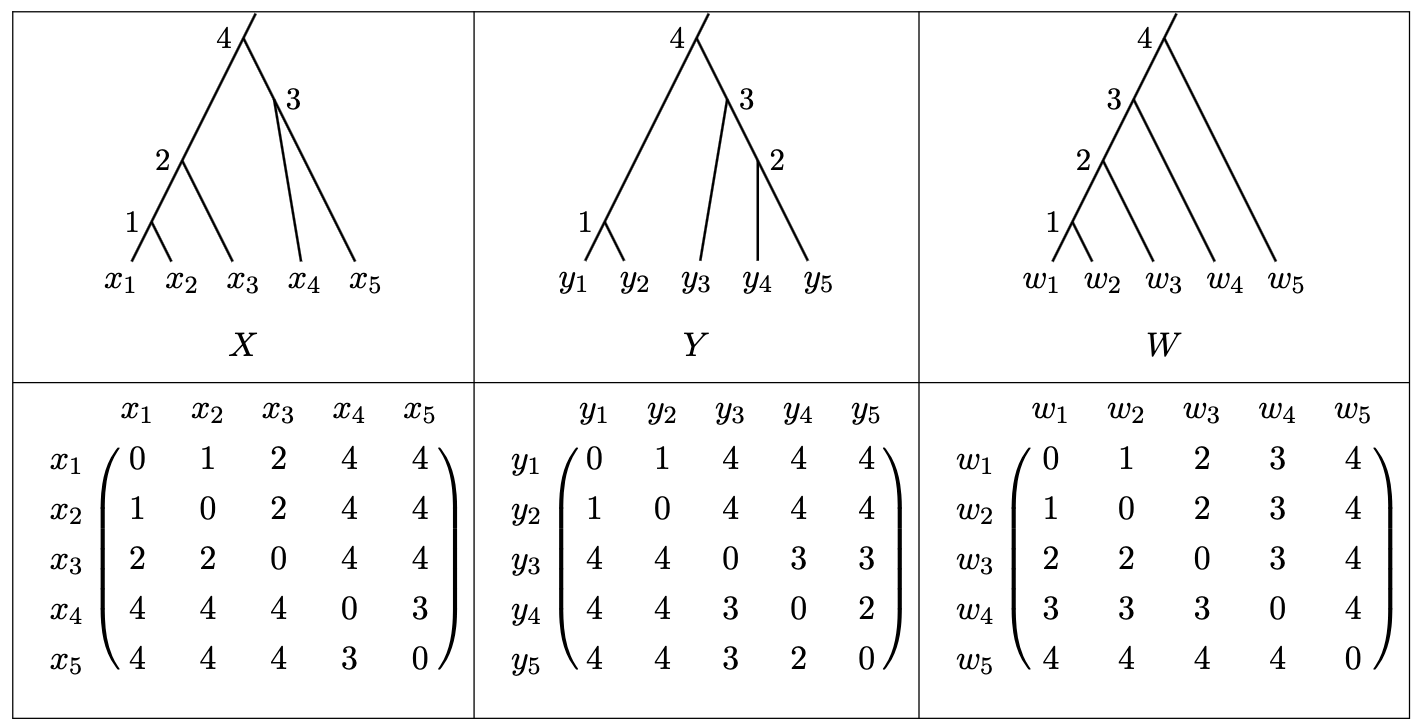}
    \caption{\textbf{First row:} three $5$-point ultra-metric spaces denoted as $X$, $Y$ and $W$, respectively. \textbf{Second row:} distance matrices of the ultra-metric spaces $X$, $Y$ and $W$, respectively. In each matrix, the $(i,j)$-th element is the distance between the $i$-th and $j$-th elements in the corresponding metric space.   }
    \label{fig:5-point-ultra}
\end{figure}

\begin{proposition}\label{prop:hatdi 5-pt non-zero}
Let $X$, $Y$ and $W$ be given as in Figure \ref{fig:5-point-ultra}. Then,
\begin{itemize}
    \item [(1)] for the pair $(X,W)$ or $(W,Y)$, $\hatdi^{\Tri}=\hatdi^{\Cor}=\hatdi^{\Map}=0$;
    \item [(2)] for the pair $(X,Y)$, $\hatdi^{\Tri}=\hatdi^{\Cor}=\hatdi^{\Map}=1$.
\end{itemize}
\end{proposition}

Proposition \ref{prop:hatdi 5-pt non-zero} implies the following corollary:

\begin{corollary}\label{cor:triangle fails}
The triangle inequality does not hold for $\hatdi^{\Tri}$, $\hatdi^{\Cor}$, and $\hatdi^{\Map}$.
\end{corollary}

\begin{proof}
Consider $X$, $Y$, and $W$ as depicted in Figure \ref{fig:5-point-ultra}. Proposition \ref{prop:hatdi 5-pt non-zero} implies that 
\[\pbdivariant{\hat}{\Tri}{X}{Y}=1
    > 0+0=
    \pbdivariant{\hat}{\Tri}{X}{W}+\pbdivariant{\hat}{\Tri}{W}{Y}.\]
The same is true for $\hatdi^{\Cor}$ and $\hatdi^{\Map}$.
\end{proof}

\paragraph{Proof of Proposition \ref{prop:hatdi 5-pt non-zero} (1).} 

By Theorem \ref{thm:all hatdi and hatdb}, we always have $\hatdi^{\Tri}\leq \hatdi^{\Cor}\leq \hatdi^{\Map}$. Thus, to prove the statement, it suffices to show that $\hatdi^{\Map}=0$. By Proposition \ref{prop:pb distances using vectors}, this is equivalent to finding pullback vectors $(\vec{m},\vec{m}')\in \frM_{\Map}$ such that $\di$ between the VR FCCs of the corresponding pullback spaces is $0$. 

For $(X,W)$, consider 
\[\vec{m}=(0,0,0,2,3) \text{ and }\vec{m}'=(0,0,0,3,2).\]
Recall the definition of $\mathfrak{M}_{\text{Map}}$ from Proposition \ref{prop:pb distances using vectors}. 
Note that \((\vec{m},\vec{m}') \in \mathfrak{M}_{\text{Map}}\), since there exists \( F \vee F' \) in \( \caX(\vec{m}+1, \vec{m}'+1) \), given by
\[
F = \begin{pmatrix}
    0 & 0 & 0 & 0 & 1 \\
    0 & 0 & 0 & 1 & 0 \\
    0 & 0 & 0 & 1 & 0 \\
    0 & 0 & 0 & 1 & 0 \\
    0 & 0 & 0 & 0 & 1
\end{pmatrix} \in \caX_{\mathrm{row}} 
\quad \text{and} \quad 
F' = \begin{pmatrix}
    0 & 0 & 0 & 0 & 0 \\
    0 & 0 & 0 & 0 & 0 \\
    0 & 0 & 0 & 0 & 0 \\
    0 & 0 & 1 & 0 & 1 \\
    1 & 1 & 0 & 1 & 0
\end{pmatrix} \in \caX_{\mathrm{col}} .
\]

Let $Z:=X(\vec{m})$ and $Z':=W(\vec{m}')$ be the pullback spaces of $X$ and $W$, respectively. 
We claim that 
\[\di\left(\fccVR{Z} ,\fccVR{Z'} \right)=\max_{k=0,\dots,\card(Z)-2}\dm
    \left(  \caB_{\Ver,k}(Z), \caB_{\Ver,k}(Z')\right) = 0.\]

Since $(\vec{m},\vec{m}')\in \frM_{\Map}$, we deduce from the above claim that $\hatdi^{\Map}$ between $X$ and $W$ is also zero. 
The claim follows by computing the verbose barcodes of $Z$ and $Z'$ via Theorem \ref{thm:verbose of ultra} and Proposition \ref{prop:explicit barc k}. For brevity, we have omitted the details. Interested readers may refer to \cite[Section 5.7 \& Section A.1]{zhou2023}. 

For $(Y,W)$, consider 
\[\vec{m}=(0,0,0,1,4) \text{ and }\vec{m}'=(0,0,4,0,1),\]
Note that \((\vec{m},\vec{m}') \in \mathfrak{M}_{\text{Map}}\), since there exists \( F \vee F' \) in \( \caX(\vec{m}+1, \vec{m}'+1) \), given by
\[
F = \begin{pmatrix}
    0 & 0 & 1 & 0 & 0 \\
    0 & 0 & 1 & 0 & 0 \\
    0 & 0 & 1 & 0 & 0 \\
    0 & 0 & 0 & 0 & 1 \\
    0 & 0 & 1 & 0 & 0
\end{pmatrix} \in \caX_{\mathrm{row}} 
\quad \text{and} \quad 
F' = \begin{pmatrix}
    0 & 0 & 0 & 0 & 0 \\
    0 & 0 & 0 & 0 & 0 \\
    0 & 0 & 0 & 0 & 0 \\
    0 & 0 & 1 & 0 & 0 \\
    1 & 1 & 0 & 1 & 1
\end{pmatrix} \in \caX_{\mathrm{col}} .
\]

Apply a similar argument as in the previous case to deduce that $\hatdi^{\Map}$ between $Y$ and $W$ is zero.

\paragraph{Proof of Proposition \ref{prop:hatdi 5-pt non-zero} (2).} 

For the pair $(X,Y)$, to show $\hatdi^{\Tri}=\hatdi^{\Cor}=\hatdi^{\Map}=1$, it suffices to prove 
\begin{itemize}
    \item [(a)] $\pbdivariant{\hat}{\Map}{X}{Y}\leq 1$; and 
    \item [(b)] $\pbdivariant{\hat}{\Tri}{X}{Y}\geq 1$. 
\end{itemize}

Item (a) can be obtained by considering pullback vectors
\[\vec{m}=\vec{m}'=(0,0,0,1,4).\]
With a similar argument as in the proof of Proposition \ref{prop:hatdi 5-pt non-zero} (1), we have $(\vec{m},\vec{m}')\in \frM_{\Map}$.
Let $Z:=X(\vec{m})$ and $Z':=Y(\vec{m}')$ be the pullback spaces of $X$ and $Y$, respectively. We can compute the verbose barcodes of $Z$ and $Z'$ from Theorem \ref{thm:verbose of ultra} and Proposition \ref{prop:explicit barc k}, and check that 
\[\pbdivariant{\hat}{\Map}{X}{Y}\leq \di(\fccVR{Z},\fccVR{Z'}) = \max_{k=0,1,\dots,8}\dm(\caB_{k}(Z),\caB_{k}(Z'))=1.\]

For Item (b), note that the statement is equivalent to showing that 
\begin{equation}\label{eq:non-zero}
    \pbdivariant{\hat}{\Tri}{X}{Y}\neq 0.
\end{equation}
The proof of Equation (\ref{eq:non-zero}) is not included in this section due to its technical nature. Interested readers can find more details in Section \ref{app:proof of non-zero}. To summarize, the proof involves solving \emph{Diophantine equations}, which are polynomial equations in two or more unknowns with integer coefficients, such that the only solutions of interest are the integer ones.
The reason for this is that $\pbdivariant{\hat}{\Tri}{X}{Y}= 0$ if and only if there exist pullback vectors $(\vec{m},\vec{m}')$ such that $X(\vec{m})$ and $Y(\vec{m}')$ have the same verbose barcodes in all degrees. And the latter is equivalent to the existence of solutions for equations given by matching the multiplicities of bars in the verbose barcodes.

\begin{remark}\label{rmk:XY hatdb=0}
For the pair $(X,Y)$ and for any $k=1,2,3$, we claim that $\hatdbk{k}^{\Map}$ is zero. It then follows that $\hatdbk{k}^{\Tri}$ and $\hatdbk{k}^{\Cor}$ are also zero. The claim can be proved by considering the following pullback vectors:
\begin{itemize}
    \item For $k=1$: $\vec{m}=(1, 1, 0, 0, 3), \vec{m}'=(0, 3, 0, 1, 1)$. 
    Note that \((\vec{m},\vec{m}') \in \mathfrak{M}_{\text{Map}}\), since there exists \( F \vee F' \) in \( \caX(\vec{m}+1, \vec{m}'+1) \), given by
\[
F = \begin{pmatrix}
    0 & 1 & 0 & 0 & 0 \\
    0 & 1 & 0 & 0 & 0 \\
    0 & 1 & 0 & 0 & 0 \\
    0 & 0 & 0 & 1 & 0 \\
    0 & 0 & 0 & 0 & 1
\end{pmatrix} \in \caX_{\mathrm{row}} 
\quad \text{and} \quad 
F' = \begin{pmatrix}
    0 & 0 & 0 & 0 & 1 \\
    0 & 0 & 1 & 0 & 0 \\
    0 & 0 & 0 & 0 & 0 \\
    0 & 0 & 0 & 0 & 0 \\
    1 & 1 & 0 & 1 & 0
\end{pmatrix} \in \caX_{\mathrm{col}} .
\]
    \item For $k=2$ and $3$: $\vec{m}=(0, 0, 0, 1, 4),  \vec{m}'=(0, 1, 4, 0, 0)$. 
    With a similar argument as in the proof of Proposition \ref{prop:hatdi 5-pt non-zero} (1), we see that $(\vec{m},\vec{m}')$ is in $\frM_{\Map}$.
    \item For $k>3$: $\vec{m}=\vec{m}'=(1,1,1,1,1)$. 
    Note that \((\vec{m},\vec{m}') \in \mathfrak{M}_{\text{Map}}\), since there exists \( F \vee F' \) in \( \caX(\vec{m}+1, \vec{m}'+1) \), given by
\[
F = \begin{pmatrix}
    1 & 0 & 0 & 0 & 0 \\
    0 & 1 & 0 & 0 & 0 \\
    0 & 0 & 1 & 0 & 0 \\
    0 & 0 & 0 & 1 & 0 \\
    0 & 0 & 0 & 0 & 1
\end{pmatrix} \in \caX_{\mathrm{row}} 
\quad \text{and} \quad 
F' = \begin{pmatrix}
    0 & 1 & 0 & 0 & 0 \\
    0 & 0 & 1 & 0 & 0 \\
    0 & 0 & 0 & 1 & 0 \\
    0 & 0 & 0 & 0 & 1 \\
    1 & 0 & 0 & 0 & 0
\end{pmatrix} \in \caX_{\mathrm{col}} .
\]
\end{itemize}
\end{remark}

%%%%%%%%% Reference %%%%%%%%%%%% 
\newpage

\bibliographystyle{plainurl}%{alpha} % or try abbrvnat or unsrtnat
\bibliography{refsfcc.bib} % refers to example.bib

\appendix

\section{Appendix}
\subsection{Understanding the Multiplicity Function \texorpdfstring{$\mu_k(\vec{m}(I_p))$}{mu}}
\label{app:mu}
We present some examples to help understand the notation $\mu_k(\vec{m}(I_p))$ introduced in Section \ref{subsec:pb barcodes explicit}.

First consider the case when $k=1$. There are only three possible cases:
\begin{itemize}
    \item $p=1,$ in which situation we must have $q=1$ and $\omega_1=2$;
    \item $p=2$ and $q=1$, in which situation we must have $\omega_1=1$;
    \item $p=2$ and $q=2$, in which situation we must have $\omega_1=\omega_2=1$.
\end{itemize}
Thus, we have
\begin{align}
    \mu_1(\vec{m}([i]))=&\,\sum_{ \omega_1= 2}\, {m_{i}\choose \omega_1}={m_{i}\choose 2}. \notag\\
    \mu_1(\vec{m}([i_1,i_2]))
    =&\,
    \sum_{\omega_1=1}\, {m_{i_1}\choose 1}+
    \sum_{\omega_1=\omega_2=1}{m_{i_1+1}\choose \omega_1}{m_{i_2}\choose \omega_2}  = m_{i_1}m_{i_2}+m_{i_1}+m_{i_2}.\label{eq:mu_1,2}
\end{align}
From the above, it is clear that when  $k=1$, Proposition \ref{prop:explicit barc k} reduces to the following:
\begin{align*}
    \caB_{\Ver,1}(Z)
= \caB_{\Ver,1}(X) \sqcup \bigsqcup_{1\leq p<q\leq n}\left\{  d_X(x_p,x_q)\cdot(1,1)\right\}^{(m_pm_q+m_p+m_q)}
\sqcup \left\{ (0,0)\right\}^{\sum_p{m_p \choose 2}}.
\end{align*}

For $k=2$, we have
\begin{align}
    \mu_2(\vec{m}([i]))=&\,\sum_{ \omega_1= 3}\, {m_{i}\choose \omega_1}={m_{i}\choose 3}. \notag\\
    \mu_2(\vec{m}([i_1,i_2]))
    % =&\,\sum_{q=1}^2\,\sum_{\substack{ \omega_1,\dots,\omega_q\geq 1 \\\omega_1+\dots+\omega_q+(2-q)=3 }}\, {m_{i_1}+1\choose \omega_1}%{m_{i_2}+1\choose \omega_2}
    % \dots {m_{i_{q-1}}+1\choose\omega_{q-1}}{m_{i_q}\choose \omega_q}\notag\\
    % =&\,\sum_{\omega_1=2}\, {m_{i_1}\choose 2}+
    % \sum_{\substack{ \omega_1,\omega_2\geq 1 \\\omega_1+\omega2=3 }}\, {m_{i_1}+1\choose \omega_1}%{m_{i_2}+1\choose \omega_2}
    % \dots {m_{i_{q-1}}+1\choose\omega_{q-1}}{m_{i_q}\choose \omega_q}\notag\\
    =&\,{m_{i_1}\choose 2}+ {m_{i_1}+1\choose 1}{m_{i_2}\choose 2} + {m_{i_1}+1\choose 2}{m_{i_2}\choose 1}\notag\\
    =&\,(m_{i_1}+1)(m_{i_2}+1) \tfrac{m_{i_1}+m_{i_2}-2}{2}+1.\label{eq:mu_2,2}
\end{align}

One more example of $\mu_k(\vec{m}(I_p))$ is when $p=k+1$. In this case, we have
\begin{align}
\label{eq:mu_k,k+1}
    \mu_k(\vec{m}([i_1,\dots,i_{k+1}])
    =& \sum_{q=1}^{k+1}\,\sum_{\substack{ \omega_1,\dots,\omega_q\geq 1 \\\omega_1+\dots+\omega_q=q }}\, {m_{i_1}+1\choose \omega_1}{m_{i_2}+1\choose \omega_2}\dots {m_{i_{q-1}}+1\choose\omega_{q-1}}{m_{i_q}\choose \omega_q}\notag\\
    =&\sum_{q=1}^{k+1}(m_{i_1}+1)(m_{i_2}+1)\dots (m_{i_{q-1}}+1)m_{i_{q}}.
\end{align}

\begin{remark} \label{rmk:recursive mu_k} The functions $\{\mu_k(\cdot)\}_k$ satisfy the following recursive relation:
    \begin{align*}
        &\mu_{k}(\vec{m}([i_1,\dots,i_p]))-\mu_{k-1}(\vec{m}([i_1,\dots,i_{p-1}]))\\
        =&\left(\sum_{q=1}^p\,\sum_{\substack{ \omega_1,\dots,\omega_q\geq 1 \\\omega_1+\dots+\omega_q+(p-q)=k+1 }}-
        \sum_{q=1}^{p-1}\,\sum_{\substack{ \omega_1,\dots,\omega_q\geq 1 \\\omega_1+\dots+\omega_q+(p-1-q)=k }}\right) {m_{i_1}+1\choose \omega_1}\dots {m_{i_{q-1}}+1\choose\omega_{q-1}}{m_{i_q}\choose \omega_q}\\
        =& \sum_{q=p}\,\sum_{\substack{ \omega_1,\dots,\omega_q\geq 1 \\\omega_1+\dots+\omega_q=k+1 }} {m_{i_1}+1\choose \omega_1}\dots {m_{i_{q-1}}+1\choose\omega_{q-1}}{m_{i_q}\choose \omega_q}\\
        =& \sum_{\substack{ \omega_1,\dots,\omega_p\geq 1 \\\omega_1+\dots+\omega_p=k+1 }} {m_{i_1}+1\choose \omega_1}\dots {m_{i_{p-1}}+1\choose\omega_{p-1}}{m_{i_p}\choose \omega_p}.
    \end{align*}
\end{remark}

\subsection{Proof of Equation (\ref{eq:non-zero})} \label{app:proof of non-zero}

We establish an important Lemma that provides a necessary condition for pullback interleaving-type distances to be zero (see Lemma \ref{lem:ultra-permutation}). This lemma allows us to restrict our attention to pullback vectors $(\vec{m},\vec{m}')$ that are a certain type of permutation of one another.

For any integer $n\geq 2$, let $\rmS_{n-1}$ be the group of all permutations of the set ${1,\dots,n-1}$. 
Let $\sigma\in \rmS_{n-1}$ be a permutation with the property that $\sigma(1)=1$. Define $X_\sigma$ to be the ultra-metric space such that $u_X(x_i,x_{i+1}):=\sigma(i)$ for all $i=1,\dots,n-1$, and $u_X(x_i,x_j):=\max_{l=i,\dots,j-1}\sigma(l)$ for all $1\leq i<j\leq n$.\label{para:caU_n}
Denote
\[\caU_n:=\left\{ (X_\sigma,u_X)\mid \sigma\in \rmS_{n-1}, \, \sigma(1)=1 \right\}.\]

\begin{lemma} \label{lem:ultra-permutation}
Let $X,X'$ be in $\caU_n$, and $\vec{m},\vec{m}'\in\bbN^n$. Let $Z:=X(\vec{m})$ and $Z':=X'(\vec{m}')$ be the pullback spaces of $X$ and $X'$, respectively. Then, 
\begin{itemize}
    \item[(1)] $\caB_{\Ver,k}(Z)$ and $\caB_{\Ver,k}(Z')$ have the same number of $(1,1)$ for both $k=1,2$, and 
    \item[(2)] $\caB_{\Ver,k}(Z)$ and $\caB_{\Ver,k}(Z')$ have the same number of $(0,0)$ for all $k=0,1,\dots,n-3$,
\end{itemize}
iff 
\begin{itemize}
    \item [(I)] $(m_1,m_2)$ and $(m_1',m_2')$ differ by a permutation, and
    \item [(II)] $(m_3,\dots, m_n)$ and $(m_3',\dots, m_n')$ differ by a permutation.
\end{itemize}
Moreover, if these conditions are satisfied, the multiplicity of $(0,0)$ (or $(1,1)$ respectively) in $\caB_{\Ver,k}(Z)$ and $\caB_{\Ver,k}(Z')$ matches for $k = 3, \dots, n + \|\vec{m}\|_1 - 2$.
\end{lemma}

\begin{proof}%[Proof of Lemma \ref{lem:ultra-permutation}]
\label{pf:permutaton lemma}
We first prove that Items (1) and (2) imply Items (I) and (II). 

It follows from Proposition \ref{prop:explicit barc k} and Equations (\ref{eq:mu_1,2}) and (\ref{eq:mu_2,2}) that the multiplicities of $(1,1)$ in $\caB_{\Ver,k}(Z)$ for $k=1,2$ are:
\begin{itemize}
    \item $k=1$: $\mu_1(\vec{m}([1,2]))=(m_1+1)(m_2+1)-1$;
    \item $k=2$: $\mu_2(\vec{m}([1,2])) = (m_{1}+1)(m_{2}+1) \tfrac{m_{1}+m_{2}-2}{2}+1.$
\end{itemize}
Similarly, we have the multiplicities of $(1,1)$ in $\caB_{\Ver,k}(Z')$ for $k=1,2$:
\begin{itemize}
    \item $k=1$: $\mu_1(\vec{m}'([1,2]))=(m_1'+1)(m_2'+1)-1$;
    \item $k=2$: $\mu_2(\vec{m}'([1,2])) =(m_{1}'+1)(m_{2}'+1) \tfrac{m_{1}'+m_{2}'-2}{2}+1$.
\end{itemize}
Thus, for Item (1) to hold, the following system of equations is satisfied
\begin{align*}
    (m_1+1)(m_2+1)-1&=(m_1'+1)(m_2'+1)-1\\
    (m_{1}+1)(m_{2}+1) \tfrac{m_{1}+m_{2}-2}{2}+1&=(m_{1}'+1)(m_{2}'+1) \tfrac{m_{1}'+m_{2}'-2}{2}+1.
\end{align*}
Because $m_1+1,m_2+1>0$ are non-zero, the above equations can be simplified to
\begin{align*}
    (m_1+1)(m_2+1)&=(m_1'+1)(m_2'+1)\\
    m_1+m_2&=m_1'+m_2'.
\end{align*}
As a consequence, we can infer that $(m_1+1,m_2+1)$ and $(m_1'+1,m_2'+1)$ are permutations of each other, which implies that $(m_1,m_2)$ and $(m_1',m_2')$ are also permutations of each other.

By Proposition \ref{prop:pullback-barcode} and Proposition \ref{prop:explicit barc k}, for any $k=0,\dots,n-3$, the multiplicities of $(0,0)$ in the verbose barcodes $\caB_{\Ver,k}(Z)$ and $\caB_{\Ver,k}(Z')$ are $\sum_p{m_p \choose k+1}$ and $\sum_p{m_p' \choose k+1}$, respectively. 
Thus, for Item (2) to hold, $\vec{m}$ and $\vec{m}'$ need to satisfy the following $(n-2)$ equations:
\[\sum_p{m_p \choose 1} =\sum_p{m_p' \choose 1},\, 
%\sum_p{m_p \choose 2} =\sum_p{m_p' \choose 2}\,
\dots,\,
\sum_p{m_p \choose n-2} =\sum_p{m_p' \choose n-2}.\]
It is clear that the above system of equations is equivalent to the system of equations
\[\| \vec{m}\|_1 =\| \vec{m}'\|_1,\, 
%\| \vec{m}\|_2 =\| \vec{m}'\|_2,\,\| \vec{m}\|_3 =\| \vec{m}'\|_3,
\dots,\,\| \vec{m}\|_{n-2} =\| \vec{m}'\|_{n-2},\]
where $\|\cdot\|_{k+1}$ denotes the $(k+1)$-norm of a vector.

Let $\vec{z}:=(m_3 ,\dots,m_n )\in  \bbN^{n-2}$ and $\vec{z}':=(m_3' ,\dots,m_n' )\in  \bbN^{n-2}$. Because $(m_1,m_2)$ and $(m_1',m_2')$ are permutations of each other, we have 
\[\| \vec{z}\|_{k+1}^{k+1}=\| \vec{m}\|_{k+1}^{k+1}-\|(m_1 ,m_2 )\|_{k+1}^{k+1}
=\| \vec{m}'\|_{k+1}^{k+1}-\|(m_1' ,m_2' )\|_{k+1}^{k+1}
=\| \vec{z}'\|_{k+1}^{k+1}\] 
for any $k$, i.e. the following $(n-2)$ equations hold:
\begin{equation}\label{eq:norms}
    \| \vec{z}\|_1 =\| \vec{z}'\|_1,\, 
\| \vec{z}\|_2 =\| \vec{z}'\|_2,\, 
\dots,\,\| \vec{z}\|_{n-2} =\| \vec{z}'\|_{n-2}.
\end{equation}

Let $f(x):=\prod_{i=1}^{n-2}(x-z_i)-\prod_{i=1}^{n-2}(x-z_i')$. Then the system of equations (\ref{eq:norms}) guarantees that $f(x)\equiv 0$.
This follows from Newton's identities, which say that power sums $\left\{\sum_{j=1}^{n-2}z_j^{k}\right\}_{k=1}^{n-2}$ and symmetric polynomials $\left\{\left(z_1+\dots+z_{n-2}\right),\left(\sum_{1\leq j_1<j_2\leq n-2}z_{j_1}z_{j_2}\right),\dots,\left(z_1\dots z_{n-2}\right)\right\}$ determine each other. Because these symmetric polynomials determine $f(x)$, the power sums also determine $f(x)$.

Thus, $\vec{z}$ and $\vec{z}'$ differ by a permutation,
which implies that $(m_3,\dots, m_n)$ and $(m_3',\dots, m_n')$ differ by a permutation. Therefore, we have proved that Items (I) and (II) hold.

For the other direction, it is straightforward to verify that Items (I) and (II) imply Items (1) and (2).\\

Moreover, if Items (I) and (II) are satisfied, the following is true for any \(k = 3, \dots, n + \|\vec{m}\|_1 - 2\):
The multiplicities of $(0,0)$ and $(1,1)$ in $\caB_{\Ver,k}(Z)$ and $\caB_{\Ver,k}(Z')$ match. 
This equivalence is due to the preservation of the multiplicity of $(0,0)$ in $\caB_{\Ver,k}(Z)$ under permutations and the multiplicity of $(1,1)$ being a symmetric function in $m_1$ and $m_2$.
\end{proof}

Let $X$ and $Y$ be given as in Figure \ref{fig:5-point-ultra}. We apply Lemma \ref{lem:ultra-permutation} to show $\pbdivariant{\hat}{\Tri}{X}{Y}\neq 0$, i.e. Equation (\ref{eq:non-zero}).

\begin{proof}[Proof of `$\pbdivariant{\hat}{\Tri}{X}{Y}\neq 0$']

It suffices to show that there exist no pullback vectors $\vec{m}\in \bbN^5$ for $X$ and $\vec{m}'\in\bbN^5$ for $Y$ such that
\[\max_{0\leq k\leq 3}\dm(\caB_{\Ver,k}(X(\vec{m})),\caB_{\Ver,k}(Y(\vec{m}')))=0.\]
In other words,
\[\caB_{\Ver,k}(X(\vec{m}))=\caB_{\Ver,k}(Y(\vec{m}')),\,\forall k=0,\dots,3.\]
We prove these equalities by contradiction. Assume such $\vec{m},\vec{m}'\in \bbN^5$ exist. Then they must satisfy a certain system of Diophantine equations. %We will prove that such system of equations does not admit a solution.

Because of Lemma \ref{lem:ultra-permutation}, the pullback vectors $\vec{m},\vec{m}'\in \bbN^5$ satisfies the following properties:
\begin{itemize}
    \item[(I)] $(m_1,m_2)$ and $(m_1',m_2')$ differ by a permutation, and \item[(II)] $(m_3,m_4, m_5)$ and $(m_3',m_4', m_5')$ differ by a permutation.
\end{itemize}
Let $Z:=X(\vec{m})$ and $Z':=Y(\vec{m}')$ be the pullback spaces of $X$ and $Y$, respectively. 

Conditions (I) and (II) guarantee that $ \caB_{\Ver,k}(Z)$ and $ \caB_{\Ver,k}(Z')$ contain the same number of $(0,0)$ and $(1,1)$ for all $k=0,\dots,n-2$. 
Given that the total cardinalities of $\caB_{\Ver,k}(Z)$ and $\caB_{\Ver,k}(Z')$ are equal, matching the multiplicities of $(2,2)$ and $(3,3)$ in both barcodes implies that the multiplicity of $(4,4)$ will also match.
Thus, it remains to match the multiplicities of $(2,2)$ and $(3,3)$ in $\caB_{\Ver,k}(Z)$ and $\caB_{\Ver,k}(Z')$. 

For degree $1$, the multiplicity of $(a,a)$ in $ \caB_{\Ver,1}(Z)$ is:
\begin{itemize}
    \item $a=2$: $\mu_1(\vec{m}([1,3]))+\mu_1(\vec{m}([2,3]))+1$;
    \item $a=3$: $\mu_1(\vec{m}([4,5]))$.
\end{itemize}
And the multiplicity of $(a,a)$ in $ \caB_{\Ver,1}(Z')$ is:
\begin{itemize}
    \item $a=2$: $\mu_1(\vec{m}'([4,5]))$;
    \item $a=3$: $\mu_1(\vec{m}'([3,4]))+\mu_1(\vec{m}'([3,5]))+1$.
\end{itemize}
Applying Equation (\ref{eq:mu_1,2}), we want $\vec{m}$ and $\vec{m}'$ to satisfy:
\begin{align}
    (m_{1}+1)(m_{3}+1)-1 +(m_{2}+1)(m_{3}+1) =&(m_{4}'+1)(m_{5}'+1)-1\notag\\
    (m_3'+1)(m_4'+1)-1 +(m_3'+1)(m_5'+1)
    =&(m_4+1)(m_5+1)-1.\notag
\end{align}
To simplify the equations, we define 
\[\vec{\zeta}:=(m_1+1,\dots,m_5+1)\text{ and }\vec{\zeta}':=(m_1'+1,\dots,m_5'+1).\]
Then the above equations can be rewritten as:
\begin{align}
    \zeta_1\zeta_3 +\zeta_2\zeta_3 =&\zeta_4'\zeta_5'\label{eq:1,2}\\
    \zeta_3'\zeta_4' +\zeta_3'\zeta_5'
    =&\zeta_4\zeta_5.\label{eq:1,3}
\end{align}

For degree $2$, it follows from Proposition \ref{prop:explicit barc k} that the multiplicity of $(a,a)$ in $ \caB_{\Ver,2}(Z)$ is:
\begin{itemize}
    \item $a=2$: $\mu_2(\vec{m}([1,3]))+\mu_2(\vec{m}([2,3]))+\mu_2(\vec{m}([1,2,3]))$; %= {m_1m_3}(\tfrac{m_1+m_3}{2}-2)+1 + {m_2m_3}(\tfrac{m_2+m_3}{2}-2)+1+ m_1m_2(m_3-1)$;
    \item $a=3$: $\mu_2(\vec{m}([4,5]))$.% ={m_4m_5}(\tfrac{m_4+m_5}{2}-2)+1$.
\end{itemize}
And the multiplicity of $(a,a)$ in $ \caB_{\Ver,2}(Z')$ is:
\begin{itemize}
    \item $a=2$: $\mu_2(\vec{m}'([4,5]))$;
    \item $a=3$: $\mu_2(\vec{m}'([3,4]))+\mu_2(\vec{m}'([3,5]))+\mu_2(\vec{m}'([3,4,5]))$.
\end{itemize}
Applying Equation (\ref{eq:mu_2,2}) and (\ref{eq:mu_k,k+1}) and substituting $\vec{\zeta}$ and $\vec{\zeta}'$ for $\vec{m}$ and $\vec{m}'$, we obtain:
\begin{align}
    {\zeta_1\zeta_3}(\tfrac{\zeta_1+\zeta_3}{2}-2) + {\zeta_2\zeta_3}(\tfrac{\zeta_2+\zeta_3}{2}-2)+1+ \zeta_1\zeta_2\zeta_3-1=&{\zeta_4'\zeta_5'}(\tfrac{\zeta_4'+\zeta_5'}{2}-2)\label{eq:2,2}\\
    {\zeta_3'\zeta_4'}(\tfrac{\zeta_3'+\zeta_4'}{2}-2) + {\zeta_3'\zeta_5'}(\tfrac{\zeta_3'+\zeta_5'}{2}-2)+1+ \zeta_3'\zeta_4'\zeta_5'-1=&{\zeta_4\zeta_5}(\tfrac{\zeta_4+\zeta_5}{2}-2).\label{eq:2,3}
\end{align}

Let us assume 
\[\alpha:=\zeta_3, \beta:=\zeta_4, \gamma:=\zeta_5.\]
By Equation (\ref{eq:1,3}) and Item (II), we have
\begin{align*}
\zeta_4\zeta_5=\zeta_3'(\zeta_4'+\zeta_5')&\implies \zeta_4\zeta_5+\zeta_4'\zeta_5'=\zeta_3'(\zeta_4'+\zeta_5')+\zeta_4'\zeta_5'\\
&\implies \beta\gamma+\zeta_4'\zeta_5' = \alpha\beta+\beta\gamma+\alpha\gamma\\
&\implies \zeta_4'\zeta_5' = \alpha\beta+\alpha\gamma.
\end{align*}
There are three possibilities for the multiset $\{\zeta_4',\zeta_5'\}$: $\{\alpha,\beta\},\{\alpha,\gamma\},\{\beta,\gamma\}$. Because $\alpha,\beta,\gamma>0$, the first two choices will yield a contradiction. Thus, $\{\zeta_4',\zeta_5'\}=\{\beta,\gamma\}$. This implies 
\begin{center}
    $\zeta_3'=\alpha=\zeta_3$ and $\beta\gamma=\alpha\beta+\alpha\gamma.$
\end{center} 
Going back to Equation (\ref{eq:1,2}), we obtain that
\begin{align*}
(\zeta_1+\zeta_2)\zeta_3=\zeta_4'\zeta_5'\implies (\zeta_1+\zeta_2)\alpha=\beta\gamma
\implies \zeta_1+\zeta_2=\frac{\beta\gamma}{\alpha}=\beta+\gamma,
\end{align*}
where we applied the fact that all variables involved are non-zero.

Assume $\xi:=\zeta_1$. So far, we have shown that $\vec{\zeta}$ and $\vec{\zeta}'$ must be of the following forms:
\begin{itemize}
    \item $\vec{\zeta} = (\xi, \beta+\gamma-\xi,\alpha,\beta,\gamma)$ for some $\xi,\alpha,\beta,\gamma\in \bbZ_{\geq 1}$ satisfying $\beta\gamma=\alpha\beta+\alpha\gamma$;
    \item $\vec{\zeta}' $ is of one of the following four: $\vec{\zeta}, (\beta+\gamma-\xi,\xi,\alpha,\beta,\gamma),(\xi, \beta+\gamma-\xi,\alpha,\gamma,\beta)$ or $(\beta+\gamma-\xi,\xi,\alpha,\gamma,\beta)$.
\end{itemize}

Applying Equation (\ref{eq:2,3}), we obtain:
\begin{align*}
0=&{\zeta_3'\zeta_4'}(\tfrac{\zeta_3'+\zeta_4'}{2}-2) + {\zeta_3'\zeta_5'}(\tfrac{\zeta_3'+\zeta_5'}{2}-2)+1+ \zeta_3'\zeta_4'\zeta_5'-1-{\zeta_4\zeta_5}(\tfrac{\zeta_4+\zeta_5}{2}-2)\\
=&{\alpha \zeta_4'}(\tfrac{\alpha+\zeta_4'}{2}-2) + {\alpha \zeta_5'}(\tfrac{\alpha+\zeta_5'}{2}-2)+1+ \alpha \beta\gamma-1-{\beta\gamma}(\tfrac{\beta+\gamma}{2}-2)\\
=&\tfrac{(\alpha-4)\alpha(\beta+\gamma)+\alpha(\beta^2+\gamma^2)}{2}+ \alpha \beta\gamma -\tfrac{{\beta\gamma}(\beta+\gamma)}{2}+2\beta\gamma
\\
=&\tfrac{(\alpha-4)\beta\gamma+\alpha(\beta^2+\gamma^2)}{2} + \alpha \beta\gamma -\tfrac{{\beta\gamma}(\beta+\gamma)}{2}+2\beta\gamma
\\
=&\tfrac{\alpha(3\beta\gamma+\beta^2+\gamma^2)}{2} -\tfrac{{\beta\gamma}(\beta+\gamma)}{2}\\
=&\tfrac{\alpha(\beta\gamma+(\beta+\gamma)^2)-\beta\gamma(\beta+\gamma)}{2} \\
=&\tfrac{\alpha\beta\gamma}{2} .
\end{align*}
contradicts the fact that $\alpha,\beta,\gamma>0.$

Therefore, we have proved that there exists no $\vec{\zeta},\vec{\zeta}'\in \bbZ_{\geq1}^5$, i.e. there are no $\vec{m},\vec{m}'\in \bbN^5$, such that \[\caB_{\Ver,k}(X(\vec{m}))=\caB_{\Ver,k}(Y(\vec{m}')),\,\forall k=0,\dots,3.\]
Consequently, we conclude that $\hatdi^{\Tri} \neq 0$, thus completing the proof. 

As an additional remark, it is noteworthy that not all conditions were utilized in our argument. Specifically, we did not utilize Equation (\ref{eq:2,2}) and equations that match the multiplicities of $(2,2)$ and $(3,3)$ in degree $3$.
\end{proof}

\end{document}